\newtheorem{ass}[thm]{Assumption}
\newcommand{\Graph}{{\sf G}}
\newcommand{\Good}{\mathrm{Good}}
\newcommand{\Bad}{\mathrm{Bad}}
\renewcommand{\nu}{v}
\newcommand{\vx}{\bm x}
\newcommand{\sss}{\scriptscriptstyle}
\newcommand{\mvbeta}{\bm \beta}
\newcommand{\vX}{\bm X}
\newcommand{\erdos}{Erd\H{o}s-R\'enyi }
\newcommand{\inp}[2]{\langle #1 , #2 \rangle}
\renewcommand{\leq}{\leqslant}
\renewcommand{\geq}{\geqslant}
\newcommand{\eps}{\varepsilon}
\newcommand{\norm}[1]{\left\Vert#1\right\Vert}
\newcommand{\vI}{\mathbf{I}}
\newcommand{\vP}{\mathbf{P}}
\newcommand{\ve}{\mathbf{e}}
\newcommand{\mvzero}{\boldsymbol{0}}
\newcommand\pgfmathsinandcos[3]{%
  \pgfmathsetmacro#1{sin(#3)}%
  \pgfmathsetmacro#2{cos(#3)}%
}
\newcommand\LongitudePlane[3][current plane]{%
  \pgfmathsinandcos\sinEl\cosEl{#2} 
  \pgfmathsinandcos\sint\cost{#3} 
  \tikzset{#1/.style={cm={\cost,\sint*\sinEl,0,\cosEl,(0,0)}}}
}
\newcommand\LatitudePlane[3][current plane]{%
  \pgfmathsinandcos\sinEl\cosEl{#2} 
  \pgfmathsinandcos\sint\cost{#3} 
  \pgfmathsetmacro\yshift{\cosEl*\sint}
  \tikzset{#1/.style={cm={\cost,0,0,\cost*\sinEl,(0,\yshift)}}} %
}
\newcommand\DrawLatitudeCircle[2][1]{
  \LatitudePlane{\angEl}{#2}
  \tikzset{current plane/.prefix style={scale=#1}}
  \pgfmathsetmacro\sinVis{sin(#2)/cos(#2)*sin(\angEl)/cos(\angEl)}
  \pgfmathsetmacro\angVis{asin(min(1,max(\sinVis,-1)))}
  \draw[current plane,thin,black] (\angVis:1) arc (\angVis:-\angVis-180:1);
  \draw[current plane,thin,dashed] (180-\angVis:1) arc (180-\angVis:\angVis:1);
}
\tikzset{%
  >=latex,
  inner sep=0pt,%
  outer sep=2pt,%
  mark coordinate/.style={inner sep=0pt,outer sep=0pt,minimum size=3pt,
    fill=black,circle}%
}
\begin{document}
\title[Large subgraphs in pseudo-random graphs]{Large subgraphs in pseudo-random graphs}

\author[A.\ Basak]{Anirban Basak$^*$}
 \author[S.\ Bhamidi]{Shankar Bhamidi$^\dagger$}
 \author[S.\ Chakraborty]{Suman Chakraborty$^\ddagger$}
 \author[A.\ Nobel]{Andrew Nobel$^\mathsection$}
 
 
 \address{$^*$Department of Mathematics,
Weizmann Institute of Science, 
\newline\indent POB 26, Rehovot 76100, Israel}

\address{$^{\dagger\ddagger\mathsection}$Department of Statistics \& Operations Research, 
\newline\indent University of North Carolina at Chapel Hill,
\newline\indent 318 Hanes Hall, CB\# 3260, Chapel Hill, NC 27599, USA}

\date{\today}

\maketitle

\begin{abstract}
We consider classes of pseudo-random graphs on $n$ vertices for which the degree of every vertex and the co-degree between every pair of vertices are in the intervals $(np - Cn^\delta,np+Cn^\delta)$ and $(np^2- C n^\delta, np^2 +C n^\delta)$ respectively, for some absolute constant $C$, and  $p, \delta \in (0,1)$. We show that for such pseudo-random graphs the number of induced isomorphic copies of subgraphs of size $s$ are approximately same as that of an Erd\H{o}s-R\'{e}yni random graph with edge connectivity probability $p$ as long as $s \le (((1-\delta)\wedge \f{1}{2})-o(1))\log n/\log (1/p)$, when $p \in (0,1/2]$. When $p \in (1/2,1)$ we obtain a similar result. Our result is applicable for a large class of random and deterministic graphs including exponential random graph models (ERGMs), thresholded graphs from high-dimensional correlation networks,  Erd\H{o}s-R\'{e}yni random graphs conditioned on large cliques, random $d$-regular graphs and graphs obtained from vector spaces over binary fields. In the context of the last example, the results obtained are optimal.  Straight-forward extensions using the proof techniques in this paper imply strengthening of the above results in the context of larger motifs if a model allows  control over higher co-degree type functionals.  
\end{abstract}

\section{Introduction}\label{sec:introduction}
In the context of probabilistic combinatorics, the origin of random graphs dates back to \cite{E} where Erd\H{o}s used probabilistic methods to show the existence of a graph with certain Ramsey property. Soon after some of the foundational work on random graphs was established in \cite{ER1, ER2, ER3}. The simplest model of random graph is as follows: Fix $n\ge1 $ and  start with vertex set $\{1,2,\ldots,n\}$. For ease of notation hereafter, we denote $[n]:=\{1,2,\ldots,n\}$. Fix $p\in (0,1)$. The graph is formed by randomly drawing an edge between every pair of vertices $i \ne j \in [n]$, with probability $p$. This random graph is known as {\em Erd\H{o}s-R\'{e}yni random graph} with edge connectivity probability $p$. In the sequel we use $\Graph(n,p)$ to denote this graph (also known as {\em binomial random graph} in the literature). This model has stimulated an enormous amount of work over the last fifty years aimed at understanding properties of $\Graph(n,p)$ (see \cite{bollobas, JLR} and the references therein) in particular in the large network $n\to\infty$ limit.  

In the last decade there has been an explosion in the amount of empirical data on real world networks in a wide array of fields including statistics, machine learning, computer science, sociology, epidemiology. This has stimulated the development of a multitude of models to explain many of the properties of real world networks including high clustering coefficient, heavy tailed degree distribution and small world properties; see e.g.~\cite{newman2003structure,albert2002statistical} for wide-ranging surveys; see \cite{durrett2007random,van2009random,chung2006complex} for a survey of rigorous results on the formulated models.    Although such network models are not as simple as an \erdos graph, one would still like to investigate whether these posses properties similar to an \erdos graph. Therefore, it is natural to ask about how similar/dissimilar such network models  are compared to an \erdos graph.
Many researchers have delved deep into these questions and have found various conditions under which a given graph $\Graph_n:=([n], E_n)$, with vertex set $[n]$ and edge set $E_n$ looks similar to an \erdos random graph. In literature these graphs are known by various names. Following Krivelevich and Sudakov \cite{prg}, we call them pseudo-random graphs.  

One key property of an \erdos random graph is the following: For any two subsets of vertices $U,W \subset [n]$ the number of edges in a $\Graph(n,p)$, whose one end point is in $U$ and the other is in $W$, is roughly equal to $p |U||W|$, where $|\cdot|$ denotes the cardinality of a set. Hence, if a pseudo-random graph $\Graph_n$ is similar to $\Graph(n,p)$, then it must also satisfy similar properties. This motivated researchers to consider graphs which possess the above property. 

Foundational work for these sorts of questions began with Thomason \cite{thom2, thom1} in the mid-eighties where he used the term  {\em Jumbled graph} (see Definition \ref{dfn:jumbled}) to describe such graphs.  
Roughly it provided quantitative bounds on similarity between pseudo-random graphs and a $\Graph(n,p)$. This study provided some examples of jumbled graphs and explored various properties of them. It exposed a whole new interesting research area and numerous results were obtained afterwards. Further fundamental work in this area was established by Chung, Graham, and Wilson in \cite{cgw}. They coined the term {\em quasi-random} graphs to describe their models of pseudo-random graphs. They provided several equivalent conditions of pseudo-randomness; one of their major results is described in Section \ref{sec:disc}, Theorem \ref{prgcgw} . Paraphrasing these results, loosely they state the following:

\begin{quote}
	\emph{For a graph ${\sf G}_n$ to be pseudo-random one must have that the number of induced isomorphic copies of any subgraph ${\sf H}$ of \textbf{fixed size} (e.g.~${\sf H}$ is a triangle) must be roughly equal to that of a $\Graph(n,p)$.}
\end{quote}
  Another class of pseudo-random graphs is Alon's $(n,d,\lambda)$-graph (see \cite{alon_reg_graph}). These graphs are $d$-regular graphs on $n$ vertices such that the second largest absolute eigenvalue of its adjacency matrix is less than or equal to $\lambda$. Various graph properties are known for $(n,d,\lambda)$-graphs (see \cite{prg} and the references therein).
  
  As described above, the availability of data on real-world networks has stimulated a host of questions in an array of field, in particular in finding \emph{large} motifs in observed networks such as large cliques or communities (representing interesting patterns in biological or social networks) or understanding the limits of search algorithms in cryptology. Many of these questions are computationally hard and one is left with brute search algorithms over all possible subgraphs of a fixed size to check existence of such motifs.  Thus a natural question (again loosely stated) is as follows:

  \begin{quote}
  \emph{Can we find simple conditions on a sequence of graphs $\set{\Graph_n: n\geq 1}$ such that the number of induced isomorphic copies of any subgraph ${\sf H}$ of \textbf{growing size} (e.g. ${\sf H}$ is a clique of size $\log{n}$) must be roughly equal to that in  $\Graph(n,p)$? What are the fundamental limits of such conditions?  }	
  \end{quote}

Here we consider a class of pesudo-random graphs and study the number of induced copies of large subgraphs in those. More precisely, we will assume that our graph $\Graph_n$ satisfies the following two assumptions for some absolute constant $C$, and $p, \delta \in (0,1)$:

\noindent
{\bf Assumption A1.}
$$\max_{v \in [n]}{\left|\left|\sN_\nu\right| - {n}p\right|} < Cn^{\delta}.$$

\noindent
{\bf Assumption A2.} 
$$
\max_{ \nu \ne \nu' \in  [n]}{\left|\left|\sN_\nu \cap \sN_{\nu'}\right| - {n}p^2\right|} < Cn^{\delta}. $$
Here for any $v \in [n]$, $\sN_v:=\{u \in [n]: u \sim v\}$, where $u \sim v$ means that $u$ is connected to $v$ by an edge in $\Graph_n$. These two conditions are very natural to assume. For example, using Hoeffding's ineqaulity it is easy to check that for  $\Graph(n, p)$ assumptions {\bf (A1)} and {\bf (A2)} are satisfied for any $\delta >1/2$ with super-polynomially high probability. As we will see below, besides $\Graph(n,p)$ there are many examples of graph ensembles which satisfy assumptions {\bf (A1)}-{\bf (A2)}.  Further, these two specifications are quite basic and often very easy to check for a given graph (random or deterministic). 

In our main theorem below we show that for any such graph sequence the number of induced isomorphic copies of any slowly growing sub-graph is approximately same as that of an \erdos random graph with edge connectivity $p$. Before stating our main theorem let us introduce some notation: For any $r \in \N$, let us denote $\cG(r)$ be the collection of all graph with vertex set $[r]$. Further given any graph ${\sf H}$, denote $n_{\Graph_n}({\sf H})$ to be the number of induced isomorphic copies of ${\sf H}$ in $\Graph_n$ and $E({\sf H})$ to be edge-set of ${\sf H}$. Next, for $p\in (0,1)$, define $\gamma_p:= \max\{p^{-1}, (1-p)^{-1}\}$. We also write $\log(\cdot)$ to denote the natural logarithm, i.e.~logarithm with respect to the base $e$. When needed, we will specify the base $b$ and write as $\log_b(\cdot)$. For any two positive integers $s \le n$, let us denote $(n)_s:=n(n-1)\cdots (n-s+1)$. Now we are ready to state our main theorem.
\begin{thm}\label{thm:main}
Let $\{\Graph_n\}_{n \in \N}$ be a sequence of graphs satisfying assumptions {\bf (A1)} and {\bf (A2)}. Then, there exists a positive constant $C_0'$, depending on $p$, such that 
\beq\label{eq:thm_main_display}
\max_{s \le ((1-\delta)\wedge \f{1}{2}) \f{\log n}{\log \gamma_p}- C_0' \log \log n}\max_{{\sf H}_s \in \cG(s)}\left|\f{n_{\Graph_n}({\sf H}_s)}{\frac{(n)_s}{|\mathrm{Aut}({\sf H}_s)|} \left(\f{p}{1-p}\right)^{|E({\sf H}_s)|} (1-p)^{{s \choose 2}}}-1 \right|  \ra 0,
\eeq
as $n \ra \infty$.
\end{thm}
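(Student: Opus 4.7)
The plan is to reduce counting induced copies to counting labeled homomorphisms via inclusion--exclusion, and then to estimate each homomorphism count by a moment computation that uses only (A1) and (A2). Writing $A_{uv}:=\ind\{u\sim v\}$ and expanding each non-edge factor $1-A_{v_iv_j}$ in the indicator that $(v_1,\ldots,v_s)$ induces ${\sf H}_s$ yields
$$n_{\Graph_n}({\sf H}_s)=\frac{1}{|\mathrm{Aut}({\sf H}_s)|}\sum_{F\subseteq\binom{[s]}{2}\setminus E({\sf H}_s)}(-1)^{|F|}N_s({\sf H}_s\cup F),\quad N_s(K):=\sum_{(v_1,\ldots,v_s)\text{ distinct}}\prod_{(i,j)\in E(K)}A_{v_iv_j}.$$
Centering each $A_{uv}=p+\xi_{uv}$ with $\xi_{uv}:=A_{uv}-p$, expanding $N_s(K)$, exchanging the order of summation, and applying the identity $\sum_{G\subseteq S}(-p)^{|G|}=(1-p)^{|S|}$, one rewrites the whole expression as
$$n_{\Graph_n}({\sf H}_s)=\frac{(n)_s\,p^{|E({\sf H}_s)|}(1-p)^{\binom{s}{2}-|E({\sf H}_s)|}}{|\mathrm{Aut}({\sf H}_s)|}\Biggl[1+\sum_{\eset\ne F'\subseteq\binom{[s]}{2}}(-1)^{b(F')}\frac{Z_s(F')}{(n)_s\,p^{a(F')}(1-p)^{b(F')}}\Biggr],$$
where $a(F'):=|F'\cap E({\sf H}_s)|$, $b(F'):=|F'|-a(F')$, and $Z_s(F'):=\sum_{(v_1,\ldots,v_s)\text{ distinct}}\prod_{(i,j)\in F'}\xi_{v_iv_j}$. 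Because $p^{a(F')}(1-p)^{b(F')}\ge\gamma_p^{-|F'|}$, it suffices to show that $|Z_s(F')|=o\bigl((n)_s\,\gamma_p^{-|F'|}\bigr)$ uniformly in $F'\ne\eset$, with the accumulated errors summing to $o(1)$.

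The estimate on $Z_s(F')$ is the technical heart of the argument. Assumption (A1) immediately gives $|\sum_v\xi_{uv}|\le Cn^\delta$ for every $u$, and expanding and cancelling main terms using (A1) and (A2) produces the pair analogue $|\sum_v\xi_{uv}\xi_{u'v}|\le C'n^\delta$ for every $u\ne u'$. I would bound $Z_s(F')$ by summing the vertex variables iteratively in a greedy (degeneracy-minimizing) order: a new vertex with $k$ already-processed neighbors in $F'$ contributes a factor of $n$ if $k=0$, of $n^\delta$ if $k\in\{1,2\}$ (using the two displayed bounds), and, for $k\ge3$, a factor obtained by Cauchy--Schwarz squaring of $\prod_j\xi_{v_ju}$ back to a pair expression controlled by (A2). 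Collecting these per-vertex contributions and book-keeping the exponents of $n$ and of $p(1-p)$ yields, uniformly over non-empty $F'$, a bound of the form $|Z_s(F')|\le C^{|F'|}(n)_s\,n^{-|F'|\cdot((1-\delta)\wedge 1/2)}$. Plugging back into the bracketed sum and summing over $F'$ produces a geometric-like series in $\gamma_p\,n^{-((1-\delta)\wedge 1/2)}$ whose total tends to $0$ precisely when $s\le((1-\delta)\wedge\tfrac12)\log n/\log\gamma_p-C_0'\log\log n$; the $\log\log n$ slack absorbs the combinatorial $C^{|F'|}$ and $\binom{\binom{s}{2}}{|F'|}$ overheads.

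The main obstacle is exactly the Cauchy--Schwarz step for vertices with $\ge 3$ already-processed neighbors: assumptions (A1) and (A2) give no direct control on triple or higher codegrees $|\sN_{v_1}\cap\cdots\cap\sN_{v_k}|$, and the squaring required to reduce back to a pair expression unavoidably costs a factor of $\sqrt n$ per such vertex relative to the ER-typical size. This is precisely what forces the $\tfrac12$ cap in the threshold $(1-\delta)\wedge\tfrac12$; as the authors remark in the introduction, any improvement beyond this cap would require additional hypotheses on higher-order co-degree functionals.
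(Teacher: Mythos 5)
Your algebraic reduction to the centered-expansion form is correct, and it is a genuinely different route from the paper's: the paper runs an induction on subgraph size, extending an induced copy of ${\sf H}_r$ to ${\sf H}_{r+1}$ by tracking the number of common generalized neighbors of ``good'' tuples and controlling the ``bad'' ones separately (Section~\ref{sec:proof_outline}, Lemmas~\ref{common-nbr-allgood}--\ref{lem:var_type_bd}), whereas you attempt a Chung--Graham--Wilson-type moment computation. The crux of your argument, however, is the claimed bound $|Z_s(F')|\le C^{|F'|}(n)_s\,n^{-|F'|\cdot((1-\delta)\wedge 1/2)}$, and that bound is false. The per-vertex tally cannot work as stated because cancellation is available only at the innermost sum: once you bound $\sum_{v_s}\prod_{j\sim s}\xi_{v_jv_s}$ by a magnitude $Cn^\delta$ via~(A1)/(A2), the resulting quantity is a function $g$ of the outer vertices whose sign is uncontrolled, so the next outer sum, e.g.\ $\sum_{v_1,v_2}\xi_{v_1v_2}\,g(v_1,v_2)$, can only be bounded crudely by $Cn^\delta\sum|\xi_{v_1v_2}|\lesssim n^{2+\delta}$. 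Concretely, for $F'$ a triangle you get $|Z|\lesssim n^{2+\delta}$, a single factor of $n^{\delta-1}$ savings, not $n^{-3((1-\delta)\wedge 1/2)}$; and the Cauchy--Schwarz step you propose for $k\ge3$ processed neighbors produces $\sqrt{n\sum_u\prod_j\xi_{v_ju}^2}\lesssim n$, which is not sublinear and gives no savings. With the correct single-factor savings the error sum becomes of order $n^{\delta-1}(1+\gamma_p)^{\binom{s}{2}}$, which is $o(1)$ only for $s=O(\sqrt{\log n})$ --- exactly the barrier the paper attributes to the unrestricted CGW-style computation in Section~\ref{sec:proof_outline}, and exactly why they restrict the variance sum to the isomorphism class $\cH_r$ and introduce the good/bad vertex decomposition.

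Independently, your claimed $Z_s(F')$ estimate is ruled out by the paper's own optimality example. If it held, the error sum $\sum_{F'\ne\eset}(C\gamma_p n^{-\alpha})^{|F'|}\le(1+C\gamma_p n^{-\alpha})^{\binom{s}{2}}-1$ with $\alpha=(1-\delta)\wedge\tfrac{1}{2}$ would be $o(1)$ whenever $s^2=o(n^\alpha)$, so the theorem would extend to $s$ polynomial in $n$. But Theorem~\ref{thm:binary_graph}(iii) exhibits $\Graph_n^b$ satisfying (A1)--(A2) with $p=\tfrac{1}{2}$ and $\delta$ arbitrarily small for which the normalized clique count diverges already at $\bar{s}=(\tfrac{1}{2}+\eta)\log_2 n$. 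Thus the threshold is genuinely logarithmic, and the $\tfrac{1}{2}$ cap is not created by Cauchy--Schwarz in a moment expansion: in the paper's proof it enters through the Chebyshev-based count of bad vertices (Lemmas~\ref{lem:var_N_nu}, \ref{lem:good_B}, \ref{lem:Bad_bound}), which becomes ineffective once $r\approx\tfrac{1}{2}\log_{\gamma_p}n$; Theorem~\ref{thm:main_improve} then shows that assuming control of higher co-degrees pushes the cap to $\tfrac{2}{3}$ via a fourth-moment Markov bound, confirming that more assumptions --- not a different inequality inside this expansion --- are what is missing.
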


\bigskip

\begin{rmk}
	Note that 
	\[
	\E(n_{\Graph(n,p)}({\sf H}_s))= \frac{(n)_s}{|\mathrm{Aut}({\sf H})|} \left(\f{p}{1-p}\right)^{|E({\sf H})|} (1-p)^{{s \choose 2}}.
	\]
	Thus Theorem \ref{thm:main} establishes that the number of induced isomorphic copies of large graphs are approximately same as that of an \erdos graph. In the proof of Theorem  \ref{thm:main} we actually obtain bounds on the rate of convergence to zero of the \abbr{LHS} of \eqref{eq:thm_main_display}. Using these rates,  one can allow $\min \{p, 1-p\}$ and $1-\delta$ to go to zero and obtain modified versions of \eqref{eq:thm_main_display} in those cases. See Remark \ref{rmk:delt-p} for more details. For clarity of presentation we work with fixed values of $p$ and $\delta$  in Theorem \ref{thm:main}. In Section \ref{sec:binary-graph} we give an example of a graph model where the result Theorem \ref{thm:main} is optimal.
	
\end{rmk}

\begin{rmk}[Existence of large motifs]
	\label{rmk:exs-motifs}
	The above result shows that under assumptions {\bf (A1)} and {\bf (A2)}, the associated sequences of graphs are {\bf strongly} pseudo-regular in the sense that the count of large motifs in the graph is approximately the same as in \erdos random graph. A weaker question is asking {\bf existence} of large motifs; to fix ideas let $r = c\log{n}$ for a constant $c> 0$ and ${\sf C}_r$ denote a clique on $r$ vertices. Then one could ask the import of Assumptions {\bf (A1)} and {\bf (A2)} on the {\bf existence} of ${\sf C}_r$; this corresponds to 
	\[\liminf_{n\to\infty} n_{\Graph_n}({\sf C}_r) > 0. \]
	We study these questions in work in progress. 
\end{rmk}

We will see in Section \ref{sec:binary-graph} that the conclusion of Theorem \ref{thm:main} cannot be improved unless one has more assumptions on the graph sequence. Below we consider one such direction to understand the implications of our proof technique if one were to assume bounds on the number of common neighbors of three and four vertices.

\noindent
{\bf Assumption A3.}
$$\max_{v_1\ne v_2\ne v_3 \in [n]}{\left|\left|\sN_{\nu_1} \cap \sN_{\nu_3} \cap\sN_{\nu_3}  \right| - {n}p^3\right|} < Cn^{\delta}.$$

\noindent
{\bf Assumption A4.} 
$$
\max_{v_1\ne v_2\ne v_3 \ne v_4 \in [n]}{\left|\left|\sN_{\nu_1} \cap \sN_{\nu_3} \cap\sN_{\nu_3}   \cap\sN_{\nu_4}\right| - {n}p^4\right|} < Cn^{\delta}. $$

\medskip
Under these above two assumptions we obtain the following improvement of Theorem \ref{thm:main}.

\begin{thm}\label{thm:main_improve}
Let $\{\Graph_n\}_{n \in \N}$ be a sequence of graphs satisfying assumptions {\bf (A1)}-{\bf (A4)}. Then, there exists a positive constant $C_5'$, depending on $p$, such that 
\beq\label{eq:thm_main_display_improve}
\max_{s \le ((1-\delta)\wedge \f{2}{3}) \f{\log n}{\log \gamma_p}- C_5' \log \log n}\max_{{\sf H}_s \in \cG(s)}\left|\f{n_{\Graph_n}({\sf H}_s)}{\frac{(n)_s}{|\mathrm{Aut}({\sf H})|} \left(\f{p}{1-p}\right)^{|E({\sf H})|} (1-p)^{{s \choose 2}}}-1 \right|  \ra 0,
\eeq
as $n \ra \infty$.
\end{thm}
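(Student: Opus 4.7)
The proof will follow the same centering-and-peeling strategy as the proof of Theorem \ref{thm:main}, with the peeling step upgraded from back-degree at most $2$ to back-degree at most $4$ using the additional Assumptions {\bf (A3)}-{\bf (A4)}.

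First, I would expand
\[
|\mathrm{Aut}({\sf H}_s)|\cdot n_{\Graph_n}({\sf H}_s) \;=\; \sum_{(v_1,\dots,v_s)}\prod_{\{i,j\}\in E({\sf H}_s)}\ind_{v_i\sim v_j}\prod_{\{i,j\}\notin E({\sf H}_s)}(1-\ind_{v_i\sim v_j}),
\]
where the sum runs over ordered tuples of distinct vertices in $[n]$. Substituting $\ind_{v_i\sim v_j}=p+Y_{v_i v_j}$ in every factor and rewriting it as $q_{ij}+\varepsilon_{ij}Y_{v_i v_j}$ with $q_{ij}\in\{p,1-p\}$ and $\varepsilon_{ij}\in\{\pm 1\}$ determined by whether $\{i,j\}\in E({\sf H}_s)$, the expansion takes the form
\[
\sum_{S\subseteq\binom{[s]}{2}}\Bigl(\prod_{\{i,j\}\notin S}q_{ij}\Bigr)\Bigl(\prod_{\{i,j\}\in S}\varepsilon_{ij}\Bigr)\,T(S),\qquad T(S):=\sum_{(v_1,\dots,v_s)\,\text{distinct}}\prod_{\{i,j\}\in S}Y_{v_i v_j}.
\]
The $S=\varnothing$ term produces the desired main term $(n)_s\,p^{|E({\sf H}_s)|}(1-p)^{\binom{s}{2}-|E({\sf H}_s)|}$; everything else reduces to controlling $T(S)$ for $S\ne\varnothing$.

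The core ingredient is a peeling lemma for $T(S)$. Fix an ordering of $[s]$, let $B_k=\{j<k:\{j,k\}\in S\}$ and $d_k=|B_k|$, and integrate out the largest-indexed vertex $v_k$:
\[
\sum_{v_k}\prod_{j\in B_k}Y_{v_j v_k}\;=\;\sum_{A\subseteq B_k}(-p)^{d_k-|A|}\Bigl|\bigcap_{j\in A}\mathcal{N}_{v_j}\Bigr|\;=\;\sum_{A\subseteq B_k}(-p)^{d_k-|A|}\Bigl(\Bigl|\bigcap_{j\in A}\mathcal{N}_{v_j}\Bigr|-np^{|A|}\Bigr),
\]
the last equality using $\sum_{A\subseteq B_k}(-p)^{d_k-|A|}p^{|A|}=(p-p)^{d_k}=0$ for $d_k\geq 1$. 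Under {\bf (A1)}-{\bf (A4)}, each bracketed residual is $O(n^\delta)$ whenever $|A|\leq 4$, so the whole sum is $O(2^{d_k}n^\delta)$ as long as $d_k\leq 4$; this is precisely where {\bf (A3)}-{\bf (A4)} buy us new room compared to Theorem \ref{thm:main}, which was restricted to $d_k\leq 2$. I would peel the vertices in a degeneracy-respecting order so that $d_k\leq 4$ as long as possible, and on any residual $4$-core fall back on the crude monotone bound $|\bigcap_{j\in A}\mathcal{N}_{v_j}|\leq np^4+Cn^\delta$ valid for $|A|\geq 5$. Iterating across all $s$ peeling steps, summing the weighted $T(S)$ over the $2^{\binom{s}{2}}$ choices of $S$, and dividing by the main term $(n)_s\gamma_p^{-\binom{s}{2}}$, one obtains a ratio that is $o(1)$ in the advertised range of $s$, with the $C_5'\log\log n$ buffer absorbing the exponential count over $S$.

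The main obstacle is the bookkeeping for those $S$ whose induced graph on $[s]$ has degeneracy larger than $4$: for each such $S$ one must verify that the crude level-$4$ contribution from the residual $4$-core remains dominated by the main term. The threshold $\tfrac{2}{3}\log n/\log\gamma_p$ emerges as precisely the balance point between the main-term scale $n p^{s}$ and the per-step cost $p^{-4}$ forced by invoking four-wise co-degrees on the residual core (compared with the cost $p^{-2}$ and corresponding threshold $\tfrac{1}{2}$ for Theorem \ref{thm:main}). Beyond this sharpening of the peeling lemma no new structural input is needed, and the remainder of the argument tracks the proof of Theorem \ref{thm:main} line by line.
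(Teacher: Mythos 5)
Your proposal takes a genuinely different route from the paper, and it has gaps. The paper's proof of Theorem \ref{thm:main_improve} simply re-runs the inductive good/bad-vertex argument from Proposition \ref{prop:main} and Theorem \ref{thm:main}, upgrading only the tail bound on the number of bad vertices: under {\bf (A1)}--{\bf (A4)} one can bound the \emph{fourth} central moment of $\sN_v^{B,\xi}$ (Lemma \ref{lem:fourth_moment_N_nu_p}), so Markov with the fourth power replaces Chebyshev, and $\Upsilon_n(x,\delta)\sim x^{-1}+n^{\delta-1}$ in Lemma \ref{lem:good_B} improves to $\bar{\Upsilon}_n(x,\delta)\sim x^{-2}+n^{\delta-1}$. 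With $x\approx n\gamma_p^{-r}$, the constraint $\log\Upsilon_n\leq -r\log\gamma_p$ changes from $\gamma_p^{2r}\lesssim n$ (giving $\tfrac12$) to $\gamma_p^{3r}\lesssim n^2$ (giving $\tfrac23$). Your proposal instead centres the indicators and expands over $S\subseteq\binom{[s]}{2}$, which is the Chung--Graham--Wilson route; the paper explicitly warns (Section~3.2) that plugging {\bf (A1)}--{\bf (A2)} into that expansion and ``following the steps'' of the CGW proof only reaches $r=O(\sqrt{\log n})$, and that the key improvement is precisely the good/bad decomposition your sketch omits.

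Concretely, two steps in your sketch fail. First, the peeling does not iterate. After peeling $v_s$ you get $T(S)=\sum_{A\subseteq B_s}(-p)^{d_s-|A|}\sum_{(v_1,\dots,v_{s-1})}\prod_{e\in S'}Y_e\cdot R_A$, with $R_A:=\bigl|\bigcap_{j\in A}\sN_{v_j}\bigr|-np^{|A|}$ a genuine function of $\{v_j:j\in A\}$. When the next vertex you peel lies in $A$, the identity you wrote no longer applies to $\prod_e Y_e\cdot R_A$; and if you pull $|R_A|=O(n^{\delta})$ out uniformly, the remaining sum $\sum_{(v)}\bigl|\prod_{e\in S'}Y_e\bigr|$ is of order $n^{s-1}$, and you are back to the one-step, $r=O(\sqrt{\log n})$ loss. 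Second, the fallback for $|A|\geq 5$ is not a usable bound: $\sum_{|A|\geq5}p^{d_k-|A|}\bigl(np^4+Cn^{\delta}\bigr)\sim np^4(1+p)^{d_k}$, which \emph{grows} with $d_k$ because the alternating signs in $(-p)^{d_k-|A|}$ have been discarded; the actual crude bound one should use is $\bigl|\sum_{v_k}\prod_{j\in B_k}Y_{v_jv_k}\bigr|\leq n\max(p,1-p)^{d_k}$, obtained from each factor directly, not from the $A$-decomposition. Finally, the claim that the exponent $\tfrac23$ emerges from a ``balance between $np^s$ and $p^{-4}$'' is a heuristic that does not match the mechanism in either your argument or the paper's; as shown above, the $\tfrac23$ comes from the fourth-moment Markov bound on the bad-vertex count, and your proposal would need an actual computation to recover that exponent.
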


\bigskip

From the proof of Theorem \ref{thm:main_improve} it will be clear that adding more assumptions on the common neighbors of a larger collection of vertices may further improve Theorem \ref{thm:main}. To keep the presentation of current work to manageable length we restrict ourselves only to the above extension. Proof of Theorem \ref{thm:main_improve} can be found in Section \ref{sec:proof_thm}.

 We defer a full discussion of related work to Section \ref{sec:disc}. The rest of this paper is organized as follows:

\subsection*{Outline of the paper.} 
\begin{enumeratei}
\item In Section \ref{sec:aotm} we apply Theorem \ref{thm:main} to {\bf exponential random graph models} (\abbr{ERGM}s) in the high temperature regime (Section \ref{sec:ergm}), {\bf random geometric graphs} (Section \ref{sec:rgg}), {\bf conditioned \erdos}random graphs (Section \ref{sec:erdos-renyi-lc}), and {\bf random regular graphs} (Section \ref{sec:rrg}). 
Section \ref{sec:binary-graph} describes a network model where Theorem \ref{thm:main} is optimal. Proofs of all these results can be found in Section \ref{sec:potaotm}.

\item Section \ref{sec:disc} discusses the relevance of our results, connections to existing literature as well as possible extensions and future directions to pursue. 

\item In Section \ref{sec:proof_outline} we provide an outline of our proof technique. We start this section by stating Proposition \ref{prop:main}, which is the sub-case of Theorem \ref{thm:main} for $p=1/2$. For clarity of presentation, we only explain the idea behind the proof of Proposition \ref{prop:main}. Along with the proof idea we introduce necessary notation and definitions.


\item We prove Proposition \ref{prop:main} in Section \ref{sec:proof_prop}. We state the lemmas needed to prove Proposition \ref{prop:main}. Proofs of these lemmas are deferred to Section \ref{sec:proof_lemma}.

\item In Section \ref{sec:proof_thm} we provide a detailed outline about how one can extend the ideas from the proof of Proposition \ref{prop:main} to prove Theorem \ref{thm:main}. Proof of Theorem \ref{thm:main_improve} can also be found in Section \ref{sec:proof_thm}. Section \ref{sec:potaotm} contains proofs for the applications our main result. 


\end{enumeratei}

\section{Applications of Theorem \ref{thm:main}}
\label{sec:aotm}

In this section we consider four different random graph ensembles: exponential random graph models, random geometric graphs, \erdos random graphs conditioned on a large clique, and random $d$-regular graphs. We show that for these four random graph ensembles Theorem \ref{thm:main} can be applied (and adapted) to show that the number of induced isomorphic slowly growing subgraphs are close to that of an \erdos random graph. In Section \ref{sec:binary-graph} considering an example of a sequence of deterministic graphs we show that one may not extend the conclusion of Theorem \ref{thm:main} without any additional assumptions on the graph sequence.

\subsection{Exponential random graph models (\abbr{ERGM})}
\label{sec:ergm}
    The exponential random graph model (\abbr{ERGM}) is one of the most widely used  models in areas such as sociology and political science since it gives a flexible method for capturing reciprocity (or clustering behavior) observed in most real world social networks. The last few years have seen major breakthroughs in rigorously understanding the behavior of \abbr{ERGM} in the large network $n\to\infty$ limt (see \cite{BBS-11, chaergm} and the references therein). It has been shown that in the {\em high temperature} regime (which we precisely define in Assumption \ref{ass:ergm}), these models converge in the so-called {\em cut-distance}, a notion of distance between graphs established in \cite{gl2}, \cite{gl3}, to the same limit as an \erdos random graph as the number of vertices increase, where the edge connectivity probability of the \erdos random graph is determined explicitly by a function of the parameters (see \cite{chaergm}). In particular this implies that the number of induced isomorphic copies of any subgraph on fixed number of vertices are asymptotically same as in an \erdos random graph in that high temperature regime (see \cite[Theorem 2.6]{gl2}). 
In this section we strengthen the above result to show that  in the high temperature regime the number of induced isomorphic copies of subgraphs of growing size in an \abbr{ERGM} are approximately same with that of an Erd\H{o}s-R\'{e}nyi graph, with appropriate edge connectivity probability. 

We start with a precise formulation of the model. We will stick to the simplest specification of this model which incorporates edges and triangles although we believe that the results below carry over for any model in the ferromagnetic case as long as one is in the high temperature regime. 
Let $\Omega := \set{0,1}^{{n\choose 2}}$ and note that any simple graph on $n$ vertices can be represented by a tuple $\vx := (x_{ij})_{1\leq i< j\leq n}\in \Omega$. Here $x_{ij} = 1$ if vertices $i,j$ are connected by an edge and $x_{ij}= 0$ otherwise.   On this space,  we define the Hamiltonian $H$ as follows.
\begin{equation}
\label{eqn:hamil-def}
	H(\vx) := \beta \sum_{1\leq i < j\leq n} x_{ij} + \frac{\gamma}{n} \sum_{1\leq i < j< k\leq n} x_{ij}x_{jk} x_{ik}.
\end{equation} 
Note that $E(\vx) := \sum_{1\leq i < j\leq n} x_{ij} $ is just the number of edges in $\vx$ whilst $T(\vx) := \sum_{1\leq i < j< k\leq n} x_{ij}x_{jk} x_{ik} $ is the number of triangles. Thus the above Hamiltonian has a simpler interpretation as 
\[H(\vx)=\beta E(\vx) + \frac{\gamma}{n} T(\vx).\]
For simplicity we write $\mvbeta := (\beta,\gamma) $ for the parameters of the model, and  consider the probability measure, 
\begin{equation}
\label{eqn:ergm-measure}
	p_{\mvbeta}^{\sss(n)}(\vx)\propto \exp(H(\vx)), \qquad \vx\in \Omega. 
\end{equation}
In the sequel, to ease notation,  we will suppress $n$ and write the above, often  as $p_{\mvbeta}$. Now for later use we define the function $\varphi_{\mvbeta}:[0,1]\to[0,1]$ via,
\begin{equation}
\label{eqn:phi-beta-def}
	\varphi_{\mvbeta}(x) := \frac{e^{\beta +\gamma x}}{1+e^{\beta +\gamma x}}. 
\end{equation}
Below we state the assumption on the \abbr{ERGM} with which we work in this paper.
\begin{ass}
	\label{ass:ergm}
	We assume that we are in the \emph{ferromagnetic} regime, namely $\gamma > 0$. Further we assume that the parameters $\mvbeta$ are in the high temperature regime (see also \cite{BBS-11}). That is,  there exists a unique solution $0 < p^{*} < 1$ to the equation 
\begin{equation}
\label{eqn:ergm-fixed-point}
	p = \frac{e^{\beta +\gamma p^2}}{1+e^{\beta +\gamma p^2}}
\end{equation} 
and further 
\begin{equation}
\label{eqn:ergm-slope-cond}
	\left.\frac{d}{dp} \frac{e^{\beta +\gamma p^2}}{1+e^{\beta +\gamma p^2}} \right|_{p=p^{*}} < 1. 
\end{equation}
See Figure \ref{fig:phip} below for a graphical description of our setting. In passing we note that the above two conditions can be succinctly expressed using the function $\varphi_{\mvbeta}$ as defined in \eqref{eqn:phi-beta-def} as,
\begin{equation}
\label{eqn:cond-exp-varphi}
	p^* = \varphi_{\mvbeta}((p^{*})^2), \qquad \left.\frac{d}{dp}\varphi_{\mvbeta}(p^2)\right|_{p=p^*} < 1.
\end{equation}	 
\end{ass}
\begin{figure}[htbp]
	\centering
		\includegraphics[scale=.7]{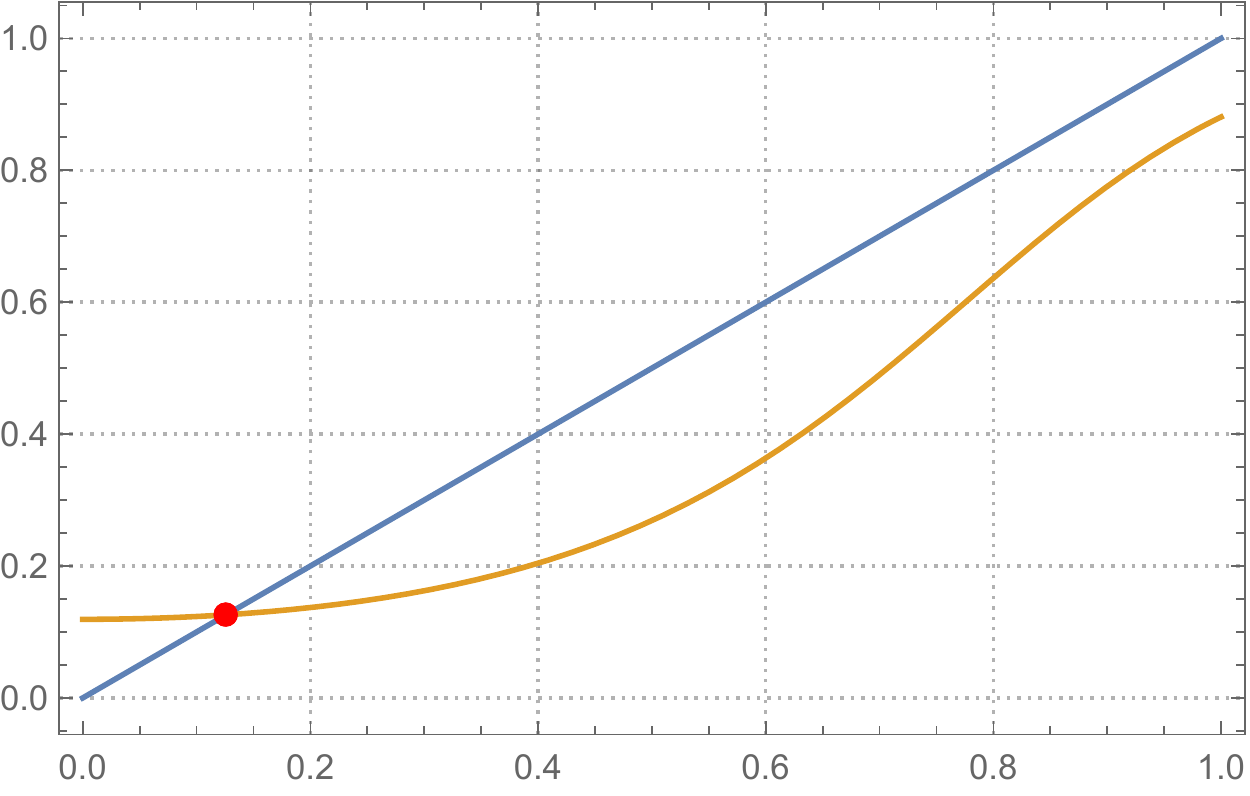}
	\caption{The functions $f(p) =p$ and $g(p) = \frac{e^{\beta +\gamma p^2}}{1+e^{\beta +\gamma p^2}}$ with $\beta = -2$ and $\gamma =4$. The red point corresponds to $p^*$. }
	\label{fig:phip}
\end{figure}
Now we are ready to state the main result regarding \abbr{ERGM}.

\begin{thm}\label{thm:ergm-impli}
Let $\Graph(n, p^*)$ denote an \erdos random graph with edge connectivity probability $p^*$. Let $\bm{X}^{\sss(n)}\sim p_{\mvbeta}^{\sss(n)}$ satisfying assumption \ref{ass:ergm}. Then as $n\to\infty$, we have, 
\[\max_{s \le \f{1}{2}\f{\log n}{ \log \gamma_{p^*}}- C_1' \log \log \hspace{-2pt}n}\max_{{\sf H}_s\in \cG(s)} \left|\frac{n_{\bm{X}^{\sss(n)}}({\sf H}_s)}{\E(n_{\Graph(n, p^*)}({\sf H}_s))} \; \;- \;\; 1  \right| \ra 0, \text{ almost surely},\]
where $C_1'$ is some positive constant depending only on $p^*$.
\end{thm}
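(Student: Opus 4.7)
The plan is to deduce the theorem from Theorem \ref{thm:main} by verifying that, under Assumption \ref{ass:ergm}, the sample $\bm{X}^{\sss(n)}$ almost surely satisfies {\bf (A1)} and {\bf (A2)} with $p = p^*$ and $\delta = \tfrac{1}{2} + \eta_n$ for some sequence $\eta_n = O(\log\log n / \log n)$. The $\log\log n$ slack in the range of $s$ permitted by Theorem \ref{thm:main} will absorb this $\eta_n$, producing the stated bound.

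First I would verify that the slope condition \eqref{eqn:ergm-slope-cond} is equivalent to Dobrushin's uniqueness condition for $p_{\mvbeta}^{\sss(n)}$. The conditional law of $x_{ij}$ given all other edges is Bernoulli with mean $\varphi_{\mvbeta}\bigl(n^{-1}\sum_{k\ne i,j} x_{ik}x_{jk}\bigr)$, and a direct computation then shows that the row sums of the Dobrushin coupling matrix are bounded above by $\left.\frac{d}{dp}\varphi_{\mvbeta}(p^2)\right|_{p=p^*} + o(1) < 1$. Given this, I would invoke the standard concentration inequality for Gibbs measures satisfying Dobrushin's condition (Chatterjee, \emph{Spin glasses and Stein's method}, 2007): for any function $f$ of the edge variables whose coordinatewise differences are bounded by $(c_{ij})$,
\[\mathbb{P}\bigl(|f - \mathbb{E}f| > t\bigr) \le 2\exp\bigl(-C_0\, t^2 / {\textstyle\sum_{ij}} c_{ij}^2\bigr).\]
Applied to $f = \deg(v) = \sum_{j\ne v} x_{vj}$, which depends on only $n-1$ edges with unit differences, and to $f = |\sN_v \cap \sN_{v'}| = \sum_{k\ne v,v'} x_{vk}x_{v'k}$, which depends on $2(n-2)$ edges, this yields sub-Gaussian concentration at scale $\sqrt n$. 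Taking $t = n^{1/2}(\log n)^{1/2+o(1)}$, union-bounding over the $O(n^2)$ vertices/pairs, and invoking Borel--Cantelli secures the required a.s.\ concentration around the means.

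Next I would center the degrees and co-degrees: I need $\mathbb{E}[\deg(v)] = np^* + O(n^{1/2+\eta_n})$ and $\mathbb{E}[|\sN_v\cap \sN_{v'}|] = n(p^*)^2 + O(n^{1/2+\eta_n})$. By the vertex-transitive symmetry of $p_{\mvbeta}^{\sss(n)}$, $\bar p := \mathbb{E}[x_{ij}]$ is common to all edges. I would combine the Glauber-stationarity identity $\bar p = \mathbb{E}\bigl[\varphi_{\mvbeta}\bigl(n^{-1}\sum_k x_{ik}x_{jk}\bigr)\bigr]$ with the concentration just established (which controls the random argument of $\varphi_{\mvbeta}$ to within $O(n^{-1/2+\eta_n})$), driving a fixed-point bootstrap via the strict contraction $\left.\frac{d}{dp}\varphi_{\mvbeta}(p^2)\right|_{p=p^*}<1$ to conclude $\bar p = p^* + O(n^{-1/2+\eta_n})$. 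An analogous argument for the two-edge marginal gives $\mathbb{E}[x_{ij}x_{ik}] = (p^*)^2 + O(n^{-1/2+\eta_n})$. Summing across the relevant edges and combining with the concentration verifies {\bf (A1)} and {\bf (A2)} a.s.\ with $\delta = \tfrac{1}{2} + \eta_n$, after which Theorem \ref{thm:main} delivers the claim.

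The main obstacle will be the mean estimate $\bar p - p^* = O(n^{-1/2+\eta_n})$: a naïve substitution into the fixed-point equation gives only $O(1)$ error, and sharpening it to the stated rate requires carefully coupling the Dobrushin-driven concentration of $n^{-1}\sum_k x_{ik}x_{jk}$ with an iterative scheme that exploits the strict slope condition \eqref{eqn:ergm-slope-cond}. A secondary delicate point is quantifying the Dobrushin coupling constant uniformly in $n$ so that the variance proxy in the Gibbs concentration inequality is indeed of order $n$ rather than $n^2$.
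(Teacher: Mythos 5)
Your overall plan — verify Assumptions \textbf{(A1)}--\textbf{(A2)} for $\bm{X}^{\sss(n)}$ with $p=p^*$ and $\delta$ just above $1/2$, then invoke Theorem \ref{thm:main} — is the paper's plan. The gap is in the concentration step, and it is not a technicality: the asserted equivalence between the slope condition \eqref{eqn:ergm-slope-cond} and Dobrushin's uniqueness condition is false. A Dobrushin row sum for this Gibbs measure bounds the influence of all edges $\{k,l\}$ sharing a vertex with $\{i,j\}$ (roughly $2n$ of them) on the conditional law of $x_{ij}$, and each contributes up to $\tfrac{1}{n}\cdot\sup_{u\in[0,1]}|\varphi'_{\mvbeta}(u)|\le\tfrac{\gamma}{4n}$, giving a row sum of order $\gamma/2$. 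This is a \emph{worst-case} Lipschitz bound over all configurations, and is not controlled by $\left.\tfrac{d}{dp}\varphi_{\mvbeta}(p^2)\right|_{p=p^*}$, which is evaluated only at the fixed point. In fact the paper's own illustrative choice $\beta=-2,\gamma=4$ (Figure \ref{fig:phip}) lies in the high-temperature regime of Assumption \ref{ass:ergm} but gives a Dobrushin row sum $\approx 2 > 1$, so the Dobrushin-based Gibbs concentration inequality you want to cite is simply unavailable there. Your closing remark that one needs to "quantify the Dobrushin coupling constant uniformly in $n$" understates the issue: it is not that the constant is hard to pin down, it is that the condition can fail outright in the regime the theorem covers.

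The paper sidesteps this by using Chatterjee's exchangeable-pair concentration theorem (Theorem \ref{thm:cha-stein-conc}), which requires no uniform contraction and hence no Dobrushin hypothesis. The price is that the resulting estimates (Lemmas \ref{lem:conc-deg-ph} and \ref{lem:proof-ergm-comm-deg-con}) concentrate $d_\nu$ and $L_{ij}$ not around deterministic means but around the \emph{data-dependent} centers $\sum_{j\ne\nu}\varphi_{\mvbeta}(L_{\nu j})$ and $\tfrac1n\sum_k\varphi_{\mvbeta}(L_{ik})\varphi_{\mvbeta}(L_{jk})$. One then studies $\psi_{\mvbeta}(u)=\varphi_{\mvbeta}(u)^2-u$: its unique zero $u^*=(p^*)^2$, sign change, and nondegenerate slope (precisely the content of \eqref{eqn:ergm-slope-cond}, via Lemma \ref{lem:equiv-ass-ergm}) force $L_{\min}$ and $L_{\max}$ to sandwich $(p^*)^2$ up to an $O(\sqrt{\log n/n})$ error. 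Thus the fixed-point structure is used to \emph{locate} the center, not to establish contraction — conceptually close to your "fixed-point bootstrap" for the mean, but it is carried out at the level of the random arrays $L_{ij}$ themselves, and it is what makes the argument work without Dobrushin. To repair your proof you would replace the Dobrushin step with such an exchangeable-pair / self-referential-center argument, at which point you have essentially reproduced the paper's route.
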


\bigskip
Theorem \ref{thm:ergm-impli} follows from Theorem \ref{thm:main} once we establish that assumptions {\bf (A1)}-{\bf (A2)} are satisfied by ${\bm{X}^{\sss(n)}}$ almost surely. We defer the proof to Section \ref{sec:potaotm}.

\subsection{Thresholded graphs from high-dimensional correlation networks}\label{sec:rgg}
 In this section we consider thresholded graphs obtained from high-dimensional random geometric graphs. Roughly in a geometric graph the vertices are points in some metric space and two vertices are connected if they are within a specified threshold distance. These models have drawn wide interests from different branches of science such as machine learning, statistics etc. It is of interest to study these graphs when the dimension of the underlying metric space is growing with the number of vertices. In the context of applications, one major are of applications of this model is neuroscience, while it is impossible to give even a representative sample of related papers, a starting point would be \cite{bullmore2009complex,stam2007graph,eguiluz2005scale} and the references therein. Paraphrasing from \cite[Page 187]{bullmore2009complex} (with parts (c) and (d) most relevant for this paper): 
 
 \begin{quote}
	 \emph{
	``Structural and functional brain networks can be explored using graph theory through the following four steps: ''
	\begin{enumeratea}
		\item Define the network nodes. These could be defined as electroencephalography or multielectrode-array electrodes, or as anatomically defined regions of histological, MRI or diffusion tensor imaging data.
   \item Estimate a continuous measure of association between nodes. This could be the spectral coherence or Granger causality measures between two magnetoencephalography sensors, $\ldots$ or the inter-regional correlations in cortical thickness or volume MRI measurements estimated in groups of subjects.
   \item Generate an association matrix by compiling all pairwise associations between nodes and (usually) apply a threshold to each element of this matrix to produce a binary adjacency matrix or undirected graph.
   \item Calculate the network parameters of interest in this graphical model of a brain network and compare them to the equivalent parameters of a population of random networks.'' 
	\end{enumeratea} 
		} 
 \end{quote}
As a test-bed, we study the simplest possible setting for such questions, first studied rigorously in \cite{devroye2011high} and \cite{dingrgeom16}, and study the behavior of the graph as dimension is growing with the number of vertices.  
To describe the model, we closely follow \cite{devroye2011high}. Write $\sS_{d-1} := \set{\vx \in \R^d: \|\vx\|_2 =1}$ for the unit sphere in $\R^d$, where $\|\cdot\|_2$ denotes the usual Euclidean metric. Suppose $\set{\bm{X}_i: i\in [n]}$ be $n$ points chosen independently and uniformly distributed on $\sS_{d-1}$. Fix $p \in (0,1)$. We will use these points to construct a graph with vertex set $[n]$ as follows: For $i\neq j\in [n]$ say that vertex $i$ is connected to $j$ if and only if 
\[\inp{\bm{X}_i}{\bm{X}_j} \geq t_{p,d}.\]
Here $\inp{\cdot}{\cdot}$ is the usual inner product operation on $\R^d$ and $t_{p,d}$ is a constant chosen such that 
\begin{equation}
\label{eqn:tpd-def}
	\P(\inp{\bm{X}_i}{\bm{X}_j} \geq t_{p,d}) = p
\end{equation}
Call the resulting graph $\Graph(n,d,p)$. See Figure \ref{fig:sphere} for a pictorial representation. 

\begin{figure}[ht!]
	\begin{tikzpicture}[scale=.8,every node/.style={minimum size=1cm}]
	
	\def\R{4} 
	
	\def\angEl{25} 
	\def\angAz{-100} 
	\def\angPhiOne{-50} 
	\def\angPhiTwo{-35} 
	\def\angBeta{30} 
	
	
	\pgfmathsetmacro\H{\R*cos(\angEl)} 
	\LongitudePlane[xzplane]{\angEl}{\angAz}
	\LongitudePlane[pzplane]{\angEl}{\angPhiOne}
	\LongitudePlane[qzplane]{\angEl}{\angPhiTwo}
	\LatitudePlane[equator]{\angEl}{0}
	\fill[ball color=white!10] (0,0) circle (\R); 
	\coordinate (O) at (0,0);
	\coordinate[mark coordinate] (N) at (0,\H);
	\coordinate[mark coordinate] (S) at (0,-\H);
	\path[xzplane] (\R,0) coordinate (XE);
	
	\path[qzplane] (\angBeta:\R) coordinate (XEd);
	\path[pzplane] (\angBeta:\R) coordinate (P);
	\path[pzplane] (\R,0) coordinate (PE);
    \path[pzplane] (\R,0) coordinate (PEd);
	\path[qzplane] (\angBeta:\R) coordinate (Q);
	\path[qzplane] (\angBeta:\R) coordinate (Qd);
	
	\path[qzplane] (\R,0) coordinate (QE);

    \DrawLatitudeCircle[\R]{\angBeta}
    \DrawLatitudeCircle[\R]{0} 
	\node[above=8pt] at (N) {};
	\node[below=8pt] at (S) {};
	
	\draw[-, thick] (N) -- (S);
	\draw[->] (O) -- (P);
	\draw[dashed] (O) -- (QE);
	\draw[-,dashed,black,very thick] (O) -- (PEd);
    \draw[-,dashed,black,very thick] (O) -- (XEd);
    \draw[dashed] (XE) -- (O) -- (PE);
    \draw[-,ultra thick,black] (O) -- (PEd) node[below, right] {$\mathbf{e_1}$};
    \draw[-,ultra thick,black] (O)-- (XEd)node[above, right] {$\mathbf{X_i}$};
    		
	\draw[pzplane,->,thin] (0:0.5*\R) to[bend right=15]
	    node[midway,right] {$\theta$} (\angBeta:0.5*\R);
%
	
%
%
%
\end{tikzpicture}
	\caption{A vertex $i$ is connected $1$ in $\Graph(n,d,p)$ if and only if $\cos(\theta) \geq t_{p,d}$. Here we have rotated the original points via an orthogonal transformation so that the co-ordinates of vertex $1$ correspond to $\mathbf{e_1} = (1,0,\ldots,0)$. Picture modified from template on \url{https://github.com/MartinThoma/LaTeX-examples}.   }
	\label{fig:sphere}
\end{figure}
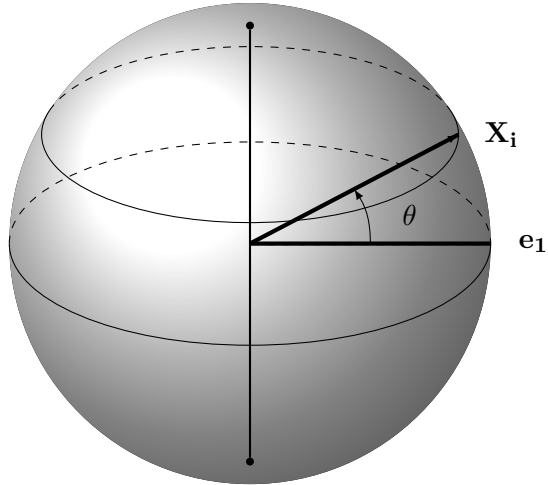
Maximal clique behavior in this model was  studied in \cite{devroye2011high}. Further it was shown in \cite{devroye2011high} that the total variation distance between $\Graph(n,d,p)$ and $\Graph(n, p)$ goes to zero as $d \rightarrow \infty$ while $n$ is fixed. It was further shown in \cite{dingrgeom16}  that the total variation distance between $\Graph(n,d,p)$ and $\Graph(n, p)$ goes to zero if $d$ grows faster than $n^3$ while the distance goes to $1$ if $d/n^3 \rightarrow 0$.  In this section we will consider weaker notions of convergence of graphs and establish that  random geometric graphs look similar to \erdos when $d$ is growing as slow as $(\log n)^c$ for some $c>1$. Below we work with following assumption on random geometric graphs.
\begin{ass}\label{ass:rgeom} We consider random geometric graphs $\Graph(d,n,p)$ with $0 < p\leq 1/2$ and $d/\log{n}\to \infty$ as $n \ra \infty$. 
\end{ass}

We split our result on $\Graph(d,n,p)$ into two parts. In first part we consider induced isomorphic copies of any fixed graph.
\begin{thm}\label{thm:rgg}
If $(\log n)^c/d =o(1)$ for some $c>1$, then as $n\to\infty$ and we have, 
\[ \left|\frac{n_{\Graph(n,d,p)}({\sf H})}{\E(n_{\Graph(n, p)}({\sf H}))} \; \;- \;\; 1  \right| \ra 0,\text{ almost surely},\]
for all finite subgraph ${\sf H}$. 
\end{thm}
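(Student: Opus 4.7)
The plan is to decompose
\[
\frac{n_{\Graph(n,d,p)}({\sf H})}{\E[n_{\Graph(n,p)}({\sf H})]}
= \frac{n_{\Graph(n,d,p)}({\sf H})}{\E[n_{\Graph(n,d,p)}({\sf H})]}\cdot \frac{\E[n_{\Graph(n,d,p)}({\sf H})]}{\E[n_{\Graph(n,p)}({\sf H})]},
\]
and argue that each factor tends to $1$: the first almost surely via a concentration-of-measure argument that treats $n_{\Graph(n,d,p)}({\sf H})$ as a function of the i.i.d.\ vectors $\bm X_1,\ldots,\bm X_n$, and the second deterministically, exploiting the fact that the hypothesis forces $d\to\infty$. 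Because $s := |V({\sf H})|$ is fixed, I would not route through Theorem \ref{thm:main}: a short calculation shows that for $d$ of order $(\log n)^c$ one cannot verify Assumption~{\bf (A2)} with any $\delta<1$, so a direct argument is required.

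To handle the ratio of expectations, I would first observe that by exchangeability both expectations equal $(n)_s/|\mathrm{Aut}({\sf H})|$ times a single probability, namely the chance that a fixed ordered $s$-tuple of distinct vertices induces a labeled copy of ${\sf H}$. For $\Graph(n,p)$ this probability is $p^{|E({\sf H})|}(1-p)^{\binom{s}{2}-|E({\sf H})|}$; for $\Graph(n,d,p)$ it is the probability of a matching pattern across the $\binom{s}{2}$ events $\{\inp{\bm X_i}{\bm X_j}\geq t_{p,d}\}$. Writing $\bm X_i = \bm Z_i/\|\bm Z_i\|$ with $\bm Z_i\sim N(0,I_d)$ i.i.d., I would apply a multivariate central limit theorem to the vector $(\inp{\bm Z_i}{\bm Z_j}/\sqrt d)_{i<j}$, combined with the law of large numbers $\|\bm Z_i\|^2/d\to 1$, to conclude that the rescaled inner products $(\sqrt d\,\inp{\bm X_i}{\bm X_j})_{1\leq i<j\leq s}$ converge jointly in distribution to i.i.d.\ standard Gaussians as $d\to\infty$. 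Together with $\sqrt d\, t_{p,d}\to\Phi^{-1}(1-p)$, which follows from \eqref{eqn:tpd-def}, the $\binom{s}{2}$ edge indicators converge jointly to independent Bernoulli$(p)$ variables, and the ratio of expectations tends to $1$.

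For the concentration factor, I would view $n_{\Graph(n,d,p)}({\sf H})$ as a measurable function $f(\bm X_1,\ldots,\bm X_n)$ of i.i.d.\ inputs. Altering a single coordinate $\bm X_i$ only affects edges incident to vertex $i$, and therefore changes $f$ by at most the number of copies of ${\sf H}$ through $i$, which is bounded by $s\binom{n-1}{s-1} = O(n^{s-1})$. McDiarmid's bounded-differences inequality then gives
\[
\P\!\bigl(\bigl|n_{\Graph(n,d,p)}({\sf H}) - \E\,n_{\Graph(n,d,p)}({\sf H})\bigr|>\eps\, n^s\bigr)\leq 2\exp(-c\eps^2 n)
\]
for some constant $c = c({\sf H},p)>0$. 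Since $\E\, n_{\Graph(n,d,p)}({\sf H}) = \Theta(n^s)$ by the previous step, this tail bound is summable in $n$, and the Borel--Cantelli lemma delivers almost-sure convergence of the first factor to $1$.

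The hard part will be the joint convergence in the expectation step. The $\binom{s}{2}$ inner products are pairwise uncorrelated but jointly depend on the common random vectors $\bm Z_i$, so asymptotic independence requires a genuine multivariate CLT rather than a pair-by-pair argument, together with uniform integrability to justify dividing by the norms $\|\bm Z_i\|$. The concentration step, by contrast, is robust and uses no bound on $d$; the stronger hypothesis $d\gg(\log n)^c$ is not needed for the present statement (only $d\to\infty$ is used) and is presumably included for alignment with the companion result on growing-size subgraphs previewed just before Theorem \ref{thm:rgg}.
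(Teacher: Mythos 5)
Your proposal is correct, but it takes a genuinely different and more elementary route than the paper. The paper derives Theorem \ref{thm:rgg} as a corollary of the stronger Theorem \ref{thm:rgeom-impli}: it first proves the concentration estimates in Theorem \ref{thm:rgeom-cond}, showing that $\Graph(n,d,p)$ satisfies analogues of Assumptions {\bf (A1)}--{\bf (A2)} but with an exponent $\delta_n\to 1$ determined by $\tau_n$, and then it reruns the entire inductive machinery of Theorem \ref{thm:main} with $\delta$ replaced by $\delta_n$ and $\log n$ replaced by $\log(1/\tau_n)$ throughout Lemmas \ref{common-nbr-allgood_p}--\ref{lem:f_r_bar_bound_p}. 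The hypothesis $d\gg(\log n)^c$ is then essential because the induction can only be pushed up to subgraph sizes $\lesssim \log\tau_n^{-1}/\log\gamma_p$, and Theorem \ref{thm:rgg} falls out as the trivial ``fixed-size'' endpoint. Your argument bypasses all of this: the multiplicative decomposition into (random over own mean)$\,\times\,$(mean over Erd\H{o}s--R\'enyi mean), with McDiarmid's bounded-differences inequality handling the first factor and a multivariate CLT (for the i.i.d.\ sums $\inp{\bm Z_i}{\bm Z_j}/\sqrt d$, whose limit covariance is the identity by the vanishing of $\E[W_{ij}W_{ik}]$, combined with the law of large numbers for $\|\bm Z_i\|^2/d$ and the convergence $\sqrt d\,t_{p,d}\to\Phi^{-1}(1-p)$ from Lemma \ref{lem:DGLU}) handling the second, is self-contained and needs only $d\to\infty$. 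This is a real tradeoff: your proof is shorter, needs a weaker hypothesis, and avoids the $\delta_n$-bookkeeping entirely; the paper's route is heavier but gives the quantitative, growing-size statement of Theorem \ref{thm:rgeom-impli} for free, which is the point of the section. Your diagnosis is also correct that {\bf (A2)} cannot be verified directly with fixed $\delta<1$ when $d$ is polylogarithmic (the paper makes the same observation before defining $\delta_n$), so some deviation from a naive application of Theorem \ref{thm:main} is unavoidable; you deviate by abandoning the framework, the paper by drifting $\delta$.
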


Theorem \ref{thm:rgg} shows that the number of induced isomorphic copies of any fixed graph is asymptotically same as that of an \erdos random graph. Therefore, from \cite[Theorem 2.6]{gl2} it follows that $\Graph(d,n,p)$ and $\Graph(n,p)$ are arbitrarily close in cut-metric as soon as $d$ is poly-logarithmic in $n$. Theorem \ref{thm:rgg} is actually a consequence of the following stronger result. This result shows that 
the number of copies of ``large subgraphs'' are asymptotically same as that of \erdos random graph. However, the size of the subgraphs one can consider depends on how fast $d$ ``grows'' compared to  $n$. More precisely, we have the following result.

\begin{thm}\label{thm:rgeom-impli}
Let $\Graph(d,n, p)$ be a random geometric graph on $n$ vertices satisfying Assumption \ref{ass:rgeom}. Then, we have,
\[\max_{s \le {\log \tau_n^{-1}}/{\log \gamma_p}- C_2' \log \log \tau_n^{-1}}\max_{{\sf H}_s\in \cG(s)} \left|\frac{n_{\Graph(n,d,p)}({\sf H}_s)}{\E(n_{\Graph(n, p)}({\sf H}_s))} \; \;- \;\; 1  \right| \ra 0, \text{ almost surely},\]
where
\[
\tau_n :=  \kappa_p\left[\sqrt{\frac{\max\{\log{n},\log{d}\}}{d}} + \frac{1}{n^2}\right],
\]
for some large positive constants $\kappa_p$ and $C_2'$.
\end{thm}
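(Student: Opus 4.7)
The plan is to apply Theorem~\ref{thm:main} to $\Graph(n,d,p)$ after verifying Assumptions {\bf (A1)} and {\bf (A2)} with deviation parameter $Cn\tau_n$ in place of $Cn^\delta$. Identifying $n^\delta$ with $n\tau_n$ gives $1-\delta=\log\tau_n^{-1}/\log n$, translating the cutoff $((1-\delta)\wedge\tfrac12)\log n/\log\gamma_p$ of Theorem~\ref{thm:main} into $\log\tau_n^{-1}/\log\gamma_p$ (with the $\wedge\tfrac12\log n/\log\gamma_p$ cap implicit). Since the conclusion is almost sure, the deviation bounds must hold with probability at least $1-n^{-c}$ for some $c>1$ so that Borel--Cantelli applies. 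Verification of {\bf (A1)} is immediate: by the rotational invariance of the uniform law on $\sS_{d-1}$, conditional on $\vX_v$ the indicators $\{\ind(\inp{\vX_v}{\vX_u}\ge t_{p,d})\}_{u\ne v}$ are i.i.d.\ $\mathrm{Bernoulli}(p)$, so Hoeffding's inequality and a union bound over $v\in[n]$ give $\max_v\bigl||\sN_v|-np\bigr|\le C\sqrt{n\log n}\le Cn\tau_n$ with probability at least $1-n^{-3}$.

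For {\bf (A2)}, write $W_{vv'}:=\inp{\vX_v}{\vX_{v'}}$ and let
\[
q(w):=\mathbb{P}\bigl(\inp{U}{\vX_v}\ge t_{p,d},\;\inp{U}{\vX_{v'}}\ge t_{p,d}\,\big|\,W_{vv'}=w\bigr),
\]
for $U$ uniform on $\sS_{d-1}$. By rotational symmetry $q$ depends only on $w$. Decompose
\[
|\sN_v\cap\sN_{v'}|-np^2 \;=\;\bigl[|\sN_v\cap\sN_{v'}|-(n-2)q(W_{vv'})\bigr]\;+\;\bigl[(n-2)q(W_{vv'})-np^2\bigr].
\]
Conditional on $(\vX_v,\vX_{v'})$, the first bracket is a centered sum of $(n-2)$ i.i.d.\ $\mathrm{Bernoulli}(q(W_{vv'}))$ variables and is $O(\sqrt{n\log n})$ by Hoeffding. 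For the second bracket I would establish two geometric facts: (i) a Lipschitz bound $|q(w)-q(0)|\le K(p)|w|$ uniform in $d$ for small $|w|$, via the Gaussian-limit picture in which $q(w)$ is essentially the bivariate normal orthant probability $\Phi_2(z_p,z_p;w)$, whose derivative at $w=0$ equals $\phi(z_p)^2$; and (ii) $|q(0)-p^2|=O(1/d)$, using the explicit joint density $\propto(1-u_1^2-u_2^2)^{(d-4)/2}$ of two coordinates of $U$ and a Gaussian comparison. Coupled with the sub-Gaussian concentration $\max_{v\ne v'}|W_{vv'}|\le C\sqrt{\max(\log n,\log d)/d}$ for inner products of independent uniform vectors on $\sS_{d-1}$, this bounds the second bracket by $O(n\tau_n)$. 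A union bound over the $\binom{n}{2}$ pairs then yields {\bf (A2)}.

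The principal technical obstacle is the uniform (in $d$) Lipschitz estimate on $q$ near $w=0$ together with the accompanying $O(1/d)$ bound on $|q(0)-p^2|$, both of which require careful comparison of projections of the uniform measure on $\sS_{d-1}$ with their Gaussian limits. The $\log d$ appearing inside $\tau_n$ is needed to absorb the error in these high-dimensional approximations in the regime $d\gg n$, and the auxiliary $1/n^2$ term handles the nearly-Erd\H{o}s--R\'enyi regime $d\gg n^2$ where the preceding geometric estimates become trivial. With these geometric estimates in hand, the remaining steps---conditional Hoeffding, sub-Gaussian concentration of spherical inner products, and invocation of Theorem~\ref{thm:main}---are routine.
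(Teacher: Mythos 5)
Your concentration arguments for Assumptions~{\bf (A1)} and~{\bf (A2)} match the paper's: the conditional-Binomial structure obtained by rotating so one point sits at a pole (giving a Bin$(n-1,p)$ degree, and a Bin$(n-2,q(W_{vv'}))$ co-degree after conditioning on the pair), combined with Chernoff/Hoeffding, is precisely what the paper does; your function $q(w)$ is the paper's $\tilde\gamma_n$, your $W_{vv'}$ its $\rho$, and your proposed Lipschitz-plus-Gaussian-comparison estimate of $|q(w)-p^2|$ corresponds to the paper's use of Willink's bivariate-normal orthant bound (their Lemma~\ref{lem:tail-bd-biv-norm}) and the $t_{p,d}\sqrt{d}\to\Phi^{-1}(1-p)$ asymptotics (their Lemma~\ref{lem:DGLU}). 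That part of the plan is sound.

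However, the final step—"invocation of Theorem~\ref{thm:main} is routine"—conceals a genuine gap. Theorem~\ref{thm:main} is stated and proved for a fixed $\delta\in(0,1)$, with a correction term $C_0'\log\log n$ that is also calibrated to a fixed $\delta$. Identifying $n^\delta$ with $n\tau_n$ forces an $n$-dependent $\delta_n=1-\log\tau_n^{-1}/\log n$, and when $d$ is only poly-logarithmic in $n$, $\delta_n\to 1$. Plugging $\delta_n$ into the formula of Theorem~\ref{thm:main} literally would yield a cutoff of $\log\tau_n^{-1}/\log\gamma_p - C_0'\log\log n$. But in the poly-logarithmic regime $\log\tau_n^{-1}=O(\log\log n)$, so that expression is negative for large $n$ and the conclusion is vacuous; whereas the theorem you are trying to prove has the sharper correction $C_2'\log\log\tau_n^{-1}$, which in that regime is of order $\log\log\log n$ and leaves a non-trivial range of $s$. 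To obtain $\log\log\tau_n^{-1}$ rather than $\log\log n$, one cannot black-box Theorem~\ref{thm:main}: the entire proof must be re-run with $\log(1/\tau_n)$ replacing $\log n$ in the definition of the good-set threshold $\vep_n$, in the Chebyshev exceedance bounds (Lemma~\ref{lem:good_B}), in the bad-set cardinality bound (Lemma~\ref{lem:Bad_bound_p}), and in the error recursion. That re-parameterization is the content of the paper's proof of this theorem and is what makes the constants work out; it is not automatic from the statement of Theorem~\ref{thm:main}. As a smaller point, the $1/n^2$ inside $\tau_n$ does not arise from "the nearly Erd\H{o}s--R\'enyi regime"; it comes from the probability that $\|\bm{Z}_v\|_2$ deviates from $\sqrt{d}$ by more than $\sqrt{8\log n}$ in the Gaussian representation, which enters additively into the estimate of the conditional edge probability.
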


\bigskip
\begin{rmk}
	Note that if $d$ is poly-logarithmic in $n$ then from Theorem \ref{thm:rgeom-impli} it follows that the number of induced isomorphic copies of subgraphs of size upto $O(\log \log n)$ are roughly same as that an \erdos random graph. Proof of Theorem \ref{thm:rgeom-impli} can be found in Section \ref{sec:potaotm}. As described in Remark \ref{rmk:exs-motifs} the question considered in this paper is refined in the sense that we want the number of large motifs in the model under consideration to closely match the corresponding number in an \erdos random graph; this second object grows exponentially in the size of the motif being considered. Thus small errors build up as will be seen in the proofs. In future work we will consider the problem described in  Remark \ref{rmk:exs-motifs} where one is only interested in the existence of large motifs. 
\end{rmk}

\subsection{\erdos random graphs conditioned on large cliques}
\label{sec:erdos-renyi-lc}
Finding maximal cliques in graphs is a fundamental problem in computational complexity. One natural direction of research is average case analysis; more precisely studying this problem in the context of the \erdos random graph. Here it is known that the maximal clique in $\Graph(n, 1/2)$ is $(2+o(1))\log_2 \hspace{-2pt} n$. Simple greedy algorithms have been formulated to find cliques of size $\log_2\hspace{-2pt} n$ in polynomial time but no algorithms are known for finding cliques closer to the maximal size of $(2-\eps)\log_2 \hspace{-2pt} n$ with $\eps$ small, in polynomial time. The situation is different when one places (hides) a clique of large size in the random graph, the so-called \emph{hidden clique} problem especially when the size of the hidden clique is of size $\kappa\sqrt{n}$ for some absolute constant $\kappa$. Here a number  of polynomial algorithms have been formulated to discover the maximal clique. For example  \cite{conjecture} proposed a spectral algorithm that finds a hidden clique of size $\sqrt{cn\log_2 n}$ in polynomial time. In \cite{montalgo} the authors devised an almost linear time non-spectral algorithm to find a clique of size $\sqrt{n/e}$. Dekel, Gurel-Gurevich, and Peres in \cite{pereslargeclique} describe the ``most important" open problem in this area as devising an algorithm (or proving the lack of existence thereof) that finds a hidden clique of size $o(\sqrt{n})$ in polynomial time. Motivated by these questions we investigate the following question,
\begin{quote}
	\emph{How does an \erdos random graph look like when the largest clique is of size ${n^{1/2-\vep}}$? }
\end{quote}
We check that the graph is strongly pseudo random graph even when the largest clique is of size  ${n^{1/2-\vep}}$ for some $\eps>0$.

\begin{thm}\label{lem:large_clique}
Let $\Graph(n,1/2)$ be the Ed\H{o}s-R\'{e}nyi random graph with connectivity probability $\f{1}{2}$. Fix $\delta \in (1/2,1)$, and define
\[
\sA_{\mathrm{d}}^\delta:= \left\{ \exists \,  \nu \in [n]: \left|\left|\sN_\nu\right| - \frac{n}{2}\right| \ge  Cn^{\delta}\right\},
\]
and
\[
\sA_{\mathrm{cod}}^\delta:=\left\{ \exists \,  \nu \ne \nu' \in [n]: \left|\left|\sN_\nu \cap \sN_{\nu'}\right| - \frac{n}{4}\right| \ge Cn^{\delta}\right\}.
\]
Let $\sC_r$ denote the event that the maximal clique size in $\Graph(n,1/2)$ is greater than or equal to $r$. If $r=c n^{\f{1}{2}-\vep}$, for some positive constants $c$, and $\vep$ such that $\delta+\vep>1$, then
\beq
\P\left(\sA_{\mathrm{d}}^\delta \cup \sA_{\mathrm{cod}}^\delta \Big| \sC_r \right) \ra 0,
\eeq
as $n \ra \infty$. In particular by Theorem \ref{thm:main}, the assertion in  \eqref{eq:thm_main_display} holds with high probability as $n\to\infty$ with $p=1/2$. 
\end{thm}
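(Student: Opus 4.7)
The plan is to bound the conditional probability via
\begin{equation*}
\P\bigl(\sA_{\mathrm{d}}^\delta \cup \sA_{\mathrm{cod}}^\delta \mid \sC_r\bigr) \;\leq\; \frac{\P(\sA_{\mathrm{d}}^\delta) + \P(\sA_{\mathrm{cod}}^\delta)}{\P(\sC_r)},
\end{equation*}
and to show that the numerator decays much faster than the denominator under the hypothesis $\delta + \eps > 1$. The two key inputs are a concentration bound for the numerator, exploiting that $\Graph(n,1/2)$ is a product measure, together with a trivial lower bound for the denominator that nevertheless turns out to be sharp enough.

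For the numerator, for any fixed $\nu \in [n]$ the degree $|\sN_\nu|$ is a sum of $n-1$ independent $\mathrm{Bernoulli}(1/2)$ random variables, so Hoeffding's inequality gives $\P(||\sN_\nu| - n/2| \geq Cn^\delta) \leq 2\exp(-c_1 n^{2\delta-1})$ for some $c_1 = c_1(C) > 0$ and all $n$ large. Similarly, for any pair $\nu \neq \nu' \in [n]$, the co-degree $|\sN_\nu \cap \sN_{\nu'}|$ is a sum of $n-2$ independent $\mathrm{Bernoulli}(1/4)$ variables, so the same type of Hoeffding bound applies. A union bound over all $n$ vertices and all $n(n-1)$ ordered pairs then gives
\begin{equation*}
\P(\sA_{\mathrm{d}}^\delta) + \P(\sA_{\mathrm{cod}}^\delta) \;\leq\; 4 n^2 \exp\bigl(-c_1 n^{2\delta-1}\bigr).
\end{equation*}

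For the denominator I intend to use the simplest possible lower bound: the event that the specific vertex set $\{1,\ldots,\lceil r\rceil\}$ forms a clique in $\Graph(n,1/2)$ is contained in $\sC_r$ and has probability $2^{-\binom{\lceil r\rceil}{2}}$. Using $r = c n^{1/2-\eps}$ this yields $\P(\sC_r) \geq \exp(-c_2 n^{1-2\eps})$ for an explicit $c_2 = c_2(c) > 0$. Substituting the two estimates, the ratio is at most $4 n^2 \exp(-c_1 n^{2\delta-1} + c_2 n^{1-2\eps})$. The hypothesis $\delta + \eps > 1$ is precisely equivalent to $2\delta - 1 > 1 - 2\eps$, whence the Hoeffding exponent $n^{2\delta-1}$ dominates $n^{1-2\eps}$, the overall exponent diverges to $-\infty$, and the ratio vanishes.

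The ``in particular'' statement is then automatic: the first conclusion says that, conditional on $\sC_r$, the deterministic assumptions \textbf{(A1)} and \textbf{(A2)} (with $p=1/2$ and the given $\delta$) hold with probability $1-o(1)$, and on any such realization Theorem \ref{thm:main} delivers \eqref{eq:thm_main_display}. There is no real obstacle in this argument; the only quantitative point that has to work out is that the elementary clique lower bound $2^{-\binom{r}{2}}$ gives exactly the $n^{1-2\eps}$ scaling, matched against the $n^{2\delta-1}$ Hoeffding scaling, which is why the threshold condition reads $\delta + \eps > 1$ and any weakening of it would force a more refined analysis of $\P(\sC_r)$.
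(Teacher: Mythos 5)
Your proof is correct, and it takes a genuinely simpler route for the denominator than the paper does. The decomposition into $\P(\sA_{\mathrm{d}}^\delta\cup\sA_{\mathrm{cod}}^\delta)/\P(\sC_r)$ and the Hoeffding plus union-bound treatment of the numerator are the same as in the paper. For the lower bound on $\P(\sC_r)$, however, the paper invokes a Stein--Chen Poisson approximation for the clique count (Proposition \ref{prop:bhj}): it verifies that the total-variation distance between $n_{\Graph(n,1/2)}({\sf C}_r)$ and a Poisson variable with mean $\mu=\binom{n}{r}2^{-\binom{r}{2}}$ is $o(1)$ --- a rather delicate second-moment computation over all overlap sizes $s$ --- before concluding $\P(\sC_r)\geq\mu/4$. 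Your argument bypasses all of that with the one-line observation that a single fixed $\lceil r\rceil$-set forms a clique with probability $2^{-\binom{\lceil r\rceil}{2}}$, giving $\P(\sC_r)\geq\exp(-c_2 n^{1-2\vep})$ directly. This is just as effective here: in its final estimate the paper discards the $\binom{n}{r}$ factor in $\mu$ anyway, and even retaining it would not relax the threshold, since $\log\binom{n}{r}\asymp n^{1/2-\vep}\log n$ is of lower order than $\binom{r}{2}\asymp n^{1-2\vep}$. The Poisson machinery would only become necessary for sharp two-sided asymptotics of $\P(\sC_r)$, which the theorem does not require, so your elementary bound buys a substantially shorter proof at no cost in the range of validity.
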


\begin{rmk}
	It would be interesting to further explore the implications of the above result and in particular see if this result negates the existence of polynomial time algorithms for finding hidden cliques in $\Graph(n,1/2)$ if the size of the hidden clique is $o(\sqrt{n})$. We defer this to future work. 
\end{rmk}
\begin{rmk}
For clarity of the presentation of the proof of Theorem \ref{lem:large_clique} we consider only $p=1/2$. We believe it can be extended for any $p \in (0,1)$. 
\end{rmk}

\subsection{Random $d$-regular graphs}\label{sec:rrg} 
For positive integers $n$ and $d :=d(n)$,  the random $d$-regular graph $\Graph(n,d)$ is a random graph which is chosen uniformly among all regular graphs on $n$ vertices of degree $d$. One key difference between $\Graph(n,d)$ and $\Graph(n,p)$ is that the edges in $\Graph(n,d)$ are not independent of each other.  However, using different techniques researchers have been able to study different properties of $\Graph(n,d)$. For more details, we refer the reader to \cite{bollobas, JLR}. Here we study the number of induced isomorphic copies of large subgraph in $\Graph(n,d)$. We obtain the following result. Before stating the result let us recall $a_n= \Theta(b_n)$ implies that theres exists constant $c_1$ and $c_2$ such that $c_1 a_n \le b_n \le c_2 a_n$, for all large values of $n$.

\begin{thm}\label{thm:rrd}
Let $\Graph(n,d)$ be a random regular graph with $d=\Theta(n)$. Then, there exists a positive constant such that
\beq\label{eq:thm_rrd_display}
\max_{s \le  \f{1}{2} \f{\log n}{\log \gamma_{d/n}}- C_3' \log \log n}\max_{{\sf H}_s \in \cG(s)}\left|\f{n_{\Graph(n,d)}({\sf H}_s)}{\frac{(n)_s}{|\mathrm{Aut}({\sf H}_s)|} \left(\f{d/n}{1-(d/n)}\right)^{|E({\sf H}_s)|} (1-\f{d}{n})^{{s \choose 2}}}-1 \right|  \ra 0, 
\eeq
as $n \ra \infty$, in probability.
\end{thm}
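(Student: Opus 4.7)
The plan is to reduce Theorem~\ref{thm:rrd} to a direct application of Theorem~\ref{thm:main} by verifying assumptions~\textbf{(A1)} and~\textbf{(A2)} for $\Graph(n,d)$ with $p := d/n$. Since $d = \Theta(n)$, the parameter $p$ is bounded away from $0$ and $1$, so $\gamma_p$ is a positive constant and the setting of Theorem~\ref{thm:main} applies.

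Assumption~\textbf{(A1)} is immediate: every vertex in $\Graph(n,d)$ has degree exactly $d = np$, so the left-hand side of~\textbf{(A1)} is identically zero, and the assumption holds with any $C > 0$ and any $\delta > 0$. The substantive work is to verify~\textbf{(A2)}, i.e.\ to show that with probability $1 - o(1)$,
\[
\max_{u \neq v \in [n]} \bigl| |\sN_u \cap \sN_v| - n p^2 \bigr| < C n^{\delta}
\]
for some $\delta$ that can be taken arbitrarily close to $1/2$ (so that $(1-\delta) \wedge \tfrac{1}{2}$ matches the leading coefficient $\tfrac{1}{2}$ appearing in \eqref{eq:thm_rrd_display}, after absorbing $o(\log n)$ error into $C_3'\log\log n$).

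For the co-degree concentration I would use the switching method of McKay (or equivalently an edge-exposure martingale argument on the configuration model, noting that in the regime $d = \Theta(n)$ the probability of simplicity is bounded below by a positive constant, so conditioning is harmless). For any fixed pair $u \neq v$, one has $\E[|\sN_u \cap \sN_v|] = np^2 + O(1)$, and standard switching or Azuma-type bounds yield a tail estimate of the form
\[
\P\bigl( \bigl| |\sN_u \cap \sN_v| - np^2 \bigr| > t \bigr) \;\leq\; 2 \exp\bigl(-c\, t^2/n\bigr),
\]
in the moderate-deviation regime, where $c = c(p) > 0$. Choosing $t = C n^{\delta}$ with $\delta$ fixed slightly above $1/2$ makes each tail probability at most $n^{-\omega(1)}$, and a union bound over the $\binom{n}{2}$ pairs delivers~\textbf{(A2)} with high probability.

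With \textbf{(A1)} and \textbf{(A2)} in hand, Theorem~\ref{thm:main} applies with parameter $p = d/n$ and the chosen $\delta$, yielding \eqref{eq:thm_rrd_display} for a suitable constant $C_3'$. The main obstacle is the co-degree concentration step: one needs a tail bound that is both uniform over pairs and sharp enough to take $\delta$ essentially equal to $1/2$, so that the resulting threshold in Theorem~\ref{thm:main} becomes $\tfrac{1}{2}\,\log n/\log\gamma_{d/n}$ up to the allowed $\log\log n$ correction. This is precisely where $d = \Theta(n)$ is essential: it guarantees both that $p$ is non-degenerate and that switching/configuration-model arguments produce sub-Gaussian concentration on the natural scale $\sqrt{n}$.
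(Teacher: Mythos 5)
Your overall strategy matches the paper's: reduce to Theorem~\ref{thm:main} with $p = d/n$, note that \textbf{(A1)} holds identically because $\Graph(n,d)$ is $d$-regular, and verify \textbf{(A2)} via co-degree concentration with $\delta$ arbitrarily close to $1/2$ so that the threshold becomes $\tfrac12\log n/\log\gamma_{d/n}$ after absorbing the slack into $C_3'\log\log n$. The paper takes the shortest route at the co-degree step: it simply cites Krivelevich, Sudakov, Vu, and Wormald (\cite[Theorem 2.1(i)]{KSVW}, stated in the paper as Theorem~\ref{thm:co-deg_rrg}), which gives $\max_{u\neq v}\bigl||\sN_u\cap\sN_v| - d^2/n\bigr| \le 6d\sqrt{\log n}/\sqrt{n}$ with probability tending to one, i.e.\ \textbf{(A2)} with $\delta_n = \tfrac12 + \tfrac12\tfrac{\log\log n}{\log n}$. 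Your proposal re-derives that concentration by switching; since the cited result itself rests on switching, this is the same argument in expanded form.

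There is, however, a genuine error in your parenthetical fallback. You assert that for $d = \Theta(n)$ the configuration model produces a simple graph with probability bounded below by a positive constant, so that conditioning on simplicity is harmless. This is false: the probability of simplicity in the $d$-regular configuration model decays to zero as soon as $d \to \infty$, and in the dense regime $d=\Theta(n)$ it is vanishingly small (super-exponentially so). Consequently any Azuma/edge-exposure martingale bound you establish for the random \emph{multigraph} does not transfer to the random simple regular graph by conditioning, and the entire fallback route collapses. This is exactly the reason the dense regular case requires direct switching arguments on the space of simple $d$-regular graphs, as in \cite{KSVW}, rather than configuration-model comparison. If you drop the configuration-model remark and rely only on switching (or, as the paper does, cite \cite[Theorem 2.1(i)]{KSVW}), the proof is correct and matches the paper's.
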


\bigskip
The proof of Theorem \ref{thm:rrd} is a direct application of Theorem \ref{thm:main} once we establish that $\Graph(n,d)$ satisfies conditions {\bf (A1)}-{\bf (A2)}. However, this is already known in the literature.
\begin{thm}[{\cite[Theorem 2.1(i)]{KSVW}}]
\label{thm:co-deg_rrg}
Let $\Graph(n,d)$ be a random regular graph with $d=\Theta(n)$. Then, 
\[
\P\left(\max_{v \ne v' \in [n]} {\left|\left|\sN_\nu \cap \sN_{\nu'}\right| - \f{d^2}{n}\right|} \le \f{6 d\sqrt{\log n}}{\sqrt{n}}\right) \ra 1, \text{ as } n \ra \infty.
\]
\end{thm}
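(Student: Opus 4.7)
I aim to show that in a uniform random $d$-regular graph on $[n]$ with $d=\Theta(n)$, every co-degree $X_{ij}:=|\sN_i\cap\sN_j|$ lies within $6d\sqrt{\log n/n}$ of $d^2/n$ with probability tending to $1$. The strategy has three steps: pin down the mean, establish exponential concentration for each pair via a switching argument, and union bound.

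\textbf{Step 1 (Mean).} By vertex-transitivity of the uniform measure on $d$-regular graphs, $\E X_{ij}$ does not depend on $(i,j)$. Double-counting paths of length two yields
\[
\sum_{i<j} X_{ij} \;=\; \sum_{k\in[n]} \binom{d}{2} \;=\; n\binom{d}{2},
\]
so $\E X_{ij}=\frac{d(d-1)}{n-1}=\frac{d^2}{n}+O(1)$, which is well inside the $6d\sqrt{\log n/n}$ tolerance.

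\textbf{Step 2 (Concentration via switchings).} I would appeal to the switching method of McKay and Wormald (see also the Kim--Vu line of work on concentration for random regular graphs). A \emph{switching} on a $d$-regular graph $G$ selects two edges $\{a,b\},\{c,d\}$ with $\{a,b,c,d\}$ distinct and $\{a,c\},\{b,d\}\notin E(G)$, and replaces them by $\{a,c\},\{b,d\}$; it preserves $d$-regularity and changes $X_{ij}$ by at most $2$. For fixed $i\ne j$, let $N_k$ be the number of $d$-regular graphs with $X_{ij}=k$. Counting, in two ways, all switchings whose net effect on $X_{ij}$ is $+1$ produces an identity of the form
\[
(k+1)\,N_{k+1}\cdot \#\{\text{reverse switchings per graph in }\mathcal F_{k+1}\} \;=\; (d-k)\,N_k\cdot\#\{\text{forward switchings per graph in }\mathcal F_k\},
\]
leading to a ratio $N_{k+1}/N_k$ that, up to $1+o(1)$, is an explicit rational function of $k,d,n$ with unique fixed point near $d^2/n$ and derivative bounded away from $1$. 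Consequently $(N_k)$ is log-concave in a window around its mode, and a standard Laplace-type summation yields the sub-Gaussian tail
\[
\P\bigl(|X_{ij}-d^2/n|>t\bigr)\;\leq\;2\exp\!\Bigl(-\tfrac{c\,n\,t^2}{d^2}\Bigr), \qquad 0<t\leq d^2/n,
\]
for an absolute constant $c>0$ in the regime $d=\Theta(n)$.

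\textbf{Step 3 (Union bound).} Setting $t=6d\sqrt{\log n/n}$ yields a bound of $2n^{-36c}$ per pair. Adjusting the numerical constant ``$6$'' to ensure $36c>2$, a union bound over the $\binom{n}{2}$ pairs gives $\P\bigl(\max_{i\ne j}|X_{ij}-d^2/n|>6d\sqrt{\log n/n}\bigr)\to 0$, which is the claim.

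\textbf{Main obstacle.} Step 2 is the heart of the proof. The challenge is to count forward and reverse switchings to $1+o(1)$ precision while handling forbidden configurations (switchings that would create multi-edges or self-loops, or that involve $i$ or $j$ in an unfavorable way). The density assumption $d=\Theta(n)$ is essential: both valid edges to delete and valid non-edges to insert are $\Theta(n^2)$ in number, so forbidden switchings contribute only a lower-order correction. Turning this heuristic into an estimate that is uniform in $k$ over a window of width $O(d\sqrt{\log n/n})$ around the mean is where the combinatorial bookkeeping becomes technically delicate.
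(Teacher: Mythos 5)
First, a point of comparison: the paper does not prove this statement at all. It is quoted verbatim as \cite[Theorem 2.1(i)]{KSVW} and used as a black box to verify assumption {\bf (A2)} for $\Graph(n,d)$, so there is no in-paper argument to match your proposal against. What you have written is therefore an attempt to reprove a cited external result, and it has to be judged on its own completeness.

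On that score there is a genuine gap. Step 1 is fine: exchangeability plus the double count $\sum_{i<j}|\sN_i\cap\sN_j|=n\binom{d}{2}$ gives $\E|\sN_i\cap\sN_j|=\frac{d(d-1)}{n-1}=\frac{d^2}{n}+O(1)$. But Step 2, which you correctly identify as the heart of the matter, is not a proof. The switching identity you display is not justified (the factor $(d-k)$ has no derivation, and "switchings whose net effect on $X_{ij}$ is $+1$" is not a well-defined class you have counted); the claims that $N_{k+1}/N_k$ is $1+o(1)$ times an explicit rational function, that $(N_k)$ is log-concave near its mode, and that a "Laplace-type summation" yields a sub-Gaussian tail are all asserted rather than derived, and each of these is itself a substantial piece of work (this is essentially the content of the McKay--Wormald/KSVW analysis, which in the dense regime $d=\Theta(n)$ requires controlling forbidden switchings uniformly over $k$, exactly the bookkeeping you defer). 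Moreover, even granting the tail bound $2\exp(-cnt^2/d^2)$ with an unspecified absolute constant $c$, your Step 3 does not prove the stated theorem: the constant $6$ is part of the statement, and your remark that one should "adjust the numerical constant $6$ to ensure $36c>2$" amounts to changing the claim rather than proving it. To get the theorem as written you must exhibit a concentration inequality with an explicit constant large enough that $t=6d\sqrt{\log n}/\sqrt{n}$ beats the union bound over $\binom{n}{2}$ pairs. For the purposes of this paper any absolute constant in place of $6$ would suffice (it only feeds into {\bf (A2)} with some $C$), but as a proof of the quoted theorem the argument is a program, not a demonstration.
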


\bigskip
Proof of Theorem \ref{thm:rrd} is immediate from Theorem \ref{thm:co-deg_rrg}.

\begin{rmk}
Note that  Theorem \ref{thm:co-deg_rrg} implies that {\bf (A1)}-{\bf (A2)} (assumption {\bf (A1)} is automatically satisfied as the graphs are $d$-regular) are satisfied with $\delta:=\delta_n= \f{1}{2}+\f{1}{2}\f{\log \log n}{\log n}$. One can absorb the quantity $\f{1}{2}\f{\log \log n}{\log n}$ in the constant $C_0'$ appearing in Theorem \ref{thm:main}, and can essentially assume that $\Graph(n,d)$ satisfies assumptions {\bf (A1)}-{\bf (A2)} with $\delta=\f{1}{2}$. 
\end{rmk}

\begin{rmk}\label{rmk:new_preprint_random_d}
Very recently, during the final stages of writing of this paper, Konstantin and Youssef \cite{KY_rrg} improved a previously existing bound on the second largest absolute value of the adjacency matrix of a random $d$-regular graph and showed that it is $O(\sqrt{d})$ with probability tending to one. Hence, using \cite{KY_rrg} one can re-derive Theorem \ref{thm:rrd} from \cite[Theorem 4.10]{prg}. 
\end{rmk}


\subsection{Optimality of Theorem \ref{thm:main}: Binary graph}
\label{sec:binary-graph}

Now as mentioned above we provide the example which shows that the result Theorem \ref{thm:main} is optimal. This corresponds to a graph on a vector space over the binary field. For any odd integer $k$ this graph $\Graph_n^b$  is a graph on $n$ vertices where $n=2^{k-1}-1$ and ``$b$'' is to emphasize ``binary'' which we now elucidate. The vertices of $\Graph_n^b$ are binary tuples of length $k$ with odd number of ones except the vector of all ones. Note that every vertex $v$ has also a vector representation. We draw an edge between two vertices $u$ and $v$ if and only if $\langle u , v \rangle =1$, where the multiplication and the addition are the corresponding operations in the binary field. Now we are ready to state the main result on $\Graph_n^b$.

\begin{thm}\label{thm:binary_graph}
Let $\Graph_n^b$ be the graph described above. It has the following properties:

\begin{enumeratei}
\item There exists an absolute constant $C_4'$ such that
\[\max_{s \le \f{1}{2}{\log_2\hspace{-2pt} n}- C_4' \log \log \hspace{-2pt}n}\max_{{\sf H}_s\in \cG(s)} \left|\frac{n_{\Graph_n^b}({\sf H}_s)}{\E(n_{\Graph(n, \f{1}{2})}({\sf H}_s))} \; \;- \;\; 1  \right| \ra 0,\]
as $n \ra \infty$. 

\item Let ${\sf I}_s$ denote the empty graph with vertex set $[s]$. Then for any $\eta >0$,
\[\max_{s \le (1-\eta){\log_2\hspace{-2pt} n}} \left|\frac{n_{\Graph_n^b}({\sf I}_s)}{\E(n_{\Graph(n, \f{1}{2})}({\sf I}_s))} \; \;- \;\; 1  \right| \ra 0,\]
as $n \ra \infty$. Moreover the size of the largest independent set in $\Graph_n^b$ is $\log_2(n+1) +1$. 

\item Recall that ${\sf C}_s$ is the complete graph with vertex set $[s]$. Then for any $\eta>0$, setting $\bar{s}:=(\f{1}{2}+\eta) \log_2\hspace{-2pt}n$ we have
\[
 \f{n_{\Graph_n^b}({\sf C}_{\bar{s}})}{\E(n_{\Graph(n, \f{1}{2})}({\sf C}_{\bar{s}}))} \ge \exp(-2/3) 2^{{\eta \log_2 \hspace{-2pt}n \choose 2}},
\]
for all large $n$.
\end{enumeratei}
\end{thm}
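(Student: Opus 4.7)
I would verify the pseudo-random assumptions (A1) and (A2) for $\Graph_n^b$ with $p = 1/2$ and $\delta$ arbitrarily small, and then invoke Theorem~\ref{thm:main}. The verification is a direct Fourier calculation in $\mathbb{F}_2^k$: writing $\ind\{\langle u, v\rangle = 1\} = \tfrac{1}{2}(1 - (-1)^{\langle u,v\rangle})$ and the analogous expression for odd weight, the counts $|\sN_v|$ and $|\sN_v \cap \sN_{v'}|$ expand into linear combinations of character sums $\sum_{u \in \mathbb{F}_2^k} (-1)^{\langle u, w\rangle}$, each of which vanishes unless $w = 0$. Since $v,v'$ are distinct odd-weight vectors different from $\mathbf{1}$, and since $k$ odd forces $v+v'$ to be even-weight (hence different from $\mathbf{1}$), all relevant labels $v,v',\mathbf{1},v+v',v+\mathbf{1},v'+\mathbf{1},v+v'+\mathbf{1}$ are nonzero. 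The sums then collapse to give $|\sN_v| = (n-3)/2$ exactly and $|\sN_v \cap \sN_{v'}| = (n+1)/4 + O(1)$, so (A1) and (A2) hold with $\delta$ arbitrarily close to $0$, and part~(i) is immediate from Theorem~\ref{thm:main}.

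\textbf{Part (ii).} An $s$-element induced copy of ${\sf I}_s$ corresponds to a set of $s$ pairwise orthogonal odd-weight vectors in $\mathbb{F}_2^k \setminus \{\mathbf{1}\}$. The Gram matrix of any such set is the identity, so the vectors are linearly independent; thus the maximum independent set has size at most $k = \log_2(n+1)+1$, with equality achieved by the standard basis $\{e_1,\ldots,e_k\}$. I count ordered tuples inductively: given valid $v_1, \ldots, v_{l-1}$, any odd-weight vector $v_l$ in their orthogonal complement avoids $\mathbf{1}$ automatically (otherwise $\mathbf{1} \in \mathrm{span}(v_1,\ldots,v_{l-1})$ would force $v_l \in \mathbf{1}^\perp$, i.e.\ even-weight) and avoids each previous $v_j$ (since $\langle v_j, v_j\rangle = 1 \ne 0$ excludes $v_j$ from its own orthogonal complement). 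This yields exactly $2^{k-l}$ choices at step $l$, and the product, divided by $s!$, matches $\binom{n}{s} 2^{-\binom{s}{2}} = \E[n_{\Graph(n,1/2)}({\sf I}_s)]$ up to a multiplicative factor $1 + O(n^{-\eta})$ uniformly in $s \le (1-\eta)\log_2 n$.

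\textbf{Part (iii).} For a clique $\{v_1, \ldots, v_{\bar s}\}$, the shifted vectors $w_l := v_1 + v_l$ ($l \ge 2$) form a pairwise orthogonal, self-orthogonal (equivalently, even-weight) set of distinct nonzero vectors in $v_1^\perp$, and their span $U$ is totally isotropic, so $\dim U \le m := (k-1)/2$. Since $\bar s - 1 > m$ under our hypothesis, we must have $\dim U = m$. A key structural lemma I would prove is that $v_1 + \mathbf{1}$ lies in \emph{every} maximal totally isotropic subspace of $v_1^\perp$: the restriction of the bilinear form to $\mathbf{1}^\perp \cap v_1^\perp$ has $\mathrm{span}(v_1+\mathbf{1})$ as its radical, and the radical of a symmetric form is contained in every maximal totally isotropic subspace. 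Ordered clique tuples can then be counted in two stages: Stage~I chooses $w_2, \ldots, w_{m+1}$ as a basis of $U$ with roughly $\prod_{l=2}^{m+1}(2^{k-l}-l)$ choices, and Stage~II chooses $w_{m+2}, \ldots, w_{\bar s}$ from $U \setminus \{0, v_1+\mathbf{1}, w_2, \ldots, w_{l-1}\}$ with roughly $\prod_{l=m+2}^{\bar s}(2^m-l)$ choices. Multiplying by $n$ for $v_1$, dividing by $\bar s!$, and dividing by $\binom{n}{\bar s}2^{-\binom{\bar s}{2}}$: substituting $\bar s = m + r$ with $r = \eta \log_2 n$ and $n \approx 2^{2m}$, the main-term exponent $\binom{m+1}{2} - m\bar s + \binom{\bar s}{2}$ collapses algebraically to $\binom{r}{2}$, yielding the advertised factor $2^{\binom{r}{2}}$.

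\textbf{Main obstacle.} The principal technical hurdle is in part~(iii): one must control the multiplicative corrections carefully enough that the prefactor is at least $\exp(-2/3)$ for all sufficiently large $n$. This entails bounding the $(1 - l/2^{k-l})$-type factors in Stage~I, the distinctness factors $(1 - l/2^m)$ in Stage~II, and the ratio $(n)_{\bar s}/n^{\bar s}$, as well as showing that early ``bifurcations'' (configurations in which $v_1 + \mathbf{1}$ enters $\mathrm{span}(w_2, \ldots, w_{l-1})$ before step $m+2$) contribute only lower-order terms to the total count.
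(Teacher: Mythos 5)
Your verification of Assumptions (A1)--(A2) in part (i) via character sums over $\mathbb{F}_2^k$ is correct and is a self-contained alternative to the paper's citation of known degree/co-degree identities; both give $|\sN_v| = 2^{k-2}-2$ and co-degrees $2^{k-3}-3$ (adjacent) or $2^{k-3}-1$ (non-adjacent), so (A1)--(A2) hold with $p=1/2$, $\delta=0$, $C=3$. Your part (ii) argument is essentially the paper's. One small imprecision: the count at step $l$ is $2^{k-l}$ only when $\mathbf{1} \notin \mathrm{span}(v_1,\ldots,v_{l-1})$; when $\mathbf{1}$ does enter the span (which can happen exactly when $l$ is even and the previous odd number of vectors sum to $\mathbf{1}$), the count is $0$, not $2^{k-l}$. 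The paper handles this with alternating $-1$ corrections whose product is $1+o(1)$; you hedge with ``$1+O(n^{-\eta})$'', which is correct in spirit, but ``exactly $2^{k-l}$'' should be weakened.

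\textbf{Part (iii) has a genuine gap.} Your parametrization is not the paper's: you anchor at a vertex $v_1$ and pass to the shifts $w_l = v_1+v_l$, which forces $w_l \in v_1^\perp \cap \mathbf{1}^\perp$ (codimension~$2$). The paper instead shifts every vertex by $\mathbf{1}$, working with $\bar v_j = v_j + \mathbf{1} \in \mathbf{1}^\perp$ (codimension~$1$), where adjacency becomes orthogonality and a clique is simply a set of pairwise orthogonal nonzero even-weight vectors. Your radical lemma for $v_1^\perp \cap \mathbf{1}^\perp$ is correct and elegant, but the codimension-$2$ ambient space is the source of a quantitative loss you cannot repair.

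Concretely, your Stage~I count ``$\prod_{l=2}^{m+1}(2^{k-l}-l)$'' is wrong: requiring $w_2,\ldots,w_{m+1}$ to be a basis means excluding all of $\mathrm{span}(w_2,\ldots,w_{l-1})$, which has $2^{l-2}$ elements (plus $v_1+\mathbf{1}$), not merely $l$. For $l$ near $m+1$ the span has $\approx 2^{m-1} \approx \sqrt{n}/4$ elements, vastly more than $l$. With the correct exclusion, the Stage~I factor at step $l$ is $2^{k-l}\bigl(1 - 2^{2l-k-2} - 2^{l-k}\bigr)$; reindexing $j = m+1-l$ gives corrections $\bigl(1 - 2^{-(2j+1)}\bigr)$ for $j=0,\ldots,m-1$. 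Since $\sum_{j\ge 0} 2^{-(2j+1)} = 2/3$ and $1-x < e^{-x}$ strictly, the prefactor satisfies
\[
\prod_{j\ge 0}\bigl(1 - 2^{-(2j+1)}\bigr) \;<\; e^{-2/3},
\]
numerically about $0.419 < 0.513$. By contrast, in the paper's codimension-$1$ parametrization the last Stage~I step has exclusion ratio $2^{t-1}/2^{t+1} = 1/4$ rather than $1/2$, giving $\prod_{j\ge1}(1-4^{-j}) \approx 0.689 > e^{-2/3}$. The deficit traces directly to the extra linear constraint $w_l \perp v_1$: every solution space is half the size of the paper's while the span exclusion is the same, doubling each exclusion fraction. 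No amount of care with lower-order corrections or ``bifurcations'' recovers the lost constant, because the leading factor is already $1/2 < e^{-2/3}$. To prove the theorem's stated inequality you must switch to the $\bar v = v + \mathbf{1}$ representation (anchoring at the fixed vector $\mathbf{1}$, not at a vertex), pick $\bar v_1,\ldots,\bar v_m$ as a basis of a maximal totally isotropic subspace of $\mathbf{1}^\perp$, and then fill in from the span.
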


Proof of Theorem \ref{thm:binary_graph}(i) is a direct application of Theorem \ref{thm:main}. We only need to verify that assumptions {\bf (A1)}-{\bf (A2)} are satisfied by the graph sequence $\{\Graph_n^b\}$ with $p=1/2$ and $\delta \le 1/2$. Theorem \ref{thm:binary_graph}(ii)-(iii) have deeper implications in the context of pseudo-random graphs. Theorem \ref{thm:binary_graph}(ii) shows that if in \eqref{eq:thm_main_display} we restrict ourselves to some subsets of $\cG(r)$ then  one may be able to improve the conclusion of Theorem \ref{thm:main}. Whereas, Theorem \ref{thm:binary_graph}(iii) shows that one cannot expect any such improvement to hold uniformly for all subgraphs in $\cG(r)$ beyond the $\f{1}{2}\log_2\hspace{-2pt}n$ barrier, as predicted by Theorem \ref{thm:main}. Therefore, Theorem \ref{thm:main} gives us an optimal result under the assumptions {\bf (A1)}-{\bf (A2)}. Theorem \ref{thm:binary_graph}(ii) further shows that even if we restrict our attention to some subset of $\cG(r)$ any improvement must not go beyond $r=\log_2\hspace{-2pt}n$.

\section{Discussion and related results}
\label{sec:disc}
In this Section we provide a wide-ranging discussion both of related work as well as future directions.  We begin our discussion with Thomason's Jumbled graphs.

\subsection{Jumbled graphs}
In the context of pseudo-random graphs results similar to \eqref{eq:thm_main_display} first appeared in the work of Thomason \cite{thom2}. To explain his result, let us define his notion of {Jumbled graph}. 
\begin{dfn}\label{dfn:jumbled}
A graph $\Graph_n$ is called $(p,\alpha)$-jumbled if for every subset of vertices $U \subset [n]$, one has
\beq\label{eq:jumbled}
\left| e(U) - p {|U| \choose 2}\right| \le \alpha |U|,
\eeq 
where $e(U)$ denotes the number of edges whose both end points are in $U$.
\end{dfn}

\medskip

In \cite[Theorem 2.10]{thom2} it was shown that for $(p,\alpha)$-jumbled graphs, with $p \le 1/2$, \eqref{eq:thm_main_display} holds whenever $s$ is such that $p^s n \gg 42 \alpha s^2$. In \cite{ES} it was established that $\alpha$ must be $\Omega(\sqrt{np})$ (recall $a_n=\Omega(b_n)$ implies that $\liminf_n a_n/b_n \ge c >0$). When $\alpha=\Theta(\sqrt{np})$, then we must have $p^s \sqrt{n} \gg 1$ for \eqref{eq:thm_main_display} to hold for $(p,\alpha)$-jumbled graphs. Note that, when $\delta, p \le 1/2$, under the same assumption on $s$ we establish Theorem \ref{thm:main}.
 
 Pseudo-random graphs satisfying assumptions {\bf (A1)}-{\bf (A2)} and jumbled graphs are not very far from each other. For example, if $\Graph_n$ satisfies {\bf (A1)}-{\bf (A2)}, then one can check that $\Graph_n$ is a $(p, O(n^{({1+\delta})/{2}}))$-jumbled graph (see \cite[Theorem 1.1]{thom2}). On the other hand, if $\Graph_n$ is a $(p, n^{\delta'})$-jumbled graph then all but few vertices satisfy assumption {\bf (A1)} for any $\delta >\delta'$ (see \cite[Lemma 2.1]{thom2}). So our assumptions {\bf (A1)}-{\bf (A2)} and the condition \eqref{eq:jumbled} are somewhat comparable. 
 
The advantage with {\bf (A1)}-{\bf (A2)} is that given any graph one can easily check those two conditions. On other hand, given any large graph, it is almost impossible to check the condition \eqref{eq:jumbled} for all $U \subset [n]$. For example, condition \eqref{eq:jumbled} would be very hard to establish for \abbr{ERGM}s, {random geometric graphs}, \erdos random graphs conditioned on the existence of a large clique, and even random $d$-regular graphs. We have already seen in Section \ref{sec:aotm} that for these graphs one can use Theorem \ref{thm:main} and deduce that the number of slowly growing subgraphs in those graphs are approximately same as that of an \erdos random graph with an appropriate edge connectivity probability.
 
 As mentioned above, if {\bf (A1)}-{\bf (A2)} are satisfied by graph $\Graph_n$, then it is also jumbled graph with certain parameters. Therefore. one can try to apply \cite[Theorem 2.10]{thom2} to obtain a result similar to ours. However, the application of \cite[Theorem 2.10]{thom2} yields a sub-optimal result in the following sense. Note that both random $d$-regular graphs and exponential random graph models satisfy assumptions {\bf (A1)}-{\bf (A2)} with $\delta=1/2$ (see Theorem \ref{thm:co-deg_rrg} and Theorem \ref{thm:ergm-cond}). Therefore, applying Theorem \ref{thm:main} one would obtain that the number of induced isomorphic copies of subgraphs of size $\f{1}{2}\log_2\hspace{-2pt}n$ (for ease of explanation, let us consider only the case $d/n=p^*=1/2$ in Theorem \ref{thm:rrd} and Theorem \ref{thm:ergm-impli}) are asymptotically same as that of an $\Graph(n,\f{1}{2})$. However, application of \cite[Theorem 1.1]{thom2} and \cite[Theorem 2.10]{thom2} implies that the same holds for $s \le \f{1}{4}\log_2\hspace{-2pt}n$. Therefore Theorem \ref{thm:main} improves the result of \cite{thom2}.
 
 We now turn our attention to the class of pseudo-random graphs defined by Chung, Graham, and Wilson \cite{cgw}.

\subsection{Quasi-random graphs}
One of our main motivations for this work was the notion introduced by Chung, Graham, and Wilson \cite{cgw}. We state the main theorem in their paper that relates various graph parameters and any of them can be taken as a definition of pseudo-randomness. Before stating that result recall that for a sequence of reals $\{a_n\}$ one writes $a_n=o(1)$ to imply that $a_n \ra 0$ as $n \ra \infty$. We only state parts of the main theorem of \cite{cgw} that are relevant in the context of our result. 
\begin{thm}[{\cite{cgw}}]
	\label{prgcgw}
Let $p \in(0,1)$ be fixed. Then for any graph sequence $\Graph_n$, the following are equivalent: 
\begin{itemize}

\item[$P_1(r)$:] For any  ${\sf H} \in \cG(r)$, where $r$ is any arbitrary fixed positive integer  
\begin{equation}\label{subgraphcopies}
n_{\Graph_n}^*({\sf H}) = ({1+o(1)}) n^r \left(\f{p}{1-p}\right)^{|E({\sf H})|}(1-p)^{{r \choose 2}},
\end{equation} 
where $n_{\Graph_n}^*({\sf H})$ denotes the number of labelled occurrences of the subgraph ${\sf H}$ in $\Graph_n$.

\item[$P_4$:]
For each subset of vertices $U \subset [n]$
\begin{equation}\label{edgedistnprg}
 e(U)=\f{p}{2}|U|^2+o(n^2).
 \end{equation}

\item[$P_7$:] 
\begin{equation}\label{avgcodegree}
\sum_{\nu\ne  \nu'=1}^n{\left|\left|\sN_\nu \cap \sN_{\nu'}\right| - p^2{n}\right|} = o(n^3).
\end{equation}

\end{itemize}
\end{thm}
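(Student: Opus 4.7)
The plan is to establish the cyclic chain of implications $P_1(r) \Rightarrow P_7 \Rightarrow P_4 \Rightarrow P_1(r)$, valid for every fixed $r \ge 4$; this is the classical Chung--Graham--Wilson route, whose core ingredient is that counting a few carefully chosen small subgraphs (the edge, the cherry, and the $4$-cycle) already encodes both the $P_4$-type global edge-distribution information and the $P_7$-type co-degree information.

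For $P_1(r) \Rightarrow P_7$, I would apply $P_1(r)$ to small pattern graphs ${\sf H}_0$ augmented with $r - |V({\sf H}_0)|$ isolated vertices (these contribute a factor $n^{r-|V({\sf H}_0)|}(1+o(1))$ to the labeled count). Using ${\sf H}_0 = P_2$ gives $\sum_{v \ne v'} |\sN_v \cap \sN_{v'}| = \sum_v \deg(v)(\deg(v)-1) = (1+o(1))p^2 n^3$. Using ${\sf H}_0 = C_4$, together with inclusion--exclusion to pass from induced to non-induced counts (applying $P_1$ also to the theta graph and $K_4$ on four vertices, whose edge-weight contributions combine to $p^4$), one obtains $\sum_{v \ne v'} |\sN_v \cap \sN_{v'}|^2 = (1+o(1)) p^4 n^4$. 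Combining these two identities yields $\sum_{v \ne v'} (|\sN_v \cap \sN_{v'}| - p^2 n)^2 = o(n^4)$, and Cauchy--Schwarz on the $n(n-1)$ terms then delivers $P_7$.

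For $P_7 \Rightarrow P_4$, fix $U \subset [n]$ and set $D_U(w) := |\sN_w \cap U| - p|U|$. A direct expansion yields $\sum_w |\sN_w \cap U|^2 = \sum_{v, v' \in U} |\sN_v \cap \sN_{v'}|$, which by $P_7$ equals $p^2 n |U|^2 + o(n^3)$. Combined with the degree regularity $\sum_v (\deg(v) - pn)^2 = o(n^3)$ that $P_7$ implies in the special case $U = [n]$, this gives $\sum_w D_U(w)^2 = o(n^3)$. Applying Cauchy--Schwarz to the restricted sum over $w \in U$,
\[
\Big|\sum_{w \in U} D_U(w)\Big|^2 \le |U| \sum_{w \in U} D_U(w)^2 \le n \cdot o(n^3) = o(n^4),
\]
and noting that $\sum_{w \in U} |\sN_w \cap U| = 2 e(U)$, we conclude $2e(U) = p|U|^2 + o(n^2)$, which is $P_4$.

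For $P_4 \Rightarrow P_1(r)$, I would invoke the standard graph counting lemma. For fixed ${\sf H} \in \cG(r)$, the labeled count of induced copies of ${\sf H}$ is
\[
\sum_{(v_1, \ldots, v_r)} \prod_{\{i,j\} \in E({\sf H})} A_{v_i v_j} \prod_{\{i,j\} \notin E({\sf H})} (1 - A_{v_i v_j}),
\]
the sum being over ordered $r$-tuples of distinct vertices. Writing each $A_{v_i v_j} = p + (A_{v_i v_j} - p)$ and expanding, the ``pure $p$'' term contributes exactly $(n)_r \cdot p^{|E({\sf H})|} (1-p)^{\binom{r}{2} - |E({\sf H})|}$, while every other term contains at least one discrepancy factor $A_{uv} - p$. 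Invoking $P_4$ on the subsets picked out by the partial embedding bounds each such term by $o(n^r)$, and since $r$ is fixed, the bounded number of expansion terms keeps the total error $o(n^r)$, yielding $P_1(r)$. The hard part will be Step 2: the Cauchy--Schwarz argument loses a $\sqrt{n}$ factor, so one must verify that the $o(n^3)$ bound on $\sum_w D_U(w)^2$ is genuinely uniform in $U$; a cleaner but less elementary alternative is the spectral route, where $P_7$ forces the operator norm $\|A - pJ\|_{\mathrm{op}} = o(n)$, whence $|\mathbf{1}_U^T (A - pJ)\mathbf{1}_U| \le \|A - pJ\|_{\mathrm{op}} |U| = o(n^2)$ gives $P_4$ at once.
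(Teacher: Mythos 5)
Note first that the paper does not supply its own proof of this statement: it is quoted from Chung, Graham and Wilson~\cite{cgw}, with the remark that the $p=\tfrac12$ argument ``easily adapts'' to general $p$. Your cyclic chain $P_1(r)\Rightarrow P_7\Rightarrow P_4\Rightarrow P_1(r)$ for $r\ge4$ is the classical CGW route, and the $P_1(r)\Rightarrow P_7$ and $P_4\Rightarrow P_1(r)$ legs are sound in outline. The genuine gap is in the step $P_7\Rightarrow P_4$, and it lives exactly where you flag ``the hard part,'' though not for the reason you name (uniformity in $U$).

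You assert that the degree regularity $\sum_v(\deg(v)-pn)^2=o(n^3)$ ``follows from $P_7$ in the special case $U=[n]$.'' It does not: that special case delivers only the second--moment estimate $\sum_v\deg(v)^2=p^2n^3+o(n^3)$. Passing to $\sum_v(\deg(v)-pn)^2=o(n^3)$ also requires the first--moment (edge--count) estimate $\sum_v\deg(v)=2|E|=pn^2+o(n^2)$, and $P_7$ together with Cauchy--Schwarz yields only the one--sided bound $2|E|\le pn^2(1+o(1))$; a matching lower bound is a separate, non--trivial claim. Your ``cleaner spectral alternative'' carries exactly the same lacuna: for $u\ne v$ one computes
\[
\bigl((A-pJ)^2\bigr)_{uv} \;=\; \bigl(|\sN_u\cap\sN_v|-p^2n\bigr) \;-\; p\bigl(\deg(u)-pn\bigr)\;-\;p\bigl(\deg(v)-pn\bigr),
\]
so bounding $\norm{A-pJ}_{\mathrm{op}}$ via $P_7$ already presupposes control of the degree deviations. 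The missing piece can be supplied, but it takes real work. For instance, $P_7$ gives $\sum_{u\ne v}(|\sN_u\cap\sN_v|-p^2n)^2\le n\sum_{u\ne v}\bigl||\sN_u\cap\sN_v|-p^2n\bigr|=o(n^4)$, hence both $\mathrm{tr}(A^4)=p^4n^4+o(n^4)$ and $\norm{A^2-D-p^2n(J-I)}_F=o(n^2)$ with $D=\mathrm{diag}(\deg)$; the latter pins $\lambda_1(A)^2=\lambda_{\max}(A^2)=p^2n^2(1+o(1))$, the former then forces $\max_{i\ge2}|\lambda_i(A)|=o(n)$, and the identity $\mathbf{1}^{T}A^2\mathbf{1}=\sum_v\deg(v)^2=p^2n^3+o(n^3)$ forces $\mathbf{1}$ to align up to $o(1)$ with the top eigenvector, whence $2|E|=\mathbf{1}^{T}A\mathbf{1}=pn^2(1+o(1))$. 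Only after this does your $U=[n]$ computation deliver degree regularity and the rest of $P_7\Rightarrow P_4$ go through as you describe; it is not a triviality hidden in the special case.
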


\bigskip
Although \cite[Theorem 1]{cgw} was proved only for $p=1/2$ one can easily adapt their technieues to easily extend the result for any $p \in (0,1)$.  In  this paper one of our main aims is to understand the role of \emph{concentration inequalities} and rates of convergence in the above conditions. That is, if we specify the rates of convergence in \eqref{edgedistnprg} and/or in \eqref{avgcodegree}, then a natural question is whether one can accommodate a slowly growing $r$ in \eqref{subgraphcopies}. Motivated by this observation we began this work. Note that our assumptions {\bf (A1)}-{\bf (A2)} are similar to $P_7$ above. 

Investigating the proof of Theorem \ref{thm:main} one can easily convince oneself that we do not need the maximum over all vertices (and pair of vertices) in {\bf (A1)}-{\bf (A2)}. The proof goes through as long as the assertions {\bf (A1)}-{\bf (A2)} hold on the average. Indeed, using  Markov's inequality, and increasing $\delta$ by some multiple of $\f{\log n}{\log \log n}$, one can establish that {\bf (A1)}-{\bf (A2)} holds for all but few vertices. This does not change the conclusion Theorem \ref{thm:main} and it only increases the constant $C_0'$. For clarity of presentation we do not pursue this generalization and work with the assumption that {\bf (A1)}-{\bf (A2)} holds for all vertices (and pair of vertices).
 
In the context of quasi-random graphs attempts have been made to prove that \eqref{subgraphcopies} holds even one allows $r$ to go to infinity. In \cite{rootlogn} Chung and Graham showed that if a subgraph ${\sf H}_s$ is not an induced subgraph of $\Graph_n$ then one must have a set $S$ of size $n/2$ such that $e(S)$ deviates from $\f{n^2}{16}$ by at least $2^{-2(s^2+ 27)n^2}$. This implies that in a quasi-random graph $\Graph_n$ at least one induced copy of every subgraph of size $O(\sqrt{\log n})$ must be present. 

One may also want to check upon plugging in the rates of convergence from {\bf (A1)}-{\bf (A2)} and following the steps in the proof of \cite[Theorem 1]{cgw} whether it is possible to show  \eqref{subgraphcopies} for subgraphs of growing size. It can indeed be done. However, the arguments break down at $r=O(\sqrt{ \log n})$. Note that this is much weaker than Theorem \ref{thm:main}. To understand why the argument stops at $O(\sqrt{\log n})$ we need discuss the key idea behind the proof of \cite[Theorem 1]{cgw} which is deferred to Section \ref{sec:proof_outline}.

Next we direct our attention to Alon's $(n,d,\lambda)$ graph.

\subsection{Alon's $(n,d,\lambda)$ graphs} 
A graph is called an $(n,d,\lambda)$-graph if it is a $d$-regular graph on $n$ vertices whose second largest absolute eigenvalue is $\lambda$. A number of properties of such graphs have been studied (see \cite{prg} and the references therein). For such graphs the number of induced isomorphic copies of large subgraphs is also well understood (see \cite[Section 4.4]{prg}). From \cite[Theorem 4.1]{prg} it follows that $(n,d,\lambda)$-graphs contain the correct number of cliques (and independent sets) of size $s$ if $s$ satisfies the condition $n \gg \lambda (n/d)^s$. Thus, to apply \cite[Theorem 4.1]{prg} one needs a good bound on the second largest absolute eigenvalue $\lambda$. For nice graph ensembles it is believed that one should have $\lambda=\Theta(\sqrt{d})$. However, this has been established only in few examples. For example, when $d=\Theta(n)$ for a random $d$-regular 
this has been established very recently in \cite{KY_rrg}. 
Any bound on $\lambda$ of the form $O(d^{\f{1}{2}+\eta})$, for some $\eta >0$, yield a sub-optimal  result when applied to \cite[Theorem 4.10]{prg}. 
On the other hand, our method being a non-spectral technique, does not require any bound on $\lambda$ and the conditions {\bf (A1)}-{\bf (A2)} are much easier to check.

\subsection{Optimality, limitations and future directions}

We have already seen that Theorem \ref{thm:main} is optimal in the sense that one cannot improve the conclusion of Theorem \ref{thm:main} without adding further assumptions. 
One can consider two possible directions for improvements. The first of those is to have more conditions on the graph sequence. For example, one may assume controls on the number of common neighbors of three and four vertices. This indeed improves the conclusion of Theorem \ref{thm:main} when $\delta \le 1/2$. This is seen in Theorem \ref{thm:main_improve}.  

Another direction we are currently pursuing is extending Theorem \ref{thm:main} to the setting of models which incorporate \emph{community structure}. Here the comparative model is not the \erdos random graph but the {\em stochastic block model}.  Community  detection and clustering on networks have spawned an enormous literature over the last decade, see the survey \cite{fortunato2010community} and the references therein. One can easily extend the ideas of Theorem \ref{thm:main} to count the number of induced isomorphic copies of large subgraphs in a stochastic block model. To keep the clarity of the presentation of the current work we defer this generalization to a separate paper.

\section{Notation and Proof Outline}
\label{sec:proof_outline}
In this section our goal is to discuss the idea behind the proof of our main result. For clarity of presentation we consider the sub-case $p=1/2$ separately. That is we establish the following result first. 
\begin{prop}\label{prop:main}
Let $\{\Graph_n\}_{n \in \N}$ be a sequence of graphs satisfying the following two conditions:
\beq
\sup_{1 \leq \nu \leq n}{\left|\left|\sN_\nu\right| - \f{n}{2}\right|} < Cn^{\delta}, \label{eq:A1_1/2}
\eeq
\beq
\sup_{1\leq \nu \ne \nu' \leq n}{\left|\left|\sN_\nu \cap \sN_{\nu'}\right| - \f{n}{4}\right|} < Cn^{\delta}. \label{eq:A2_1/2}
\eeq
There exists a positive constant $C_0$, such that
\beq
\max_{s \le ((1-\delta)\wedge \f{1}{2}) \log_{2} \hspace{-2pt}n - C_0' \log_2 \log_2 \hspace{-2pt}n}\max_{{\sf H}_s \in \cG(s)}\left|\f{n_\Graph({\sf H}_s)}{\frac{(n)_s}{|\mathrm{Aut}({\sf H}_s)|} \left(\f{1}{2}\right)^{{s \choose 2}}}-1 \right|  \ra 0,
\eeq
as $n \ra \infty$.
\end{prop}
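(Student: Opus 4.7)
The natural approach is inclusion--exclusion over non-edges followed by a careful analysis of centered moment sums. I would set $B_{uv} := A_{uv} - \tfrac12$, so that $B_{uv} \in \{\pm \tfrac12\}$ on off-diagonal entries (with $B_{uu}=-\tfrac12$). Writing the induced-subgraph indicator for a distinct tuple $(v_1,\ldots,v_s)$ as $\prod_{(i,j)\in E({\sf H}_s)} A_{v_iv_j} \prod_{(i,j)\notin E({\sf H}_s)} (1-A_{v_iv_j})$ and expanding each factor as $\tfrac12 \pm B_{v_iv_j}$ yields
\[
n_\Graph({\sf H}_s) \cdot |\mathrm{Aut}({\sf H}_s)| = \sum_{F \subseteq E({\sf K}_s)} \left(\tfrac12\right)^{\binom{s}{2}-|F|} \varepsilon_F\, S_F,
\]
where $\varepsilon_F \in \{\pm 1\}$ depends only on $E({\sf H}_s)$ and $S_F := \sum_{(v_1,\ldots,v_s)\text{ distinct}} \prod_{(i,j)\in F} B_{v_iv_j}$. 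The $F=\eset$ term produces exactly the target main quantity $(n)_s (1/2)^{\binom{s}{2}}$, so the problem reduces to showing that $\sum_{F \neq \eset} 2^{|F|}|S_F| = o((n)_s)$.

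The core technical work is to bound $|S_F|$. After factoring the trivial $n^{s-|V(F)|}$ contribution of the vertices of $[s]$ not touching $F$, one is left with $T_F := \sum_{\text{distinct}} \prod_{(i,j)\in F} B_{v_iv_j}$ supported on $V(F)$. I would bound $T_F$ by \emph{iteratively peeling vertices of $F$}. A degree-$1$ vertex $i$ with unique neighbour $j$ contributes $\sum_{v_i} B_{v_iv_j} = O(n^\delta)$ by assumption {\bf (A1)}; a degree-$2$ vertex $i$ with neighbours $j,k$ contributes $\sum_{v_i} B_{v_iv_j}B_{v_iv_k} = |\mathcal{N}_{v_j}\cap \mathcal{N}_{v_k}| - \tfrac12(|\mathcal{N}_{v_j}|+|\mathcal{N}_{v_k}|) + n/4 = O(n^\delta)$ on combining {\bf (A1)} and {\bf (A2)}. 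Each such elimination costs one vertex factor of $n$ but gains only $n^\delta$, so for graphs $F$ whose vertices all have degree $\leq 2$ (disjoint unions of paths and cycles) the clean bound $|T_F| \leq (Cn^\delta)^{|F|}\, n^{|V(F)|-c(F)}$ follows for an exponent $c(F) \geq 1$ determined by the peeling order, and distinctness corrections lose only a factor $1+O(s^2/n)$.

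The principal obstacle is graphs $F$ containing a vertex of degree $\geq 3$, for which neither {\bf (A1)} nor {\bf (A2)} directly controls $\sum_{v_i}\prod_{\ell=1}^k B_{v_iv_{j_\ell}}$. Here I would expand $\prod_{\ell}(A_{v_iv_{j_\ell}}-\tfrac12)$ into an alternating sum of common-neighbourhood sizes $|\bigcap_{\ell\in S}\mathcal{N}_{v_{j_\ell}}|$ and argue by a second-moment method. A direct computation using only pairwise codegree control yields mean $n/2^k + O(n^\delta)$ and variance $O(n/2^k)$ for the random codegree $|\bigcap_\ell \mathcal{N}_{v_{j_\ell}}|$ over a uniformly random tuple $(v_{j_1},\ldots,v_{j_k})$, hence typical fluctuation $O(\sqrt{n}/2^{k/2})$. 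This is $o(n/2^k)$ precisely when $2^k = o(n)$, which is the source of the $\tfrac12\log_2 n$ threshold. Chebyshev's inequality then controls the atypical tuples, while the $(1-\delta)\log_2 n$ threshold arises independently from needing the accumulated weight $2^{|F|} n^{(\delta-1)|F|}$ to remain $o(1)$ when summed over $F$. The hardest aspect of executing this plan will be running the peeling and the second-moment argument uniformly across the $2^{\binom{s}{2}}$ choices of $F$ so that the $2^{|F|}$ combinatorial weight is absorbed with no loss; this is precisely the role of the polylogarithmic slack $C_0'\log_2\log_2 n$ in the range of $s$.
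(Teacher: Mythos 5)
Your proposal is a genuinely different route from the paper's. The paper proves Proposition~\ref{prop:main} by \emph{induction on the subgraph size} $r$: a recursive error estimate (Lemma~\ref{lem:error_inductive_bd}) relates $\cE_n(r+1)$ to $\cE_n(r)$, and the crucial Lemma~\ref{lem:Bad_bound} bounds the set of ``bad'' $r$-tuples \emph{as a fraction of $|\cH_r| = n_\Graph({\sf H}_r)$}, which it achieves only under the bootstrap assumptions $n_\Graph({\sf H}_j)\le 2(n)_j/(|\mathrm{Aut}({\sf H}_j)|2^{\binom{j}{2}})$ for $j<r$ and $n_\Graph({\sf H}_r)\ge \tfrac12(n)_r/(|\mathrm{Aut}({\sf H}_r)|2^{\binom{r}{2}})$. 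Your plan --- expanding $A_{uv}=\tfrac12+B_{uv}$ and summing over all $F\subseteq E({\sf K}_s)$ --- is a one-shot decomposition with no such bootstrap, and it is closer in spirit to the Chung--Graham--Wilson and Thomason arguments that the paper explicitly says (end of Section~\ref{sec:disc} and Section~\ref{sec:proof_outline}) stall out at $r=O(\sqrt{\log n})$.

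The gap is concretely in the degree-$\ge 3$ case, which you rightly call the principal obstacle but do not close. Peeling a vertex $i$ of degree $k\ge 3$ produces $g(\ul v)=\sum_{v_i}\prod_\ell B_{v_iv_{j_\ell}}$, which {\bf (A1)}--{\bf (A2)} control only through its second moment over a uniform tuple, $\E[g^2]\approx n/4^k$. Applying Cauchy--Schwarz to the remaining sum gives
\[
|T_F|\;\le\;\Bigl(\textstyle\sum W^2\Bigr)^{1/2}\Bigl(\textstyle\sum g^2\Bigr)^{1/2}\;\approx\;2^{-|F|}\,n^{|V(F)|-1/2},
\]
a gain of only $n^{-1/2}$ over the trivial bound --- and this gain is \emph{uniform in $F$}, not per edge, because the factor $\sum W^2 = 4^{-(|F|-k)}(n)_{|V(F)|-1}$ has no cancellation left to exploit. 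A uniform $n^{-1/2}$ gain cannot beat the combinatorial weight $\sum_{F\ne\eset}$ over $\sim 2^{\binom{s}{2}}$ choices unless $2^{\binom{s}{2}}\lesssim\sqrt n$, i.e.\ $s=O(\sqrt{\log n})$. Your leaf-peeling of degree-$\le 2$ vertices does compound, gaining $O(n^{\delta-1})$ per edge, which is why that piece is fine; but any $F$ with a nonempty $3$-core (e.g.\ $F\supseteq K_4$, of which there are exponentially many once $|F|$ is moderately large) cannot be reduced by leaf-peeling, and for those the CS step stalls at $n^{-1/2}$. Nothing in {\bf (A1)}--{\bf (A2)} by itself gives a pointwise estimate on $|\bigcap_{\ell\in S}\sN_{v_{j_\ell}}|$ for $|S|\ge 3$, which is exactly what you would need for the gain to compound; that is the point of the additional assumptions {\bf (A3)}--{\bf (A4)} in the paper's Theorem~\ref{thm:main_improve}.

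What the paper's bootstrap buys is the replacement of the unrestricted sum $\sum_{\ul v\in[n]^r}$ by the restricted sum $\sum_{\ul v\in\cH_r}$ in the key variance-type bound (Lemma~\ref{lem:var_type_bd}). The bad tuples of $\cH_r$ are then a $(\log_2 n)^{-9(r-j)}$ fraction of $|\cH_r|$ (Lemma~\ref{lem:Bad_bound}), giving a per-level relative error of $(\log_2 n)^{-3}$ that accumulates to $o(1)$ over $r\le\tfrac12\log_2 n$ levels. Your decomposition has no analogue of this restricted, inductively controlled second moment; as written, the ``polylogarithmic slack'' you invoke cannot absorb a $2^{\binom{s}{2}}$-vs.-$\sqrt{n}$ mismatch. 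To repair the argument you would either have to reintroduce the restriction to $\cH_s$ together with an a priori bound on $|\cH_j|$ for $j<s$ --- which brings you back to the paper's induction --- or assume control of higher-order codegrees, which the proposition does not grant you.
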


\medskip
Once we prove Proposition \ref{prop:main} the proof of Theorem \ref{thm:main} is an easy adaptation. Proof of Proposition can be found in Section \ref{sec:proof_prop} and the proof of Theorem \ref{thm:main} is deferred to Section \ref{sec:proof_thm}. This section is devoted to outline the ideas of the proof of Theorem \ref{thm:main}. For clarity once again we focus on the sub-case $p=1/2$. 

For any graph $\sf{H}_r$ on $r$ vertices, let us define
\beq
\cE_n({\sf{H}}_r):=\left|\f{n_\Graph({\sf H}_r)}{\f{(n)_r}{|\mathrm{Aut}(\sf{H_r})|}} - \left(\f{1}{2}\right)^{{r \choose 2}}\right|,
\eeq
and let $\cE_n(r):=\max_{{\sf H}_r \in \cG(r)}\cE_n({\sf H}_r)$.

Note that, in order to prove Proposition \ref{prop:main}, it is enough to show that $2^{{r \choose 2}}\cE_n(r)\ra 0$ as $n \ra \infty$, uniformly for all graphs $\sf{H}_r \in \cG(r)$ and all $r \le ((1-\delta) \wedge \f{1}{2}) \log_2\hspace{-2pt}n - C_0' \log_2 \log_2\hspace{-2pt}n$, where $C_0'$ is some finite positive constant. If $\Graph_n$ satisfies assumptions \eqref{eq:A1_1/2}-\eqref{eq:A2_1/2}, then one can easily check that the required result holds for $r=2$. So, one natural approach to extend the same for larger values of $r$ is to use an induction-type argument. The key to such an argument is a recursive estimate of the errors $\cE_n(r)$ (see Lemma \ref{lem:error_inductive_bd}). 

Therefore the main idea behind the proof of Proposition \ref{prop:main} lies in obtaining a bound on $\cE_n(r+1)$ in terms of $\cE_n(r)$. To explain the idea more elaborately let us first introduce few notation. Our first definition will be a notion about generalized neighborhood.

\begin{dfn}[Generalized neighborhood]\label{nbrdfn}
Let $(a_{\nu \nu'})_{\nu,\nu'=1}^n$ denote the adjacency matrix of the graph $\Graph_n$. 
\begin{enumeratea}
\item For any $\xi \in \{0,1\}$, and $v \in [n]$ define 
\[
\sN_v^\xi:=\left\{v' \in [n]: a_{vv'}=\xi\right\}.
\]
That is, for $\xi=1$ and $0$ the $\sN_v^\xi$ denote the collection of neighbors and non-neighbors of $v$ respectively. By a slight abuse of notation $\sN_v^\xi$ will also denote cardinality of the set in context. For later use, we also denote $a_{uv}^\xi:= \bI(a_{uv}=\xi)$.
\item For any set of vertices $B \subset [n]$ we denote $\sN_\nu^{B,\xi} := \sN_\nu^\xi \cap B$. That is, $\sN_\nu^{B,\xi}$ denotes the collection of neighbors (non-neighbors, if $\xi=0$) of $\nu$ in $B$. Further, for ease of writing we will continue to use the notation $\sN_\nu^{B,\xi}$ for the cardinality of the set in context. 	
\end{enumeratea} 
  
\end{dfn}

The parameter $\xi$ in Definition \ref{nbrdfn} is required to consider all subgraphs of any given size. For example, if we are interested in finding the number of cliques in $\Graph_n$ then it is enough to consider $\xi=1$, whereas for independent sets we will need $\xi=0$.

With the help of this definition let us now again continue explaining the idea of the proof.  For ease of explanation let us assume that we are interested in proving  the main result only for cliques. Suppose we already have the desired conclusion for $\cE_n({\sf{C}_k})$. Note that a collection of $(k+1)$ vertices forms a clique of size $(k+1)$ if and only if any $k$ of them form a clique of size $k$, and the $(k+1)$-th vertex is the common neighbor of the first $k$ vertices. We use this simple idea to propagate our estimate in the error $\cE_n(\cdot)$.

If we have an Erd\H{o}s-R\'{e}yni random graph with edge connectivity probability $1/2$, then we see that given any $r$ vertices the expected number of common neighbors of those $r$ vertices is $n/2^r$. This implies that the number of cliques of size $(r+1)$ is approximately same with the number of the cliques of size of $r$ multiplied by $n/2^r$. 

To formalize this idea for graphs satisfying \eqref{eq:A1_1/2}-\eqref{eq:A2_1/2} we need to show that for any collection of vertices $v_1,v_2,\ldots,v_r\in [n]$ and $\xi_1,\xi_2,\ldots,\xi_r \in \{0,1\}^r$ we must have that $|\sN_{v_1}^{\xi_1} \cap \sN_{v_2}^{\xi_2} \cap \cdots \sN_{v_r}^{\xi_r} | \approx \f{n}{2^r}$. For ease of writing we have the next definition.

\begin{dfn}\label{dfn:f_r_bar_xi}
For $\ul{\nu}:=\{\nu_1,\nu_2,\ldots,\nu_r\}$ and $\ul{\xi}:=\{\xi_1,\xi_2,\ldots,\xi_r\} \in \{0,1\}^r$, let us denote
\beq
{f_r(\underline{\nu},\ul{\xi})} := {\left|\sN_{\nu_1}^{\xi_1} \cap \sN_{\nu_2}^{\xi_2} \cap \cdots \cap \sN_{\nu_r}^{\xi_r}\right|}.\notag
\eeq
Further denote 
\begin{equation}\label{avgno}
\bar{f}_{r,\ul{\xi}}:= \f{1}{(n)_r} \sum_{\ul{\nu}} f_r(\ul{\nu},\ul{\xi}).
\end{equation}
\end{dfn}

\medskip

Equipped with the above definition, as noted earlier, our goal is to show that $f_r(\ul{v},\ul{\xi}) \approx \f{n}{2^r}$ for graphs satisfying \eqref{eq:A1_1/2}-\eqref{eq:A2_1/2}. However, such an estimate is not feasible for all choices of vertices $v_1,v_2,\ldots, v_r$. Instead we show that such an estimate can be obtained for most of the vertices which we call ``good'' vertices, whereas the vertices for which the above does not hold are termed as ``bad'' vertices. More precisely we have the following definition.
\begin{dfn}\label{dfn:good_dfn}
Fix $\vep >0$. For any given set $B \subset [n]$, and $\xi \in \{0,1\}$, define
\[
\mathrm{Good}^\xi_\vep(B):= \left\{ \nu \in [n]: \left|\sN_\nu^{B,\xi} - \frac{|B|}{2}\right| \le \wt{C} |B| n^{\vep(\delta-1)/2}\right\}.
\]
Throughout this paper, we fix an \begin{equation}
\label{eqn:vep-def}
	\vep := \frac{\bar{C}_0 \log_2 \log_2\hspace{-2pt}n}{(1-\delta)\log_2\hspace{-2pt}n},
\end{equation} 
for some appropriately chosen large constant $C_0$, which we determine later during the proof. Since we work with this fixed choice of $\vep$ through out the paper (except in the proof of Theorem \ref{thm:rgeom-impli}), for brevity of notation we drop the subscript $\vep$ from in the definition of $\mathrm{Good}^\xi_\vep(\cdot)$ and write $\mathrm{Good}^\xi(\cdot)$ instead. Next for a given collection of $m$ vertices $\{v_1,v_2,\ldots, v_m\}$ and $\ul{\xi}:=\{\xi_1,\xi_2,\ldots,\xi_m,\xi_{m+1}\} \in \{0,1\}^{m+1}$, we set 
$$\mathrm{Good}^{\ul{\xi}}(v_1,v_2,\ldots,v_m):= \mathrm{Good}^{\xi_{m+1}}(B), \qquad \text{ where }B= \sN_{\nu_1}^{\xi_1} \cap \sN_{\nu_2}^{\xi_2} \cap \cdots \cap \sN_{\nu_m}^{\xi_m}.$$ 
Moreover, letting $\ul{\wt{\xi}}:=\{\xi_1,\xi_2,\ldots,\xi_m\}$, define 
$$\mathrm{Bad}^{\ul{\wt{\xi}}}(v_1,v_2,\ldots,v_m):=\cup_{\xi_{m+1}=0}^1 \left(\mathrm{Good}^{\ul{\xi}}(v_1,v_2,\ldots,v_m)\right)^c.$$
\end{dfn} 

\medskip

Equipped with above definition it is easy to show by induction that for any $\ul{v} \in [n]^r$ and $\ul{\xi} \in \{0,1\}^r$ such that $\nu_j \in \Good^{\ul{\xi}^{j}}(\nu_1, \nu_2,...,\nu_{j-1})$,  for all $j =3,4,\ldots,r$, we have $f_r(\underline{\nu},\ul{\xi}) \approx {n}/{2^r}$ (see Lemma \ref{common-nbr-allgood}).  However, our notion of ``good'' vertices is useful only when the cardinality of collection of ``bad'' vertices is small compared to that of ``good'' vertices. To show this we work with the following definition.

\begin{dfn}\label{dfn:bad_define}
Fix two positive integers $m'< m$. Let ${\sf H}_m$ be a graph on $m$ vertices, and ${\sf H}_{m'}$ be any one of the sub-graphs of ${\sf H}_m$ induced by $m'$ vertices. Define $\cH_m$ to be the (unordered) collection of vertices $\ul{v} :=\{v_1,v_2,\ldots,v_m\} \in [n]^m$ such that the sub-graph induced by them is isomorphic to ${\sf H}_m$. Similarly define $\cH_{m'}$. Further given any $\ul{v} \in [n]^m$, denote $\ul{v}^{m'}:=\{v_1,v_2,\ldots,v_{m'}\}$, and for any  $\ul{\xi}=\{\xi_1,\xi_2,\ldots,\xi_{m'}\} \in \{0,1\}^{m'}$, denote $\ul{\xi}^j:=\{\xi_1,\xi_2,\ldots,\xi_j\}$, for $j=1,2,\ldots,m'$. Then define
\begin{align*}
\Bad^{\ul{\xi}}({\sf H}_{m'}, {\sf H}_m)&:=\Big\{ \ul{v} \in {\cH}_m: \exists \text{ a relabeling }  \widehat{\ul{v}}=\{\hat{v}_1,\hat{v}_2\ldots,\hat{v}_m\} \text{ such that }\\
& \qquad \qquad \widehat{\ul{v}}^{m'}\in \cH_{m'},  \hat{v}_k \in \mathrm{Bad}^{\ul{\xi}}(\hat{v}_1,\hat{v}_2,\ldots,\hat{v}_{m'}), \text{ for } k=m'+1,\ldots,m,\\
& \qquad \qquad \qquad\text{ and }\hat{v}_j \in \mathrm{Good}^{\ul{\xi}^j}(\hat{v}_1,\hat{v}_2,\ldots,\hat{v}_{j-1}), \text{ for } j=3,4,\ldots,m'\Big\}. 
\end{align*}
If there are more than one relabeling of $\ul{v}$ such that the conditions of $\Bad^{\ul{\xi}}({\sf H}_{m'}, {\sf H}_m)$ are satisfied choose one of them arbitrarily. When ${\sf H}_{m'}$ and ${\sf H}_m$ are clear from the context, for brevity we simply write $\Bad^{\ul{\xi}}_{m',m}$.

\end{dfn}

\bigskip
We noted above that to show that $f_r(\underline{\nu},\ul{\xi}) \approx {n}/{2^r}$  one needs $\nu_j \in \Good^{\ul{\xi}^{j}}(\nu_1, \nu_2,...,\nu_{j-1})$,  for all $j =3,4,\ldots,r$. Thus if we have a collection of vertices $v_1,v_2,\ldots,v_r$ such that  $\nu_3 \notin \Good^{\ul{\xi}^{2}}(\nu_1, \nu_2)$ and $\nu_j \in \Good^{\ul{\xi}^{j}}(\nu_1, \nu_2,...,\nu_{j-1})$,  for all $j =4,5,\ldots,r$, then we cannot carry out our argument. To prevent such cases we consider all possible relabeling of  $\ul{v}$ in Definition \ref{dfn:bad_define}. Since we only count the number of isomorphic copies of subgraphs the relabeling of Definition \ref{dfn:bad_define} does not cause any harm.

Coming back to the proof of main result we note that we need to establish that the cardinality of $\cup_{j=1}^r \Bad^{\ul{\xi}}_{j,r}$ is small. To prove this we start by bounding $\sN_v^{B,\xi}$ for every $v \in [n]$, $B \subset [n]$, and $\xi \in \{0,1\}$. The key to the latter is a bound on the variance of $\sN_v^{B,\xi}$, which can be obtained using \eqref{eq:A1_1/2}-\eqref{eq:A2_1/2}. Stitching these arguments together show that we then have the desired bound for $f_r(\ul{v},\ul{\xi})$ for most of the vertices.

Recall from Proposition \ref{prop:main} that this argument stops at $r\approx ((1-\delta) \wedge \f{1}{2}) \log_2\hspace{-2pt}n$. This is due to the fact that for such values of $r$ the cardinality of ``good'' vertices become comparable with that of ``bad'' vertices. On the set of ``bad'' vertices one does not have a good estimate on $f_r(\ul{v},\ul{\xi})$. Therefore, one cannot proceed with this scheme beyond $((1-\delta) \wedge \f{1}{2}) \log_2\hspace{-2pt}n$. The rest of the argument involves finding good bounds on $\bar{f}_{r,\ul{\xi}}$ and an application of Cauchy-Schwarz inequality which relate $\cE_n(r+1)$ with $\cE_n(r)$. The required bound on $\bar{f}_{r,\ul{\xi}}$ is easy to obtain using \eqref{eq:A1_1/2}-\eqref{eq:A2_1/2}.  To complete the proof one also requires a good bound on  $(\sum_{\ul{\nu} \in \mathcal{H}_r} \left\{f_r(\ul{\nu},\xi)- \bar{f}_{r,\xi}\right\})^2$, where $\cH_r$ is the collection of $r$ vertices such that the graph induced by those $r$ vertices is ${\sf H}_r$. This bound can be easily derived upon combining the previously mentioned estimates on $f_r(\ul{\nu},\xi)$ and  $\bar{f}_{r,\xi}$.

The above scheme of the proof of Theorem \ref{thm:main} has been motivated by the proof of \cite[Theorem 1]{cgw}. As mentioned earlier in Section \ref{sec:disc}, plugging in \eqref{eq:A1_1/2}-\eqref{eq:A2_1/2}, and repeating their proof would have only yielded the conclusion of Proposition \ref{prop:main} only upto $s=O(\sqrt{\log n})$. There is a key modification in our approach which we describe below. Likewise in the proof of Proposition \ref{prop:main}, in \cite{cgw} they also need to bound $(\sum_{\ul{\nu} \in \mathcal{H}_r} \left\{f_r(\ul{\nu},\xi)- \bar{f}_{r,\xi}\right\})^2$. However, they bound the former by a much larger quantity, namely $(\sum_{\ul{\nu} \in [n]^r} \left\{f_r(\ul{\nu},\xi)- \bar{f}_{r,\xi}\right\})^2$. Since for large $r$ the cardinality of $\cH_r$ is much smaller compared to $n^r$ direct application of the techniques from \cite[Theorem 1]{cgw} provides a much weaker conclusion. Here we obtain the required bound in a more efficient way by defining ``good'' and ``bad'' vertices, and controlling them separately. Here we should also mention that a similar idea was used in the work of Thomason \cite{thom2} in the context of jumbled graphs.



\section{Proof of Proposition \ref{prop:main}}
\label{sec:proof_prop}
In this section we prove Proposition \ref{prop:main}. As mentioned in Section \ref{sec:proof_outline} the proof is based on an induction-type argument and relies heavily in obtaining a recursive relation between $\cE_n(r)$ and $\cE_n(r+1)$, where we recall that 
\beq
\cE_n({\sf{H}}_r)=\left|\f{n_\Graph({\sf H}_r)}{\f{(n)_r}{|\mathrm{Aut}(\sf{H_r})|}} - \left(\f{1}{2}\right)^{{r \choose 2}}\right|, \notag
\eeq
and $\cE_n(r)=\max_{{\sf H}_r \in \cG(r)}\cE_n({\sf H}_r)$. In this section we prove the following desired recursive relation.
\begin{lem}\label{lem:error_inductive_bd}
Let $\{\Graph_n\}_{n \in \N}$ is a sequence of graphs satisfying assumptions \eqref{eq:A1_1/2}-\eqref{eq:A2_1/2}, such that 
\beq\label{eq:initial_condition}
\f{1}{2} \times \f{(n)_j}{|\mathrm{Aut}({\sf H}_j)|}\left(\f{1}{2}\right)^{{j \choose 2}} \le n_\Graph({\sf H}_j) \le 2 \times  \f{(n)_j}{|\mathrm{Aut}({\sf H}_j)|}\left(\f{1}{2}\right)^{{j \choose 2}}, \,{\sf H}_j \in \cG(j), \, j=2,3,\ldots, r.
\eeq
Then, there exists a positive constant $C^*$, depending only on $C$, and another positive constant, depending only on $\delta$, such that is for $r \le ((1-\delta) \wedge \f{1}{2})\log_2\hspace{-2pt}n - C_0 \log_2 \log_2\hspace{-2pt}n$, we have 
\begin{align}\label{eq:recursion}
\cE_n(r+1) \le \cE_n(r) \left(\frac{1}{2} \right)^r \left(1+ \f{ C^* r}{(\log_2\hspace{-2pt}n)^3}\right) + \f{C^* r }{(\log_2\hspace{-2pt}n)^3} \left(\f{1}{2}\right)^{r+1 \choose 2}.
\end{align}
\end{lem}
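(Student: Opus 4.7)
The plan is to relate $n_\Graph({\sf H}_{r+1})$ to a weighted sum of generalized common-neighbourhood counts $f_r(\underline v,\underline\xi)$ and then split this sum into a ``mean'' piece and a ``fluctuation'' piece, controlling each via assumptions \eqref{eq:A1_1/2}--\eqref{eq:A2_1/2} and the good/bad dichotomy from Definitions \ref{dfn:good_dfn}--\ref{dfn:bad_define}.

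\emph{Reduction to common-neighbour sums.} Fix ${\sf H}_{r+1}\in\cG(r+1)$, distinguish its last labelled vertex, let ${\sf H}_r^*$ be the induced subgraph on the first $r$ labels, and let $\underline\xi\in\{0,1\}^r$ record the adjacency vector of the distinguished vertex. Counting labelled embeddings yields
\[
\frac{n_\Graph({\sf H}_{r+1})}{(n)_{r+1}/|\mathrm{Aut}({\sf H}_{r+1})|}
 \;=\; \frac{1}{(n)_{r+1}}\sum_{\underline v\in\cH_r^*} f_r(\underline v,\underline\xi)\;-\;R,
\]
where $\cH_r^*$ is the set of ordered labelled embeddings of ${\sf H}_r^*$ (so that $|\cH_r^*|=n_\Graph({\sf H}_r^*)\cdot|\mathrm{Aut}({\sf H}_r^*)|$), and $R$ absorbs the over-count from $v_{r+1}\in\{v_1,\ldots,v_r\}$, which is at most $r|\cH_r^*|/(n)_{r+1}$ and thus negligible compared with the target error $(C^*r/(\log_2 n)^3)(1/2)^{r+1\choose 2}$ in the admissible range of $r$. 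Then I write the sum as $|\cH_r^*|\bar f_{r,\underline\xi}+\sum_{\underline v\in\cH_r^*}(f_r(\underline v,\underline\xi)-\bar f_{r,\underline\xi})$, cleanly separating the mean from the fluctuation.

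\emph{Mean term.} The inductive hypothesis gives $|\cH_r^*|/(n)_r=(1/2)^{r\choose 2}+\epsilon_1$ with $|\epsilon_1|\le\cE_n(r)$. A separate averaging lemma (to be stated next in the section) asserts $\bar f_{r,\underline\xi}=(n/2^r)(1+\epsilon_2)$ with $|\epsilon_2|\le C^*r/(\log_2 n)^3$; this is proved by induction on $r$, using \eqref{eq:A1_1/2}--\eqref{eq:A2_1/2} to evaluate the innermost sum over $v_r$, and noting that the resulting $O(n^\delta)$ errors accumulated over $r$ layers stay below the leading order $n/2^r$ exactly when $r\le((1-\delta)\wedge 1/2)\log_2 n-C_0\log_2\log_2 n$. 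Expanding $[(1/2)^{r\choose 2}+\epsilon_1]\cdot(n/2^r)(1+\epsilon_2)/(n-r)$, subtracting $(1/2)^{r+1\choose 2}$, and controlling the $(n)_{r+1}/((n)_r(n-r))=1$ bookkeeping directly produces the two explicit terms on the right-hand side of \eqref{eq:recursion}.

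\emph{Fluctuation term and main obstacle.} This is the hardest step. Split $\cH_r^*$ into \emph{good} embeddings---those with $v_j\in\mathrm{Good}^{\underline\xi^{j}}(v_1,\ldots,v_{j-1})$ for every $j=3,\ldots,r$---and the complementary \emph{bad} part $\cH_r^*\cap\bigcup_j\Bad^{\underline\xi}_{j,r}$. On good embeddings, an iterated application of \eqref{eq:A1_1/2}--\eqref{eq:A2_1/2} (formalised as the common-neighbour estimate alluded to after Definition \ref{dfn:good_dfn}) yields $|f_r(\underline v,\underline\xi)-n/2^r|\le (n/2^r)\cdot Cr\,n^{\vep(\delta-1)/2}$, which by the choice \eqref{eqn:vep-def} of $\vep$ is $(n/2^r)\cdot O(r/(\log_2 n)^3)$. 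For bad embeddings I use the trivial bound $f_r\le n$ and estimate $|\cH_r^*\cap\Bad^{\underline\xi}_{j,r}|$ through Chebyshev's inequality applied to the second-moment bound on $\sN_v^{B,\xi}$ coming from \eqref{eq:A2_1/2}, combined with the initial condition \eqref{eq:initial_condition} at level $j$ to count the embeddings of the smaller subgraphs forming the base of a bad tuple. The main technical obstacle is verifying that the accumulation of the polynomially small bad fraction over all layers $j\le r$ still fits inside $O(r/(\log_2 n)^3)(1/2)^{r+1\choose 2}$; this is exactly why the cutoff $r\le((1-\delta)\wedge 1/2)\log_2 n-C_0\log_2\log_2 n$ arises, and why the $\log_2\log_2 n$ slack is needed to absorb the logarithmic factors generated at each iteration. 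Combining the mean and fluctuation bounds via the triangle inequality then yields the recursion \eqref{eq:recursion}.
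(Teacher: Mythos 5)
Your overall strategy --- writing $n_\Graph({\sf H}_{r+1})$ in terms of $\sum_{\underline v} f_r(\underline v, \underline\xi)$ via a ``distinguish the last labelled vertex'' reduction, splitting into a mean part $|\cH_r^*|\,\bar f_{r,\underline\xi}$ and a fluctuation part, controlling the mean by the inductive hypothesis together with a direct estimate on $\bar f_{r,\underline\xi}$, and controlling the fluctuations by the good/bad dichotomy --- is exactly the paper's route (Lemma \ref{lem:subgraphaut}, Lemma \ref{lem:f_r_bar_bound}, Lemma \ref{lem:var_type_bd}). The mean-term bookkeeping and the remainder $R$ are handled correctly, and bounding $\bigl|\sum(f_r - \bar f_{r,\underline\xi})\bigr|$ directly by the good/bad split is in fact a little cleaner than the paper's detour through Cauchy--Schwarz and second moments in Lemma \ref{lem:var_type_bd}.

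The gap is in the bad part of the fluctuation estimate, and it is a real one. You bound $f_r(\underline v, \underline\xi) \le n$ on $\Bad^{\underline\xi}_{j,r}$. Combining this with the cardinality estimate $|\Bad^{\underline\xi}_{j,r}| \le n_\Graph({\sf H}_r)(\log_2 n)^{-K(r-j)}$ (which is what Chebyshev plus the initial condition give, as in Lemma \ref{lem:Bad_bound}), the layer $j=r-1$ already contributes about $n\, n_\Graph({\sf H}_r)(\log_2 n)^{-K}$. After dividing by $(n)_{r+1}$ and invoking \eqref{eq:initial_condition}, fitting this inside the target $\frac{C^* r}{(\log_2 n)^3}(\tfrac12)^{\binom{r+1}{2}}$ would force $2^r \le r(\log_2 n)^{K-3}$. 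But for $r$ up to $((1-\delta)\wedge\tfrac12)\log_2 n - C_0\log_2\log_2 n$ the left-hand side is polynomial in $n$, so this fails for every fixed $K$; the $\log_2\log_2 n$ slack only buys powers of $\log_2 n$, not of $n$.

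The fix is precisely what Definition \ref{dfn:bad_define} is set up to enable and what the paper exploits in Lemma \ref{goodbad}: every $\underline v \in \Bad^{\underline\xi}_{j,r}$ admits a relabeling $\widehat{\underline v}$ whose first $j$ coordinates satisfy the goodness condition, so by Lemma \ref{common-nbr-allgood},
\[
f_r(\underline v, \underline\xi) = f_r(\widehat{\underline v}, \widehat{\underline\xi}) \le f_j\bigl(\widehat{\underline v}^{\,j}, \widehat{\underline\xi}^{\,j}\bigr) \le 2\cdot\frac{n}{2^{j}},
\]
an improvement over the trivial bound by a factor of order $2^{j}$. Writing $n/2^j = (n/2^r)\cdot 2^{r-j}$, the extra $2^{r-j}$ is absorbed by the $(\log_2 n)^{-K(r-j)}$ cardinality decay, and summing over $3\le j\le r-1$ gives the required total of order $r\, n_\Graph({\sf H}_r)\,(n/2^r)\,(\log_2 n)^{-3}$. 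Without this relabeling step the recursion \eqref{eq:recursion} cannot close.
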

First let us show that the proof of Proposition \ref{prop:main} is an immediate consequence of Lemma \ref{lem:error_inductive_bd}. 

\begin{proof}[Proof of Proposition \ref{prop:main}]
The key to the proof is the recursive estimate obtained in Lemma \ref{lem:error_inductive_bd}. Note that for Lemma \ref{lem:error_inductive_bd} to hold, one needs to verify that \eqref{eq:initial_condition} is satisfied. To this end, observe that if $\cE_n(r) \ra 0$, for $r=2,3\ldots,s-1$, then \eqref{eq:initial_condition} is indeed satisfied for $r=s-1$. Therefore, by  Lemma  \ref{lem:error_inductive_bd}, we have $\cE_n(s) \ra 0$. Thus, using Lemma \ref{lem:error_inductive_bd} repeatedly we see that we would be able to control $\cE_n(s)$ if we have a good bound on $\cE_n(2)$, where the bound on the latter follows from our assumptions  
\eqref{eq:A1_1/2}-\eqref{eq:A2_1/2}. Below we make this idea precise.

We begin by noting that, if \eqref{eq:recursion} holds for $r=2,3,\ldots,s-1$, then by induction it is easy to show that
\beq\label{eq:error_bd}
\cE_n(s) \le \left(\prod_{j=2}^{s-1} \alpha_n(j)\right) \cE_n(2) + \sum_{j=2}^{s-1}\gamma_n^s(j),
\eeq
where, for $j=2,3,\ldots,s-1$,
\[
\alpha_n(j):= \left(\frac{1}{2} \right)^j \left(1+ \f{ C^* j}{(\log_2\hspace{-2pt}n)^3}\right), \,\gamma_n^s(j):= \beta_n(j) \prod_{k=j+1}^{s-1} \alpha_n(k), \, \beta_n(j):= \f{C^* j }{(\log_2\hspace{-2pt}n)^3} \left(\f{1}{2}\right)^{j+1 \choose 2}.
\]
Since $s \le \log_2 \hspace{-2pt}n$, from \eqref{eq:error_bd} it can also be deduced that 
\beq\label{eq:cE_n_s_bd}
\cE_n(s) \le 2  \cE_n(2) \left(\f{1}{2}\right)^{s \choose 2} + \f{2C^*}{(\log_2\hspace{-2pt}n)^2}\left(\f{1}{2}\right)^{s \choose 2}.
\eeq
Now, note that there only two graphs in $\cG(2)$, namely the complete graph and the empty graph on two vertices. Therefore,
\begin{align*}
\cE_n(2) =\max_{\xi \in \{0,1\}} \left|\f{\f{1}{2}\sum_{\nu \in [n]} \sN_\nu^\xi}{{n \choose 2}} - \f{1}{2}\right| & = \max_{\xi \in \{0,1\}}\left|\f{\sum_{\nu \in [n]} \sN_\nu^\xi}{n(n-1)} - \f{1}{2}\right| \\
& \le \f{1}{n(n-1)}\max_{\xi \in \{0,1\}}\left|{\sum_{\nu \in [n]} \left(\sN^\xi_\nu - \f{n}{2}\right)}\right|+ \f{1}{2(n-1)}\\
& \le \f{1}{n(n-1)}\max_{\xi \in \{0,1\}}\sum_{\nu\in [n]} \left|\sN_\nu^\xi - \f{n}{2}\right| + \f{1}{2(n-1)} \le 2C n^{\delta-1},
\end{align*}
where the last step follows from \eqref{eq:A1_1/2}. This combining with \eqref{eq:cE_n_s_bd} yields
\beq\label{eq:cE_n_s_new}
\cE_n(s) \le 4 C  n^{\delta-1} \left(\f{1}{2}\right)^{s \choose 2} + \f{2C^*}{(\log_2\hspace{-2pt}n)^2}\left(\f{1}{2}\right)^{s \choose 2},
\eeq
whenever \eqref{eq:recursion} holds for $r=2,3,\ldots,s-1$. Now the rest of the proof is completed by induction. 

Indeed, we note that \eqref{eq:initial_condition} holds for $r=2$, as we have already seen $\cE_n(2) \le 2C n^{\delta-1} \ra 0$, as  $n \ra \infty$ (since $\delta <1$). Therefore, \eqref{eq:recursion} holds for $r=3$, and hence by \eqref{eq:cE_n_s_new} we see that $\cE_n(3) \ra 0$. More generally, if we have that $\cE_n(r) \ra 0$, for $r=2,3,\ldots,\ell$, then it implies that \eqref{eq:initial_condition} holds for $r=2, 3, \ldots, \ell$ as well. Hence, \eqref{eq:cE_n_s_new} now implies that $\cE_n(\ell+1) \ra 0$. Thus, the proof finishes by induction.
\end{proof}

Now we turn our attention to the proof of Lemma \ref{lem:error_inductive_bd}. As already seen in Section \ref{sec:proof_outline} we need to establish that given any $\ul{v} \in [n]^r$ and $\ul{\xi} \in \{0,1\}^r$ if $\nu_j \in \Good^{\ul{\xi}^{j}}(\nu_1, \nu_2,...,\nu_{j-1})$,  for all $j =3,4,\ldots,r$, then $f_r(\underline{\nu},\ul{\xi}) \approx {n}/{2^r}$. We establish this claim formally in the lemma below.
\begin{lem}\label{common-nbr-allgood}
Let $\{\Graph_n\}_{n \in \N}$ be a sequence of graphs satisfying \eqref{eq:A1_1/2}-\eqref{eq:A2_1/2}. {Fix $r < \log_2 \hspace{-2pt}n$,} and let $\ul{v} \in [n]^r$ and $\ul{\xi} \in \{0,1\}^r$. If $\nu_j \in \Good^{\ul{\xi}^{j}}(\nu_1, \nu_2,...,\nu_{j-1})$,  for all $j =3,4,\ldots,r$, and $\bar{C}_0 \ge 4$, then 
\beq\label{eq:good_set_estimates}
\left|\left|\sN_{\nu_1}^{\xi_1} \cap \sN_{\nu_2}^{\xi_2} \cap \cdots \cap \sN_{\nu_r}^{\xi_r}\right|- \f{n}{2^r}\right|  \le 3\wt{C} (\log_2\hspace{-2pt}n)^{-(\bar{C}_0/2-1)}  \times \f{n}{2^r},
\eeq
for all large $n$.
\end{lem}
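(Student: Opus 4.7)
The plan is to prove the estimate by induction on $r$, working with the relative error $\eta_r := |B_r|/(n/2^r) - 1$, where $B_j := \sN_{v_1}^{\xi_1} \cap \cdots \cap \sN_{v_j}^{\xi_j}$. The target bound is then equivalent to $|\eta_r| \leq 3\wt{C}(\log_2 n)^{-(\bar{C}_0/2 - 1)}$. The first step is to unpack Definition \ref{dfn:good_dfn} in combination with the choice of $\vep$ in \eqref{eqn:vep-def}: a short computation gives $n^{\vep(\delta-1)/2} = (\log_2 n)^{-\bar{C}_0/2}$, so the hypothesis $v_j \in \Good^{\xi_j}(B_{j-1})$ reads
\[
\bigl||B_j| - |B_{j-1}|/2\bigr| \leq \wt{C}\,|B_{j-1}|\,(\log_2 n)^{-\bar{C}_0/2}, \qquad j = 3, \ldots, r.
\]

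For the base cases $r = 1, 2$, I would use assumption \eqref{eq:A1_1/2} directly when $\xi_1 = 1$ and the identity $|\sN_v^0| = n - 1 - |\sN_v^1|$ when $\xi_1 = 0$ to obtain $||B_1|-n/2|\leq Cn^\delta$. At $r=2$ the four sign patterns of $(\xi_1,\xi_2)$ reduce to combinations of \eqref{eq:A1_1/2} and \eqref{eq:A2_1/2} via inclusion--exclusion, yielding $||B_2|-n/4|\leq C' n^\delta$ for an absolute constant $C'$. Since $n^{\delta-1} = o((\log_2 n)^{-(\bar{C}_0/2-1)})$, these base cases sit comfortably inside the target bound for all large $n$.

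For the inductive step, combining the unpacked good condition at stage $r$ with the triangle inequality $||B_r| - n/2^r| \leq ||B_r| - |B_{r-1}|/2| + ||B_{r-1}|/2 - n/2^r|$ and the crude bound $|B_{r-1}| \leq (n/2^{r-1})(1 + |\eta_{r-1}|)$ yields the linear recursion
\[
|\eta_r| \leq |\eta_{r-1}| + 2\wt{C}(1+|\eta_{r-1}|)(\log_2 n)^{-\bar{C}_0/2}.
\]
Setting $\beta := 2\wt{C}(\log_2 n)^{-\bar{C}_0/2}$ and $\alpha := 1 + \beta$, this reads $|\eta_r| \leq \alpha |\eta_{r-1}| + \beta$, which telescopes to $|\eta_r| \leq \alpha^{r-2}|\eta_2| + (\alpha^{r-1} - 1)$. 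Under the standing hypotheses $r < \log_2 n$ and $\bar{C}_0 \geq 4$ one has $(r-1)\beta \to 0$, so $\alpha^{r-1} - 1 \leq (r-1)\beta\,e^{(r-1)\beta} \leq 2\wt{C}(\log_2 n)^{-(\bar{C}_0/2-1)}(1+o(1))$, and combined with $|\eta_2| = O(n^{\delta-1})$ this gives $|\eta_r| \leq 3\wt{C}(\log_2 n)^{-(\bar{C}_0/2-1)}$ for all large $n$.

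The main delicate point is the final constant: the recursion injects an additive error $\beta$ at each of the $r - 2$ good-vertex steps, and because $r$ can be as large as $\log_2 n$ these small errors add up. The condition $\bar{C}_0 \geq 4$ is used precisely to guarantee that the geometric accumulation factor $\alpha^{r-1}$ stays within $1 + o(1)$ of $1 + (r-1)\beta$, so that the accumulated error is at most $2\wt{C}(\log_2 n)^{-(\bar{C}_0/2-1)}(1+o(1))$, leaving enough room (within the constant $3\wt{C}$) to absorb the polynomially small contribution from the base case $|\eta_2|$.
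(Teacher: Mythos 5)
Your proof is correct and takes essentially the same approach as the paper: both induct on $r$, using the triangle inequality together with the Good-vertex condition (and the identity $n^{\vep(\delta-1)/2}=(\log_2 n)^{-\bar{C}_0/2}$) to get a one-step recursion, then control the accumulated error over at most $\log_2 n$ steps. The paper carries the intermediate bound in the form $3\wt{C}\,j\,n^{\vep(\delta-1)/2}$ rather than solving a multiplicative recursion for the relative error $\eta_j$, but the two bookkeeping schemes are equivalent.
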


\bigskip
We also need to show that the cardinality of ``bad'' vertices is relatively small. This is derived in the following lemma.
\begin{lem}\label{lem:Bad_bound}
Let $\{\Graph_n\}_{n \in \N}$ be a sequence of graphs satisfying \eqref{eq:A1_1/2}-\eqref{eq:A2_1/2}. Fix any two positive integers $j< r$, and let ${\sf H}_r$ be a graph on $r$ vertices. Fix ${\sf H}_j$ any one of the sub-graphs of ${\sf H}_r$ induced by $j$ vertices. Assume that 
\[
n_\Graph({\sf H_r}) \ge \f{1}{2} \times \f{(n)_r}{|\mathrm{Aut}({\sf H}_r)|2^{{r \choose 2}}} \quad \text{ and } \quad n_\Graph({\sf H_j}) \le 2 \times \f{(n)_j}{|\mathrm{Aut}({\sf H}_j)|2^{{j \choose 2}}}. 
\]
There exists a large absolute constant $C_0$ such that, for any given $\ul{\xi}=\{\xi_1,\xi_2,\ldots,\xi_j\} \in \{0,1\}^j$, and $r \le ((1-\delta)\wedge \f{1}{2})\log_2\hspace{-2pt}n- C_0 \log_2 \log_2 \hspace{-2pt}n$, we have 
\[
\left|\Bad^{\ul{\xi}}({\sf H}_{j}, {\sf H}_r)\right| \le \f{n_\Graph({\sf H_r})}{\left(\log_2\hspace{-2pt}n \right)^{9(r-j)}},
\]
for all large $n$.
\end{lem}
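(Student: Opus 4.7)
The plan is to upper-bound $|\Bad^{\ul{\xi}}({\sf H}_{j}, {\sf H}_r)|$ by the number of ordered $r$-labelings $(\hat{v}_1,\ldots,\hat{v}_r)$ satisfying the three conditions in Definition~\ref{dfn:bad_define}, decomposed as a sum over ordered good $j$-chains $(\hat{v}_1,\ldots,\hat{v}_j)$ multiplied by the number of valid $(r-j)$-extensions. The core estimate will be a Chebyshev-type variance bound for the number of bad vertices attached to any common-neighbourhood set $B := \sN_{\hat{v}_1}^{\xi_1}\cap\cdots\cap \sN_{\hat{v}_j}^{\xi_j}$.

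For the variance step, I would expand
\[
\sum_{v\in [n]}\bigl(\sN_v^{B,1}-\tfrac{|B|}{2}\bigr)^2 \;=\; \sum_{u,u'\in B}|\sN_u\cap \sN_{u'}| \;-\; |B|\sum_{v\in [n]}\sN_v^{B,1} \;+\; \tfrac{n|B|^2}{4},
\]
and apply \eqref{eq:A1_1/2}--\eqref{eq:A2_1/2} to each piece. Using $|\sN_u\cap\sN_{u'}|=n/4+O(n^{\delta})$ and $|\sN_v|=n/2+O(n^{\delta})$, the sum is $O(|B|^2 n^{\delta}+|B|n)$. Chebyshev's inequality at the threshold $\wt{C}|B|n^{\vep(\delta-1)/2}$ then yields at most $C'(n^{\delta}+n/|B|)n^{\vep(1-\delta)}$ bad vertices; combining with Lemma~\ref{common-nbr-allgood} (so that $|B|=(1+o(1))n/2^j$ along a good chain) and the choice of $\vep$ in \eqref{eqn:vep-def} specialises this to $N_{\mathrm{bad}}\leq C''(n^{\delta}+2^j)(\log_2 n)^{\bar{C}_0}$, which is $O(n^{\delta\vee 1/2}(\log_2 n)^{\bar{C}_0})$ throughout the admissible range of $r$.

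Next, the hypothesis $n_\Graph({\sf H}_j)\leq 2(n)_j/(|\mathrm{Aut}({\sf H}_j)|\,2^{\binom{j}{2}})$ bounds the number of ordered good $j$-chains in $\cH_j$ by $2j!(n)_j/(|\mathrm{Aut}({\sf H}_j)|\,2^{\binom{j}{2}})$, and each such chain contributes at most $N_{\mathrm{bad}}^{r-j}$ extensions to an ordered ${\sf H}_r$-labelling with bad rest. This naive bound is sharpened by a $2^{-\binom{r-j}{2}}$ factor coming from the intra-extension adjacency constraints prescribed by ${\sf H}_r$ (obtained by iteratively applying Lemma~\ref{common-nbr-allgood} coordinate by coordinate across the extension). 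Combining with the hypothesised lower bound $n_\Graph({\sf H}_r)\geq (n)_r/(2|\mathrm{Aut}({\sf H}_r)|\,2^{\binom{r}{2}})$ and using the identity $\binom{r}{2}-\binom{j}{2}-\binom{r-j}{2}=j(r-j)$, the ratio of this upper bound to the target $n_\Graph({\sf H}_r)/(\log_2 n)^{9(r-j)}$ reduces to $[2N_{\mathrm{bad}}\cdot 2^{j}/n]^{r-j}(\log_2 n)^{9(r-j)}$ times the combinatorial factor $j!|\mathrm{Aut}({\sf H}_r)|/|\mathrm{Aut}({\sf H}_j)|$. The range restriction $r\leq ((1-\delta)\wedge 1/2)\log_2 n - C_0 \log_2 \log_2 n$ forces $N_{\mathrm{bad}}\cdot 2^{j}/n\leq C(\log_2 n)^{\bar{C}_0-C_0}$, and choosing $C_0$ sufficiently larger than $\bar{C}_0+9$ absorbs the remaining polylog factors.

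The main obstacle I anticipate is the extension-count sharpening. The naive $N_{\mathrm{bad}}^{r-j}$ bound is insufficient once the combinatorial ratio $j!|\mathrm{Aut}({\sf H}_r)|/|\mathrm{Aut}({\sf H}_j)|$---which can be as large as $r!$ (e.g.\ for cliques)---enters the comparison, so the $2^{-\binom{r-j}{2}}$ saving from the intra-extension adjacency structure is essential rather than cosmetic. Proving this sharpening rigorously requires careful tracking of how the bad set intersects the successive common-neighbourhood restrictions at each extension coordinate, together with meticulous bookkeeping of the automorphism factors for general (not necessarily rigid) ${\sf H}_j$ and ${\sf H}_r$, which is the delicate part of the argument.
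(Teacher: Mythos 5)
Your first two steps — the variance expansion for $\sum_v (\sN_v^{B,\xi}-|B|/2)^2$, the Chebyshev threshold at $\wt C|B|n^{\vep(\delta-1)/2}$, and the resulting bound of the form $N_{\mathrm{bad}} \le n(\log_2 n)^{\bar C_0}\Upsilon_n(|B|,\delta)$ — match the paper's Lemmas \ref{lem:var_N_nu} and \ref{lem:good_B}, and the decomposition into (good $j$-chain) $\times$ (bad $(r-j)$-extension) together with the hypothesised bounds on $n_\Graph({\sf H}_j)$, $n_\Graph({\sf H}_r)$ and the automorphism inequality $|\mathrm{Aut}({\sf H}_r)|\le (r!/j!)|\mathrm{Aut}({\sf H}_j)|$ (Lemma \ref{lem:automorphism}) is exactly the paper's. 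The gap is in your final paragraph: you assert that the naive product bound $N_{\mathrm{bad}}^{r-j}$ fails against the automorphism ratio and that the $2^{-\binom{r-j}{2}}$ "intra-extension" saving is \emph{essential}. Both claims are wrong, and the second is actually not available by the route you propose.

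On the first point, the paper carries out the logarithmic bookkeeping with the plain bound
\[
|\Bad^{\ul\xi}_{j,r}|\le 2\,\frac{(n)_j}{|\mathrm{Aut}({\sf H}_j)|2^{\binom{j}{2}}}\Bigl(n(\log_2 n)^{\bar C_0}\Upsilon_n(n/2^r,\delta)\Bigr)^{r-j},
\]
and the crucial cancellation is the following. The range restriction $r\le((1-\delta)\wedge\tfrac12)\log_2 n-(\bar C_0+12)\log_2\log_2 n$ forces
$\log_2\Upsilon_n(n/2^r,\delta)\le -r-(\bar C_0+12)\log_2\log_2 n$, hence each factor $N$ contributes $\log_2 N\le \log_2 n-r-12\log_2\log_2 n$. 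Combined with $(n)_j/(n)_r\approx n^{-(r-j)}$ and the estimate $\binom{r}{2}-\binom{j}{2}=\sum_{k=j}^{r-1}k\le r(r-j)$, the exponential pieces satisfy
\[
(r-j)(\log_2 N-\log_2 n)+\Bigl[\binom{r}{2}-\binom{j}{2}\Bigr]\le -(r-j)r-12(r-j)\log_2\log_2 n+r(r-j)= -12(r-j)\log_2\log_2 n,
\]
while Stirling gives $\log_2(r!/j!)\le 2(r-j)\log_2\log_2 n$ for $r\le\log_2 n$; the $12(r-j)$ polylog margin absorbs this with $9(r-j)$ to spare. (Note also that the paper's combinatorial factor is $r!/j!$, not $r!$; your extraneous $j!$ comes from counting ordered $j$-chains and should cancel against an overcounting correction you have not performed.) On the second point, even if a $2^{-\binom{r-j}{2}}$ saving were wanted, you cannot obtain it by "iteratively applying Lemma \ref{common-nbr-allgood} coordinate by coordinate across the extension": the extension vertices $\hat v_{j+1},\dots,\hat v_r$ are, by definition of $\Bad^{\ul\xi}({\sf H}_j,{\sf H}_r)$, in the bad set, so they fail the hypothesis of Lemma \ref{common-nbr-allgood}. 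Extracting adjacency structure among bad vertices would require a separate quasi-randomness argument restricted to the bad set, which is not available.
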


\bigskip
Recall that we also need bounds on $\bar{f}_{r,\ul{\xi}}$. This is obtained from the following lemma.

\begin{lem}\label{lem:f_r_bar_bound}
For any $r$, let $\ul{\xi}:=\{\xi_1,\xi_2,\ldots,\xi_r\} \in \{0,1\}^r$, and $n(\ul{\xi})$ be the number of zeros in $\ul{\xi}$. 
Then for any $r\le \log_2\hspace{-2pt}n$,
\beq
\bar{f}_{r, \ul{\xi}}= \f{1}{(n)_r}\sum_{\nu \in [n]}(\sN_\nu^0 )^{n(\ul{\xi})}{(\sN_\nu^1)^{r-n(\ul{\xi})}}.\label{eq:f_r_xi_bound}
\eeq
We further have
\beq\label{eq:avgbd}
n \left(\frac{1}{2} \right)^r\left(1-12 C  r n^{\delta-1}\right)\left(1-\f{2r^2}{n}\right) \le \bar{f}_{r,\xi} \leq n \left(\frac{1}{2} \right)^r\left(1+12 C  r n^{\delta-1}\right)\left(1+\f{2r^2}{n}\right).
\eeq
\end{lem}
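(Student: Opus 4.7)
The proof of the identity \eqref{eq:f_r_xi_bound} is a routine double counting. The plan is to write
\[
f_r(\ul{v}, \ul{\xi}) \;=\; \sum_{w \in [n]} \prod_{i=1}^r \mathbf{1}\{a_{v_i w} = \xi_i\}
\]
and swap the order of summation in the definition of $\bar{f}_{r,\ul{\xi}}$. For each fixed $w$, the inner sum ranges over tuples $(v_1, \ldots, v_r)$ with $v_i \in \sN_w^{\xi_i}$; since the sets $\sN_w^0$ and $\sN_w^1$ are disjoint, the non-distinct version of this count equals exactly $(\sN_w^0)^{n(\ul{\xi})}(\sN_w^1)^{r-n(\ul{\xi})}$, while the discrepancy between distinct and non-distinct tuples is of relative size $O(r^2/n)$ via a union bound over the $\binom{r}{2}$ coincidences $v_i = v_j$. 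This yields \eqref{eq:f_r_xi_bound} up to an error that is absorbed into the $(1 \pm 2r^2/n)$ brackets of \eqref{eq:avgbd}.

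For the two-sided estimate in \eqref{eq:avgbd}, I would apply the degree assumption \eqref{eq:A1_1/2} pointwise: for every $w \in [n]$, both $\sN_w^1 = |\sN_w|$ and $\sN_w^0 = n - 1 - |\sN_w|$ lie in $(\tfrac{n}{2} - 2C n^\delta,\, \tfrac{n}{2} + 2C n^\delta)$ for $n$ large. Writing each of these as $\tfrac{n}{2}(1 + \eta)$ with $|\eta| \le 4C n^{\delta-1}$, and using the elementary inequality $(1+\eta)^r \le 1 + 2r|\eta|$, valid when $r|\eta| \le 1$ (which holds for $r \le \log_2 n$ and $n$ large since $\delta < 1$), one obtains the pointwise bound
\[
\left|(\sN_w^0)^{n(\ul{\xi})}(\sN_w^1)^{r - n(\ul{\xi})} \;-\; (\tfrac{n}{2})^r\right| \;\le\; 8Cr\, n^{\delta-1}\, (\tfrac{n}{2})^r.
\]

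Summing over $w \in [n]$ and dividing by $(n)_r$, and using $\tfrac{n^r}{(n)_r} = \prod_{i=0}^{r-1}(1 - i/n)^{-1} \in [1,\, 1 + 2r^2/n]$ (valid for $r \le \log_2 n$ and $n$ large), delivers both directions of \eqref{eq:avgbd}. There is no genuine obstacle here; the only bookkeeping care needed is combining the three separate small relative errors --- concentration of degrees at scale $O(rn^{\delta-1})$, distinct-versus-non-distinct counting at scale $O(r^2/n)$, and the linearisation of $(1+x)^r$ --- into the clean product form displayed in the statement, with the slack absorbed into the absolute constants $12C$ and $2$ already appearing there.
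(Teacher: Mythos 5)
Your route to \eqref{eq:avgbd} is the paper's route: interchange the order of summation so that the outer sum runs over the common-neighbour variable $w$, apply the degree bound \eqref{eq:A1_1/2} pointwise to $\sN_w^0$ and $\sN_w^1$, linearise the resulting $r$-th power via $(1+\eta)^r \le 1+2r|\eta|$, and control $n^r/(n)_r$ by $\prod_{j<r}(1-j/n)^{-1} \le 1 + 2r^2/n$. The paper organises the last step through the two-sided exponential bound $e^{-r(r-1)/n}\le \prod_{j<r}(1-j/n)\le e^{-r(r-1)/(2n)}$ and then $e^x\le 1+2x$, but that is the same estimate. Your pointwise constant $8C$ is even slightly sharper than the $12C$ in the statement, so there is no issue with the constants.

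The one place you part company with the paper is in the reading of \eqref{eq:f_r_xi_bound}. You take the sum $\sum_{\ul{\nu}}$ in the definition of $\bar f_{r,\ul{\xi}}$ to run over \emph{distinct} $r$-tuples, and accordingly conclude that \eqref{eq:f_r_xi_bound} holds only up to a relative error of order $r^2/n$, which you then propose to absorb into the $(1\pm 2r^2/n)$ brackets. In the paper's proof, however, the sum $\sum_{\ul{\nu}\in[n]^r}$ is over \emph{all} $r$-tuples, so after interchanging the two sums each coordinate $\nu_i$ ranges freely over $\sN_w^{\xi_i}$ and \eqref{eq:f_r_xi_bound} is an exact identity; the only place $n^r/(n)_r$ enters is at the final normalisation when $\sum_w (n/2)^r = n(n/2)^r$ is divided by $(n)_r$. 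Since the lemma asserts \eqref{eq:f_r_xi_bound} as an equality, your argument as written does not establish the first claim of the lemma (it only establishes it approximately, which is not what is claimed). This is a reading issue rather than a mathematical obstacle: once you adopt the all-tuples convention, your own double-counting computation proves \eqref{eq:f_r_xi_bound} exactly, and you can delete the discussion of coincidences $v_i=v_j$. With that correction, your derivation of \eqref{eq:avgbd} is complete and coincides with the paper's.
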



\bigskip

Finally we need to bound $(\sum_{\ul{\nu} \in \mathcal{H}_r} \left\{f_r(\ul{\nu},\xi)- \bar{f}_{r,\xi}\right\})^2$. Building on Lemma \ref{common-nbr-allgood}, Lemma \ref{lem:Bad_bound}, and Lemma \ref{lem:f_r_bar_bound} we obtain the required bound in the lemma below.

\begin{lem}\label{lem:var_type_bd}
Let $\{\Graph_n\}_{n \in \N}$ be a sequence of graphs satisfying \eqref{eq:A1_1/2}-\eqref{eq:A2_1/2}. Fix any positive integer $ r$, and let ${\sf H}_r$ be a graph on $r$ vertices. Assume that 
\[
n_\Graph({\sf H_r}) \ge \f{1}{2} \times \f{(n)_r}{|\mathrm{Aut}({\sf H}_r)|2^{{r \choose 2}}} \quad \text{ and } \quad n_\Graph({\sf H_j}) \le {2} \times \f{(n)_j}{|\mathrm{Aut}({\sf H}_j)|2^{{j \choose 2}}},
\]
for all $j <r$ and all graphs ${\sf H}_j$ on $j$ vertices. Fix any $\ul{\xi}\in \{0,1\}^r$. Then, there exists a positive constant $C_0$, depending only on $\delta$, and another positive constant $\widehat{C}$, depending only on $C$, such that for any $r \le ((1-\delta) \wedge \frac{1}{2})\log_2\hspace{-2pt}n - C_0 \log_2\log_2\hspace{-2pt}n$, we have
\beq\label{eq:f_r_var}
\left|\sum_{\ul{\nu} \in \cH_r} \left\{f_r(\ul{\nu},\ul{\xi})- \bar{f}_{r,\ul{\xi}}\right\}\right| \le \wh{C} n_\Graph({\sf H} _r) \f{n}{(\log_2\hspace{-2pt}n)^3} \left(\f{1}{2}\right)^{r}, 
\eeq
for all large $n$.
\end{lem}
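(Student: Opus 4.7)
The strategy is to split $\cH_r=\mathrm{Good}_r\sqcup(\cH_r\setminus\mathrm{Good}_r)$, where $\mathrm{Good}_r$ denotes the set of $\ul{\nu}\in\cH_r$ admitting an ordering $(\nu_1,\ldots,\nu_r)$ with $\nu_j\in\Good^{\ul{\xi}^j}(\nu_1,\ldots,\nu_{j-1})$ for every $j=3,\ldots,r$. For each $\ul{\nu}\notin\mathrm{Good}_r$, I will fix an ordering whose good prefix has \emph{maximal} length $j^*\in\{2,\ldots,r-1\}$; by maximality no remaining vertex can be inserted at position $j^*+1$, so each $\nu_k$ with $k>j^*$ lies in $\Bad^{\ul{\xi}^{j^*}}(\nu_1,\ldots,\nu_{j^*})$, placing $\ul{\nu}\in\Bad^{\ul{\xi}}({\sf H}_{j^*},{\sf H}_r)$ for the induced $j^*$-vertex subgraph ${\sf H}_{j^*}\subseteq{\sf H}_r$ carried by the first $j^*$ vertices. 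A union bound over the pair $(j^*,{\sf H}_{j^*})$ then covers $\cH_r\setminus\mathrm{Good}_r$.

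For the good part, Lemma \ref{common-nbr-allgood} yields $|f_r(\ul{\nu},\ul{\xi})-n/2^r|\le 3\wt{C}(\log_2 n)^{-(\bar{C}_0/2-1)}\cdot n/2^r$ and Lemma \ref{lem:f_r_bar_bound} yields $|\bar f_{r,\ul{\xi}}-n/2^r|=O(rn^{\delta-1}+r^2/n)\cdot n/2^r$. Under the range $r\le((1-\delta)\wedge\tfrac{1}{2})\log_2 n-C_0\log_2\log_2 n$ and with $\bar{C}_0\ge 8$, both errors are $O((\log_2 n)^{-3})\cdot n/2^r$, so
\[
\sum_{\ul{\nu}\in\mathrm{Good}_r}\bigl|f_r(\ul{\nu},\ul{\xi})-\bar f_{r,\ul{\xi}}\bigr|\;\le\; n_\Graph({\sf H}_r)\cdot O\!\left(\frac{n}{2^r(\log_2 n)^3}\right).
\]

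For the bad part, if $\ul{\nu}\in\Bad^{\ul{\xi}}({\sf H}_j,{\sf H}_r)$ then the good prefix of length $j$ together with Lemma \ref{common-nbr-allgood} gives $f_r(\ul{\nu},\ul{\xi})\le\bigl|\sN^{\xi_1}_{\nu_1}\cap\cdots\cap\sN^{\xi_j}_{\nu_j}\bigr|\le 2n/2^j$; combined with $\bar f_{r,\ul{\xi}}\le 2n/2^r\le 2n/2^j$, this gives $|f_r-\bar f_r|\le 4n/2^j$. Lemma \ref{lem:Bad_bound} bounds $|\Bad^{\ul{\xi}}({\sf H}_j,{\sf H}_r)|\le n_\Graph({\sf H}_r)/(\log_2 n)^{9(r-j)}$, and there are at most $\binom{r}{j}\le(\log_2 n)^{r-j}$ induced $j$-vertex subgraphs of ${\sf H}_r$. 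Setting $k=r-j$, the total bad contribution is bounded by
\[
4n_\Graph({\sf H}_r)\cdot\frac{n}{2^r}\sum_{k=1}^{r-2}\frac{(2\log_2 n)^k}{(\log_2 n)^{9k}}\;=\;4n_\Graph({\sf H}_r)\cdot\frac{n}{2^r}\sum_{k=1}^{r-2}\!\left(\frac{2}{(\log_2 n)^{8}}\right)^{\!k}\;=\;O\!\left(n_\Graph({\sf H}_r)\cdot\frac{n}{2^r(\log_2 n)^3}\right),
\]
via geometric-series decay for large $n$.

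Adding the two contributions gives the claimed bound with an appropriate constant $\wh{C}$. The main obstacle is the partition step: verifying that the maximum-good-prefix construction genuinely places every $\ul{\nu}\notin\mathrm{Good}_r$ into some $\Bad^{\ul{\xi}}({\sf H}_{j^*},{\sf H}_r)$ in the sense of Definition \ref{dfn:bad_define}, and carefully reconciling ordered/unordered conventions since $f_r(\ul{\nu},\ul{\xi})$ and the $\Bad$ sets depend on the pairing of vertices with the fixed string $\ul{\xi}$. The choice of the exponent $9$ in Lemma \ref{lem:Bad_bound} is used crucially here: it must dominate both the factor $2^{r-j}$ arising from $n/2^j$ versus $n/2^r$ and the $(\log_2 n)^{r-j}$ multiplicity from counting induced subgraphs, while still leaving a $(\log_2 n)^{-3}$ factor in the final bound.
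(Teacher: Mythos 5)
Your proposal is correct, and it takes a genuinely different and simpler route than the paper. The paper first applies Cauchy--Schwarz to get $\bigl|\sum_{\ul\nu\in\cH_r}(f_r-\bar f_{r,\ul\xi})\bigr|^2\le n_\Graph({\sf H}_r)\sum_{\ul\nu\in\cH_r}(f_r-\bar f_{r,\ul\xi})^2$, expands the second-moment sum into four terms $\sum f_r(f_r-1)+\sum f_r+n_\Graph\bar f_r^2-2\bar f_r\sum f_r$, and bounds each piece via Lemma \ref{goodbad} (which is where the good/bad split lives in the paper) and Lemma \ref{lem:f_r_bar_bound}; the leading $(n/2^r)^2$ contributions cancel at first order, leaving $(n/2^r)^2\cdot O(r/(\log_2 n)^7)$ inside the square, whence the $(\log_2 n)^{-3}$ after the square root. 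You instead bound $\sum_{\ul\nu\in\cH_r}|f_r-\bar f_{r,\ul\xi}|$ directly by the triangle inequality, applying the good/bad split at the level of individual summands: on the good set both $f_r$ and $\bar f_r$ are within $O((\log_2 n)^{-3})\cdot n/2^r$ of $n/2^r$, and on the bad set $|f_r-\bar f_r|\le 4n/2^j$ with the cardinality controlled by Lemma \ref{lem:Bad_bound}. This avoids the Cauchy--Schwarz step and, in particular, the second-moment estimate \eqref{eq:seconorder} on $\sum f_r(f_r-1)$ from Lemma \ref{goodbad} is never needed; you only use the first-moment-type ingredients. Since you are bounding the sum of absolute values, you actually prove something stronger than \eqref{eq:f_r_var}. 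Two small bookkeeping remarks: first, in the bad-part union bound one should also multiply by the $\le r^{r-j}\le(\log_2 n)^{r-j}$ choices of the reordered string $\hat{\ul\xi}^j$ (the paper does this in its proof of \eqref{eq:bad_j_H_r}), which shifts your geometric ratio from $2/(\log_2 n)^8$ to $2/(\log_2 n)^7$; this is still summable and leaves ample room against the target $(\log_2 n)^{-3}$. Second, your maximal-good-prefix selection plays the same role as the paper's greedy stopping construction (from the proof of Lemma \ref{goodbad}): either produces an ordering after which no remaining vertex is good, which is exactly what Definition \ref{dfn:bad_define} needs, so the partition of $\cH_r$ and the invocation of Lemma \ref{lem:Bad_bound} go through.
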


\bigskip

Before going to the proof let us mention one more lemma that is required to prove the main result inductively. Its proof is elementary. We include the proof for completion.
\begin{lem}\label{lem:subgraphaut}
Fix ${\sf H}_{r+1}$, a graph on $(r+1)$ vertices with vertex set $[r+1]$. Let ${\sf H}_r$ be the sub-graph of ${\sf H}_{r+1}$ with vertex set $[r]$. Then, there exists a $\ul{\xi}^r \in \{0,1\}^r$ such that 
\beq\label{eq:subgraphaut}
\left| \mathrm{Aut}({\sf H}_{r+1})\right| n_\Graph ({\sf H}_{r+1}) =  \left| \mathrm{Aut}({\sf H}_{r})\right| \sum_{\ul{\nu} \in \mathcal{H}_r}{f_r(\ul{\nu},\ul{\xi}^r)}.
\eeq
\end{lem}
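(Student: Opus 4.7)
The plan is a direct double-counting of the ordered labeled induced copies of ${\sf H}_{r+1}$ in $\Graph_n$. I would first choose $\ul{\xi}^r \in \{0,1\}^r$ by setting $\xi_i^r := \mathbf{1}\{\{i,r+1\} \in E({\sf H}_{r+1})\}$ for $i \in [r]$, so that $\ul{\xi}^r$ records the adjacency pattern of the ``new'' vertex $r+1$ with the vertex set $[r]$ of ${\sf H}_r$ inside ${\sf H}_{r+1}$. Both sides of \eqref{eq:subgraphaut} will then be computed as the cardinality of the set of ordered $(r+1)$-tuples $(u_1,\ldots,u_{r+1})$ of distinct vertices of $\Graph_n$ for which $i \mapsto u_i$ is a graph isomorphism from ${\sf H}_{r+1}$ onto the induced subgraph on $\{u_1,\ldots,u_{r+1}\}$.

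For the LHS, the standard correspondence between unordered induced copies and their isomorphic orderings gives that this cardinality equals $|\mathrm{Aut}({\sf H}_{r+1})|\cdot n_\Graph({\sf H}_{r+1})$. For the RHS, I would decompose each such ordered $(r+1)$-tuple into its first $r$ coordinates and the last one. Because ${\sf H}_r$ is the induced subgraph of ${\sf H}_{r+1}$ on $[r]$, the restriction $(u_1,\ldots,u_r)$ is an isomorphic ordering of some induced copy $\ul{\nu}\in\cH_r$ of ${\sf H}_r$; and by the choice of $\ul{\xi}^r$, the extension vertex $u_{r+1}$ is precisely any element of $\sN_{u_1}^{\xi_1^r}\cap\cdots\cap\sN_{u_r}^{\xi_r^r}$ distinct from $u_1,\ldots,u_r$. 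Grouping the isomorphic orderings by the underlying unordered copy $\ul{\nu}\in\cH_r$ (each $\ul{\nu}$ giving $|\mathrm{Aut}({\sf H}_r)|$ such orderings, each with the same extension count $f_r(\ul{\nu},\ul{\xi}^r)$ under the ordering convention implicit in Definition~\ref{dfn:f_r_bar_xi}) and summing produces $|\mathrm{Aut}({\sf H}_r)|\sum_{\ul{\nu}\in\cH_r} f_r(\ul{\nu},\ul{\xi}^r)$. Equating the two counts yields \eqref{eq:subgraphaut}.

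The only bookkeeping subtlety is the implicit distinctness requirement $u_{r+1}\notin\{u_1,\ldots,u_r\}$, which is not directly baked into the definition of $f_r$; I do not expect this to be a genuine obstacle, as the identity is purely combinatorial and the ``there exists'' clause in the statement leaves room to absorb any degenerate extensions through the choice of $\ul{\xi}^r$ or the canonical labelings associated to elements of $\cH_r$.
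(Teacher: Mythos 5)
Your proof is essentially the paper's: both sides are computed as the number of ordered $(r+1)$-tuples of distinct vertices of $\Graph_n$ inducing an isomorphic ordering of ${\sf H}_{r+1}$, with $\ul{\xi}^r$ recording the adjacency of vertex $r+1$ to $[r]$, and the restriction to the first $r$ coordinates grouped over $\cH_r$ exactly as in the paper's proof of Lemma~\ref{lem:subgraphaut}.

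The one subtlety you flag is genuine, but your proposed escape hatch does not resolve it: neither the ``there exists'' clause over $\ul{\xi}^r$ nor any canonical labeling of $\cH_r$ helps, since for instance when ${\sf H}_{r+1}$ is the empty graph every choice of distinguished vertex yields $\ul{\xi}^r = \mathbf{0}$, and under a literal reading of Definition~\ref{nbrdfn} one would then have $\nu_i \in \sN_{\nu_i}^0$, spuriously counting each $\nu_i$ itself as an extension. More generally the defect arises precisely when the distinguished vertex has a non-adjacent ``false twin'' in $[r]$. The correct repair --- and the interpretation the paper clearly intends, given that $\sN_\nu$ is the set of \emph{neighbors} of $\nu$ so that $\nu\notin\sN_\nu^1$, and $\sN_\nu^0$ is read as the set of \emph{non-neighbors} of $\nu$ --- is the convention $\sN_\nu^0 := [n]\setminus(\sN_\nu\cup\{\nu\})$. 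With $\nu\notin\sN_\nu^{\xi}$ for both $\xi\in\{0,1\}$, the extension vertex $u_{r+1}$ is automatically distinct from $u_1,\ldots,u_r$, and your double-count closes exactly.
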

\begin{proof}
For any collection of vertices $\{v_1,v_2,\ldots,v_s\}$, let $\Graph(v_1,v_2,\ldots,v_s)$ be the sub-graph induced by those vertices. Let ${\sf H}'$ and ${\sf H}''$ be two graphs with vertex sets $\{v_1,v_2,\ldots,v_s\}$ and $\{v_1',v_2',\ldots,v_s'\}$, respectively. We write ${\sf H}'= {\sf H}''$, if under the permutation $\pi$, for which $\pi(v_i)=v_i'$ for all $i=1,2,\ldots,s$, the graphs are same. Then, for any given collection of vertices $\{v_1,v_2,\ldots,v_r\}$, if $\Graph(v_{\pi(1)}, v_{\pi(2)},\ldots,v_{\pi(r+1)})={\sf H}_{r+1}$ for some permutation of the vertices $\{v_1,v_2,\ldots,v_r\}$, then there are actually $|\mathrm{Aut}({\sf H}_{r+1)}|$ many such permutations. Thus recalling the definition of $n_\Graph(\cdot)$ we immediately deduce that 
\[
\sum_{v_1,v_2\ldots,v_{r+1} \in [n]} \bI(\Graph(v_1,v_2,\ldots,v_{r+1})= {\sf H}_{r+1})= \left| \mathrm{Aut}({\sf H}_{r+1})\right| n_\Graph ({\sf H}_{r+1}).
\]
We also note that   $\Graph(v_1,v_2,\ldots,v_{r+1})= {\sf H}_{r+1}$, if and only if $\Graph(v_1,v_2,\ldots,v_{r})= {\sf H}_{r}$ and $v_{r+1} \in \sN_{v_1}^{\xi_1^r}\cap \sN_{v_2}^{\xi_2^r} \cap \cdots \cap \sN_{v_r}^{\xi_r^r}$, for some appropriately chosen $\ul{\xi}^r=\{\xi_1^r,\xi_2^r,\ldots,\xi_r^r\}$. Thus, recalling the definition of $f_r(\ul{v},\ul{\xi}^r)$ we see that
\[
\sum_{v_1,v_2\ldots,v_{r+1} \in [n]} \bI(\Graph(v_1,v_2,\ldots,v_{r+1})= {\sf H}_{r+1})=  \sum_{v_1,v_2\ldots,v_{r} \in [n]} \bI(\Graph(v_1,v_2,\ldots,v_{r})= {\sf H}_{r}) f_r(\ul{v},\ul{\xi}^r).
\]
Now noting that $f_r(\ul{v}, \ul{\xi}^r)$ is invariant under any permutation of coordinates of $\ul{v}$ and recalling the definitions of $n_\Graph(\cdot)$ and $|\mathrm{Aut}(\cdot)|$ again, we observe that the \abbr{RHS} of the above equation is same as the the \abbr{RHS} of \eqref{eq:subgraphaut}, which completes the proof.
\end{proof}

Now we are ready to prove Lemma \ref{lem:error_inductive_bd}.

\begin{proof}[Proof of Lemma \ref{lem:error_inductive_bd}] 
Fix any graph ${\sf H}_{r+1}$ with vertex set $[r+1]$. Applying the triangle inequality we have
\begin{align}
\cE_n({\sf H}_{r+1})& \le \left|\f{|\mathrm{Aut}({\sf H}_{r+1})| n_\Graph({\sf H}_{r+1})- \bar{f}_{r,\ul{\xi}^r} n_\Graph({\sf H}_{r})|\mathrm{Aut}({\sf H}_r)|}{(n)_{ r+1}}\right| + \left|\f{ \bar{f}_{r,\ul{\xi}^r} n_\Graph({\sf H}_{r})|\mathrm{Aut}({\sf H}_r)|}{{(n)_{r+1}}} - \left(\f{1}{2}\right)^{{r +1 \choose 2}}\right| \notag\\
& \le \left|\f{|\mathrm{Aut}({\sf H}_{r+1})| n_\Graph({\sf H}_{r+1})- \bar{f}_{r,\ul{\xi}^r} n_\Graph({\sf H}_{r})|\mathrm{Aut}({\sf H}_r)|}{(n)_{ r+1}}\right| + \f{\bar{f}_{r,\ul{\xi}^r}{(n)_r}}{{(n)_{r+1}}}\left|\f{n_\Graph({\sf H}_r)|\mathrm{Aut}({\sf H}_r)|}{{(n)_r}} - \left(\f{1}{2}\right)^{{r \choose 2}}\right|\notag\\
& \qquad \qquad \qquad \qquad \qquad \qquad + \left|\f{\bar{f}_{r,\ul{\xi}^r}{(n)_r}}{{(n)_{r+1}}}\left(\f{1}{2}\right)^{{r \choose 2}}- \left(\f{1}{2}\right)^{{r+1 \choose 2}}\right|\label{eq:error_break}\\
& =: \text{Term I} + \text{Term II}+ \text{Term III}.\notag
\end{align}
Recalling the definition of $\cE_n(r)$ and using Lemma \ref{lem:f_r_bar_bound}, we note that
\begin{align}
\text{Term II}= \f{\bar{f}_{r,\ul{\xi}}{(n)_r}}{{(n)_{r+1}}}\left|\f{n_\Graph({\sf H}_r)|\mathrm{Aut}(\mathrm{H}_r)|}{{(n)_r}} - \left(\f{1}{2}\right)^{{r \choose 2}}\right| &\le \cE_n(r)\frac{n}{n-r} \left(\frac{1}{2} \right)^r \left(1+12 C  r n^{\delta-1}\right)\left(1+\f{2r^2}{n}\right) \notag \\
& \le \cE_n(r) \left(\frac{1}{2} \right)^r \left(1+ \f{16 C r}{(\log_2\hspace{-2pt}n)^3}\right), \label{eq:rec_term2}
\end{align}
where the last inequality follows from that fact that $r \le \log_2\hspace{-2pt}n$. Now to control Term III, using Lemma \ref{lem:f_r_bar_bound} we have
\[
\left|\bar{f}_{r,\ul{\xi}} - n \left(\f{1}{2}\right)^r\right|  \le  n\left(\f{1}{2}\right)^r\f{14C r}{(\log_2\hspace{-2pt}n)^4}.
\]
Therefore, using the triangle inequality, 
\begin{align}
\text{Term III}   = \left|\f{\bar{f}_{r,\ul{\xi}}{(n)_r}}{{(n)_{r+1}}}\left(\f{1}{2}\right)^{{r \choose 2}}- \left(\f{1}{2}\right)^{{r+1 \choose 2}}\right| \le \f{n{(n)_r}}{{(n)_{r+1}}}\left(\f{1}{2}\right)^{{r+1 \choose 2}}\f{14 C r}{(\log_2\hspace{-2pt}n)^4}+2\f{r}{n}\left(\f{1}{2}\right)^{{r+1 \choose 2}}.\label{eq:rec_term3}
\end{align}
Now it remains to bound Term I. To this end, using Lemma \ref{lem:subgraphaut} and the triangle inequality, from Lemma \ref{lem:var_type_bd} we obtain
\begin{align}
\text{ Term I} &= \left|\f{|\mathrm{Aut}({\sf H}_{r+1})| n_\Graph({\sf H}_{r+1})- \bar{f}_{r,\ul{\xi}} n_\Graph({\sf H}_{r})|\mathrm{Aut}({\sf H}_r)|}{(n)_{ r+1}}\right| \notag\\
&=    \f{|\mathrm{Aut}({\sf H}_r)|}{(n)_{r+1}}\left|\sum_{\ul{\nu} \in \mathcal{H}_r} \left\{f_r(\ul{\nu},\xi)- \bar{f}_{r,\xi}\right\}\right| \le   \f{|\mathrm{Aut}(\mathrm{H}_r)| \widehat{C}n_\Graph({\sf H} _r) n }{{(n)_{r+1}}(\log_2\hspace{-2pt}n)^3}\left(\f{1}{2}\right)^r \le \f{6 \wh{C}}{(\log_2\hspace{-2pt}n)^3} \left(\f{1}{2}\right)^{r+1 \choose 2},  \label{eq:rec_term1}
\end{align}
where in the last step we use the fact that $ n_\Graph({\sf H}_r) |\mathrm{Aut}({\sf H}_r)| \le 2  {(n)_r}\left(\f{1}{2}\right)^{{r \choose 2}}$.
Finally combining \eqref{eq:rec_term2}-\eqref{eq:rec_term1} we complete the proof.
\end{proof}


\section{Preparatory Technical Lemmas}
\label{sec:proof_lemma}


In this section we prove Lemma \ref{common-nbr-allgood}, Lemma \ref{lem:Bad_bound}, Lemma \ref{lem:f_r_bar_bound}, and Lemma \ref{lem:var_type_bd}. 

We begin with the proof of Lemma \ref{common-nbr-allgood}. Before going to the proof let us recall the definition of ``good'' vertices from Definition \ref{dfn:good_dfn}. Using Definition \ref{dfn:good_dfn} the proof follows by induction.

\begin{proof}[Proof of Lemma \ref{common-nbr-allgood}]
Note that, by our assumption \eqref{eq:A1_1/2}-\eqref{eq:A2_1/2}, we have good bounds on $|\sN_{v_1}^{\xi_1} \cap \sN_{v_2}^{\xi_2}|$ (see \eqref{eq:bd_generalized_nbhd}). We propagate this bound for a general $j$ by induction. Using induction we will prove a slightly stronger bound. Namely we will prove the following:
\beq\label{eq:good_set_estimates_ind}
\left|\left|\sN_{\nu_1}^{\xi_1} \cap \sN_{\nu_2}^{\xi_2} \cap \cdots \cap \sN_{\nu_r}^{\xi_r}\right|- \f{n}{2^r}\right|  \le 3\wt{C} r n^{\f{\vep(\delta-1)}{2}}  \times \f{n}{2^r}.
\eeq
From \eqref{eq:good_set_estimates_ind} the conclusion follows by noting that $r < \log_2\hspace{-2pt}n$.

To this end, using the triangle inequality we note that
\begin{align}
\left|\left|\sN_{\nu_1}^{\xi_1} \cap \sN_{\nu_2}^{\xi_2} \cap \cdots \cap \sN_{\nu_j}^{\xi_j}\right|- \f{n}{2^j}\right|  &\le   \left|\left|\sN_{\nu_1}^{\xi_1} \cap \sN_{\nu_2}^{\xi_2} \cap \cdots \cap \sN_{\nu_j}^{\xi_j}\right|- \f{1}{2}\left|\sN_{\nu_1}^{\xi_1} \cap \sN_{\nu_2}^{\xi_2} \cap \cdots \cap \sN_{\nu_{j-1}}^{\xi_{j-1}}\right|\right| \notag\\
& \qquad \qquad \qquad  \qquad \qquad + \f{1}{2}\left|\left|\sN_{\nu_1}^{\xi_1} \cap \sN_{\nu_2}^{\xi_2} \cap \cdots \cap \sN_{\nu_{j-1}}^{\xi_{j-1}}\right|- \f{n}{2^{j-1}}\right|. \notag
\end{align}
Since $v_j \in \Good^{\ul{\xi}^{j}}(\nu_1, \nu_2,...,\nu_{j-1})$, we therefore have
$$
 \left|\left|\sN_{\nu_1}^{\xi_1} \cap \sN_{\nu_2}^{\xi_2} \cap \cdots \cap \sN_{\nu_j}^{\xi_j}\right|- \f{1}{2}\left|\sN_{\nu_1}^{\xi_1} \cap \sN_{\nu_2}^{\xi_2} \cap \cdots \cap \sN_{\nu_{j-1}}^{\xi_{j-1}}\right|\right| \leq \wt{C} n^{\f{\vep(\delta-1)}{2}} \left|\sN_{\nu_1}^{\xi_1} \cap \sN_{\nu_2}^{\xi_2} \cap \cdots \cap \sN_{\nu_{j-1}}^{\xi_{j-1}}\right|, $$
for all $j=3,4,\ldots,r$. Recalling that $\vep=\frac{\bar{C}_0 \log_2\hspace{-2pt}n \log_2\hspace{-2pt}n}{(1-\delta)\log_2\hspace{-2pt}n}$ and using induction hypothesis we obtain
\begin{align}
\left|\left|\sN_{\nu_1}^{\xi_1} \cap \sN_{\nu_2}^{\xi_2} \cap \cdots \cap \sN_{\nu_j}^{\xi_j}\right|- \f{n}{2^j}\right|  
&  \le \wt{C} n^{\f{\vep(\delta-1)}{2}} \left|\sN_{\nu_1}^{\xi_1} \cap \sN_{\nu_2}^{\xi_2} \cap \cdots \cap \sN_{\nu_{j-1}}^{\xi_{j-1}}\right|+ {3}\wt{C}(j-1) n^{\f{\vep(\delta-1)}{2}}  \times \f{n}{2^j}.\notag
\end{align}
Since $ r n^{\f{\vep(\delta -1)}{2}} \ra 0$, as $n \ra \infty$, the induction hypothesis further implies 
\beq\label{eq:common_neighbor}
\left|\sN_{\nu_1}^{\xi_1} \cap \sN_{\nu_2}^{\xi_2} \cap \cdots \cap \sN_{\nu_{j-1}}^{\xi_{j-1}}\right| \le  \f{n}{2^{j-1}} + 3 \wt{C} (j-1)n^{\f{\vep(\delta-1)}{2}} \times \f{n}{2^{j-1}} \le \frac{3}{2}\times \f{n}{2^{j-1}} = 3 \times \f{n}{2^{j}},
\eeq
for all large $n$, and hence we have \eqref{eq:good_set_estimates_ind}. This completes the proof.
\end{proof}

\bigskip

We now proceed to the proof of Lemma \ref{lem:Bad_bound}. That is, we want to show that the number of ``bad'' vertices is negligible. To prove Lemma \ref{lem:Bad_bound} we first show that the number of vertices $v$ such that $v \notin \mathrm{Good}^\xi(B)$ is small. To this end, we first prove the following variance bound. Here for ease of exposition write $\E(\cdot)$, $\mathrm{Var}(\cdot)$ etc where the expectations will be taken with respect to the empirical measure over $[n]$. 
\begin{lem}\label{lem:var_N_nu}
Let $\{\Graph_n\}_{n \in \N}$ be a sequence of graphs satisfying \eqref{eq:A1_1/2}-\eqref{eq:A2_1/2}. Then for any set $B \subset [n]$, and $\xi\in \{0,1\}$,
\[
\mathrm{Var} \left(\sN_\nu^{B,\xi}\right) \le \frac{|B|}{4} + \bar{C} {|B|^2 n^{\delta-1}},
\]
for some constant $\bar{C}$, depending only on $C$.
\end{lem}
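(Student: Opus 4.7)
\medskip

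\noindent\textbf{Proof proposal for Lemma \ref{lem:var_N_nu}.} The plan is to perform a direct second-moment calculation, interpreting the variance as an average over a uniformly random vertex $\nu \in [n]$. Write
\[
\sN_\nu^{B,\xi} = \sum_{u \in B} a_{\nu u}^\xi, \qquad a_{\nu u}^\xi := \mathbf{1}(a_{\nu u} = \xi),
\]
and note the basic identities $\sum_{\nu \in [n]} a_{\nu u}^\xi = |\sN_u^\xi|$ and $\sum_{\nu \in [n]} a_{\nu u}^\xi a_{\nu u'}^\xi = |\sN_u^\xi \cap \sN_{u'}^\xi|$. Then expand
\[
\E\bigl[\sN_\nu^{B,\xi}\bigr] = \frac{1}{n}\sum_{u \in B}|\sN_u^\xi|, \qquad \E\bigl[(\sN_\nu^{B,\xi})^2\bigr] = \frac{1}{n}\sum_{u \in B}|\sN_u^\xi| + \frac{1}{n}\sum_{\substack{u, u' \in B \\ u \ne u'}} |\sN_u^\xi \cap \sN_{u'}^\xi|,
\]
so the variance is
\[
\mathrm{Var}(\sN_\nu^{B,\xi}) = \frac{1}{n}\sum_{u \in B}|\sN_u^\xi|\Bigl(1 - \tfrac{|\sN_u^\xi|}{n}\Bigr) + \frac{1}{n}\sum_{\substack{u, u' \in B \\ u \ne u'}}\Bigl(|\sN_u^\xi \cap \sN_{u'}^\xi| - \tfrac{|\sN_u^\xi|\,|\sN_{u'}^\xi|}{n}\Bigr).
\]

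Next I would plug in \eqref{eq:A1_1/2} and \eqref{eq:A2_1/2}. Assumption \eqref{eq:A1_1/2} gives $|\sN_u^\xi| = \tfrac{n}{2} + O(n^\delta)$ for either $\xi \in \{0,1\}$ (when $\xi = 0$, use $|\sN_u^0| = n - 1 - |\sN_u^1|$), which makes the single-sum term contribute exactly $|B|/4 + O(|B|n^{\delta-1})$. For the off-diagonal pair term, the direct control from \eqref{eq:A2_1/2} applies to $\xi = 1$; for $\xi = 0$ or in general I would use the inclusion-exclusion identity
\[
|\sN_u^0 \cap \sN_{u'}^0| = n - |\sN_u^1 \cup \sN_{u'}^1| + \text{(correction for $u,u'$ themselves)} = n - |\sN_u^1| - |\sN_{u'}^1| + |\sN_u^1 \cap \sN_{u'}^1| + O(1),
\]
together with \eqref{eq:A1_1/2}--\eqref{eq:A2_1/2}, to conclude that $|\sN_u^\xi \cap \sN_{u'}^\xi| = \tfrac{n}{4} + O(n^\delta)$ for both values of $\xi$. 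Substituting this and the approximation $|\sN_u^\xi|\,|\sN_{u'}^\xi|/n = n/4 + O(n^\delta)$ makes each off-diagonal summand $O(n^\delta)$, yielding an aggregate contribution of $O(|B|^2 n^{\delta - 1})$. Collecting the two pieces gives the claimed bound $\tfrac{|B|}{4} + \bar C \, |B|^2 n^{\delta - 1}$ with $\bar C$ depending only on the constant $C$ in \eqref{eq:A1_1/2}--\eqref{eq:A2_1/2}.

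I do not anticipate any serious obstacle here; the only small subtlety is the handling of $\xi = 0$ since Assumption \eqref{eq:A2_1/2} is phrased in terms of common neighbors (i.e.\ $\xi = 1$), and one must pass to the $\xi = 0$ case by inclusion-exclusion, absorbing the resulting linear-in-$|B|$ correction into the overall $O(|B|^2 n^{\delta - 1})$ error. Everything else is a bookkeeping exercise in expanding a square and summing deterministic error bounds.
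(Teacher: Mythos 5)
Your proposal is correct, and it is essentially the paper's proof expressed in the $\E[X^2]-(\E X)^2$ form rather than the $\frac{1}{2n^2}\sum_{\nu_1,\nu_2}(\cdot)^2$ form the paper uses; the two algebraic identities expand to exactly the same expression $\tfrac{1}{n}\sum_{u\in B}\sN_u^\xi + \tfrac{2}{n}\sum_{u<u'}|\sN_u^\xi\cap\sN_{u'}^\xi| - \tfrac{1}{n^2}\bigl(\sum_{u\in B}\sN_u^\xi\bigr)^2$. Your handling of $\xi=0$ by inclusion--exclusion corresponds precisely to the paper's observation in \eqref{eq:bd_generalized_nbhd} that \eqref{eq:A1_1/2}--\eqref{eq:A2_1/2} imply, via the triangle inequality, $\bigl||\sN_\nu^\xi\cap\sN_{\nu'}^{\xi'}|-n/4\bigr|<3Cn^\delta$ for all $\xi,\xi'\in\{0,1\}$, so there is no gap.
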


\begin{proof}
Recall that $a^\xi_{v_1v_2}=\bI(a_{v_1v_2}=\xi)$, where $(a_{\nu_1 \nu_2})_{\nu_1,\nu_2=1}^n$ is the adjacency matrix of the graph $\Graph_n$. Therefore, we have

\begin{align*}
& \sum_{\nu_1,\nu_2 \in [n]}{\left({\sum_{\nu \in B}\left(a_{\nu \nu_1}^\xi - a_{\nu \nu_2}^\xi\right)}\right)^2} \\
& \qquad \qquad \qquad = \sum_{\nu_1,\nu_2 \in [n]}\left\{\left(\sum_{\nu \in B}a_{\nu \nu_1}^\xi\right)^2 + \left(\sum_{\nu \in B}a_{\nu \nu_2}^\xi\right)^2- 2\left(\sum_{\nu \in B}a_{\nu \nu_1}^\xi\right) \left(\sum_{\nu \in B}a_{\nu \nu_2}^\xi\right)\right\} \\
& \qquad \qquad \qquad = 2n \sum_{\nu_1 \in [n]}\left(\sum_{\nu \in B}a_{\nu \nu_1}^\xi\right)^2 - 
2\sum_{\nu,\nu' \in B}\left(\sum_{\nu_1 \in [n]}a_{\nu \nu_1}^\xi\right)\left(\sum_{\nu_2 \in [n]}a_{\nu'  \nu_2}^\xi\right) \\
& \qquad \qquad \qquad =  2n \sum_{\nu_1 \in [n]} \left\{\sum_{\nu \in B}{a_{\nu \nu_1}^\xi} + 2\sum_{\nu < \nu' \in B}{a_{\nu \nu_1}^\xi a_{\nu' \nu_1}^\xi}\right\} - 2\left(\sum_{\nu \in B}{\sN_{\nu}^\xi}\right)^2 \\
& \qquad \qquad \qquad = 2n\left\{\sum_{\nu \in B}{\sN_{\nu}^\xi} + 2 \sum_{\nu < \nu' \in B}{|\sN_{\nu}^\xi \cap \sN_{\nu'}^\xi|}\right\}
- 2\left(\sum_{\nu \in B}{\sN_{\nu}^\xi}\right)^2. 
\end{align*}
Using \eqref{eq:A1_1/2} and \eqref{eq:A2_1/2}, by the triangle inequality, we note that
\beq\label{eq:bd_generalized_nbhd}
\sup_{1 \leq \nu \leq n}{\left|\left|\sN_\nu^\xi\right| - \frac{n}{2}\right|}, \sup_{1\leq \nu \ne \nu' \leq n}{\left|\left|\sN_\nu^\xi \cap \sN_{\nu'}^{\xi'}\right| - \frac{n}{4}\right|}  < 3Cn^{\delta}, \qquad \text{ for any } \xi,\xi' \in \{0,1\}.
\eeq
Thus, continuing from above
\begin{align*}
&\sum_{\nu_1,\nu_2 \in [n]}{\left({\sum_{\nu \in B}\left(a_{\nu \nu_1}^\xi - a_{\nu \nu_2}^\xi\right)}\right)^2}\\
& \qquad \qquad \leq  2n \left\{\sum_{\nu \in B}\left(\frac{n}{2} + 3Cn^\delta\right) + 2\sum_{\nu < \nu' \in B}\left(\frac{n}{4}+3Cn^{\delta}\right)\right\} - 2\left(\sum_{\nu \in B}\left(\frac{n}{2} - 3Cn^{\delta}\right)\right)^2 \\
& \qquad \qquad = 2n \left\{|B|\left(\frac{n}{2} + 3Cn^\delta\right)+2 {|B| \choose 2}\left(\frac{n}{4} +3Cn^{\delta}\right)\right\}
- 2|B|^2\left(\frac{n}{2} - 3Cn^{\delta}\right)^2 \\
& \qquad \qquad  \le \frac{n^2 |B|}{2} + 2\bar{C}|B|^2 n^{1+\delta },
\end{align*}
for some constant $\bar{C}$, depending only on $C$. Finally observing that 
$$ 2n^2\mathrm{Var}(\sN_\nu^{B,\xi}) =\sum_{\nu_1,\nu_2 \in [n]}{\left({\sum_{\nu \in B}(a_{\nu \nu_1}^\xi - a_{\nu \nu_2}^\xi)}\right)^2},$$ 
completes the proof.
\end{proof}


\begin{rmk}
Note that for an Erd\H{o}s-R\'{e}yni graph with edge connectivity probability $1/2$, one can show that $\mathrm{Var} (\sN_\nu^{B,\xi}) = \frac{|B|}{4}$, for any subset vertices $B$, where the variance is with respect to the randomness of the edges and the uniform choice of the vertex $v$. 
For pesudo-random graphs satisfying \eqref{eq:A1_1/2}-\eqref{eq:A2_1/2}, repeating the same steps as in the proof of Lemma \ref{lem:var_N_nu}, we can also obtain that
\[
\mathrm{Var} \left(\sN_\nu^{B,\xi}\right) \ge \frac{|B|}{4} - \bar{C} {|B|^2 n^{\delta-1}}.
\]
Therefore, we see that pseudo-random graphs satisfying \eqref{eq:A1_1/2}-\eqref{eq:A2_1/2} are not much different from  $\Graph(n,1/2)$ in this aspect. 
\end{rmk}

\vskip10pt

\noindent
From Lemma \ref{lem:var_N_nu}, using Markov's inequality we obtain the following result.
\begin{lem}\label{lem:good_B}
Let $\{\Graph_n\}_{n \in \N}$ be a sequence of graphs satisfying \eqref{eq:A1_1/2}-\eqref{eq:A2_1/2}. Fix $\vep = \f{\bar{C}_0\log_2\hspace{-2pt}\log_2\hspace{-2pt}n}{(1-\delta)\log_2\hspace{-2pt}n}$. Then, there exists a positive constant $\wt{C}$, depending only on $C$, such that for any set $B \subset [n]$, and $\xi \in \{0,1\}$
\[
\left|\left\{ \nu \in [n]: \left|\sN_\nu^{B,\xi} - \frac{|B|}{2}\right| > \wt{C} |B| n^{\vep(\delta-1)/2}\right\}\right| \le n (\log_2\hspace{-2pt}n)^{\bar{C}_0}\Upsilon_n(|B|,\delta),
\]
where $\Upsilon_n(x,\delta):=\frac{x^{-1}+n^{\delta-1}}{2}$.
\end{lem}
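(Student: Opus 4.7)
The plan is to derive the bound as a direct application of Chebyshev's inequality (i.e.\ Markov applied to the squared deviation) using the variance estimate from Lemma \ref{lem:var_N_nu}. Recall that expectations here are with respect to the uniform measure on $[n]$. So first I would compute the mean
\[
\E\left[\sN_\nu^{B,\xi}\right] = \frac{1}{n}\sum_{\nu\in [n]} \sN_\nu^{B,\xi} = \frac{1}{n}\sum_{u \in B} \sN_u^\xi,
\]
and observe that, by the extended degree bound \eqref{eq:bd_generalized_nbhd}, the quantity $\E[\sN_\nu^{B,\xi}]$ differs from $|B|/2$ by at most $3C|B|n^{\delta-1}$.

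Next, I would fix the threshold $t := \tfrac{1}{2}\wt{C}|B|n^{\vep(\delta-1)/2}$ and ensure that $\wt{C}$ is chosen large enough (relative to $C$) that $3C|B|n^{\delta-1} \le t$ for all sufficiently large $n$; this is permissible because $\delta-1 < \vep(\delta-1)/2$ (as $\delta<1$ and $\vep<2$), so the $n^{\delta-1}$ mean-shift is negligible compared to the fluctuation scale $n^{\vep(\delta-1)/2}$. By the triangle inequality,
\[
\left\{ \nu \in [n]: \left|\sN_\nu^{B,\xi} - \tfrac{|B|}{2}\right| > \wt{C} |B| n^{\vep(\delta-1)/2}\right\}
\subset
\left\{ \nu \in [n]: \left|\sN_\nu^{B,\xi} - \E\sN_\nu^{B,\xi}\right| > t\right\},
\]
and by Markov applied to $|\sN_\nu^{B,\xi} - \E\sN_\nu^{B,\xi}|^2$ together with the variance estimate of Lemma \ref{lem:var_N_nu}, the cardinality on the right is bounded by
\[
n\cdot \frac{\mathrm{Var}(\sN_\nu^{B,\xi})}{t^2} \le \frac{n\bigl(|B|/4 + \bar{C}|B|^2 n^{\delta-1}\bigr)}{\frac{1}{4}\wt{C}^2 |B|^2 n^{\vep(\delta-1)}} = \frac{4}{\wt{C}^2}\cdot n\cdot n^{-\vep(\delta-1)}\cdot\left(\frac{1}{4|B|} + \bar{C} n^{\delta-1}\right).
\]

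The decisive algebraic observation is that, for the prescribed choice $\vep = \bar{C}_0\log_2\log_2 n/((1-\delta)\log_2 n)$, one has
\[
n^{-\vep(\delta-1)} = n^{\vep(1-\delta)} = 2^{\bar{C}_0\log_2\log_2 n} = (\log_2 n)^{\bar{C}_0}.
\]
Substituting this and absorbing constants into $\wt{C}$, the resulting estimate takes exactly the form $n(\log_2 n)^{\bar{C}_0}\Upsilon_n(|B|,\delta)$ with $\Upsilon_n(x,\delta) = (x^{-1} + n^{\delta-1})/2$, as claimed. There is no real obstacle here: the proof is a bookkeeping step that converts the variance bound of Lemma \ref{lem:var_N_nu} into a tail count via Chebyshev, with the parametric choice of $\vep$ engineered precisely to convert the polynomial fluctuation factor $n^{-\vep(\delta-1)}$ into the polylogarithmic factor $(\log_2 n)^{\bar{C}_0}$ that appears in the statement.
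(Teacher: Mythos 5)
Your argument is correct and follows essentially the same path as the paper: Chebyshev's inequality applied to the variance bound of Lemma \ref{lem:var_N_nu}, a mean estimate from \eqref{eq:bd_generalized_nbhd}, a triangle-inequality absorption of the $O(n^{\delta-1})$ mean shift into the $n^{\vep(\delta-1)/2}$ fluctuation threshold, and the identity $n^{\vep(1-\delta)} = (\log_2 n)^{\bar{C}_0}$. The only difference is presentational: you split the threshold in half to handle the centering explicitly, whereas the paper sets $\varpi = \sqrt{2\bar{C}}|B|n^{\vep(\delta-1)/2}$ and then invokes the triangle inequality at the end; both are just bookkeeping choices for where the constants get absorbed into $\wt{C}$.
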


\begin{proof}
The proof is a straightforward application of Lemma \ref{lem:var_N_nu}. From Lemma \ref{lem:var_N_nu}, by Chebychev's inequality we deduce,
\begin{align}
\P(|\sN_\nu^{B,\xi} - \E(\sN_\nu^{B,\xi})| > \varpi) \leq \frac{\mathrm{Var}(\sN_\nu^{B,\xi})}{\varpi^2} \le \f{\bar{C}(|B|+|B|^2 n^{\delta-1})}{\varpi^2},\notag
\end{align}
for every $\varpi >0$. Setting $\varpi=\sqrt{2\bar{C}}|B| n^{\vep(\delta-1)/2}$, from above we therefore obtain
\beq \label{eq:markov}
\P\left(|\sN_\nu^{B,\xi} - \E(\sN_\nu^{B,\xi})| > \sqrt{2\bar{C}}|B| n^{\vep(\delta-1)/2}\right) \le  \f{\f{n^{\vep(1-\delta)}}{|B|}+n^{(\delta-1)(1-\vep)}}{2}.
\eeq
Next we note that
\[
\sum_{\nu \in [n]}\sN_\nu^{B,\xi}= \sum_{\nu' \in B}\sum_{\nu \in [n]} a_{\nu \nu'}^\xi= \sum_{\nu' \in B} \sN_{\nu'}^\xi,
\]
and thus using \eqref{eq:bd_generalized_nbhd}, we further have
\beq\label{eq:expec}
\frac{|B|}{2} - 3C|B|n^{\delta-1} \leq \E\left(\sN_\nu^{B,\xi}\right)  \leq \frac{|B|}{2} + 3C|B|n^{\delta-1}.
\eeq
Thus combining \eqref{eq:markov}-\eqref{eq:expec} the required result follows upon using triangle inequality.
\end{proof}


We now use Lemma \ref{lem:good_B} to prove Lemma \ref{lem:Bad_bound}. Before going to the proof of Lemma \ref{lem:Bad_bound} we need one more elementary result.

\begin{lem}\label{lem:automorphism}
Let ${\sf H}$ be any graph with vertex set $[m]$, and ${\sf H}'$ be one of its vertex-deleted induced sub-graph. That is, ${\sf H}'$ is the sub-graph induced by the vertices $[m] \setminus \{v\}$ for some $v \in [m]$. 
Then
\[
\left|\mathrm{Aut}({\sf H})\right| \le m \left|\mathrm{Aut}({\sf H}')\right| .
\]
\end{lem}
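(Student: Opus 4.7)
The plan is to use the orbit-stabilizer theorem applied to the natural action of $\mathrm{Aut}({\sf H})$ on the vertex set $[m]$. Specifically, for the distinguished vertex $v \in [m]$ (the one whose deletion yields ${\sf H}'$), write
\[
|\mathrm{Aut}({\sf H})| = |\mathrm{Orb}(v)| \cdot |\mathrm{Stab}(v)|,
\]
where $\mathrm{Orb}(v) = \{\sigma(v) : \sigma \in \mathrm{Aut}({\sf H})\}$ and $\mathrm{Stab}(v) = \{\sigma \in \mathrm{Aut}({\sf H}) : \sigma(v) = v\}$. Since $\mathrm{Orb}(v) \subset [m]$, we trivially have $|\mathrm{Orb}(v)| \le m$.

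The remaining step is to bound $|\mathrm{Stab}(v)|$ by $|\mathrm{Aut}({\sf H}')|$. The key observation is that any $\sigma \in \mathrm{Stab}(v)$ restricts to a bijection on $[m] \setminus \{v\}$, and since ${\sf H}'$ is the induced subgraph on that vertex set and $\sigma$ preserves adjacency in ${\sf H}$, the restriction $\sigma|_{[m]\setminus\{v\}}$ is an automorphism of ${\sf H}'$. Moreover, the restriction map $\mathrm{Stab}(v) \to \mathrm{Aut}({\sf H}')$ is injective, because $\sigma$ is completely determined by its values on $[m] \setminus \{v\}$ together with the condition $\sigma(v) = v$. Hence $|\mathrm{Stab}(v)| \le |\mathrm{Aut}({\sf H}')|$.

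Combining the two bounds gives $|\mathrm{Aut}({\sf H})| \le m \cdot |\mathrm{Aut}({\sf H}')|$, as claimed. There is no real obstacle here; the lemma is purely a standard group-action fact, and the only thing to check carefully is that restriction to $[m] \setminus \{v\}$ really lands in $\mathrm{Aut}({\sf H}')$, which follows immediately from ${\sf H}'$ being the \emph{induced} subgraph.
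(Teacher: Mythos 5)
Your proof is correct and is essentially the same as the paper's: the paper bounds $|\mathrm{Aut}({\sf H})|/|\mathrm{Aut}_v({\sf H})|$ by counting cosets of the stabilizer (Lagrange), which is precisely the orbit-stabilizer identity you invoke, and both arguments then embed the stabilizer into $\mathrm{Aut}({\sf H}')$ via restriction.
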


\begin{proof}
Let us denote $\mathrm{Aut}_v({\sf H})$ to be the vertex-stablilizer sub-group. That is,
\[
\mathrm{Aut}_v({\sf H}):=\left\{\pi \in \mathrm{Aut}({\sf H}): \pi(v)=v\right\}.
\]
Clearly  $\mathrm{Aut}_v({\sf H})$ can be embedded into $\mathrm{Aut}({\sf H}')$, and hence $\left|\mathrm{Aut}_v({\sf H})\right| \le\left|\mathrm{Aut}({\sf H}')\right|$. Thus we only need to show that
\[
\left|\mathrm{Aut}({\sf H})\right| \le m \left|\mathrm{Aut}_v({\sf H})\right| .
\]
Using Lagrange's theorem (see \cite[Section 3.2]{DF} for more details), we note that this boils down to showing that the number of distinct left cosets of $\mathrm{Aut}_v({\sf H})$ in $\mathrm{Aut}({\sf H})$ is less than or equal to $m$. To this end, it is easy to check that for any $\pi, \pi' \in \mathrm{Aut}({\sf H})$, if $\pi(v)=\pi'(v)$, then $\pi \mathrm{Aut}_v({\sf H})=\pi' \mathrm{Aut}_v({\sf H})$. Since $\pi$ is a permutation on $[m]$, we immediately have the desired conclusion.
\end{proof}

Now we are ready to prove Lemma \ref{lem:Bad_bound}.

\begin{proof}[Proof of Lemma \ref{lem:Bad_bound}]
Recalling the definition of $\cH_r$ (see Definition \ref{dfn:bad_define}), we see that given any $\ul{v} \in \cH_r$, there exists a relabeling $\hat{v}$, such that $\hat{\ul{v}}^j \in \cH_j$ (see Definition \ref{dfn:bad_define} for a definition of $\hat{\ul{v}}^j$). We also note that $\hat{v}_k \in \mathrm{Good}^{\ul{\xi}^k}(\hat{v}_1,\hat{v}_2,\ldots,\hat{v}_{k-1})$ for $k=3,4,\ldots,j$. Therefore using Lemma \ref{common-nbr-allgood} we note that 
\[
\left|\sN_{\hat{\nu}_1}^{\xi_1} \cap \sN_{\hat{\nu}_2}^{\xi_2} \cap \cdots \cap \sN_{\hat{\nu}_j}^{\xi_j}\right|\ge  \f{n}{2^{j+1}} \ge \f{n}{2^{r}}.
\]
Thus applying Lemma \ref{lem:good_B}, using the union bound, from our assumption on $n_{\Graph}({\sf H}_j)$ we immediately deduce that
\[
|\Bad^{\ul{\xi}}_{j,r}| \le 2 \f{(n)_j}{|\mathrm{Aut}({\sf H}_j)|2^{{j \choose 2}}} \times \left(n (\log_2\hspace{-2pt}n)^{\bar{C}_0}\Upsilon_n\left(\f{n}{2^r},\delta\right)\right)^{r-j}.
\]
Recall that by our assumption 
$$n_{\Graph}({\sf H}_r) \ge \f{1}{2}\f{(n)_r}{|\mathrm{Aut}({\sf H}_r)|2^{{r \choose 2}}}.$$
Further applying Lemma \ref{lem:automorphism} repetitively we deduce that
\[
|\mathrm{Aut}({\sf H}_r)| \le \f{r!}{j!} |\mathrm{Aut}({\sf H}_j)|.
\]
Hence using Stirling's approximation we obtain
\begin{align}
&\log_2 \left(\f{|\Bad_{j,r}^{\ul{\xi}}|}{n_{\Graph}({\sf H}_r)}\right) \notag\\
 \le & \log_2\hspace{-2pt}4 + \log_2\left(\f{{n \choose j}}{{n \choose r}}\right)+ (r-j)\left[\log_2\hspace{-2pt}n+ \bar{C}_0 \log_2\hspace{-2pt} \log_2\hspace{-2pt}n +\log_2 \hspace{-2pt}\Upsilon_n(n/2^r,\delta)\right]+ \left[{r \choose 2} - {j \choose 2}\right] \notag\\
 \le & \log_2\hspace{-2pt}4 -\sum_{k=j}^{r-1} \log_2\hspace{0pt}(n-k)+\log_2 (\sqrt{2 \pi} e)-(r-j)\log_2\hspace{-2pt}e+ \f{1}{2}(\log_2\hspace{-2pt}r-\log_2\hspace{-2pt}j)  \notag\\
&  \qquad+r \log_2\hspace{-2pt}r- j \log_2\hspace{-2pt}j+ (r-j)\left[\log_2\hspace{-2pt}n+ \bar{C}_0 \log_2\hspace{-2pt} \log_2\hspace{-2pt}n +\log_2 \hspace{-2pt}\Upsilon_n(n/2^r,\delta)\right]+ \left[\sum_{k=j}^{r-1} k\right].\notag
\end{align}
Using the fact that $\log_e(1-x) \ge -2 x$, for $x \in (0,1/2)$, we further note that
\begin{align}\label{eq:prelim_bd_p1}
(r-j) \log_2\hspace{-2pt}n - \sum_{k=j}^{r-1} \log_2\hspace{0pt}(n-k)= -\sum_{j=k}^{r-1} \log_2\left(1-\f{k}{n}\right) \le 2\log_2 \hspace{-2pt}e\sum_{k=1}^{r-1} \f{k}{n} \le \f{r^2 \log_2 \hspace{-2pt}e }{n}.
\end{align}
Next note that the function $F_1 (x):= \f{1}{2}\log_e\hspace{-2pt}x -x$ is decreasing in $x$ for all $x \ge 1/2$. Thus
\beq\label{eq:prelim_bd_p2}
\f{1}{2}(\log_2\hspace{-2pt}r-\log_2\hspace{-2pt}j)  -(r-j)\log_2\hspace{-2pt}e =\log_2\hspace{-2pt}e \left[ \f{1}{2} \left(\log_e\hspace{-2pt}r- \log_e\hspace{-2pt}j\right) - (r-j)\right] \le 0. 
\eeq
Thus
\begin{align}\label{eq:prelim_bd}
\log_2 \left(\f{|\Bad_{j,r}^{\ul{\xi}}|}{n_{\Graph}({\sf H}_r)}\right)  & \le  \log_2\hspace{-2pt}4+ \log_2 (\sqrt{2 \pi} e) + \f{r^2 \log_2 \hspace{-2pt}e }{n}+ r \log_2\hspace{-2pt}r- j \log_2\hspace{-2pt}j \notag\\
& \qquad \qquad  +(r-j)\left[ \bar{C}_0 \log_2\hspace{-2pt} \log_2\hspace{-2pt}n +\log_2 \hspace{-2pt}\Upsilon_n(n/2^r,\delta)\right]+ \left[\sum_{k=j}^{r-1} k\right].
\end{align}
Recalling the definition of $\Upsilon_n(\cdot,\cdot)$ we note that
\[
\log_2\hspace{-2pt} \Upsilon_n(n/2^r,\delta) \le \max\{(r-\log_2\hspace{-2pt}n) , (\delta-1) \log_2 \hspace{-2pt}n\}.
\]
Thus, if $r \le ((1-\delta)\wedge \f{1}{2})\log_2\hspace{-2pt}n - (\bar{C}_0+12) \log_2\hspace{-1pt}\hspace{-2pt}\log_2 \hspace{-2pt}n$, we have $\log_2\hspace{-2pt}\Upsilon_n(n/2^r,\delta) \le -r-(\bar{C}_0+12)\log_2\hspace{-1pt}\log_2\hspace{-2pt}n$. Therefore noting that 
\beq\label{eq:prelim_bd_p3}
 \sum_{k=j}^{r-1}k \le \f{(r+j)(r-j)}{2}\le r(r-j),
\eeq
from \eqref{eq:prelim_bd} we deduce
\begin{align}
\log_2 \left(\f{|\Bad_{j,r}^{\ul{\xi}}|}{n_{\Graph}({\sf H}_r)}\right) & \le r \log_2\hspace{-2pt}r- j \log_2\hspace{-2pt}j - 11(r-j)\log_2 \log_2\hspace{-2pt}n, \notag
\end{align}
for all large $n$. Hence, to complete the proof it only remains to show that  $r \log_2\hspace{-2pt}r- j \log_2\hspace{-2pt}j  \le 2 \log_2\log_2 \hspace{-2pt}n(r-j)$.

To this end, fixing $r, n$, denote $F_2(x):= x \log_2\hspace{-2pt}x.$	
Using the Mean-Value Theorem, and recalling the fact that $r \le \log_2\hspace{-2pt}n$, we note that 
\[
r \log_2\hspace{-2pt} r - j \log_2\hspace{-2pt} j \le \sup_{x \in [1,r]} \left(\log_2\hspace{-2pt}e + \log_2\hspace{-2pt}x\right) (r-j) \le \left(\log_2\hspace{-2pt}e + \log_2\hspace{-2pt}\log_2\hspace{-2pt}n\right)(r-j).
\]
This completes the proof.
\end{proof}


Building on Lemma \ref{lem:Bad_bound} we now derive a bound on $\sum_{\ul{\nu} \in \mathcal{H}_r} f_r(\ul{\nu},\ul{\xi})$ and $\sum_{\ul{\nu} \in \mathcal{H}_r}{f_r(\ul{\nu},\ul{\xi})(f_r(\ul{\nu},\ul{\xi}) -1)}$ which will later be used in the proof of Lemma \ref{lem:var_type_bd}.

\begin{lem}\label{goodbad}
Let $\{\Graph_n\}_{n \in \N}$ be a sequence of graphs satisfying \eqref{eq:A1_1/2}-\eqref{eq:A2_1/2}. Fix any positive integer $ r$, and let ${\sf H}_r$ be a graph on $r$ vertices. Assume that 
\[
n_\Graph({\sf H_r}) \ge \f{1}{2} \times \f{(n)_r}{|\mathrm{Aut}({\sf H}_r)|2^{{r \choose 2}}} \quad \text{ and } \quad n_\Graph({\sf H_j}) \le {2} \times \f{(n)_j}{|\mathrm{Aut}({\sf H}_j)|2^{{j \choose 2}}},
\]
for all graphs ${\sf H}_j$ on $j$ vertices and all $j < r$. Fix any $\ul{\xi} =\{\xi_1,\xi_2,\ldots,\xi_r\}\in \{0,1\}^r$. Then, there exists a large absolute constant $C_0$, such that for any $r \le ((1-\delta) \wedge \f{1}{2}) \log_2\hspace{-2pt}n - C_0 \log_2 \log_2\hspace{-2pt}n$, we have


{\beq\label{eq:term2}
\left|\sum_{\ul{\nu} \in \mathcal{H}_r} f_r(\ul{\nu},\ul{\xi})- n_\Graph({ \sf{H}}_r)\f{n}{2^r}\right| \le 13\wt{C}r  n_\Graph({ \sf H}_r) \f{n}{(\log_2\hspace{-2pt} n)^7}\left(\f{1}{2}\right)^r,
\eeq}
and
\beq\label{eq:seconorder}
\sum_{\ul{\nu} \in \mathcal{H}_r}{f_r(\ul{\nu},\ul{\xi})(f_r(\ul{\nu},\ul{\xi}) -1)} \le n_\Graph({\sf H}_r) \left(\f{n}{2^r}\right)^2\left(1+ \f{40\wt{C} r}{(\log_2\hspace{-2pt}n)^7}\right).
\eeq

\end{lem}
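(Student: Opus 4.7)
The plan is to split the sum over $\cH_r$ according to whether $\ul{\nu}$ admits a relabeling along which the entire chain of good conditions holds. Let $\Gamma_r^{\ul{\xi}} \subseteq \cH_r$ denote the set of $\ul{\nu}$ such that for some relabeling $\widehat{\ul{\nu}}=(\widehat{\nu}_1,\ldots,\widehat{\nu}_r)$ one has $\widehat{\ul{\nu}}^j \in \cH_j$ and $\widehat{\nu}_j \in \Good^{\ul{\xi}^j}(\widehat{\nu}_1,\ldots,\widehat{\nu}_{j-1})$ for every $j=3,\ldots,r$. Since $f_r(\ul{\nu},\ul{\xi})$ is invariant under simultaneous permutation of the coordinates of $\ul{\nu}$ and $\ul{\xi}$, Lemma \ref{common-nbr-allgood} applied to any such good relabeling yields $|f_r(\ul{\nu},\ul{\xi}) - n/2^r| \le 3\wt{C}(\log_2 n)^{-(\bar{C}_0/2-1)} \cdot n/2^r$ for every $\ul{\nu} \in \Gamma_r^{\ul{\xi}}$. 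Hence the ``good'' contribution to $\sum_{\ul{\nu} \in \cH_r} f_r(\ul{\nu},\ul{\xi})$ is within $3\wt{C}\,n_\Graph({\sf H}_r)\,n/(2^r (\log_2 n)^{\bar{C}_0/2-1})$ of $|\Gamma_r^{\ul{\xi}}|\cdot n/2^r$, which, for $\bar{C}_0 \ge 16$, is well within the target error in \eqref{eq:term2}.

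For the complement $\cH_r \setminus \Gamma_r^{\ul{\xi}}$, the key observation is that every such tuple lies in $\Bad^{\ul{\xi}^j}_{j,r}$ for some $j \in \{2,\ldots,r-1\}$: choose the relabeling that maximizes the length $j$ of the initial good chain; any vertex appearing in positions $j+1,\ldots,r$ must belong to $\mathrm{Bad}^{\ul{\xi}^{j+1}}(\widehat{\nu}_1,\ldots,\widehat{\nu}_j)$, for otherwise swapping it into position $j+1$ would extend the chain, contradicting maximality. Lemma \ref{lem:Bad_bound} then gives $|\Bad^{\ul{\xi}^j}_{j,r}| \le n_\Graph({\sf H}_r)/(\log_2 n)^{9(r-j)}$, while Lemma \ref{common-nbr-allgood} applied to the first $j$ good vertices of the relabeling, together with the fact that $f_r$ is an intersection containing those sets, gives $f_r(\ul{\nu},\ul{\xi}) \le 3n/2^j$ for any $\ul{\nu} \in \Bad^{\ul{\xi}^j}_{j,r}$.

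The resulting ``bad'' contribution is then at most
\[
\sum_{j=2}^{r-1} \frac{3n \cdot n_\Graph({\sf H}_r)}{2^j (\log_2 n)^{9(r-j)}} = \frac{3 n\cdot n_\Graph({\sf H}_r)}{2^r} \sum_{k=1}^{r-2} \frac{2^k}{(\log_2 n)^{9k}},
\]
a geometric series with ratio $2/(\log_2 n)^9 \ll 1$, hence of order $n \cdot n_\Graph({\sf H}_r)/(2^r (\log_2 n)^9)$, comfortably within the target. Combined with the good estimate and the bound $|\cH_r \setminus \Gamma_r^{\ul{\xi}}| \le \sum_j |\Bad^{\ul{\xi}^j}_{j,r}| \le 2 n_\Graph({\sf H}_r)/(\log_2 n)^9$ (which allows us to replace $|\Gamma_r^{\ul{\xi}}|$ by $n_\Graph({\sf H}_r)$ in the main term at negligible cost), all error terms are absorbed into $13\wt{C}r \cdot n_\Graph({\sf H}_r)\,n/(2^r (\log_2 n)^7)$, yielding \eqref{eq:term2}. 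The bound \eqref{eq:seconorder} follows the same template via $f_r(f_r-1) \le f_r^2$: good tuples contribute at most $|\Gamma_r^{\ul{\xi}}|(n/2^r)^2(1+o(1))$, while bad tuples in $\Bad^{\ul{\xi}^j}_{j,r}$ contribute at most $9n^2/4^j$ per tuple, producing a geometric series in $4/(\log_2 n)^9$ that again remains of lower order. The main obstacle is the relabeling bookkeeping in the second step---verifying that every non-good tuple genuinely falls into some $\Bad^{\ul{\xi}^j}_{j,r}$ as defined---and the maximal-chain swap argument is the resolution.
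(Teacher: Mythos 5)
Your decomposition of $\cH_r$ into the ``all-good'' chains $\Gamma_r^{\ul{\xi}}$ and the rest, your use of Lemma \ref{common-nbr-allgood} on the good part, your maximal-chain/swap argument to certify that non-good tuples land in a bad set at some level $j$, and your use of Lemma \ref{lem:Bad_bound} plus the $f_r \le 3n/2^j$ bound for the bad part all match the paper's proof: the paper uses a greedy extension in place of your maximal-chain formulation, but these two produce the same partition, and the geometric-series summation over $j$ is the same.

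There is, however, one bookkeeping gap in your bad-contribution estimate that is worth flagging. You write $|\Bad^{\ul{\xi}^j}_{j,r}| \le n_\Graph({\sf H}_r)/(\log_2 n)^{9(r-j)}$ and sum this over $j$. But $\Bad^{\ul{\xi}^j}_{j,r}$ (i.e.\ $\Bad^{\ul{\xi}}({\sf H}_j,{\sf H}_r)$) is defined relative to a \emph{fixed} induced subgraph ${\sf H}_j$ of ${\sf H}_r$ and a \emph{fixed} reordered prefix $\ul{\hat\xi}^j$; different $\ul{\nu}\in\cH_r\setminus\Gamma_r^{\ul{\xi}}$ whose maximal good chain has length $j$ can land in different $\Bad^{\ul{\hat\xi}^j}({\sf H}_j,{\sf H}_r)$, depending on which induced subgraph their good prefix realizes and how $\ul{\xi}$ reorders. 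So the set of bad tuples at level $j$ is a union over at most $r^{r-j}$ choices of ${\sf H}_j$ and $r^{r-j}$ choices of $\ul{\hat\xi}^j$, giving (for $r\le \log_2 n$)
\[
\bigl|\{\ul{\nu}: \text{good chain stops at } j\}\bigr|
\;\le\; r^{2(r-j)}\cdot \frac{n_\Graph({\sf H}_r)}{(\log_2 n)^{9(r-j)}}
\;\le\; \frac{n_\Graph({\sf H}_r)}{(\log_2 n)^{7(r-j)}},
\]
which is exactly why the final bound in \eqref{eq:term2} carries $(\log_2 n)^7$ rather than $(\log_2 n)^9$. Your geometric series still converges with ratio $2/(\log_2 n)^7$ (resp.\ $4/(\log_2 n)^7$ in the second-moment estimate), so the conclusion is unaffected, but the intermediate claim with exponent $9$ is not justified as stated without this union bound.
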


\bigskip

{We prove this lemma using Lemma \ref{lem:Bad_bound} but since we cannot apply it directly we resort to the following argument. Roughly the idea is to find a re-ordering, for any $r$ tuples such that for some $j(\leq r)$ we can safely plug in the correct estimate for the first $j$ elements from Lemma \ref{common-nbr-allgood} and for the last $r-j$ terms can be controlloed by error estimates obtained from Lemma \ref{lem:Bad_bound}.  Below we provide the formal argument.}

\begin{proof}

We begin by claiming that given any $\ul{v} \in \cH_r$, there exists a reordering $\{\hat{v}_1,\hat{v}_2,\ldots, \hat{v}_r\}$, $\{\hat{\xi}_1,\hat{\xi}_2,\ldots,\hat{\xi}_r\}$, and $3 \le j \le r$ such that $\hat{v}_i \in \mathrm{Good}^{\hat{\ul{\xi}}^i}(\hat{v}_1,\hat{v}_2,\ldots,\hat{v}_{i-1})$ for all $i=3,4,\ldots,j$, and $\hat{v}_k \in \mathrm{Bad}^{\hat{\ul{\xi}}^j}(\hat{v}_1,\hat{v}_2,\ldots,\hat{v}_j)$ for all $k=j+1,\ldots,r$. 

Indeed, choose $\hat{v}_1$ and $\hat{v}_2$ arbitrarily, and choose $\hat{\xi}_1$ and $\hat{\xi}_2$ accordingly. That is, if $\hat{v}_1=v_{i_1}$, and $\hat{v}_2=v_{i_2}$, for some indices $i_1$ and $i_2$, then set $\hat{\xi}_1=\xi_{i_1}$, and $\hat{\xi}_2=\xi_{i_2}$. Next, partition the set $\cA_2:=\{v_1,v_2,\ldots,v_r\} \setminus \{\hat{v}_1,\hat{v}_2\}= \cA_2^{(1)} \cup \cA_2^{(0)}$, where $\cA_2^{(1)}:=\{v \in \cA_2: \xi_v =1\}$, and  $\cA_2^{(0)}:=\{v \in \cA_2: \xi_v =0\}$. For $\xi \in\{0,1\}$, if there is a vertex $v \in \cA_2^{(\xi)}$ such that $v \in \mathrm{Good}^{\hat{\ul{\xi}}^3}(\hat{v}_1,\hat{v}_2)$, where $\hat{\ul{\xi}}^3=\{\hat{\xi}_1,\hat{\xi}_2,\xi\}$, then set $\hat{v}_3=v$, and $\hat{\xi}_3=\xi$. If there is more than choice, choose one of them arbitrarily. Now continue by induction. That is, having chosen $\hat{v}_1, \hat{v}_2,\ldots,\hat{v}_{i-1}$ partition the set $\cA_{i-1}:=\{v_1,v_2,\ldots,v_r\} \setminus \{\hat{v}_1,\hat{v}_2, \ldots, \hat{v}_{i-1}\}= \cA_{i-1}^{(1)} \cup \cA_{i-1}^{(0)}$, where $\cA_{i-1}^{(1)}:=\{v \in \cA_{i-1}: \xi_v =1\}$, and  $\cA_{i-1}^{(0)}:=\{v \in \cA_{i-1}: \xi_v =0\}$. Again for some $\xi \in\{0,1\}$, if there is a vertex $v \in \cA_{i-1}^{(\xi)}$ such that $v \in \mathrm{Good}^{\hat{\ul{\xi}}^i}(\hat{v}_1,\hat{v}_2,\ldots,\hat{v}_{i-1} )$, where $\hat{\ul{\xi}}^i=\{\hat{\xi}_1,\hat{\xi}_2,\ldots,\hat{\xi}_{i-1},\xi\}$, then set $\hat{v}_i=v$, and $\hat{\xi}_i=\xi$. 

Note that if the above construction stops at $j$, then it is obvious that $\hat{v}_i \in \mathrm{Good}^{\hat{\ul{\xi}}^i}(\hat{v}_1,\hat{v}_2,\ldots,\hat{v}_{i-1})$ for all $i=3,4,\ldots,j$, and $\hat{v}_k \in \mathrm{Bad}^{\hat{\ul{\xi}}^j}(\hat{v}_1,\hat{v}_2,\ldots,\hat{v}_j)$ for all $k=j+1,\ldots,r$, and hence we have our claim.

For brevity, let us denote $\mathrm{Bad}_j(\mathcal{H}_r)$ to be the collection of all ${\ul{v}} \in \cH_r$, such that for some re-ordering $\widehat{\ul{\nu}}$ of $\ul{v}$ we have $\hat{v}_i \in \mathrm{Good}^{\hat{\ul{\xi}}^i}(\hat{v}_1,\hat{v}_2,\ldots,\hat{v}_{i-1})$ for all $i=3,4,\ldots,j$, and $\hat{v}_k \in \mathrm{Bad}^{\hat{\ul{\xi}}^j}(\hat{v}_1,\hat{v}_2,\ldots,\hat{v}_j)$ for all $k=j+1,\ldots,r$. Given a $\ul{v} \in \cH_r$, it may happen that $\ul{v}$ belong to $\mathrm{Bad}_j(\mathcal{H}_r)$ for two different indices $j$. To avoid confusion we choose the smallest index $j$. When $j=r$ we denote the corresponding set by $\mathrm{Good}_r(\mathcal{H}_r)$ instead of $\mathrm{Bad}_r(\mathcal{H}_r)$. Equipped with these notations, we now note that
\begin{align}
\sum_{\ul{\nu} \in \mathcal{H}_r}{f_r(\ul{\nu},\ul{\xi})} &= \sum_{{\ul{\nu}} \in \mathrm{Good}_r(\mathcal{H}_r)}{f_r({\ul{\nu}},\ul{\xi})}+ \sum_{j=3}^{r-1}\sum_{{\ul{\nu}} \in \mathrm{Bad}_j(\mathcal{H}_r)}{f_r({\ul{\nu}},\ul{\xi})}\label{eq:good+bad}.
\end{align}
We show the first term in the \abbr{RHS} of \eqref{eq:good+bad} is the dominant term, and other term is negligible. First we find a good estimate on the dominant term. 

To this end, from  Lemma \ref{common-nbr-allgood}, using the union bound, and choosing $\bar{C}_0\ge 16$, we obtain that
\beq\label{eq:term1}
\left|\sum_{{\ul{\nu}} \in \mathrm{Good}_r(\mathcal{H}_r)}{|\sN_{\nu_1}^{\xi_1} \cap \sN_{\nu_2}^{\xi_2} \cap \cdots \cap \sN_{\nu_r}^{\xi_r}|}- |\mathrm{Good}_r(\mathcal{H}_r)| \times \f{n}{2^r} \right| \le 
3\wt{C}  |\mathrm{Good}_r(\mathcal{H}_r)| (\log_2\hspace{-2pt}n)^{-7}  \times \f{n}{2^r}.
\eeq
On the other hand, if $\ul{\nu} \in \mathrm{Bad}_j(\mathcal{H}_r)$, then $\ul{v}\in \mathrm{Bad}^{\ul{\hat{\xi}}^j}({\sf H}_j, {\sf H}_r)$ for some $\ul{\hat{\xi}}^j \in \{0,1\}^j$, and some sub-graph ${\sf H}_j$ of the graph ${\sf H}_r$, induced by $j$ vertices. Given any graph ${\sf H}_r$ on $r$ vertices, there are at most $r^{(r-j)}$ many induced sub-graphs on $j$ vertices, and given any $\ul{\xi}$ there are at most $r^{(r-j)}$ many choices of $\ul{\hat{\xi}}^j$. Since $r \le \log_2\hspace{-2pt}n$, from Lemma \ref{lem:Bad_bound} we deduce 
\beq\label{eq:bad_j_H_r}
|\mathrm{Bad}_j(\mathcal{H}_r)| \le \f{n_\Graph({\sf H_r})} {(\log_2\hspace{-2pt}n)^{7(r-j)}}.
\eeq
Now taking a union over $j=3,4,\ldots,r-1$, we further obtain
\[
 \left|\mathrm{Good}_r(\mathcal{H}_r)- n_\Graph({\sf H_r}) \right| \le 2 \f{n_\Graph({\sf H}_r) }{(\log_2\hspace{-2pt}n)^7}.
\]
This together with \eqref{eq:term1}, upon recalling the definition of $f_r(\ul{\nu},\ul{\xi})$, now implies that
\beq\label{eq:termIdone1}
\left|\sum_{{\ul{\nu}} \in \mathrm{Good}_r(\mathcal{H}_r)}f_r(\ul{\nu},\ul{\xi})- n_\Graph({\sf H_r}) \times \f{n}{2^r} \right| \le 
5\wt{C} r n_\Graph({\sf H_r})  \f{n}{(\log_2\hspace{-2pt}n)^7}\times \left(\f{1}{2}\right)^r.
\eeq
This provides the necessary error bound for the first term in the \abbr{RHS} of \eqref{eq:good+bad}. We now bound the second term appearing in the \abbr{RHS} of \eqref{eq:good+bad}. 

Proceeding to control the second term, we first observe that ${f_r(\ul{\nu},\ul{\xi})}$ is invariant under the permutation of the coordinates of $\ul{v}$ (with same permutation applied on $\ul{\xi}$). Thus
\[
\sum_{j=3}^{r-1}\sum_{{\ul{\nu}} \in \mathrm{Bad}_j(\mathcal{H}_r)}{f_r({\ul{\nu}},\ul{\xi})}=  \sum_{j=3}^{r-1}\sum_{{\ul{\nu}} \in \mathrm{Bad}_j(\mathcal{H}_r)}{f_r(\widehat{\ul{\nu}},\wh{\ul{\xi}})}.
\]
Now note that if $\ul{\nu} \in \Bad_j(\mathcal{H}_r)$ then for the corresponding $\widehat{\ul{\nu}}$ we have $\hat{\nu}_i \in \mathrm{Good}^{\hat{\ul{\xi}}^i}(\hat{\nu}_1,\hat{\nu}_2,\ldots,\hat{\nu}_{i-1})$, for all $i=3,4,\ldots,j$. Therefore using Lemma \ref{common-nbr-allgood}, we obtain that 
\[
{|\sN_{\hat{\nu}_1}^{\hat{\xi}_1} \cap \sN_{\hat{\nu}_2}^{\hat{\xi}_2} \cap \cdots \cap \sN_{\hat{\nu}_r}^{\hat{\xi}_r}|} \le {|\sN_{\hat{\nu}_1}^{\hat{\xi}_1} \cap \sN_{\hat{\nu}_2}^{\hat{\xi}_2} \cap \cdots \cap \sN_{\hat{\nu}_j}^{\hat{\xi}_j}|}\le 2 \times \f{n}{2^j}.
\]
This together with \eqref{eq:bad_j_H_r}, now implies
\begin{align}
\sum_{j=3}^{r-1}\sum_{{\ul{\nu}} \in \mathrm{Bad}_j(\mathcal{H}_r)}{f_r({\ul{\nu}},\ul{\xi})} & =  \sum_{j=3}^{r-1}\sum_{{\ul{\nu}} \in \mathrm{Bad}_j(\mathcal{H}_r)}{|\sN_{\hat{\nu}_1}^{\hat{\xi}_1} \cap \sN_{\hat{\nu}_2}^{\hat{\xi}_2} \cap \cdots \cap \sN_{\hat{\nu}_r}^{\hat{\xi}_r}|} \notag\\
 &\le \sum_{j =3}^{r} 2|\Bad_j(\mathcal{H}_r)| \times \f{n}{2^j} \notag\\
 & \le 2\sum_{j <r} \f{n_\Graph({\sf H}_r)}{(\log_2\hspace{-2pt}n)^{7(r-j)}}\times \f{n}{2^j}\notag\\
 &\le  2\f{n}{2^r}\sum_{j <r} n_\Graph({\sf H}_r) \left(2(\log_2\hspace{-2pt}n)^{-7}\right)^{(r-j)} \le 8rn_\Graph({\sf H}_r) \f{n}{(\log_2\hspace{-2pt}n)^7}\left(\f{1}{2}\right)^r .\label{eq:termIIdone}
\end{align}
Thus combining \eqref{eq:termIdone1}-\eqref{eq:termIIdone}, from \eqref{eq:good+bad} we deduce that
\beq
\left|\sum_{\ul{\nu} \in \mathcal{H}_r} f_r(\ul{\nu},\xi)- n_\Graph({\sf H}_r)\f{n}{2^r}\right| \le 13\wt{C} r n_\Graph({\sf H_r})  \f{n}{(\log_2\hspace{-2pt}n)^7}\times \left(\f{1}{2}\right)^r.\notag
\eeq
This completes the proof of \eqref{eq:term2}. To prove \eqref{eq:seconorder} we proceed similarly as above. As before, we split the sum in two parts
\begin{align*}
\sum_{\nu \in \mathcal{H}_r}{f_r(\nu,\xi)(f_r(\nu,\xi) -1)} 
& =  \sum_{\ul{\nu} \in \mathrm{Good}_r(\mathcal{H}_r)}{f_r(\nu,\xi)(f_r(\nu,\xi) -1)}  +   \sum_{j=3}^{r-1}\sum_{{\ul{\nu}} \in \mathrm{Bad}_j(\mathcal{H}_r)} {f_r(\nu,\xi)(f_r(\nu,\xi) -1)} .
\end{align*}
Proceeding as in \eqref{eq:termIIdone} we see that
\begin{align}\label{eq:term1b}
\sum_{j=3}^{r-1}\sum_{{\ul{\nu}} \in \mathrm{Bad}_j(\mathcal{H}_r)}  {f_r(\nu,\xi)(f_r(\nu,\xi) -1)}  
  \le \sum_{j=3}^{r-1}\sum_{{\ul{\nu}} \in \mathrm{Bad}_j(\mathcal{H}_r)} {f_r(\widehat{\ul{\nu}},\wh{\ul{\xi}})}^2 
 \le &    4\sum_{j <r} \f{n_\Graph({\sf H}_r)}{(\log_2\hspace{-2pt}n)^{7(r-j)}}\times \left(\f{n}{2^j}\right)^2\notag\\
 \le & 32 rn_\Graph({\sf H}_r) \f{n^2}{(\log_2\hspace{-2pt}n)^7}\left(\f{1}{2}\right)^{2r}.
\end{align}
On the other hand, using \eqref{eq:good_set_estimates} we deduce
\begin{align}\label{eq:term1g}
\sum_{\ul{\nu} \in \mathrm{Good}_r(\mathcal{H}_r)} {f_r(\widehat{\ul{\nu}},\wh{\ul{\xi}})} \left({f_r(\widehat{\ul{\nu}},\wh{\ul{\xi}})}-1\right) \le n_\Graph({\sf H}_r) \left(\f{n}{2^r}\right)^2\left(1+ \f{3\wt{C} r}{(\log_2\hspace{-2pt}n)^7} \right)^2.
\end{align}
Combining \eqref{eq:term1b}-\eqref{eq:term1g} the proof of \eqref{eq:seconorder} completes.
\end{proof}

We next prove Lemma \ref{lem:f_r_bar_bound} where we obtain bounds on $\bar{f}_{r,\ul{\xi}}$.

\begin{proof}[Proof of Lemma \ref{lem:f_r_bar_bound}] 
Recall that 
\[
\bar{f}_{r,\ul{\xi}}= \f{1}{(n)_r} \sum_{\ul{\nu}} f_r(\ul{\nu},\ul{\xi}),
\]
where
\[
{f_r(\underline{\nu},\ul{\xi})} = {\left|\sN_{\nu_1}^{\xi_1} \cap \sN_{\nu_2}^{\xi_2} \cap \cdots \cap \sN_{\nu_r}^{\xi_r}\right|}
\]
(see Definition \ref{dfn:f_r_bar_xi}). Therefore
\[
\bar{f}_{r, \ul{\xi}}= \f{1}{(n)_r}\sum_{\ul{\nu}\in [n]^r}\sum_{v \in [n]} \prod_{i=1}^r a_{vv_i}^{\xi_i}.
\]
Now interchanging the summations we arrive at \eqref{eq:f_r_xi_bound}. To prove \eqref{eq:avgbd} we begin by observing the following inequality:
\beq
 e^{- \f{\ell(\ell-1)}{m}} = e^{-2\sum_{j=0}^{\ell-1}\f{j}{m}} \le \prod_{j=0}^{\ell-1} \left(1-\f{j}{m}\right) \le e^{-\sum_{j=0}^{\ell-1}\f{j}{m}}= e^{- \f{\ell(\ell-1)}{2m}}, \quad \text{ for all } \ell <\f{m}{2},\label{eq:fact}
\eeq
 For ease of writing, let us assume that $n(\ul{\xi})=k$. Then, using \eqref{eq:fact}, from \eqref{eq:bd_generalized_nbhd}, upon applying the triangle inequality, we deduce
\begin{align}
\bar{f}_{r,\ul{\xi}} & \le \f{n}{(n)_r} \left(\frac{n}{2} + 3C n^{\delta} \right)^k \left(\frac{n}{2} + 3C n^{\delta} \right)^{r-k} \notag \\
&= n \left(\frac{1}{2}+ 3C n^{\delta-1} \right)^r \left[\prod_{j=0}^{r-1}\left(1-\f{j}{n}\right)\right]^{-1}  \le  n \left(\frac{1}{2} +3 Cn ^{\delta-1}\right)^r e^{\f{r^2}{n}}.\notag
\end{align}
Since $(1+x)^r \le 1+2r x$ for $rx \le 1$, and $e^x \le 1+2x$ for $x \le \log 2$, we further obtain that
\beq\label{avgubd}
\bar{f}_{r,\ul{\xi}}  \le n \left(\frac{1}{2} \right)^r \left(1+ 12 C  r n^{\delta-1}\right)\left(1+\f{2r^2}{n}\right),
\eeq
which proves the upper bound in \eqref{eq:avgbd}.


To obtain the lower bound of $\bar{f}_{r,\xi}$ we proceed similarly to deduce that
\begin{align}
\bar{f}_{r,\ul{\xi}} & \geq \f{n}{(n)_r} \left(\frac{n}{2} - 3C n^{\delta} \right)^k \left(\frac{n}{2} - 3C n^{\delta} \right)^{r-k} \notag \\
&= n \left(\frac{1}{2}- 3C n^{\delta-1} \right)^r \left[\prod_{j=0}^{r-1}\left(1-\f{j}{n}\right)\right]^{-1}  \ge  n \left(\frac{1}{2} -3 Cn ^{\delta-1}\right)^r.\notag
\end{align}
Using the facts that for $x \in [0,1/r)$, we have $(1-x)^r \ge 1-rx$, we further have,
\begin{equation}\label{avglbd}
\bar{f}_{r,\ul{\xi}} \geq n \left(\frac{1}{2} \right)^r\left(1-6 C  r n^{\delta-1}\right).
\end{equation}
This completes the proof.
\end{proof}

\bigskip

Now combining the previous results we complete the proof of Lemma \ref{lem:var_type_bd}.

\begin{proof}[Proof of Lemma \ref{lem:var_type_bd}]
To prove the lemma we use Cauchy-Schwarz inequality as follows 
\begin{align}
\left|\sum_{\ul{\nu} \in \mathcal{H}_r}\{f_{r}(\ul{\nu},\ul{\xi}) - \bar{f}_{r,\ul{\xi}}\}\right|^2 &\leq n_\Graph({\sf H}_r)\sum_{\ul{\nu} \in \mathcal{H}_r}\{f_{r}(\ul{\nu},\ul{\xi}) - \bar{f}_{r,\ul{\xi}}\}^2 \notag\\
&=n_\Graph({\sf H}_r)\left[ \sum_{\ul{\nu} \in \mathcal{H}_r}{f_r^2(\ul{\nu},\ul{\xi})} - 2 \bar{f}_{r,\ul{\xi}}\sum_{\nu \in \mathcal{H}_r}{f_r(\ul{\nu},\ul{\xi})} +n_\Graph({\sf H}_r)\bar{f}_{r,\ul{\xi}}^2\right] \notag\\
&= n_\Graph({\sf H}_r)\bigg[\sum_{\ul{\nu} \in \mathcal{H}_r}{f_r(\ul{\nu},\ul{\xi})(f_r(\ul{\nu},\ul{\xi}) -1)} + \sum_{\ul{\nu} \in \mathcal{H}_r}f_{r}(\ul{\nu},\ul{\xi})\notag\\
& \qquad \qquad \qquad\qquad \qquad \qquad  +n_\Graph({\sf H}_r)\bar{f}_{r,\ul{\xi}}^2 - 2\bar{f}_{r,\ul{\xi}}\sum_{\ul{\nu} \in \mathcal{H}_r}{f_r(\ul{\nu},\ul{\xi})}\bigg]. \label{eq:break}
\end{align}
Note that we have already obtained bounds on the first two terms inside the square bracket from Lemma \ref{goodbad}. The bound on the third term was derived in Lemma \ref{lem:f_r_bar_bound}. We obtain bounds on the last term follows upon combining Lemma \ref{goodbad} and Lemma \ref{lem:f_r_bar_bound}. We then plug in these estimates one by one, and combine them together to finish the proof.

To this end, we start controlling the last term of \eqref{eq:break}. Using Lemma \ref{goodbad} and Lemma \ref{lem:f_r_bar_bound} we see that

\begin{align*}
\bar{f}_{r,\ul{\xi}} \sum_{\ul{\nu} \in \mathcal{H}_r} f_r(\ul{\nu},\xi) \ge n \left(\frac{1}{2} \right)^r (1-12 C r n^{\delta-1})\left(1- \f{2 r^2}{n}\right)  n_\Graph({\sf H}_r) \f{n}{2^r} \left(1- \f{13 \wt{C}r}{(\log_2\hspace{-2pt}n)^7}\right).
\end{align*}
Therefore
\begin{align}
\bar{f}_{r,\ul{\xi}} \sum_{\ul{\nu} \in \mathcal{H}_r} f_r(\ul{\nu},\xi) & \ge n^2n_\Graph({\sf H}_r)\left(\f{1}{2}\right)^{2r}\left(1- \f{27 \wt{C}r}{(\log_2\hspace{-2pt}n)^7}\right). \label{eq:term4}
\end{align}
Now combining \eqref{eq:term2}-\eqref{eq:seconorder}, \eqref{eq:avgbd}, and \eqref{eq:term4}, from \eqref{eq:break} we obtain
\begin{align}
\f{\left|\sum_{\nu \in \mathcal{H}_r}\{f_{r}(\ul{\nu},\ul{\xi}) - \bar{f}_{r,\ul{\xi}}\}\right|^2}{n_\Graph({\sf H}_r)^2} & \le    \left(\f{n}{2^r}\right)^2\left(1+ \f{40 \wt{C}r}{(\log_2\hspace{-2pt}n)^7} \right)+ \f{n}{2^r}\left(1+ \f{13\wt{C} r}{(\log_2\hspace{-2pt}n)^7} \right) \notag\\
& \quad +  \left(\frac{n}{2^r} \right)^2\left(1+12 C  r n^{\delta-1}\right)^2\left(1+\f{2r^2}{n}\right)^2-  2\left(\frac{n}{2^r} \right)^2\left(1- \f{27 \wt{C}r}{(\log_2\hspace{-2pt}n)^7}\right).\label{eq:combine}
\end{align}
Simplifying  the \abbr{RHS} of \eqref{eq:combine}, and noting that $r \le \log_2\hspace{-2pt}n$, the proof completes.
\end{proof}


\section{Proofs of Theorem \ref{thm:main} and Theorem \ref{thm:main_improve}}
\label{sec:proof_thm}

In this section we prove our main result Theorem \ref{thm:main} followed by the proof of Theorem \ref{thm:main_improve}. 

\medskip

The proof of Theorem \ref{thm:main} uses the same ideas as in the proof of Proposition \ref{prop:main}. Since in Theorem \ref{thm:main} we allow any $p\in (0,1)$, unlike Proposition \ref{prop:main} the argument cannot be symmetric with respect to the presence and absence of an edge. This calls for changes in some definitions and some of the steps in the proof of Proposition \ref{prop:main}. Below we explain the changes and modifications necessary to extend the proof of Proposition \ref{prop:main} to establish Theorem \ref{thm:main}.

Fix a positive integer $r$ and let ${\sf H}_r$ be a graph with vertex set $[r]$. Further, for $j=2,3,\ldots,r-1$, let ${\sf H}_j$ be the sub-graph of ${\sf H}_r$ induced by the vertices $[j]$. Recall that in Proposition \ref{prop:main} we showed that the number of induced isomorphic copies of ${\sf H}_r$ is approximately same as that of an Erd\H{o}s-R\'{e}yni graph by showing that the same is true for ${\sf H}_j$ for every $j=2,3,\ldots,r-1$, and given any such isomorphic copy, ${\sf H}_{j,n}$, of ${\sf H}_j$ in $\Graph_n$, the number of common generalized neighbors (recall Definition \ref{nbrdfn}) of the vertices of ${\sf H}_{j,n}$ is about $n/2^j$. Putting these two observations together we propagated the error estimates.

Since in the set-up of Theorem \ref{thm:main} the presence  and absence of an edge do not have the same probability, the number of common generalized neighbors of the vertices of ${\sf H}_{j,n}$ should depend on the number of edges present in that common generalized neighborhood. Therefore we cannot use Definition \ref{dfn:good_dfn} and Definition \ref{dfn:bad_define} to define {\em Good} and {\em Bad} vertices. To reflect the fact that $p \ne 1/2$ we adapt those definitions as follows. For ease of writing let us denote $q:=1-p$. 
\begin{dfn}
For any given set $B \subset [n]$, and $\xi \in \{0,1\}$, define
\[
\mathrm{Good}^{\xi,p}(B):= \left\{ \nu \in [n]: \left|\sN_\nu^{B,\xi} - {|B|}p^\xi q^{1-\xi}\right| \le \wt{C} |B| p^\xi q^{1-\xi} n^{\vep(\delta-1)/2}\right\},
\]
where $\vep= \frac{\bar{C}_0 \log \log n}{(1-\delta)\log n}$ for some large constant $\bar{C}_0$. 

Equipped with the definition of $\mathrm{Good}^{\xi,p}(B)$, similar to Definition \ref{dfn:good_dfn} we next define $\mathrm{Good}^{\ul{\xi},p}$ and $\mathrm{Bad}^{\ul{\wt{\xi}},p}(v_1,v_2,\ldots,v_m)$, where $\ul{\xi}:=\{\xi_1,\xi_2,\ldots,\xi_m,\xi_{m+1}\} \in \{0,1\}^{m+1}$ and $\ul{\t{\xi}}:=\{\xi_1,\xi_2,\ldots,\xi_m,\}$. Proceeding as in Definition \ref{dfn:bad_define} we also  define $\Bad^{\ul{\xi},p}({\sf H}_{m'}, {\sf H}_m)$.
\end{dfn}
Next we need to extend Lemma \ref{common-nbr-allgood}, Lemma \ref{lem:Bad_bound}, Lemma \ref{lem:f_r_bar_bound}, and Lemma \ref{lem:var_type_bd} to allow any $p \in (0,1)$. To this end, we note that Lemma \ref{common-nbr-allgood} extends to the following result.

\begin{lem}\label{common-nbr-allgood_p}
Let $\{\Graph_n\}_{n \in \N}$ be a sequence of graphs satisfying assumptions {\bf (A1)} and {\bf (A2)}. {Fix $r < \log n$,} and let $\ul{v} \in [n]^r$ and $\ul{\xi} \in \{0,1\}^r$. Let $n(\ul{\xi}):=|\{i \in [r]: \xi_i =1\}|$. If $\nu_j \in \Good^{\ul{\xi}^{j},p}(\nu_1, \nu_2,...,\nu_{j-1})$,  for all $j =3,4,\ldots,r$, and $\bar{C}_0 \ge 4$, then 
\beq\label{eq:good_set_estimates_p}
\left|\left|\sN_{\nu_1}^{\xi_1} \cap \sN_{\nu_2}^{\xi_2} \cap \cdots \cap \sN_{\nu_r}^{\xi_r}\right|- {n}p^{n(\ul{\xi})}q^{r-n(\ul{\xi})}\right|  \le 3\wt{C} (\log n)^{-(\bar{C}_0/2-1)}   {n}p^{n(\ul{\xi})}q^{r-n(\ul{\xi})},
\eeq
for all large $n$.
\end{lem}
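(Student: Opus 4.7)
The plan is to follow the same inductive scheme as in the proof of Lemma \ref{common-nbr-allgood}, with the definition of $\Good^{\xi,p}(\cdot)$ built so that the asymmetry between $p$ and $q := 1-p$ is absorbed naturally. Specifically, I will prove by induction on $j = 2, 3, \ldots, r$ the slightly stronger statement
\begin{equation*}
\left|\left|\sN_{\nu_1}^{\xi_1} \cap \cdots \cap \sN_{\nu_j}^{\xi_j}\right|- np^{n(\ul{\xi}^j)}q^{j-n(\ul{\xi}^j)}\right| \le 3\wt{C}\, j\, n^{\vep(\delta-1)/2}\cdot np^{n(\ul{\xi}^j)}q^{j-n(\ul{\xi}^j)},
\end{equation*}
where $\ul{\xi}^j := (\xi_1, \ldots, \xi_j)$ and $n(\ul{\xi}^j) := |\{i \le j : \xi_i = 1\}|$. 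The target bound \eqref{eq:good_set_estimates_p} then follows at $j = r$: since $r < \log n$ and our choice of $\vep = \bar{C}_0 \log \log n / ((1-\delta) \log n)$ yields $n^{\vep(\delta-1)/2} = (\log n)^{-\bar{C}_0/2}$, the factor $3 \wt{C}\, r \cdot n^{\vep(\delta-1)/2}$ is bounded by $3 \wt{C} (\log n)^{-(\bar{C}_0/2 - 1)}$.

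For the base case $j = 2$, I would use assumptions {\bf (A1)} and {\bf (A2)} together with inclusion-exclusion (writing $\sN_v^0 = [n] \setminus (\sN_v^1 \cup \{v\})$) to obtain
\begin{equation*}
\max_{\xi, \xi' \in \{0,1\}} \max_{v \ne v'} \left|\left|\sN_v^\xi \cap \sN_{v'}^{\xi'}\right| - np^{\xi+\xi'}q^{2-\xi-\xi'}\right| \le C' n^\delta,
\end{equation*}
for a constant $C'$ depending only on $C$. Since $\delta < 1$, the quantity $C' n^\delta$ is dominated by $6 \wt{C}\, n^{1 + \vep(\delta-1)/2} p^{n(\ul{\xi}^2)} q^{2 - n(\ul{\xi}^2)}$ for all large $n$, once $\wt{C}$ is chosen sufficiently large in terms of $p$ and $C$. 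This establishes the inductive hypothesis at $j=2$.

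For the induction step from $j-1$ to $j$, I would apply the triangle inequality in the form
\begin{align*}
&\left|\left|\sN_{\nu_1}^{\xi_1} \cap \cdots \cap \sN_{\nu_j}^{\xi_j}\right|- np^{n(\ul{\xi}^j)}q^{j-n(\ul{\xi}^j)}\right|\\
&\quad \le \left|\left|\sN_{\nu_1}^{\xi_1} \cap \cdots \cap \sN_{\nu_j}^{\xi_j}\right| - p^{\xi_j}q^{1-\xi_j}\left|\sN_{\nu_1}^{\xi_1} \cap \cdots \cap \sN_{\nu_{j-1}}^{\xi_{j-1}}\right|\right|\\
&\quad \quad + p^{\xi_j}q^{1-\xi_j}\left|\left|\sN_{\nu_1}^{\xi_1} \cap \cdots \cap \sN_{\nu_{j-1}}^{\xi_{j-1}}\right|- np^{n(\ul{\xi}^{j-1})}q^{(j-1)-n(\ul{\xi}^{j-1})}\right|.
\end{align*}
The hypothesis $\nu_j \in \Good^{\ul{\xi}^j, p}(\nu_1, \ldots, \nu_{j-1})$ immediately bounds the first term on the right by $\wt{C}\, p^{\xi_j} q^{1-\xi_j} n^{\vep(\delta-1)/2}\left|\sN_{\nu_1}^{\xi_1} \cap \cdots \cap \sN_{\nu_{j-1}}^{\xi_{j-1}}\right|$, while the second term is controlled by the induction hypothesis. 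Using the factorisation $p^{n(\ul{\xi}^{j-1})}q^{(j-1)-n(\ul{\xi}^{j-1})}\cdot p^{\xi_j}q^{1-\xi_j} = p^{n(\ul{\xi}^j)}q^{j-n(\ul{\xi}^j)}$, together with the crude upper bound $\left|\sN_{\nu_1}^{\xi_1} \cap \cdots \cap \sN_{\nu_{j-1}}^{\xi_{j-1}}\right| \le (3/2)\, np^{n(\ul{\xi}^{j-1})}q^{(j-1)-n(\ul{\xi}^{j-1})}$, which is valid once $j \le r < \log n$ and $\bar{C}_0 \ge 4$ force $3 \wt{C}(j-1)n^{\vep(\delta-1)/2} \le 1/2$, the two error contributions combine to produce the bound with the required prefactor $3\wt{C} j$. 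The only (very mild) obstacle is to arrange the constants so that the factor of $3$ in front of $\wt{C}$ is preserved through the induction; this is handled exactly as in the proof of Lemma \ref{common-nbr-allgood}, since the $p,q$ weights cancel cleanly in the recursion.
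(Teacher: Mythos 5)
Your proposal is correct and is essentially the same argument as the paper's: an induction on $j$ establishing the stronger interior bound with prefactor $3\wt{C}\,j\,n^{\vep(\delta-1)/2}$, a base case from \textbf{(A1)}--\textbf{(A2)} via inclusion--exclusion, and an inductive step combining the triangle inequality, the $\Good^{\ul{\xi}^j,p}$ hypothesis, and the crude bound $|\sN_{\nu_1}^{\xi_1}\cap\cdots\cap\sN_{\nu_{j-1}}^{\xi_{j-1}}|\le \tfrac{3}{2}np^{n(\ul{\xi}^{j-1})}q^{j-1-n(\ul{\xi}^{j-1})}$. The only cosmetic difference is that you handle $\xi_j=0$ and $\xi_j=1$ uniformly through the factor $p^{\xi_j}q^{1-\xi_j}$, whereas the paper splits into the two cases and writes out only one of them.
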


\begin{proof}
Similar to the proof of Lemma \ref{common-nbr-allgood} we use induction. For any $j=2,3,\ldots,r$, let us denote $\ul{\xi}^j:=\{\xi_1,\xi_2,\ldots,\xi_j\}$. We consider two cases  $n(\ul{\xi}^j)=n(\ul{\xi}^{j-1})+1$ and $n(\ul{\xi}^j)=n(\ul{\xi}^{j-1})$ separately. Below we only provide the argument for $n(\ul{\xi}^j)=n(\ul{\xi}^{j-1})+1$ . Proof of the other case is same and hence omitted. 

Focusing on the case $n(\ul{\xi}^j)=n(\ul{\xi}^{j-1})+1$, using the triangle inequality we have
\begin{align}
&\left|\left|\sN_{\nu_1}^{\xi_1} \cap \sN_{\nu_2}^{\xi_2} \cap \cdots \cap \sN_{\nu_j}^{\xi_j}\right|- {n}p^{n(\ul{\xi}^j)}q^{j-n(\ul{\xi}^j)}\right| \notag \\
\le &   \left|\left|\sN_{\nu_1}^{\xi_1} \cap \sN_{\nu_2}^{\xi_2} \cap \cdots \cap \sN_{\nu_j}^{\xi_j}\right|- p\left|\sN_{\nu_1}^{\xi_1} \cap \sN_{\nu_2}^{\xi_2} \cap \cdots \cap \sN_{\nu_{j-1}}^{\xi_{j-1}}\right|\right| \notag\\
& \qquad \qquad \qquad \qquad \qquad\qquad + p\left|\left|\sN_{\nu_1}^{\xi_1} \cap \sN_{\nu_2}^{\xi_2} \cap \cdots \cap \sN_{\nu_{j-1}}^{\xi_{j-1}}\right|- {n}p^{n(\ul{\xi}^{j-1})}q^{j-1-n(\ul{\xi}^{j-1})}\right|.\notag 
\end{align}
Since $\nu_j \in \Good^{\ul{\xi}^{j},p}(\nu_1, \nu_2,...,\nu_{j-1})$, we also have
\begin{align}
 &\left|\left|\sN_{\nu_1}^{\xi_1} \cap \sN_{\nu_2}^{\xi_2} \cap \cdots \cap \sN_{\nu_j}^{\xi_j}\right|- p\left|\sN_{\nu_1}^{\xi_1} \cap \sN_{\nu_2}^{\xi_2} \cap \cdots \cap \sN_{\nu_{j-1}}^{\xi_{j-1}}\right| 
 \right| \notag\\
 & \qquad \qquad \qquad\qquad \qquad \qquad \qquad \qquad \le \wt{C}p n^{\vep(\delta-1)/2} \left|\sN_{\nu_1}^{\xi_1} \cap \sN_{\nu_2}^{\xi_2} \cap \cdots \cap \sN_{\nu_{j-1}}^{\xi_{j-1}}\right|.\notag
\end{align}
Now using the induction hypothesis and proceeding as in Lemma \ref{common-nbr-allgood} we complete the proof.
\end{proof}

\bigskip
The next step to prove Theorem \ref{thm:main} is to extend Lemma \ref{lem:Bad_bound} for any $p \in (0,1)$. Recall that a key ingredient in the proof of Lemma \ref{lem:Bad_bound} is the variance bound obtained in Lemma \ref{lem:var_N_nu}. Here using assumptions {\bf (A1)} and {\bf (A2)}, proceeding same as in the proof of Lemma \ref{lem:var_N_nu}, one can easily obtain the following variance bound. We omit its proof. For clarity of presentation, hereafter, without loss of generality,  we assume that $p \ge 1/2$. For $p <1/2$, interchanging the roles of $p$ and $q$, one can obtain the same conclusions.  
\begin{lem}\label{lem:var_N_nu_p}
Let $\{\Graph_n\}_{n \in \N}$ be a sequence of graphs satisfying assumptions {\bf (A1)} and {\bf (A2)}. Then for any set $B \subset [n]$ and $\xi\in \{0,1\}$,
\[
\mathrm{Var} \left(\sN_\nu^{B,\xi}\right) \le |B| pq + \bar{C} {|B|^2 n^{\delta-1}},
\]
for some constant $\bar{C}$, depending only on $C$.
\end{lem}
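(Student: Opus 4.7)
The plan is to mirror the proof of Lemma \ref{lem:var_N_nu} verbatim, starting from the identity
\[
2n^2 \mathrm{Var}\bigl(\sN_\nu^{B,\xi}\bigr) = \sum_{\nu_1,\nu_2 \in [n]}\left(\sum_{\nu \in B}(a_{\nu\nu_1}^\xi - a_{\nu\nu_2}^\xi)\right)^2,
\]
which holds because the outer sums are just $n^2$ times the empirical variance over uniform $\nu_1,\nu_2 \in [n]$. Expanding the square and reorganizing the sums exactly as in the $p = 1/2$ case gives
\[
2n^2 \mathrm{Var}\bigl(\sN_\nu^{B,\xi}\bigr) = 2n\left\{\sum_{\nu \in B}\sN_\nu^\xi + 2\sum_{\nu < \nu' \in B}\bigl|\sN_\nu^\xi \cap \sN_{\nu'}^\xi\bigr|\right\} - 2\left(\sum_{\nu \in B}\sN_\nu^\xi\right)^2.
\]

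The first non-routine step is to extract the analogues of \eqref{eq:bd_generalized_nbhd} for general $p$, namely $|\sN_\nu^\xi - np^\xi q^{1-\xi}| = O(n^\delta)$ and $||\sN_\nu^\xi \cap \sN_{\nu'}^\xi| - np^{2\xi}q^{2(1-\xi)}| = O(n^\delta)$. The degree bound is immediate from \textbf{(A1)} for $\xi = 1$ and from the complementary identity $\sN_\nu^0 = n - 1 - \sN_\nu^1$ for $\xi = 0$. The co-degree bound follows from \textbf{(A2)} when $\xi = 1$, and by the inclusion-exclusion identity $|\sN_\nu^0 \cap \sN_{\nu'}^0| = (n - O(1)) - |\sN_\nu^1| - |\sN_{\nu'}^1| + |\sN_\nu^1 \cap \sN_{\nu'}^1|$ when $\xi = 0$, so no cross-type co-degrees are required.

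Plugging these estimates in, with $r := p^\xi q^{1-\xi}$, the leading contribution equals
\[
2n\left\{|B|\,nr + 2\binom{|B|}{2} n r^2\right\} - 2 (|B| n r)^2 = 2n^2 |B|\, r(1-r),
\]
and here the algebraic coincidence $r(1-r) = pq$ holds for both $\xi = 0$ and $\xi = 1$, since $p(1-p) = q(1-q) = pq$. Dividing by $2n^2$ produces the advertised $|B|pq$ main term, and the errors generated by each $O(n^\delta)$ slack, upon expanding $(\tfrac{n}{2} \pm O(n^\delta))^k$-type quantities for $k \leq 2$ and $|B| \leq n$, collect into at most $\bar{C}|B|^2 n^{\delta-1}$ with $\bar{C}$ depending only on $C$.

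The only real bookkeeping hurdle is to verify that the expansion errors accumulate no worse than $|B|^2 n^{\delta-1}$ (as opposed to, e.g., $|B|^2 n^{\delta}$), which holds because all would-be $O(|B|^2 n^{1+\delta})$ terms arise symmetrically in the positive and negative contributions and cancel at the level of main terms, so only the cross-terms of order $|B|^2 \cdot n \cdot n^\delta$ remain after dividing by $n^2$. Given that the $p = 1/2$ proof is already written out in detail in the excerpt and the modification is purely a parameter change, no new ideas beyond \textbf{(A1)}--\textbf{(A2)} and the complement identity are needed, which is why the authors chose to omit the proof.
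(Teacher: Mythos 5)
Your proposal is correct and follows precisely the route the authors intended (the paper omits the proof with the remark that it is obtained by "proceeding same as in the proof of Lemma \ref{lem:var_N_nu}"). You correctly identify the two small extra observations needed beyond the $p=1/2$ case: the same-$\xi$ co-degree estimate for $\xi=0$ follows from \textbf{(A1)}--\textbf{(A2)} by complementation, and the algebraic identity $r(1-r)=pq$ with $r=p^\xi q^{1-\xi}$ makes the leading term come out to $|B|pq$ for both $\xi=0$ and $\xi=1$, after which the error bookkeeping is identical to the $p=1/2$ computation.
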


\medskip
Building on Lemma \ref{lem:var_N_nu_p} we extend Lemma \ref{lem:Bad_bound} to obtain the foliowing result:

\begin{lem}\label{lem:Bad_bound_p}
Let $\{\Graph_n\}_{n \in \N}$ be a sequence of graphs satisfying {\bf (A1)} and {\bf (A2)}. Fix any two positive integers $j< r$, and let ${\sf H}_r$ be a graph on $r$ vertices. Fix ${\sf H}_j$ any one of the sub-graphs of ${\sf H}_r$ induced by $j$ vertices. Assume that 
\[
n_\Graph({\sf H_r}) \ge \f{1}{2} \times \f{(n)_r}{|\mathrm{Aut}({\sf H}_r)|}\left(\f{p}{q}\right)^{|E({\sf H}_r)|}q^{{r \choose 2}} \quad \text{ and } \quad n_\Graph({\sf H_j}) \le 2 \times \f{(n)_j}{|\mathrm{Aut}({\sf H}_j)|} \left(\f{p}{q}\right)^{|E({\sf H}_j)|}q^{{j \choose 2}}. 
\]
There exists a large positive constant $C_0$, depending only on $p$ such that, for any given $\ul{\xi}=\{\xi_1,\xi_2,\ldots,\xi_j\} \in \{0,1\}^j$, and $r \le ((1-\delta)\wedge \f{1}{2})(\log n/\log (1/q))- C_0 \log_2 \log_2 \hspace{-2pt}n$, we have 
\[
\left|\Bad^{\ul{\xi}}({\sf H}_{j}, {\sf H}_r)\right| \le \f{n_\Graph({\sf H_r})}{\left(\log n \right)^{9(r-j)}},
\]
for all large $n$.
\end{lem}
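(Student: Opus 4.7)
The plan is to mirror the proof of Lemma \ref{lem:Bad_bound}, replacing the $p=1/2$ ingredients with their $p$-general counterparts and carefully tracking the powers of $p$ and $q$. First I would derive a $p$-analog of Lemma \ref{lem:good_B} directly from the variance bound in Lemma \ref{lem:var_N_nu_p}: Chebyshev's inequality (with $\varpi = \sqrt{2\bar{C}}|B| n^{\vep(\delta-1)/2}$) combined with the fact that $\E(\sN_\nu^{B,\xi})$ is close to $|B| p^\xi q^{1-\xi}$ (a straightforward extension of \eqref{eq:expec} using assumption {\bf (A1)}) shows that the number of vertices outside $\mathrm{Good}^{\xi,p}(B)$ is at most $n (\log n)^{\bar{C}_0} \Upsilon_n(|B|, \delta)$, with $\Upsilon_n(x,\delta) = (x^{-1} + n^{\delta-1})/2$ as before.

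Next, for $\ul{v} \in \Bad^{\ul{\xi},p}({\sf H}_j, {\sf H}_r)$, the relabeling $\widehat{\ul{v}}$ places a good chain in the first $j$ positions, so Lemma \ref{common-nbr-allgood_p} yields $|\sN_{\hat{v}_1}^{\xi_1} \cap \cdots \cap \sN_{\hat{v}_j}^{\xi_j}| \geq \tfrac{1}{2} n p^{n(\ul{\xi})} q^{j-n(\ul{\xi})} \geq \tfrac{1}{2} n q^r$ (since WLOG $p \geq q$). Each of the remaining $r-j$ bad vertices then lies in a set of size at most $n (\log n)^{\bar{C}_0} \Upsilon_n(nq^r/2, \delta)$. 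Using the upper bound on $n_\Graph({\sf H}_j)$ to count good chains, a union bound over the at most $r^{O(r-j)}$ choices of induced subgraph ${\sf H}_j$ and signature gives
\[
\left|\Bad^{\ul{\xi}}({\sf H}_j, {\sf H}_r)\right| \leq 2 \cdot \frac{(n)_j}{|\mathrm{Aut}({\sf H}_j)|} \left(\frac{p}{q}\right)^{|E({\sf H}_j)|} q^{\binom{j}{2}} \cdot \left(n (\log n)^{\bar{C}_0} \Upsilon_n(nq^r/2, \delta)\right)^{r-j}.
\]

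Dividing by the lower bound on $n_\Graph({\sf H}_r)$ and taking logarithms, I would now collect terms. Lemma \ref{lem:automorphism} gives $|\mathrm{Aut}({\sf H}_r)|/|\mathrm{Aut}({\sf H}_j)| \leq r!/j!$ and hence contributes $(r-j) \log r$. The factor $(p/q)^{|E({\sf H}_j)| - |E({\sf H}_r)|}$ is at most $1$ because $p/q \geq 1$ and ${\sf H}_j$ is an induced subgraph of ${\sf H}_r$, so this term is harmless. The dominant terms to balance are: $\log((n)_j/(n)_r) \leq -(r-j)\log n + r^2/n$ from the falling factorials; $(\binom{j}{2} - \binom{r}{2}) \log q \leq r(r-j)\log(1/q)$ from the $q$-powers; and $(r-j)[\log n + \bar{C}_0 \log\log n + \log \Upsilon_n(nq^r/2, \delta)]$ from the bad-vertex factor.

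The main obstacle, and the reason for the precise form of the bound on $r$, is the cancellation of the $\log n$ contributions. Using the constraint $r \log(1/q) \leq ((1-\delta) \wedge \tfrac{1}{2}) \log n - C_0 \log(1/q) \log\log n$, one has $q^{-r} \leq n^{(1-\delta)\wedge 1/2}/(\log n)^{C_0 \log(1/q)}$, so $\log \Upsilon_n(nq^r/2,\delta) \leq -((1-\delta) \wedge \tfrac{1}{2}) \log n + O(1)$ (one checks both the $q^{-r}/n$ and $n^{\delta-1}$ terms separately, in the cases $\delta \leq 1/2$ and $\delta > 1/2$). The $(r-j)\log n$ contribution from the bad-vertex factor cancels against $\log((n)_j/(n)_r)$, and $r(r-j)\log(1/q) + (r-j)\log\Upsilon_n \leq -(r-j) C_0 \log(1/q) \log\log n + O(r-j)$. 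Combining everything leaves
\[
\log \frac{|\Bad^{\ul{\xi}}({\sf H}_j,{\sf H}_r)|}{n_\Graph({\sf H}_r)} \leq (r-j)\bigl[(1+\bar{C}_0) - C_0 \log(1/q)\bigr]\log\log n + O(r-j),
\]
which is at most $-9(r-j)\log\log n$ whenever $C_0 \geq (\bar{C}_0 + 11)/\log(1/q)$. This is where the dependence of $C_0$ on $p$ enters, and the estimate $r^2/n = o(1)$ handles all lower-order error terms for the allowed range of $r$.
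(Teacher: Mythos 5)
Your proposal is correct and follows essentially the same route as the paper: the $p$-analog of Lemma~\ref{lem:good_B} via Chebyshev and Lemma~\ref{lem:var_N_nu_p}, the lower bound $|\sN_{\hat\nu_1}^{\xi_1}\cap\cdots\cap\sN_{\hat\nu_j}^{\xi_j}|\geq \tfrac{1}{2}nq^r$ from Lemma~\ref{common-nbr-allgood_p} using $p\geq q$, the union bound with the hypothesized count of $n_\Graph({\sf H}_j)$, the observation that the $(p/q)^{|E({\sf H}_j)|-|E({\sf H}_r)|}$ factor is $\leq 1$, and the Stirling/automorphism accounting with the $\log n$ cancellation that forces $C_0$ to scale like $1/\log(1/q)$. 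The only cosmetic difference is that the paper carries the $p$-dependent constant $q^{-2}$ inside $\Upsilon_n^p$ while you absorb it into $O(1)$ terms after taking logarithms, which is harmless.
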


\begin{proof}
First using the variance bound from Lemma \ref{lem:var_N_nu_p} proceeding as in Lemma \ref{lem:good_B} we obtain
\[
\left|\left\{ \nu \in [n]: \left|\sN_\nu^{B,\xi} - {|B|}p^\xi q^{1-\xi}\right| > \wt{C} |B| p^\xi q^{1-\xi}n^{\vep(\delta-1)/2}\right\}\right| \le n (\log n)^{\bar{C}_0}\Upsilon_n^p(|B|,\delta),
\]
where $\Upsilon_n^p(x,\delta):=\frac{2 x^{-1}+q^{-2}n^{\delta-1}}{2}$. Further, using Lemma \ref{common-nbr-allgood_p} we note that 
\[
\left|\sN_{\hat{\nu}_1}^{\xi_1} \cap \sN_{\hat{\nu}_2}^{\xi_2} \cap \cdots \cap \sN_{\hat{\nu}_j}^{\xi_j}\right|\ge  \f{n}{2}p^{n(\ul{\xi})}q^{j-n(\ul{\xi})} \ge \f{n}{2} q^j \ge \f{n}{2}q^r,
\]
where in the second inequality above we use the fact that $p \ge 1/2$.
Therefore, applying Stirling's approximation and proceeding as in \eqref{eq:prelim_bd_p1}-\eqref{eq:prelim_bd_p2} we obtain
\begin{align}\label{eq:prelim_bd}
\log \left(\f{|\Bad_{j,r}^{\ul{\xi}}|}{n_{\Graph}({\sf H}_r)}\right)  & \le  \log 4+ \log (\sqrt{2 \pi} e) + \f{r^2 }{n}+ r \log r- j \log j  + \log (p/q) (|E({\sf H}_j)| - |E({\sf H}_r)|)\notag\\
& \qquad \qquad  +(r-j)\left[ \bar{C}_0 \log \log n +\log \Upsilon_n((n/2)q^r,\delta)\right]+ \log(1/q)\left[\sum_{k=j}^{r-1} k\right].
\end{align}
Since
\[
\log \Upsilon_n^p((n/2)q^r,\delta) \le \max\{r\log(1/q)-\log n +2 \log 2 , (\delta-1) \log n + 2 \log (1/q)\},
\]
we note that if $\log(1/q) r \le ((1-\delta) \wedge \f{1}{2})\log n - (\bar{C}_0+13)\log \log n$, then $\log \Upsilon_n^p((n/2)q^r,\delta) \le - \log(1/q) r -(\bar{C}_0+12)\log \log n$. Further noting that $p \ge q$ and $|E({\sf H}_r)| \ge |E({\sf H}_j)|$  we proceed as in the proof of Lemma \ref{lem:Bad_bound} to arrive at the desired conclusion. 
\end{proof}

\bigskip
Next note that a key ingredient in the proof of Lemma \ref{lem:var_type_bd} is Lemma \ref{goodbad}. Therefore, we also need to find the analogue of Lemma \ref{goodbad} for general $p$.

\begin{lem}\label{goodbad_p}
Let $\{\Graph_n\}_{n \in \N}$ be a sequence of graphs satisfying {\bf (A1)} and {\bf (A2)}. Fix any positive integer $ r$, and let ${\sf H}_r$ be a graph on $r$ vertices. Assume that 
\[
n_\Graph({\sf H_r}) \ge \f{1}{2} \times \f{(n)_r}{|\mathrm{Aut}({\sf H}_r)|}\left(\f{p}{q}\right)^{|E({\sf H}_r)|}q^{{r \choose 2}} \quad \text{ and } \quad n_\Graph({\sf H_j}) \le 2 \times \f{(n)_j}{|\mathrm{Aut}({\sf H}_j)|} \left(\f{p}{q}\right)^{|E({\sf H}_j)|}q^{{j \choose 2}}. 
\]
for all graphs ${\sf H}_j$ on $j$ vertices and all $j < r$. Fix any $\ul{\xi} =\{\xi_1,\xi_2,\ldots,\xi_r\}\in \{0,1\}^r$. Then, there exists a large positive constant $C_0$, depending only on $p$, such that for any $r \le ((1-\delta) \wedge \f{1}{2}) (\log n / \log (1/q)) - C_0 \log \log n$, we have

{\beq\label{eq:term2_p}
\left|\sum_{\ul{\nu} \in \mathcal{H}_r} f_r(\ul{\nu},\ul{\xi})- n_\Graph({ \sf{H}}_r)n\left(\f{p}{q}\right)^{n(\ul{\xi})}q^r\right| \le 13\wt{C}r  n_\Graph({ \sf H}_r) \f{n}{(\log n)^7}\left(\f{p}{q}\right)^{n(\ul{\xi})}q^r,
\eeq}
and
\beq\label{eq:seconorder_p}
\sum_{\ul{\nu} \in \mathcal{H}_r}{f_r(\ul{\nu},\ul{\xi})(f_r(\ul{\nu},\ul{\xi}) -1)} \le n_\Graph({\sf H}_r) \left(n\left(\f{p}{q}\right)^{n(\ul{\xi})}q^r\right)^2\left(1+ \f{40\wt{C} r}{(\log_2\hspace{-2pt}n)^7}\right).
\eeq
\end{lem}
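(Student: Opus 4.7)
The plan is to mirror the proof of Lemma \ref{goodbad} in the symmetric case $p=1/2$, now using Lemma \ref{common-nbr-allgood_p} and Lemma \ref{lem:Bad_bound_p} in place of Lemma \ref{common-nbr-allgood} and Lemma \ref{lem:Bad_bound}. The only substantive novelty is bookkeeping of the asymmetry between $p$ and $q$, which we can sidestep by invoking the standing assumption $p\ge 1/2$ (i.e.\ $p\ge q$) from just before the statement.

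First I would reproduce the reordering construction verbatim: for each $\ul{\nu}\in\cH_r$, greedily build $\hat{\ul{\nu}}=(\hat{\nu}_1,\ldots,\hat{\nu}_r)$ and $\hat{\ul{\xi}}=(\hat{\xi}_1,\ldots,\hat{\xi}_r)$ and an index $j\in\{3,\ldots,r\}$ so that $\hat{\nu}_i\in\mathrm{Good}^{\hat{\ul{\xi}}^i,p}(\hat{\nu}_1,\ldots,\hat{\nu}_{i-1})$ for $i=3,\ldots,j$ and $\hat{\nu}_k\in\mathrm{Bad}^{\hat{\ul{\xi}}^j,p}(\hat{\nu}_1,\ldots,\hat{\nu}_j)$ for $k>j$. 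Define $\mathrm{Good}_r(\cH_r)$ (the case $j=r$) and $\mathrm{Bad}_j(\cH_r)$ (smallest such $j$ if ambiguous) exactly as before, and split
\[
\sum_{\ul{\nu}\in\cH_r}f_r(\ul{\nu},\ul{\xi})=\sum_{\ul{\nu}\in\mathrm{Good}_r(\cH_r)}f_r(\ul{\nu},\ul{\xi})+\sum_{j=3}^{r-1}\sum_{\ul{\nu}\in\mathrm{Bad}_j(\cH_r)}f_r(\hat{\ul{\nu}},\hat{\ul{\xi}}),
\]
using that $f_r$ is invariant under simultaneous permutation of $\ul{\nu}$ and $\ul{\xi}$.

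For the $\mathrm{Good}_r$ contribution, Lemma \ref{common-nbr-allgood_p} with $\bar C_0\ge 16$ gives $|f_r(\ul{\nu},\ul{\xi})-np^{n(\ul{\xi})}q^{r-n(\ul{\xi})}|\le 3\wt C(\log n)^{-7}\, n p^{n(\ul{\xi})}q^{r-n(\ul{\xi})}$, and Lemma \ref{lem:Bad_bound_p} together with $r\le\log n$ gives $|\mathrm{Good}_r(\cH_r)|=n_\Graph({\sf H}_r)(1+O((\log n)^{-7}))$; summing yields the main term $n_\Graph({\sf H}_r)\cdot np^{n(\ul{\xi})}q^{r-n(\ul{\xi})}=n_\Graph({\sf H}_r)\cdot n(p/q)^{n(\ul{\xi})}q^r$ with error $O(r(\log n)^{-7})$ times the main term. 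For the $\mathrm{Bad}_j$ contribution, Lemma \ref{common-nbr-allgood_p} applied to the first $j$ coordinates of the reordering bounds $f_r(\hat{\ul{\nu}},\hat{\ul{\xi}})\le |\sN_{\hat\nu_1}^{\hat\xi_1}\cap\cdots\cap\sN_{\hat\nu_j}^{\hat\xi_j}|\le 2np^{n(\hat{\ul{\xi}}^j)}q^{j-n(\hat{\ul{\xi}}^j)}$, and Lemma \ref{lem:Bad_bound_p} bounds $|\mathrm{Bad}_j(\cH_r)|\le n_\Graph({\sf H}_r)/(\log n)^{7(r-j)}$ (absorbing the at most $r^{2(r-j)}\le(\log n)^{r-j}$ choices of subgraph/$\ul{\hat\xi}^j$).

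The only point that is not purely cosmetic is comparing $np^{n(\hat{\ul{\xi}}^j)}q^{j-n(\hat{\ul{\xi}}^j)}$ against the target $np^{n(\ul{\xi})}q^{r-n(\ul{\xi})}$. Writing $k:=n(\ul{\xi})-n(\hat{\ul{\xi}}^j)\in[0,r-j]$, the ratio equals $(1/p)^k(1/q)^{(r-j)-k}$, which under $p\ge q$ is bounded by $(1/q)^{r-j}$. Since $q$ is a fixed constant in $(0,1)$, we have $(1/q)^{r-j}(\log n)^{-7(r-j)}\le(\log n)^{-6(r-j)}$ for $n$ large. Hence $\sum_{j<r}$ of the bad contributions forms a convergent geometric series giving $O(r(\log n)^{-6})$ times $n_\Graph({\sf H}_r)\cdot np^{n(\ul{\xi})}q^{r-n(\ul{\xi})}$, which combined with the Good estimate proves \eqref{eq:term2_p} after adjusting constants. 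For \eqref{eq:seconorder_p}, run the same dichotomy with $f_r(f_r-1)\le f_r^2$: on $\mathrm{Good}_r$ use the two-sided bound from Lemma \ref{common-nbr-allgood_p} to obtain $(np^{n(\ul{\xi})}q^{r-n(\ul{\xi})})^2(1+O(r(\log n)^{-7}))$ per vertex tuple, and on $\mathrm{Bad}_j$ the squared ratio $(1/q)^{2(r-j)}(\log n)^{-7(r-j)}$ is still summable because $7>2\log(1/q)/\log\log n$ trivially for large $n$.

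The main obstacle, as indicated, is the careful tracking of $p/q$ exponents in the Bad contributions; the $p\ge q$ reduction (already invoked before the statement) is what turns this into a purely geometric estimate. Once that is verified the rest is a direct transcription of the $p=1/2$ argument.
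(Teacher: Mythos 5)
Your proposal is correct and mirrors the paper's proof of Lemma~\ref{goodbad_p} essentially step for step: the same greedy reordering into $\mathrm{Good}_r(\cH_r)$ and $\mathrm{Bad}_j(\cH_r)$, the same appeal to Lemma~\ref{common-nbr-allgood_p} on the first $j$ re-ordered coordinates, the same use of Lemma~\ref{lem:Bad_bound_p} with an absorption of the $r^{O(r-j)}$ relabeling/$\hat{\ul{\xi}}$ choices, and the same $p\ge q$ reduction to tame the exponent mismatch. Your way of phrasing that reduction, $(1/p)^k(1/q)^{(r-j)-k}\le(1/q)^{r-j}$, is algebraically identical to the paper's $(p/q)^{n(\ul{\xi}^j)}\le(p/q)^{n(\ul{\xi})}$, so the core idea is exactly the one used.

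The one place your bookkeeping is loose is the final geometric tail, and it is not fixable by ``adjusting constants'' as you claim. By absorbing $(1/q)^{r-j}$ into $(\log n)^{r-j}$ and bounding each summand by $(\log n)^{-6(r-j)}$, you end up with $O(r(\log n)^{-6})$, which is genuinely weaker than the $(\log n)^{-7}$ in \eqref{eq:term2_p}: the ratio grows like $\log n$, so no constant absorbs it. The clean way, and what the paper effectively does, is to keep $(1/q)^{r-j}$ intact and note that
\[
\sum_{m\ge 1}\left(\frac{1/q}{(\log n)^{7}}\right)^{m}
\]
is a geometric series whose ratio tends to zero, hence is bounded by a $p$-dependent multiple of its first term $(1/q)(\log n)^{-7}$. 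This recovers the exact $(\log n)^{-7}$, with the price of a constant depending on $p$ --- which the lemma explicitly permits, as $C_0$ and $\wt C$ are allowed to depend on $p$. The same remark applies to your treatment of \eqref{eq:seconorder_p}: keep the $(1/q)^{2(r-j)}$ factor and use the geometric series directly rather than trading it for powers of $\log\log n$.
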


\begin{proof}
The proof is again mostly similar to that of Lemma \ref{goodbad}. Changes are required in only couple of places. Proceeding same as in the proof of Lemma \ref{goodbad} we see that \eqref{eq:termIdone1} extends to 
\beq
\left|\sum_{{\ul{\nu}} \in \mathrm{Good}_r(\mathcal{H}_r)}f_r(\ul{\nu},\ul{\xi})- n_\Graph({\sf H_r}) n\left(\f{p}{q}\right)^{n(\ul{\xi})}q^r \right| \le 
5\wt{C} r n_\Graph({\sf H_r})  (\log_2\hspace{-2pt}n)^{-7}n\left(\f{p}{q}\right)^{n(\ul{\xi})}q^r. \notag
\eeq
To extend \eqref{eq:termIIdone} we first note that for $\ul{\nu} \in \Bad_j(\mathcal{H}_r)$ there exists a relabeling $\widehat{\ul{\nu}}$ such that $\hat{\nu}_i \in \mathrm{Good}^{\hat{\ul{\xi}}^i}(\hat{\nu}_1,\hat{\nu}_2,\ldots,\hat{\nu}_{i-1})$, for all $i=3,4,\ldots,j$. Therefore using Lemma \ref{common-nbr-allgood_p}, we obtain that 
\[
{|\sN_{\hat{\nu}_1}^{\hat{\xi}_1} \cap \sN_{\hat{\nu}_2}^{\hat{\xi}_2} \cap \cdots \cap \sN_{\hat{\nu}_r}^{\hat{\xi}_r}|} \le {|\sN_{\hat{\nu}_1}^{\hat{\xi}_1} \cap \sN_{\hat{\nu}_2}^{\hat{\xi}_2} \cap \cdots \cap \sN_{\hat{\nu}_j}^{\hat{\xi}_j}|}\le 2 n \left(\f{p}{q}\right)^{n(\ul{\xi}^j)} q^j \le  2n\left(\f{p}{q}\right)^{n(\ul{\xi})} q^j,
\]
where in the last step we again used the fact that $p \ge 1/2$. Proceeding similar to \eqref{eq:termIIdone} we then deduce
\begin{align}
\sum_{j=3}^{r-1}\sum_{{\ul{\nu}} \in \mathrm{Bad}_j(\mathcal{H}_r)}{f_r({\ul{\nu}},\ul{\xi})} 
 &\le \sum_{j =3}^{r} 2|\Bad_j(\mathcal{H}_r)|   n\left(\f{p}{q}\right)^{n(\ul{\xi})} q^j \notag\\
 & \le 2\sum_{j <r} \f{n_\Graph({\sf H}_r)}{(\log_2\hspace{-2pt}n)^{7(r-j)}}n\left(\f{p}{q}\right)^{n(\ul{\xi})} q^j \le 8rn_\Graph({\sf H}_r) {(\log n)^{-7}}n\left(\f{p}{q}\right)^{n(\ul{\xi})} q^j .\notag
\end{align}
The rest of the proof requires similar adaptation. We omit the details.
\end{proof}

Imitating the proof of Lemma \ref{lem:f_r_bar_bound} we then obtain the following lemma.

\begin{lem}\label{lem:f_r_bar_bound_p}
For any $r$, let $\ul{\xi}:=\{\xi_1,\xi_2,\ldots,\xi_r\} \in \{0,1\}^r$. 
Then for any $r \le \log n$,
\beq\label{eq:avgbd}
n \left(\frac{p}{q} \right)^{n(\ul{\xi})} q^r \left(1-12 C  r n^{\delta-1}\right)\left(1-\f{2r^2}{n}\right) \leq \bar{f}_{r,\xi} \leq n \left(\frac{p}{q} \right)^{n(\ul{\xi})} q^r\left(1+12 C  r n^{\delta-1}\right)\left(1+\f{2r^2}{n}\right).
\eeq
\end{lem}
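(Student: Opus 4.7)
The plan is to mimic the proof of Lemma \ref{lem:f_r_bar_bound} almost line for line, with the symmetric bound $\sN_v^\xi \in n/2 \pm O(n^\delta)$ replaced by its asymmetric counterpart $\sN_v^\xi \in n p^\xi q^{1-\xi} \pm \widetilde C n^\delta$. The latter follows from Assumption \textbf{(A1)} for $\xi = 1$, and for $\xi = 0$ from the identity $\sN_v^0 = (n-1) - \sN_v^1$ together with (A1), which gives $|\sN_v^0 - nq| \le Cn^\delta + 1 \le \widetilde{C} n^\delta$ for all large $n$.

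First I will establish the factorized identity
\[
\bar{f}_{r,\ul{\xi}} \;=\; \frac{1}{(n)_r}\sum_{v \in [n]} (\sN_v^1)^{n(\ul{\xi})}(\sN_v^0)^{r-n(\ul{\xi})},
\]
valid up to a lower-order correction arising from tuples with repeated coordinates, which is absorbed into the final $(1 \pm 2r^2/n)$ factor coming from comparing $n^r$ with $(n)_r$. This is the same interchange of summations that yielded \eqref{eq:f_r_xi_bound}, noting that under the convention of Lemma \ref{common-nbr-allgood_p} the exponent $n(\ul{\xi})$ now counts the number of ones rather than the number of zeros, so the roles of $\sN_v^0$ and $\sN_v^1$ are swapped.

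For the upper bound, I will substitute $\sN_v^\xi \le n p^\xi q^{1-\xi} + \widetilde{C} n^\delta$ into the identity, factor out the main term
\[
(np)^{n(\ul{\xi})}(nq)^{r-n(\ul{\xi})} \;=\; n^{r}\,(p/q)^{n(\ul{\xi})}\, q^{r},
\]
and then combine three standard estimates: the product inequality \eqref{eq:fact} applied to $n^r/(n)_r = \prod_{j=0}^{r-1}(1 - j/n)^{-1}$ to obtain a $(1 + 2r^2/n)$ bound; the binomial inequality $(1+x)^r \le 1 + 2rx$ for $rx \le 1$, applied simultaneously to $(1 + \widetilde{C}n^{\delta-1}/p)^{n(\ul{\xi})}$ and $(1 + \widetilde{C}n^{\delta-1}/q)^{r-n(\ul{\xi})}$ using the uniform bound $x \le \widetilde{C}n^{\delta-1}/\min(p,q)$; and a final $(1+a)(1+b) \le 1 + a + b + ab$-type collation of the two error factors. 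The constant $\min(p,q)^{-1}$ gets absorbed into the numerical factor $12C$, which accounts for the statement's implicit dependence on $p$. The lower bound follows from the mirror argument, substituting $\sN_v^\xi \ge n p^\xi q^{1-\xi} - \widetilde{C} n^\delta$ and invoking $(1-x)^r \ge 1 - rx$ (valid for $rx \le 1$, which holds whenever $r \le \log n$ and $n$ is large) together with the lower side of \eqref{eq:fact}.

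There is no real obstacle: the argument is purely algebraic bookkeeping and structurally identical to Lemma \ref{lem:f_r_bar_bound}. The only minor point to track is that the two correction factors on the $p$- and $q$-sides must both be controlled uniformly, which forces the hidden constant to depend on $\min(p,q)^{-1}$; this is harmless since $p \in (0,1)$ is fixed throughout the paper.
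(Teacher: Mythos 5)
Your proposal is correct and follows the same route the paper intends: the paper explicitly states that Lemma~\ref{lem:f_r_bar_bound_p} is obtained by imitating the proof of Lemma~\ref{lem:f_r_bar_bound}, which is precisely what you do -- factorize $\bar f_{r,\ul\xi}$, substitute the asymmetric bounds $\sN_v^\xi \in np^\xi q^{1-\xi} \pm \widetilde{C}n^\delta$, and control $n^r/(n)_r$ and the binomial correction via \eqref{eq:fact} and $(1\pm x)^r$ bounds. You are also right to flag that the constant $12C$ as written should really carry a factor of $\min(p,q)^{-1}$ once $p\neq 1/2$; the paper suppresses this since $p$ is fixed, but your bookkeeping is the more careful one.
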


Since we have all the required ingredients we can prove Theorem \ref{thm:main} imitating the proof of Proposition \ref{prop:main}. The details are omitted.

\begin{rmk}
\label{rmk:cliq-ind}
If one is only interested in counting the number of cliques of independent sets in $\Graph_n$ satisfying {\bf (A1)}-{\bf (A2)} the from the proof of Theorem \ref{thm:main} it follows that one slightly improve the conclusion of Theorem \ref{thm:main}. For example, when considering the number of cliques one can replace $\gamma_p$ in \eqref{eq:thm_main_display} by $p^{-1}$, whereas for independent sets one can replace the same by $q^{-1}$. 
\end{rmk}

\begin{rmk}
	\label{rmk:delt-p}
Note that in the proof of Theorem \ref{thm:main} we obtained a rate of convergence for the \abbr{LHS} of \eqref{eq:thm_main_display}. Therefore we can easily consider $p,\delta \in (0,1)$ such that $\min\{p,q\},1-\delta \ra 0$ as $n \ra \infty$. Indeed, in the random geometric graph setting when the dimension of the space is poly-logarithmic of the number of points, the assumptions {\bf (A1)}-{\bf (A2)} are satisfied with $\delta \ra 1$ as $n \ra \infty$. To apply Theorem \ref{thm:main} there, we adapt our current proof to accommodate $\delta \ra 1$. For more details on this, see proof of Theorem \ref{thm:rgeom-impli}. Similarly we can also consider $\min\{p,q\} \ra 0$. This would establish a version of Theorem \ref{thm:main} for sparse graphs. Further details are omitted.
\end{rmk}

We now proceed to the proof of Theorem \ref{thm:main_improve}. For clarity of presentation we once again only provide the proof for $p=1/2$. Recall that a key to the proof of Theorem \ref{thm:main} is the variance bound obtained in Lemma \ref{lem:var_N_nu_p}. Using Chebychev's inequality, Lemma \ref{lem:var_N_nu_p} was then applied to $(\mathrm{Good}^\xi(B))^c$. With the help of two additional assumptions {\bf (A3)}-{\bf (A4)} we improve that bound by using Markov's inequality with fourth power. To do that, first we obtain the following bound on the fourth central moment of $\sN_v^{B,\xi}$. 

\begin{lem}\label{lem:fourth_moment_N_nu_p}
Let $\{\Graph_n\}_{n \in \N}$ be a sequence of graphs satisfying assumptions {\bf (A1)}-{\bf (A4)} with $p=1/2$. Then for any set $B \subset [n]$, and $\xi\in \{0,1\}$,
\beq
\E\left[\left(\sN_\nu^{B,\xi}- \E [\sN_\nu^{B,\xi}]\right)^4\right] \le \bar{C}\left({|B|^2} + {|B|^4 n^{\delta-1}}\right),\label{eq:fourth_moment}
\eeq
for some constant $\bar{C}$, depending only on $C$.
\end{lem}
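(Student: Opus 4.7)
The plan is to follow the same blueprint as the proof of Lemma \ref{lem:var_N_nu}, replacing the second-moment calculation by a fourth-moment expansion, and to use {\bf (A3)} and {\bf (A4)} in exactly the two places where the new higher-order intersection cardinalities enter. Write $Y_\nu := \sN_\nu^{B,\xi} - \E[\sN_\nu^{B,\xi}] = \sum_{\nu' \in B} Z_{\nu\nu'}$, where $Z_{\nu\nu'} := a_{\nu\nu'}^\xi - \mu_{\nu'}$ and $\mu_{\nu'} := \sN_{\nu'}^\xi/n$. Assumption {\bf (A1)} gives $|\mu_{\nu'} - \tfrac{1}{2}| \le Cn^{\delta-1}$, and in particular $|1-2\mu_{\nu'}| \le 2Cn^{\delta-1}$. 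Expanding,
\[
\E Y^4 \;=\; \frac{1}{n}\sum_\nu Y_\nu^4 \;=\; \sum_{\nu'_1,\nu'_2,\nu'_3,\nu'_4 \in B} \frac{1}{n}\sum_\nu \prod_{j=1}^4 Z_{\nu\nu'_j}.
\]
I would then split the outer sum by the multiplicity pattern of $(\nu'_1,\ldots,\nu'_4)$: all four distinct ($O(|B|^4)$ terms), one pair plus two singletons ($O(|B|^3)$), two disjoint pairs ($O(|B|^2)$), one triple plus one singleton ($O(|B|^2)$), and all four equal ($O(|B|)$).

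To reduce each summand to intersection cardinalities I would use the two elementary identities, valid since $a\in\{0,1\}$:
\[
(a-\mu)^2 = \mu(1-\mu) + (1-2\mu)(a-\mu), \qquad (a-\mu)^3 = \mu(1-\mu)(a-\mu) + (1-2\mu)(a-\mu)^2,
\]
iterated until every product $\prod_j Z_{\nu\nu'_j}$ becomes a constant plus a linear combination of products $Z_{\nu \bar\nu'_1}\cdots Z_{\nu \bar\nu'_k}$ over \emph{distinct} indices, with the coefficients of terms containing any $(1-2\mu)$ factor gaining an extra $O(n^{\delta-1})$. For each remaining $k$-fold centered sum
\[
\frac{1}{n}\sum_\nu \prod_{i=1}^k Z_{\nu\bar\nu'_i} \;=\; \frac{1}{n}\Bigl|\bigcap_{i=1}^k \sN_{\bar\nu'_i}^{\xi}\Bigr| - \prod_{i=1}^k \mu_{\bar\nu'_i} + (\text{lower-order intersections}),
\]
the binomial expansion combined with {\bf (A1)}--{\bf (A4)} (applied to $\xi$-generalized intersections via an analogue of \eqref{eq:bd_generalized_nbhd}) shows that the result is $O(n^{\delta-1})$ for every $k\in\{1,2,3,4\}$. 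This is the key place where {\bf (A3)}-{\bf (A4)} are required: they give the $O(n^{\delta-1})$ error for $k=3,4$ respectively, exactly matching the $O(|B|^3)$ and $O(|B|^4)$ configuration counts above.

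Collecting contributions pattern by pattern: the ``two disjoint pairs'' configurations produce the leading term $\binom{4}{2,2}\tfrac{1}{2}\mu_{\nu'_1}(1-\mu_{\nu'_1})\mu_{\nu'_3}(1-\mu_{\nu'_3})\approx \tfrac{3}{16}|B|(|B|-1) = O(|B|^2)$; all other patterns yield errors of order $O(|B|^4 n^{\delta-1})$ (all distinct, using {\bf (A4)}), $O(|B|^3 n^{\delta-1})$ (one pair, using {\bf (A3)}), $O(|B|^2 n^{\delta-1})$ (two pairs and one triple, using {\bf (A2)}), and $O(|B|)$ (all equal). Summing gives $\E Y^4 \le \bar{C}(|B|^2 + |B|^4 n^{\delta-1})$ for an absolute constant $\bar{C}$ depending only on $C$, which is the claim.

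The main obstacle I expect is purely bookkeeping: tracking the algebraic identities for $(a-\mu)^k$ and keeping the combinatorial factors for the five multiplicity patterns straight so that the cancellation of the leading $(1/2)^k$ terms (driven by $p=1/2$, hence $1-2\mu=O(n^{\delta-1})$) is visible and not swamped by the $O(|B|^3)$ configurations in the one-pair pattern. Once one observes that any surviving $(1-2\mu)$ factor absorbs a factor of $|B|$ into an $n^{\delta-1}$, the bound falls out.
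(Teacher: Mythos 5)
Your proposal is correct and takes essentially the same approach as the paper: both expand the fourth power of $\sum_{u\in B} a_{\nu u}^\xi$ over 4-tuples of indices, group the tuples by multiplicity pattern, and invoke {\bf (A1)}--{\bf (A4)} to pin down each $k$-fold $\xi$-intersection density to within $O(n^{\delta-1})$ of $(1/2)^k$, so that the leading-order contributions from patterns with $3$ or $4$ distinct indices cancel and the two-disjoint-pairs pattern yields the $O(|B|^2)$ main term. The only procedural difference is that you center each summand $a_{\nu u}^\xi - \mu_u$ \emph{before} expanding (using the identities for $(a-\mu)^2$ and $(a-\mu)^3$), so the cancellation appears configuration-by-configuration, whereas the paper computes the raw moments $\E[(\sN_\nu^{B,\xi})^k]$ for $k=1,\dots,4$ separately and then combines them through the binomial formula for the fourth central moment.
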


\bigskip
The main advantage of Lemma \ref{lem:fourth_moment_N_nu_p} compared to Lemma \ref{lem:var_N_nu} is the absence of $|B|^3$ on the  \abbr{RHS} of \eqref{eq:fourth_moment}. This helps in obtaining a better bound on ``bad'' vertices. Its proof is a simple consequence of assumptions {\bf (A1)}-{\bf (A4)}.

\begin{proof}
One can easily check that
\begin{align}
 & \E\left(\sN_\nu^{B,\xi} - \E(\sN_\nu^{B,\xi})\right)^4\notag\\
  =  & \E(\sN_\nu^{B,\xi})^4 -4\E(\sN_\nu^{B,\xi})^3 \E(\sN_\nu^{B,\xi}) + 6 \E(\sN_\nu^{B,\xi})^2 (\E(\sN_\nu^{B,\xi}))^2 - 3 (\E(\sN_\nu^{B,\xi}))^4.\label{eq:fourth_moment_brk}
\end{align}
Now using {\bf (A1)}-{\bf (A4)} we find bounds on each of the terms in the \abbr{RHS} of \eqref{eq:fourth_moment_brk}. Considering the first term in the \abbr{RHS} of \eqref{eq:fourth_moment_brk} we see that
\begin{align*}
\E(\sN_\nu^{B,\xi})^4 &= \frac{1}{n}\sum_{\nu \in [n]}\left(\sum_{u \in B}{a_ {\nu u}^\xi}\right)^4  = \frac{1}{n}\sum_{\nu \in [n]}\sum_{u_1, u_2,u_3,u_4 \in B}{a_ {\nu u_1}^\xi a_ {\nu u_2}^\xi a_ {\nu u_3}^\xi a_ {\nu u_4}^\xi}.
\end{align*}
We further note that
\begin{align*}
\sum_{u_1, u_2, u_3, u_4 \in B}{a_ {\nu u_1}^\xi a_ {\nu u_2}^\xi a_ {\nu u_3}^\xi a_ {\nu u_4}^\xi} & =  \sum_{u_1\neq u_2\neq u_3\neq u_4 \in B}{a_ {\nu u_1}^\xi a_ {\nu u_2}^\xi a_ {\nu u_3}^\xi a_ {\nu u_4}^\xi}  +  6\sum_{u_1\neq u_2\neq u_3 \in B}{a_ {\nu u_1}^\xi a_ {\nu u_2}^\xi a_ {\nu u_3}^\xi }  \\
& +  7\sum_{u_1\neq u_2 \in B}{a_ {\nu u_1}^\xi a_ {\nu u_2}^\xi } + \sum_{u_1 \in B }{a_ {\nu u_1} ^\xi}.
\end{align*}
Using {\bf (A1)}-{\bf (A4)} we therefore deduce that
\begin{align}\label{eq:fourth moment}
&\E(\sN_\nu^{B,\xi})^4 \notag\\
  \leq  & (|B|)_4 \left(\f{1}{16}+ \kappa Cn^{\delta-1}\right) + 6(|B|)_3\left(\frac{1}{8} +\kappa C n^{\delta-1} \right) + 7(|B|)_2\left(\frac{1}{4} + \kappa C n^{\delta-1} \right) +  |B|\left(\frac{1}{2} +\kappa C n^{\delta-1} \right),
\end{align}
for some absolute constant $\kappa$. By similar arguments we also obtain that 
\begin{align}
\label{eq:third moment}
\E(\sN_\nu^{B,\xi})^3 \geq  (|B|)_3\left(\frac{1}{8} -\kappa C n^{\delta-1} \right) + 3(|B|)_2\left(\frac{1}{4} -\kappa C n^{\delta-1} \right) +  |B|\left(\frac{1}{2} -\kappa C n^{\delta-1} \right),
\end{align}

\begin{align}\label{eq:second moment}
\E(\sN_\nu^{B,\xi})^2 \leq (|B|)_2\left(\frac{1}{4} + \kappa C n^{\delta-1}\right) +  |B|\left(\frac{1}{2} +\kappa C n^{\delta-1} \right),
\end{align}
and
\begin{align}\label{eq:first moment}
  |B|\left(\frac{1}{2} - \kappa C n^{\delta-1} \right)\leq \E(\sN_\nu^{B, \xi}) \leq   |B|\left(\frac{1}{2} + \kappa C n^{\delta-1} \right).
  \end{align}
Now upon combining \eqref{eq:fourth moment}-\eqref{eq:first moment} the result follows from \eqref{eq:fourth_moment_brk}.
\end{proof}

Next from Lemma \ref{lem:fourth_moment_N_nu_p} using Markov's inequality we immediately obtain the following result. 
\begin{lem}\label{lem:good_B_improve}
Let $\{\Graph_n\}_{n \in \N}$ be a sequence of graphs satisfying assumptions {\bf (A1)}-{\bf (A4)} with $p=1/2$. Fix $\vep = \f{\bar{C}_0\log_2\hspace{-2pt}\log_2\hspace{-2pt}n}{(1-\delta)\log_2\hspace{-2pt}n}$. Then, there exists a positive constant $\wt{C}$, depending only on $C$, such that for any set $B \subset [n]$, and $\xi \in \{0,1\}$
\[
\left|\left\{ \nu \in [n]: \left|\sN_\nu^{B,\xi} - \frac{|B|}{2}\right| > \wt{C} |B| n^{\vep(\delta-1)/2}\right\}\right| \le n (\log_2\hspace{-2pt}n)^{2 \bar{C}_0}\bar{\Upsilon}_n(|B|,\delta),
\]
where $\bar{\Upsilon}_n(x,\delta):=\frac{x^{-2}+n^{\delta-1}}{2}$.
\end{lem}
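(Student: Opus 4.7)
The plan is to mirror the proof of Lemma \ref{lem:good_B} essentially verbatim, but replacing Chebyshev's inequality (second moment) with Markov's inequality applied to the fourth power $(\sN_\nu^{B,\xi} - \E[\sN_\nu^{B,\xi}])^4$, using the sharper fourth moment bound supplied by Lemma \ref{lem:fourth_moment_N_nu_p}. The whole point is that Lemma \ref{lem:fourth_moment_N_nu_p} gives a bound of order $|B|^2 + |B|^4 n^{\delta-1}$ (notably \emph{without} any $|B|^3$ term), so dividing by $\varpi^4 \sim |B|^4 n^{2\vep(\delta-1)}$ produces an $|B|^{-2}$ in place of the $|B|^{-1}$ appearing in $\Upsilon_n$.

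First, by Markov's inequality and Lemma \ref{lem:fourth_moment_N_nu_p},
\[
\P\Big(|\sN_\nu^{B,\xi} - \E(\sN_\nu^{B,\xi})| > \varpi\Big) \le \frac{\E\big[(\sN_\nu^{B,\xi} - \E(\sN_\nu^{B,\xi}))^4\big]}{\varpi^4} \le \frac{\bar{C}(|B|^2 + |B|^4 n^{\delta-1})}{\varpi^4}.
\]
I would take $\varpi := (2\bar{C})^{1/4}|B| n^{\vep(\delta-1)/2}$, so that after simplification the right-hand side becomes
\[
\frac{1}{2}\left(\frac{n^{2\vep(1-\delta)}}{|B|^2} + n^{(\delta-1)(1-2\vep)}\right).
\]
Next, recall $\vep = \bar{C}_0 \log_2\log_2 n / ((1-\delta)\log_2 n)$, which gives $n^{2\vep(1-\delta)} = 2^{2\bar{C}_0 \log_2\log_2 n} = (\log_2 n)^{2\bar{C}_0}$. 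Therefore the displayed probability is at most
\[
(\log_2 n)^{2\bar{C}_0}\cdot\frac{|B|^{-2} + n^{\delta-1}}{2} = (\log_2 n)^{2\bar{C}_0}\bar{\Upsilon}_n(|B|,\delta).
\]

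Third, exactly as in the proof of Lemma \ref{lem:good_B}, assumption {\bf (A1)} (via \eqref{eq:bd_generalized_nbhd}) yields $|\E(\sN_\nu^{B,\xi}) - |B|/2| \le 3C|B|n^{\delta-1}$, which is negligible compared to the threshold $|B| n^{\vep(\delta-1)/2}$ because $\vep \to 0$ forces $n^{\vep(\delta-1)/2} \gg n^{\delta-1}$. Absorbing this discrepancy into the constant $\wt{C}$ by the triangle inequality and multiplying the probability bound by $n$ to convert to a count of vertices (recall $\P$ denotes the empirical measure on $[n]$), I obtain the claimed bound.

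The main potential difficulty, apart from bookkeeping of constants, is simply confirming that the proof of Lemma \ref{lem:fourth_moment_N_nu_p} indeed kills the $|B|^3$ term in $\E[(\sN_\nu^{B,\xi} - \E\sN_\nu^{B,\xi})^4]$ using both assumptions {\bf (A3)} and {\bf (A4)} together — but this is precisely the content of that lemma, which is already established in the excerpt. Once that cancellation is in hand, the present lemma is a routine consequence.
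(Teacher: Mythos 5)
Your proposal is correct and matches the paper's proof exactly: the paper itself says only "Next from Lemma \ref{lem:fourth_moment_N_nu_p} using Markov's inequality we immediately obtain the following result," which is precisely the fourth-moment Markov argument you carry out, together with the same triangle-inequality absorption of the bias $|\E(\sN_\nu^{B,\xi})-|B|/2|\le 3C|B|n^{\delta-1}$ into the constant $\wt{C}$ that appears in the proof of Lemma \ref{lem:good_B}. The algebra checks out, including the identity $n^{2\vep(1-\delta)}=(\log_2 n)^{2\bar C_0}$ and the absorption of $n^{(\delta-1)(1-2\vep)}$ into $(\log_2 n)^{2\bar C_0}\,n^{\delta-1}$.
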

Note the difference between $\bar{\Upsilon}_n(\cdot,\cdot)$ of Lemma \ref{lem:good_B_improve} and ${\Upsilon}_n(\cdot,\cdot)$ of Lemma \ref{lem:good_B}. Presence of $x^{-2}$ in $\bar{\Upsilon}_n$ is the key to the improvement of Theorem \ref{thm:main}. Using  Lemma \ref{lem:good_B_improve} we now complete the proof of Theorem \ref{thm:main_improve}.

\begin{proof}[Proof of Theorem \ref{thm:main_improve}]
We begin by noting that Lemma \ref{common-nbr-allgood} and Lemma \ref{lem:f_r_bar_bound} continue to hold as long as $r \le \log_2\hspace{-2pt} n$. Thus to improve Theorem \ref{thm:main} the key step is to establish that the number of ``bad'' vertices is negligible for all $r \le ((1-\delta) \wedge \f{2}{3}) \log_2\hspace{-2pt}n- C_5' \log_2\log_2\hspace{-2pt}n$, when $p=1/2$. That is, we need to improve  Lemma \ref{lem:Bad_bound} under the current set-up. To this end, we note that 
\[
\log_2\hspace{-2pt} \bar{\Upsilon}_n(n/2^r,\delta) \le \max\{2(r-\log_2\hspace{-2pt}n) , (\delta-1) \log_2 \hspace{-2pt}n\}.
\]
Therefore, if $r \le ((1-\delta)\wedge \f{2}{3})\log_2\hspace{-2pt}n - 2(\bar{C}_0+6) \log_2\hspace{-1pt}\hspace{-2pt}\log_2 \hspace{-2pt}n$, we have $\log_2\hspace{-2pt}\bar{\Upsilon}_n(n/2^r,\delta) \le -r-2(\bar{C}_0+6)\log_2\hspace{-1pt}\log_2\hspace{-2pt}n$. Repeating the remaining steps of the proof of Lemma \ref{lem:Bad_bound} one can extend its conclusion for all $r \le ((1-\delta) \wedge \f{2}{3}) \log_2\hspace{-2pt}n- C_5' \log_2\log_2\hspace{-2pt}n$. Since The proofs of other lemmas are based on Lemma \ref{lem:Bad_bound} one can improve those as well and  hence repeating the other steps of the proof of Theorem \ref{thm:main} one can obtain Theorem \ref{thm:main_improve}. We omit rest of the details.  
\end{proof}

\begin{rmk}
Adding more assumptions on the graph sequence, e.g.~bounds on the number of common neighbors of five and six vertices, one may potentially improve Theorem \ref{thm:main_improve} further. We do not pursue that direction.
\end{rmk}

\section{Proofs of the applications of Theorem \ref{thm:main}}
\label{sec:potaotm}

\subsection{Proof of Theorem \ref{thm:ergm-impli}}
We start this section with the following concentration result for the \abbr{ERGM} model.

\begin{thm}[Concentration results]
	\label{thm:ergm-cond}
	Under Assumption \ref{ass:ergm}, for any $\alpha>0$, there exists a constant $K_{\mvbeta}:=K_{\mvbeta}(\alpha) < \infty$ such that
	\begin{equation}
	\label{eqn:ergm-deg-con}
		\P\left(\max_{\nu\in [n]} \left||\sN_{\nu}| - np^*\right| \ge K_{\mvbeta} \sqrt{n\log{n}}\right) \le n^{-\alpha},
	\end{equation}
	and 
	\begin{equation}
	\label{eqn:ergm-comm-deg-con}
		\P\left(\max_{\nu\neq \nu^\prime\in [n]} \left||\sN_{\nu}\cap \sN_{\nu^{\prime}}| - n(p^*)^2\right| \ge K_{\mvbeta} \sqrt{n\log{n}}\right) \le n^{-\alpha},
	\end{equation}
where $p^*$ is as in Assumption \ref{ass:ergm}.
	\end{thm}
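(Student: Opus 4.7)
The plan is to derive both bounds by combining the structure of the conditional edge distribution with a Lipschitz concentration inequality valid in the high-temperature regime. The starting observation is that for every pair $i\ne j$,
\[\P\big(x_{ij} = 1 \,\big|\, x_{-ij}\big) = \varphi_{\mvbeta}\!\left(\frac{1}{n}\sum_{k\ne i,j} x_{ik}x_{jk}\right),\]
so that the fixed-point identity $\varphi_{\mvbeta}((p^{*})^{2}) = p^{*}$ together with the contraction condition \eqref{eqn:ergm-slope-cond} govern the behaviour of all local statistics. Denote $D_{v}(\vx) := |\sN_{v}(\vx)| = \sum_{u\ne v} x_{uv}$ and $C_{u,v}(\vx) := |\sN_{u}(\vx)\cap\sN_{v}(\vx)| = \sum_{w\ne u,v} x_{uw}x_{vw}$. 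Flipping a single edge changes $D_{v}$ by at most one and changes $C_{u,v}$ by at most one (only the terms with $\{i,j\}\cap\{u,v\}\ne \emptyset$ are affected, and each is a $0/1$ quantity), so both functionals are $1$-Lipschitz with respect to the Hamming metric on $\Omega$.

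First I would apply a sub-Gaussian concentration inequality for $1$-Lipschitz functions of $\bm X^{\sss(n)}\sim p_{\mvbeta}^{\sss(n)}$ with variance proxy of order $n$. The appropriate tool is a Dobrushin/transportation-type inequality: under Assumption \ref{ass:ergm}, the maximal influence coefficient between edges is bounded by $\gamma\varphi_{\mvbeta}^{\prime}((p^{*})^{2})/n$ times a geometric factor, and the slope condition \eqref{eqn:ergm-slope-cond} guarantees a spectral gap / Dobrushin coefficient strictly less than one for Glauber dynamics (this is the content of \cite{BBS-11} and sharpenings thereof). Standard transportation or exchangeable-pair methods then yield
\[\P\big(\big|F(\bm X^{\sss(n)}) - \E F(\bm X^{\sss(n)})\big| \ge t\big) \le 2\exp\!\big(-c_{\mvbeta}\, t^{2}/n\big)\]
for any $1$-Lipschitz $F$ and some $c_{\mvbeta} > 0$. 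Taking $F = D_{v}$ and then $F = C_{u,v}$ and setting $t = K_{\mvbeta}\sqrt{n\log n}$ with $K_{\mvbeta}$ large enough, a union bound over the $n$ vertices (respectively the $\binom{n}{2}$ pairs) gives probabilities bounded by $n^{-\alpha}$ for any prescribed $\alpha$, which is what \eqref{eqn:ergm-deg-con} and \eqref{eqn:ergm-comm-deg-con} assert once the means are correctly identified.

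It remains to pin down the means. By vertex exchangeability, $\E D_{v} = (n-1)\E x_{12}$ and $\E C_{u,v} = (n-2)\E[x_{12}x_{13}]$. Combining the Chatterjee--Diaconis variational characterization from \cite{chaergm} with the uniqueness of the maximizer $p^{*}$ in \eqref{eqn:ergm-fixed-point}, one obtains $\E x_{12} = p^{*} + O(n^{-1/2})$ and $\E[x_{12}x_{13}] = (p^{*})^{2} + O(n^{-1/2})$ in the high-temperature regime; alternatively these can be read off from the conditional identity above after iterating once and invoking the already-established concentration. Either way the mean deviations are absorbed into the $O(\sqrt{n\log n})$ window. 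The main obstacle is the sub-Gaussian step with variance proxy linear in $n$: a naive McDiarmid bound on $\binom{n}{2}$ coordinates would give variance proxy $\binom{n}{2}$ and miss by a factor of $\sqrt{n}$, so one must genuinely exploit the Gibbs structure via the Dobrushin coupling condition furnished by Assumption \ref{ass:ergm}. Once this Lipschitz concentration estimate is in place, the remainder of the proof is a routine union bound.
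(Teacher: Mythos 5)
Your high-level plan is close in spirit to the paper's (Lipschitz concentration via exchangeable pairs or a related Gibbs-measure technique, then pinning down the center), but there is a genuine gap at the step you treat as routine: identifying the center of concentration as $np^*$ and $n(p^*)^2$ up to an error of order $\sqrt{n\log n}$. The Chatterjee--Diaconis variational result in \cite{chaergm} gives convergence of the edge and two-star densities to $p^*$ and $(p^*)^2$ via cut-metric convergence of the graphon, but it does \emph{not} come with the quantitative $O(n^{-1/2})$ error that you assert; nothing in that argument produces a rate fast enough to absorb the mean deviation into the $K_{\mvbeta}\sqrt{n\log n}$ window. Your alternative phrase, ``iterating once and invoking the already-established concentration,'' is also circular as written, because the concentration you establish is around $\E D_v$ or around $\frac{1}{n}\sum_k \varphi_{\mvbeta}(L_{ik})\varphi_{\mvbeta}(L_{jk})$, not around the desired deterministic value.

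The paper's proof gets around this by making the self-consistency argument the main content. Chatterjee's exchangeable-pair inequality (Theorem~\ref{thm:cha-stein-conc}) shows that $L_{ij}$ concentrates not around its mean but around $\tfrac{1}{n}\sum_{k}\varphi_{\mvbeta}(L_{ik})\varphi_{\mvbeta}(L_{jk})$. One then bounds $L_{\min}$ and $L_{\max}$ using monotonicity of $\varphi_{\mvbeta}$ (ferromagnetic case) to obtain
\[
\bigl(\varphi_{\mvbeta}(L_{\min})\bigr)^2 - \Xi - \tfrac{2}{n} \;\le\; L_{\min} \;\le\; L_{\max} \;\le\; \bigl(\varphi_{\mvbeta}(L_{\max})\bigr)^2 + \Xi + \tfrac{2}{n},
\]
and then uses the function $\psi_{\mvbeta}(u) = \varphi_{\mvbeta}(u)^2 - u$, whose unique zero is $u^* = (p^*)^2$ with $\psi_{\mvbeta}'(u^*) < 0$ under the high-temperature Assumption~\ref{ass:ergm}, to squeeze both $L_{\min}$ and $L_{\max}$ into an $O(\sqrt{\log n / n})$ window around $(p^*)^2$. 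Only after this is the degree bound obtained, because the degree concentrates around $\sum_{j}\varphi_{\mvbeta}(L_{\nu j})$ and one then substitutes $L_{\nu j} \approx (p^*)^2$ together with $\varphi_{\mvbeta}((p^*)^2) = p^*$. So the two conclusions are sequential, not parallel as in your outline, and the fixed-point/contraction structure of \eqref{eqn:ergm-fixed-point}--\eqref{eqn:ergm-slope-cond} is used precisely to identify the center, not merely to justify the Lipschitz concentration. Your concentration step itself (whether via Dobrushin coupling or via an exchangeable pair) is fine in outline, and the Lipschitz and union-bound arithmetic is correct, but without the $\psi_{\mvbeta}$ self-consistency argument the proof does not close.
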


We note that Theorem \ref{thm:ergm-impli} follows immediately from the above concentration result and Theorem \ref{thm:main}, upon applying Borel Cantelli lemma. Hence, the rest of the section will be devoted in proving Theorem \ref{thm:ergm-cond}. 

Proof of \eqref{eqn:ergm-comm-deg-con} actually follows from  the proofs of \cite[Lemma 10, Lemma 11]{chatterjee2010applications}. To describe the results of \cite{chatterjee2010applications} we need to introduce the following notation. 

For each $\nu\neq \nu^\prime \in [n]$, in the sequel we write 
\begin{equation}
\label{eqn:lnu-def}
	L_{\nu \nu^\prime} := \frac{1}{n} \sum_{k\neq \nu, \nu^\prime} X_{\nu k} X_{\nu^{\prime} k } = \frac{|\sN_{\nu}\cap \sN_{\nu^{\prime}}|}{n},
\end{equation} 
for the normalized co-degree. Now recalling the definition of $\varphi_{\mvbeta}$ (see \eqref{eqn:phi-beta-def}), we paraphrase two of the main results of \cite{chatterjee2010applications} relevance to us.

\begin{lem}[{\cite[Lemma 10, Lemma 11]{chatterjee2010applications}}]
	\label{lem:cha-lij-conc}
	Under Assumption \ref{ass:ergm}, for any $i\neq j\in [n]$, for all $t\geq 8\gamma/\sqrt{n}$,
	\begin{equation}
	\label{eqn:conc-lij}
		\P\left(\sqrt{n}\left|L_{ij} - \frac{1}{n}\sum_{k\notin \set{i,j}}\varphi_{\mvbeta}(L_{ik})\varphi_{\mvbeta}(L_{jk})  \right| \geq t\right) \leq 2\exp\left(- \frac{t^2}{12(1+\gamma)}\right). 
	\end{equation}
	Further there exists a constant $K_{\mvbeta}$ such that for each $i\neq j \in [n]$,
	\begin{equation}
	\label{eqn:lij-p2-diff}
		\E\left(\left|L_{ij} - (p^*)^2\right|\right) \leq \frac{K_{\mvbeta}}{\sqrt{n}}. 
	\end{equation} 
	\end{lem}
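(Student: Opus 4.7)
My plan follows the Chatterjee--Diaconis strategy and exploits the mean-field/Gibbs structure of the ERGM. The starting observation is an exact computation of edge-wise conditional laws under $p_{\mvbeta}$. Letting $\vX_{-ij}$ denote all edges except $X_{ij}$, a direct calculation with the Hamiltonian \eqref{eqn:hamil-def} shows that flipping $X_{ij}$ changes the energy by $\beta + \gamma L_{ij}$ (since, recalling \eqref{eqn:lnu-def}, $n L_{ij}$ equals the number of triangles through edge $ij$, and $L_{ij}$ itself does not involve $X_{ij}$). Hence $\P(X_{ij}=1 \mid \vX_{-ij}) = \varphi_{\mvbeta}(L_{ij})$. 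A slightly finer computation, conditioning on $\vX$ minus the pair $\{X_{ik}, X_{jk}\}$ for some $k \notin \{i,j\}$, reveals that these two variables are $O(1/n)$-close to independent Bernoullis with parameters $\varphi_{\mvbeta}(L_{ik})$ and $\varphi_{\mvbeta}(L_{jk})$ respectively, since the only coupling between them comes from the single triangle involving vertex $k$ and both $i,j$, which contributes $O(1/n)$ to the Hamiltonian.

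For \eqref{eqn:conc-lij}, the idea is to view $L_{ij} = \frac{1}{n}\sum_{k} X_{ik}X_{jk}$ as a $(2/n)$-Lipschitz functional of the edge configuration (flipping any single edge changes $L_{ij}$ by at most $2/n$) and invoke a concentration inequality for Lipschitz statistics under $p_{\mvbeta}$. The input is a Dobrushin-type uniqueness / rapid-mixing estimate for the Glauber dynamics, which in the ``high-temperature'' regime of Assumption \ref{ass:ergm} is precisely what the slope condition \eqref{eqn:ergm-slope-cond} delivers. The natural centering produced by this argument is the conditional expectation of $L_{ij}$ given the rest of the graph, which by the paragraph above equals $\frac{1}{n}\sum_{k}\varphi_{\mvbeta}(L_{ik})\varphi_{\mvbeta}(L_{jk})$ up to an $O(1/n)$ error absorbed in the threshold $t \geq 8\gamma/\sqrt{n}$. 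Careful bookkeeping of Lipschitz constants against the contraction rate yields the explicit $\exp(-t^2/12(1+\gamma))$ tail.

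For \eqref{eqn:lij-p2-diff}, let $\mu_n := \E L_{ij}$, which by symmetry is independent of the pair. Taking expectations in \eqref{eqn:conc-lij} and using $\E|L_{ab} - \mu_n| = O(n^{-1/2})$ from the same bound (integrating the sub-Gaussian tail), together with the $O(\gamma)$-Lipschitz continuity of $\varphi_{\mvbeta}$, gives
\[\mu_n \;=\; \varphi_{\mvbeta}(\mu_n)^2 + O(n^{-1/2}).\]
The scalar map $\Phi(x) := \varphi_{\mvbeta}(x)^2$ has $(p^*)^2$ as its unique fixed point in $[0,1]$ by the uniqueness part of \eqref{eqn:ergm-fixed-point} (a solution $q$ of $\Phi(q)=q$ corresponds, via $p = \sqrt{q}$, to a solution of \eqref{eqn:ergm-fixed-point}) and $\Phi'((p^*)^2) < 1$ by \eqref{eqn:ergm-slope-cond} (written in its \eqref{eqn:cond-exp-varphi} form). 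A standard contraction/implicit-function perturbation argument then forces $|\mu_n - (p^*)^2| = O(n^{-1/2})$. Combining with the concentration from the previous step and integrating yields the desired $\E|L_{ij} - (p^*)^2| \le K_{\mvbeta}/\sqrt{n}$.

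The main obstacle, as always with ERGMs, is the Lipschitz concentration in \eqref{eqn:conc-lij} itself: because $p_{\mvbeta}$ is not a product measure, McDiarmid's inequality does not apply directly, and one has to exploit fast decay of correlations. The high-temperature hypothesis \eqref{eqn:ergm-slope-cond} is exactly the input that ensures the Glauber dynamics on edges is a strict contraction in an appropriate metric, which is what ultimately drives the sub-Gaussian bound. Once this is available, the remaining passage from \eqref{eqn:conc-lij} to \eqref{eqn:lij-p2-diff} and the subsequent deduction of Theorem \ref{thm:ergm-cond} by a union bound over $O(n^2)$ pairs with $t \asymp \sqrt{\log n}$ are essentially routine.
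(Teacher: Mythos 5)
Your plan diverges from the paper's at the crucial concentration step, and that divergence is a genuine gap, not just a stylistic choice. The paper (following Chatterjee--Diaconis) proves \eqref{eqn:conc-lij} via the exchangeable-pairs / Stein-type concentration machinery of Theorem~\ref{thm:cha-stein-conc}: one constructs $\vX'$ by resampling the pair $(X_{ik},X_{jk})$ for a uniformly random $k$, takes the antisymmetric $F(\vX,\vX')=L_{ij}-L_{ij}'$, and reads off the bound from $\Delta(\vX)\le B_2$. Two things make this indispensable here. First, this route needs only $\gamma>0$ and works for \emph{all} $(\beta,\gamma)$; it does not need the high-temperature condition at all. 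Your route instead invokes Lipschitz concentration driven by a ``Dobrushin-type uniqueness / rapid-mixing estimate,'' and you claim the slope condition \eqref{eqn:ergm-slope-cond} delivers it. It does not: \eqref{eqn:ergm-slope-cond} is a condition on the derivative of the one-dimensional map $p\mapsto\varphi_{\mvbeta}(p^2)$ at its fixed point, which constrains $(\beta,\gamma)$ only through the fixed-point structure, whereas a Dobrushin-type contraction coefficient for the $\binom{n}{2}$ spins is of order $\gamma$ and requires $\gamma$ bounded by an absolute constant. The high-temperature regime of Assumption~\ref{ass:ergm} contains points with $\gamma$ arbitrarily large (pushing $\beta\to-\infty$), so a Dobrushin hypothesis is strictly stronger than what you have.

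Second, even where Dobrushin holds, Lipschitz/transportation-type inequalities concentrate $L_{ij}$ around its \emph{deterministic} mean $\E L_{ij}$, not around the \emph{random} quantity $\frac{1}{n}\sum_k\varphi_{\mvbeta}(L_{ik})\varphi_{\mvbeta}(L_{jk})$ that appears in \eqref{eqn:conc-lij}. You suggest this random centering arises as ``the conditional expectation of $L_{ij}$ given the rest of the graph,'' but $L_{ij}$ is measurable with respect to the edges $\{X_{ik},X_{jk}\}_{k}$, so there is no single residual $\sigma$-field producing this centering; rather it is the \emph{average} over the randomly chosen $k$ of the conditional mean of $X_{ik}X_{jk}$ given $\vX_{-\{ik,jk\}}$. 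Making this a theorem-grade statement is exactly what the antisymmetric function $f(\vX)=\E[F(\vX,\vX')|\vX]$ in Theorem~\ref{thm:cha-stein-conc} accomplishes, and the explicit constant $12(1+\gamma)$ and the threshold $t\ge 8\gamma/\sqrt{n}$ come out of that computation (the threshold absorbs the $O(\gamma/n)$ discrepancy between $f(\vX)$ and $\sqrt n\,(L_{ij}-\tfrac1n\sum_k\varphi\varphi)$). Your sketch would need to rebuild this bridge from scratch.

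Your outline for \eqref{eqn:lij-p2-diff} is closer to workable: integrating the tail, using the $\gamma$-Lipschitz property of $\varphi_{\mvbeta}$, and a stability estimate at the fixed point $u^*=(p^*)^2$ via the slope condition (Lemma~\ref{lem:equiv-ass-ergm}) is indeed the right skeleton. But as written it is circular in one place: you want $\E|L_{ab}-\mu_n|=O(n^{-1/2})$ in order to pass from $\E|L_{ij}-\tfrac1n\sum\varphi\varphi|$ to $\mu_n=\varphi(\mu_n)^2+O(n^{-1/2})$, yet that moment bound is essentially what you are trying to prove. The paper sidesteps this by a deterministic sandwiching of $L_{\min}$ and $L_{\max}$ (see the argument around \eqref{eqn:lmax-lmin} using $\psi_{\mvbeta}(u)=\varphi_{\mvbeta}(u)^2-u$), which localizes every $L_{ij}$ near $u^*$ on an event of small complement, rather than manipulating expectations directly. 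You should replace your mean-level step with something of that flavor, and replace the Dobrushin/Lipschitz step with the exchangeable-pair construction.
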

To prove \eqref{eqn:ergm-comm-deg-con} we do not need \eqref{eqn:lij-p2-diff} directly. However, its proof will suffice in this case. More precisely, we will prove the following lemma which is sufficient to deduce \eqref{eqn:ergm-comm-deg-con}.
\begin{lem}\label{lem:proof-ergm-comm-deg-con}
Under Assumption \ref{ass:ergm}, for any $i\neq j\in [n]$, for all $t\geq 8\gamma/\sqrt{n}$,
	\begin{equation}
	\label{eqn:conc-lij-1}
		\P\left(\sqrt{n}\left|L_{ij} - \frac{1}{n}\sum_{k\notin \set{i,j}}\varphi_{\mvbeta}(L_{ik})\varphi_{\mvbeta}(L_{jk})  \right| \geq t\right) \leq 2\exp\left(- \frac{t^2}{12(1+\gamma)}\right). 
	\end{equation}
	Further, given any $\alpha >0$, there exists a positive constant $C_{\mvbeta}$ such that ,
	\begin{equation}
	\label{eqn:lij-p2-diff-1}
		\P\left(\max_{i \ne j \in [n]}\left|L_{ij} - (p^*)^2\right|\ge C_{\mvbeta}\sqrt{\f{\log n}{n}}\right) \leq n^{-\alpha}. 
	\end{equation} 
\end{lem}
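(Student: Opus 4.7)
The first assertion, \eqref{eqn:conc-lij-1}, is literally the concentration statement in \cite[Lemma 11]{chatterjee2010applications}, so my plan is to invoke it directly (or equivalently to reproduce the short Hamming--Lipschitz / Azuma--Hoeffding argument: exposing edges one at a time, the random variable $\sqrt{n}\,L_{ij}$ changes by at most $O(1/\sqrt{n})$ per edge, and in the high temperature regime the Gibbs measure $p_{\mvbeta}^{\sss(n)}$ satisfies sufficient mixing estimates to deliver the stated sub-Gaussian tail with variance proxy $O(1+\gamma)$).

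The upgrade to the uniform bound \eqref{eqn:lij-p2-diff-1} proceeds in two stages. First, I would apply \eqref{eqn:conc-lij-1} with $t = \kappa_{\mvbeta}\sqrt{\log n}$, where $\kappa_{\mvbeta}$ is large depending on $\alpha$, and union bound over all $\binom{n}{2}$ pairs, to obtain that with probability at least $1-n^{-\alpha-2}$,
\[
\Delta_n \;:=\; \max_{i\neq j}\;\left|L_{ij} - \frac{1}{n}\sum_{k\notin\{i,j\}}\varphi_{\mvbeta}(L_{ik})\varphi_{\mvbeta}(L_{jk})\right| \;\leq\; C_{\mvbeta}\sqrt{\tfrac{\log n}{n}}.
\]
It then remains to transfer this to a bound on $M_n := \max_{i\neq j}|L_{ij} - (p^*)^2|$. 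Writing $\eps_{ij} := L_{ij} - (p^*)^2$ and Taylor-expanding $\varphi_{\mvbeta}$ about $(p^*)^2$, and using the fixed point identity $\varphi_{\mvbeta}((p^*)^2) = p^*$ from Assumption \ref{ass:ergm}, one finds
\[
\frac{1}{n}\sum_{k\notin\{i,j\}} \varphi_{\mvbeta}(L_{ik})\varphi_{\mvbeta}(L_{jk}) - (p^*)^2 \;=\; p^*\,\varphi_{\mvbeta}'((p^*)^2)\left(\bar\eps_i + \bar\eps_j\right) + O(M_n^2) + O(n^{-1}),
\]
where $\bar\eps_i := \tfrac{1}{n}\sum_k\eps_{ik}$. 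Bounding $|\bar\eps_i|, |\bar\eps_j| \leq M_n$ and invoking the slope condition \eqref{eqn:ergm-slope-cond} in the form $\alpha_{*} := 2p^*\varphi_{\mvbeta}'((p^*)^2) < 1$, I obtain the scalar recursion
\[
M_n \;\leq\; \alpha_{*}\, M_n + O(M_n^2) + \Delta_n + O(n^{-1}).
\]

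The principal obstacle is closing this recursion to yield $M_n = O(\sqrt{\log n / n})$: the quadratic error $O(M_n^2)$ prevents using the contraction factor $\alpha_{*} < 1$ unless $M_n$ is already known to be small in sup-norm, and the trivial bound $M_n \leq 1$ is useless here. To handle this I bootstrap with a weaker a priori bound. Fix $\eta_{*} > 0$ small enough that $\alpha_{*} + c\,\eta_{*} < 1$, where $c$ is the implied constant in $O(M_n^2)$, and establish $M_n \leq \eta_{*}$ with probability $\geq 1 - n^{-\alpha-2}$. This weak uniform bound follows by combining the mean bound \eqref{eqn:lij-p2-diff} of \cite[Lemma 11]{chatterjee2010applications}, which gives $\E L_{ij} \to (p^*)^2$ at rate $n^{-1/2}$, with the uniform concentration of $L_{ij}$ around $\E L_{ij}$ obtained by iterating \eqref{eqn:conc-lij-1} and taking one further union bound over pairs. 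Once $M_n \leq \eta_{*}$, the quadratic term is absorbed into the linear one, and the recursion collapses to $(1 - \alpha_{*} - c\,\eta_{*})\, M_n \leq \Delta_n + O(n^{-1})$, which delivers $M_n \leq C'_{\mvbeta}\sqrt{\log n/n}$ on the same high probability event, completing the proof.
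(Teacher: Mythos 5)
Your plan for the first inequality \eqref{eqn:conc-lij-1} is essentially what the paper does: it reconstructs the exchangeable-pair argument from Chatterjee--Diaconis (resampling the two edges $(X_{ik},X_{jk})$ at a uniformly chosen $k$ and applying the sub-Gaussian concentration theorem for exchangeable pairs), and you propose simply to cite that lemma. That part is fine, though your parenthetical ``expose edges one at a time and apply Azuma'' is not quite right as stated, since under $p_{\mvbeta}$ the edges are dependent and a naive Doob martingale does not have bounded increments without further work; the exchangeable-pair route is what actually closes this.

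The serious issue is in the bootstrap you propose for \eqref{eqn:lij-p2-diff-1}. Your Taylor-expansion recursion
\[
M_n \;\leq\; \alpha_{*}\, M_n + O(M_n^2) + \Delta_n + O(n^{-1})
\]
is correct as far as it goes, and you correctly identify that it cannot be closed without an a priori bound $M_n\leq \eta_*$. But the source you propose for that a priori bound does not exist. The concentration inequality \eqref{eqn:conc-lij-1} controls $L_{ij}$ around the \emph{random} quantity $\frac{1}{n}\sum_k\varphi_{\mvbeta}(L_{ik})\varphi_{\mvbeta}(L_{jk})$, not around $\E L_{ij}$; ``iterating'' it only pushes the problem onto the $L_{ik}$'s, which is exactly the circularity you are trying to break. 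And the bound \eqref{eqn:lij-p2-diff} is an $L^1$ bound $\E|L_{ij}-(p^*)^2|\leq K_{\mvbeta}/\sqrt{n}$, which under a union bound over the $\binom{n}{2}$ pairs via Markov gives only $\P(M_n>\eta_*)\lesssim n^{3/2}$, i.e.\ nothing. So the step ``establish $M_n\leq\eta_*$ with probability $\geq 1-n^{-\alpha-2}$'' is unjustified, and the whole argument collapses there.

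What the paper does instead --- and what your plan is missing --- is to exploit the \emph{ferromagnetic monotonicity}. Since $\gamma>0$, $\varphi_{\mvbeta}$ is increasing, so evaluating the defining inequality at the extremal pairs gives the one-sided bounds $\psi_{\mvbeta}(L_{\min})\leq \Xi+2/n$ and $\psi_{\mvbeta}(L_{\max})\geq -(\Xi+2/n)$, where $\psi_{\mvbeta}(u):=\varphi_{\mvbeta}(u)^2-u$. Because $\psi_{\mvbeta}$ has the unique zero $u^*=(p^*)^2$, is positive to its left and negative to its right, and is bounded away from $0$ outside any $\delta$-neighborhood of $u^*$, these sign constraints force $L_{\min},L_{\max}\in(u^*-\delta,u^*+\delta)$ once $\Xi$ is small --- that is the a priori localization, obtained for free from the global shape of $\psi_{\mvbeta}$ rather than from any sup-norm concentration. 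The local bound $|u-u^*|\leq c\,|\psi_{\mvbeta}(u)|$ near $u^*$ (valid since $\psi_{\mvbeta}'(u^*)<0$ by the high-temperature slope condition) then delivers the quantitative $M_n\leq c(\Xi+2/n)$ directly, with no quadratic term to absorb. In short, the monotonicity/sign argument simultaneously supplies the a priori localization and the linear contraction, whereas the Taylor--contraction route needs the a priori bound as an external input that your proposal does not provide. If you want to keep your Taylor-expansion framing, you must first run the paper's $\psi_{\mvbeta}$-sign argument to get $M_n\leq\delta$; but at that point the Taylor step is redundant.
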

Most of the proof of Lemma \ref{lem:proof-ergm-comm-deg-con} has been carried out in \cite{chatterjee2010applications}. Here we will only provide outlines, and we refer the interested readers to \cite{chatterjee2010applications}. To prove \eqref{eqn:ergm-deg-con}, along with Lemma \ref{lem:proof-ergm-comm-deg-con}, we will need the following result.
\begin{lem}\label{lem:conc-deg-ph}
 Let $\bm{X}\sim p_{\mvbeta}$. 	Fix any vertex $\nu\in [n]$ and let $d_{\nu} = \sum_{j\neq \nu} X_{j\nu}$ denote the degree of this vertex. Then for any $t> 0$, 
	\[\P\left(\left|\frac{d_{\nu}}{n-1} - \frac{1}{n-1}\sum_{j\neq \nu}\varphi_{\mvbeta}(L_{\nu j}) \right|> t\right)\leq 2\exp\left(-\f{(n-1)t^2}{1+\gamma}\right).\]
\end{lem}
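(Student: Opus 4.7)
\medskip

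The plan is to mimic the exchangeable-pair argument of \cite{chatterjee2010applications} that establishes the co-degree concentration \eqref{eqn:conc-lij-1}, now applied to the degree $d_\nu$ rather than $L_{ij}$. First I would set up the heat-bath Glauber chain on $\Omega$: at each step, an edge $(i,j)$ is sampled uniformly from $\binom{[n]}{2}$ and $X_{ij}$ is resampled from its conditional law given $\bm{X}_{-ij}$. Because the Hamiltonian \eqref{eqn:hamil-def} decomposes as
\[
H(\bm{X}) = X_{ij}\left(\beta + \tfrac{\gamma}{n}\,|\sN_i \cap \sN_j|\right) + H_{-ij}(\bm{X}),
\]
where $H_{-ij}$ and the co-degree $|\sN_i \cap \sN_j|$ do not depend on $X_{ij}$, this conditional law is $\mathrm{Bernoulli}(\varphi_{\mvbeta}(L_{ij}))$. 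Hence $p_{\mvbeta}$ is the stationary measure, and taking $\bm{X} \sim p_{\mvbeta}$ and $\bm{X}'$ one Glauber step later produces an exchangeable pair.

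Next I would compute the Stein kernel for $f(\bm{X}) := d_\nu$. A Glauber move leaves $d_\nu$ invariant unless the resampled edge is incident to $\nu$. Conditional on that edge being $(\nu,j)$, the expected change is $X_{\nu j} - \varphi_{\mvbeta}(L_{\nu j})$, since $L_{\nu j}$ is a function of $\bm{X}_{-\nu j}$. Averaging over the uniform choice of edge yields
\[
F(\bm{X}) := \E\bigl[f(\bm{X}) - f(\bm{X}')\,\bigm|\, \bm{X}\bigr] \;=\; \frac{2}{n(n-1)}\left(d_\nu - \sum_{j\neq \nu}\varphi_{\mvbeta}(L_{\nu j})\right).
\]
Two deterministic a priori bounds are crucial: (i) $|f(\bm{X}) - f(\bm{X}')| \leq 1$, since a single edge flip changes $d_\nu$ by at most one; and (ii) a single edge flip alters $\sum_{j\neq \nu}\varphi_{\mvbeta}(L_{\nu j})$ by at most a constant multiple of $\gamma$, because $\varphi_{\mvbeta}$ is $(\gamma/4)$-Lipschitz and such a flip perturbs at most two of the co-degrees $L_{\nu j}$ by $\pm 1/n$ (or none at all, if the flipped edge is incident to $\nu$, in which case $L_{\nu j}$ does not change for $j$ equal to the other endpoint).

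Finally I would invoke Chatterjee's tail bound for exchangeable pairs. Writing $g(\bm{X}) := d_\nu - \sum_{j\neq \nu}\varphi_{\mvbeta}(L_{\nu j})$, the antisymmetric-pair identity gives
\[
\E\!\left[g(\bm{X})\, e^{\theta g(\bm{X})/(n-1)}\right] \;=\; \frac{n(n-1)}{4}\,\E\!\left[(f(\bm{X})-f(\bm{X}'))\bigl(e^{\theta g(\bm{X})/(n-1)} - e^{\theta g(\bm{X}')/(n-1)}\bigr)\right].
\]
Combining this with the mean-value bound $|e^a - e^b| \leq \frac{1}{2}|a-b|(e^a + e^b)$ and the two Lipschitz estimates above, the moment generating function $h(\theta) := \E\bigl[\exp(\theta g(\bm{X})/(n-1))\bigr]$ satisfies a differential inequality of the form $|h'(\theta)| \leq \theta\,(1+\gamma)\,h(\theta)/(2(n-1))$, which integrates to the sub-Gaussian bound $h(\theta) \leq \exp\bigl(\theta^2(1+\gamma)/(4(n-1))\bigr)$. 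A standard Chernoff argument with the optimal choice of $\theta$ then delivers the claimed inequality.

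The main obstacle is tracking the precise constant $1+\gamma$ rather than some crude $O(\gamma^2)$ bound: the summand ``$1$'' comes from the trivial $|f - f'| \leq 1$, while ``$\gamma$'' emerges from the Lipschitz response of the mean-field correction $\sum_{j\neq \nu}\varphi_{\mvbeta}(L_{\nu j})$ to a single edge flip; one has to verify that these two contributions combine additively — and not multiplicatively — inside the Stein-kernel variance proxy. This is essentially the same technical point as in Chatterjee's co-degree calculation, but cleaner here because $d_\nu$ is a linear functional of the edge variables.
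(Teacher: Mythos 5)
Your overall strategy is the right one and matches the paper's: build an exchangeable pair by resampling a single edge from its conditional law (which is $\mathrm{Bernoulli}(\varphi_{\mvbeta}(L_{ij}))$), take $F(\vX,\vX')=d_\nu-d_\nu'$ as the antisymmetric function, and read off the Stein kernel $\E[F\mid\vX]\propto d_\nu-\sum_j\varphi_{\mvbeta}(L_{\nu j})$. The paper uses a slightly leaner chain — a uniformly random vertex $K\in[n]\setminus\{\nu\}$ with the single edge $X_{\nu K}$ resampled, so that every move is incident to $\nu$ — and then invokes Theorem \ref{thm:cha-stein-conc} as a black box with $B_1=0$, $B_2=\frac{1+\gamma}{2(n-1)}$. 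Your full-edge Glauber chain plus re-derivation of the MGF differential inequality is a valid alternative, though the re-derivation is doing the work of a theorem the paper already has available.

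However, your bullet (ii) misidentifies where the $\gamma$ actually comes from, and would, if taken at face value, yield a strictly smaller (and incorrect) constant. You assert that a single flip ``perturbs at most two of the co-degrees $L_{\nu j}$ by $\pm 1/n$ (or none at all, if the flipped edge is incident to $\nu$ \ldots)''. The ``at most two'' case is the one where the flipped edge $(i,j)$ avoids $\nu$ — but there $|d_\nu-d_\nu'|=0$, so that move contributes nothing to the Stein-kernel product and a bound of $O(\gamma/n)$ there is irrelevant. When the flipped edge is $(\nu,k)$, it is \emph{not} true that no co-degree changes: since $L_{\nu j}=\frac{1}{n}\sum_{m\neq\nu,j}X_{\nu m}X_{jm}$ contains the term $m=k$ for every $j\notin\{\nu,k\}$, all $n-2$ of those co-degrees shift by at most $1/n$, and $\sum_{j}|\varphi_{\mvbeta}(L_{\nu j})-\varphi_{\mvbeta}(L_{\nu j}')|\le (n-2)\cdot\gamma/n\le\gamma$. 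Only $L_{\nu k}$ stays fixed. This is precisely the source of the ``$+\gamma$'': if your stated picture were correct, the perturbation of $\sum_j\varphi_{\mvbeta}(L_{\nu j})$ on the relevant moves would be $O(\gamma/n)$ and the tail bound would be $2\exp(-(1-o(1))(n-1)t^2)$, which is not the claimed statement. The final constant you quote happens to be right, but the reasoning behind it needs to be repaired: both the $1$ and the $\gamma$ arise from moves incident to $\nu$, the $1$ from $|d_\nu-d_\nu'|\le1$ and the $\gamma$ from the aggregate Lipschitz shift of all $n-2$ affected co-degrees.
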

\subsubsection{Proofs of Lemma \ref{lem:proof-ergm-comm-deg-con} and Lemma \ref{lem:conc-deg-ph}} 
In this section we prove  Lemma \ref{lem:proof-ergm-comm-deg-con} and Lemma \ref{lem:conc-deg-ph}. The key technique here is Chatterjee's method for concentration inequalities via Stein's method \cite{chatterjee-thesis,chatterjee-stein-ptrf}.  We start by quoting the following result which we apply to prove the lemmas. 

\begin{thm}[{\cite[Theorem 1.5]{chatterjee-stein-ptrf}}]
	\label{thm:cha-stein-conc}
	Let $\cX$ denote a separable metric space and suppose $(\bm{X}, \bm{X}^{\prime})$ be an exchangeable pair of $\cX$ valued random variables. Suppose $F:\cX\times \cX \to \R$ be a square integrable anti-symmetric function (i.e.~$F(x,y)=-F(y,x)$) and let $f(\bm{X}) := \E(F(\bm{X}, \bm{X}^{\prime})|\bm{X})$. Write, 
	\[\Delta(\vX) = \frac{1}{2}\E\left(|[f(\vX) - f(\vX^\prime)]F(\vX, \vX^{\prime})|\Big|\vX\right).\]
	Assume that $\E(e^{\theta f(\vX)}|F(\vX, \vX^{\prime})|) < \infty$ for all $\theta\in \R$. Further assume there exist constants $B_1, B_2$ such that almost surely one has $\Delta(\vX) \leq B_1 f(\vX) + B_2$. Then for any $t\geq 0$,
	\[\P(f(\vX) \geq t) \leq \exp\left(-\frac{t^2}{2B_2+2B_1 t}\right), \qquad \P(f(\vX)\leq -t) \leq \exp\left(-\frac{t^2}{2B_2}\right). \]
\end{thm}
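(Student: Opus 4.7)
The statement in question is Chatterjee's concentration inequality for exchangeable pairs, proven originally in \cite{chatterjee-stein-ptrf} via Stein's method. Here is the sketch of the argument I would follow.

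The plan is to control the moment generating function $m(\theta) := \E[\exp(\theta f(\vX))]$ and then use Markov's inequality with an optimized choice of $\theta$. The first step is to exploit exchangeability together with the antisymmetry of $F$: since $(\vX, \vX')$ and $(\vX', \vX)$ have the same distribution, and $F(\vX, \vX') = -F(\vX', \vX)$, for any measurable $\Phi$ one has $\E[F(\vX, \vX') \Phi(\vX)] = -\E[F(\vX, \vX') \Phi(\vX')]$. Combined with the defining identity $f(\vX) = \E[F(\vX, \vX')\mid \vX]$, applied with $\Phi(x) = e^{\theta x}$, this yields the fundamental identity
\[
\E\bigl[f(\vX) e^{\theta f(\vX)}\bigr] \;=\; \tfrac{1}{2}\,\E\bigl[F(\vX, \vX')\bigl(e^{\theta f(\vX)} - e^{\theta f(\vX')}\bigr)\bigr].
\]

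The second step is to convert this into a differential inequality for $m(\theta)$. The mean-value estimate $|e^{\theta a} - e^{\theta b}| \leq \tfrac{|\theta|}{2}|a-b|(e^{\theta a} + e^{\theta b})$, coupled with exchangeability of the pair, gives
\[
\bigl|\E[f(\vX) e^{\theta f(\vX)}]\bigr| \;\leq\; \tfrac{|\theta|}{2}\,\E\bigl[|F(\vX, \vX')|\,|f(\vX) - f(\vX')|\,e^{\theta f(\vX)}\bigr] \;=\; |\theta|\,\E\bigl[\Delta(\vX)\, e^{\theta f(\vX)}\bigr].
\]
Now the hypothesis $\Delta(\vX) \leq B_1 f(\vX) + B_2$ (which, incidentally, forces $f(\vX) \geq -B_2/B_1$ a.s., since $\Delta \geq 0$) inserts into the right-hand side to yield
\[
\bigl|m'(\theta)\bigr| \;=\; \bigl|\E[f(\vX) e^{\theta f(\vX)}]\bigr| \;\leq\; |\theta|\bigl(B_1 m'(\theta) + B_2 m(\theta)\bigr),
\]
and for $\theta \in [0, 1/B_1)$ this rearranges to $m'(\theta)/m(\theta) \leq \theta B_2/(1 - \theta B_1)$.

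The third and final step is to integrate: since $m(0) = 1$, one obtains $\log m(\theta) \leq \int_0^\theta s B_2/(1 - s B_1)\,ds$, which after elementary estimation is at most $\theta^2 B_2/(2(1-\theta B_1))$ for the upper tail. Markov's inequality then gives
\[
\P(f(\vX) \geq t) \;\leq\; \exp\!\left(-\theta t + \tfrac{\theta^2 B_2}{2(1-\theta B_1)}\right),
\]
and optimizing over $\theta \in [0, 1/B_1)$ via the choice $\theta = t/(B_2 + B_1 t)$ produces the claimed bound $\exp(-t^2/(2B_2 + 2B_1 t))$. For the lower tail, the same argument with $-\theta$ and $B_1 = 0$ (since the positive-sign part of the $\Delta$ bound drops out — one only needs $\Delta \leq B_2$ on the side $f < 0$) yields the sharper sub-Gaussian bound $\exp(-t^2/(2B_2))$. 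The main technical obstacle is ensuring the exchange-of-expectation manipulations in step one are justified (requiring the integrability assumption $\E[e^{\theta f(\vX)}|F(\vX,\vX')|] < \infty$), and getting the mean-value bound tight enough to extract the stated sub-exponential decay rate rather than a cruder Gaussian-only rate.
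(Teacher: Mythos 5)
Your proof is correct and reconstructs the standard moment-generating-function argument via Stein's method of exchangeable pairs: the identity $\E[f(\vX)e^{\theta f(\vX)}] = \tfrac12\E[F(\vX,\vX')(e^{\theta f(\vX)}-e^{\theta f(\vX')})]$, the convexity bound on exponential differences, the resulting differential inequality $m'(\theta)/m(\theta)\leq \theta B_2/(1-\theta B_1)$, and Chernoff optimization. The paper itself does not prove this result --- it cites \cite[Theorem 1.5]{chatterjee-stein-ptrf} --- and your argument is essentially the original one from that reference; the only slightly loose point is the parenthetical justification for the lower tail, where the cleaner statement is that $\E[f(\vX)e^{-\theta f(\vX)}]\leq 0$ for $\theta\geq 0$ (since $\E f=0$ and $\E e^{-\theta f}$ is convex with minimum at $\theta=0$), so the term $B_1\E[f e^{-\theta f}]$ is nonpositive and can be discarded, effectively setting $B_1=0$.
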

\begin{rmk}
Since $F$ is an anti-symmetric function, it is easy to check that the function $f$ as defined above satisfies $\E(f(\vX)) = 0$. Thus the above gives concentration of $f(\vX)$ about it's expectation. 
\end{rmk}
Next we state a simple lemma that describes an equivalence between the high-temperature regime as stated in Assumption \ref{ass:ergm} and a technical condition that arose in \cite{chatterjee2010applications}. Proof follows from elementary calculus and hence omitted. 

\begin{lem}\label{lem:equiv-ass-ergm}
	From Assumption \ref{ass:ergm} recall the condition for the parameter $\mvbeta$ to be in the high temperature regime. This is equivalent to the following: $\mvbeta\in \R\times \R_+$ is such that there exists a unique solution $u^*$ to the fixed point equation $(\varphi_{\mvbeta}(u))^2 = u$ in $[0,1]$ such that $2\varphi_{\mvbeta}(u^*)\varphi_{\mvbeta}^{\prime}(u^*) < 1$.  The $u^*$ so obtained is related to $p^*$ in Assumption \ref{ass:ergm} via $u^* = (p^*)^2$. 
\end{lem}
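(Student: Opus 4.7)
The plan is to establish a bijection between the positive solutions of the two fixed-point equations via the substitution $u = p^2$, and then match up the two slope conditions using the chain rule. The key observation that makes the bijection work cleanly is that $\varphi_{\mvbeta}$ is the logistic function $\varphi_{\mvbeta}(u) = e^{\beta+\gamma u}/(1+e^{\beta+\gamma u})$, hence strictly positive. Consequently, for any $u \geq 0$ the relation $u = \varphi_{\mvbeta}(u)^2$ is equivalent to $\sqrt{u} = \varphi_{\mvbeta}(u)$ (the positive branch being forced by $\varphi_{\mvbeta} > 0$).

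First I would verify the bijection on the level of fixed points. If $p^* \in (0,1)$ satisfies $p^* = \varphi_{\mvbeta}((p^*)^2)$, then squaring both sides gives $(p^*)^2 = \varphi_{\mvbeta}((p^*)^2)^2$, so $u^* := (p^*)^2 \in (0,1)$ is a solution of $u = \varphi_{\mvbeta}(u)^2$. Conversely, any $u^* \in (0,1)$ solving the latter forces $\sqrt{u^*} = \varphi_{\mvbeta}(u^*)$ by the positivity remark, hence $p^* := \sqrt{u^*} \in (0,1)$ is a solution of $p = \varphi_{\mvbeta}(p^2)$. Thus the maps $p^* \mapsto (p^*)^2$ and $u^* \mapsto \sqrt{u^*}$ are mutually inverse bijections between the two solution sets in $(0,1)$, and uniqueness of one automatically transfers to the other. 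The relation $u^* = (p^*)^2$ claimed in the lemma is built into this correspondence.

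Next I would handle the derivative condition. By the chain rule,
\[
\left.\frac{d}{dp}\varphi_{\mvbeta}(p^2)\right|_{p=p^*} \;=\; 2p^*\,\varphi_{\mvbeta}^{\prime}\bigl((p^*)^2\bigr).
\]
Substituting the fixed-point identity $p^* = \varphi_{\mvbeta}((p^*)^2) = \varphi_{\mvbeta}(u^*)$ and using $(p^*)^2 = u^*$, the right-hand side becomes exactly $2\,\varphi_{\mvbeta}(u^*)\,\varphi_{\mvbeta}^{\prime}(u^*)$. Hence the condition \eqref{eqn:ergm-slope-cond} from Assumption \ref{ass:ergm} and the condition $2\varphi_{\mvbeta}(u^*)\varphi_{\mvbeta}^{\prime}(u^*) < 1$ of Lemma \ref{lem:equiv-ass-ergm} are literally the same numerical inequality at corresponding points.

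The argument is purely algebraic and very short; there is no real obstacle. The only point requiring care is the choice of the positive square root in passing from $u^* = \varphi_{\mvbeta}(u^*)^2$ back to $p^* = \varphi_{\mvbeta}((p^*)^2)$, but this is handled uniformly by the positivity of $\varphi_{\mvbeta}$, which also rules out spurious negative roots and guarantees that the bijection is well defined on the natural intervals.
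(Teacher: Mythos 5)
Your argument is correct and is exactly the ``elementary calculus'' the paper alludes to when it omits this proof. The bijection $p^*\leftrightarrow (p^*)^2$ between fixed points, justified by the strict positivity of $\varphi_{\mvbeta}$ (which pins down the positive square root and also rules out the boundary cases $u^*\in\{0,1\}$, since $0<\varphi_{\mvbeta}<1$ everywhere), together with the chain-rule identity $\frac{d}{dp}\varphi_{\mvbeta}(p^2)\big|_{p=p^*}=2p^*\varphi_{\mvbeta}'((p^*)^2)=2\varphi_{\mvbeta}(u^*)\varphi_{\mvbeta}'(u^*)$, is precisely the intended argument; nothing is missing.
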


Now we are ready to prove the lemmas. First we start with the proof of Lemma \ref{lem:conc-deg-ph}.

\begin{proof}[Proof of Lemma \ref{lem:conc-deg-ph}]
Our plan is to apply Theorem \ref{thm:cha-stein-conc}. To do so, we need to construct an exchangeable pair, which is done in the following way: We start from configuration $\vX\sim p_{\mvbeta}$, and choose a vertex $K \in [n]\setminus \set{\nu}$ uniformly at random. Conditional on $K=k$,  sample the edge $X_{\nu k}$ from the conditional distribution given the rest of the edges. We denote $\vX^{\prime}$ to be this new configuration.  It is easy to see that $(\vX, \vX')$ form an exchangeable pair, and from the definition of $p_{\mvbeta}$ we also note that, conditional on $K=k$,
\begin{equation}
\label{eqn:cond-dist-one}
	\P(X_{\nu k}^\prime = x| \vX) \propto \exp(x(\beta+\gamma L_{\nu k}) ), \qquad x\in \set{0,1}. 
\end{equation}
In particular this implies that $\E(X_{\nu k}^{\prime}|\vX) = \varphi_{\mvbeta}(L_{\nu k})$ (see \eqref{eqn:phi-beta-def} for the definition of $\varphi_{\mvbeta}(\cdot)$). Now let $d_\nu^\prime = \sum_{k\neq \nu} X_{k\nu}^\prime$ denote the degree of $\nu$ in the new configuration $\vX'$. For later use, let $L_{\nu k}^{\prime}$ denote the normalized co-degree between $\nu$ and $k$ in $\vX^\prime$, and note that conditional on $K=k$, $L_{\nu k} = L_{\nu k}^{\prime}$. Now define the anti-symmetric function,
\begin{equation}
\label{eqn:F-deg-def}
	F(\vX, \vX^{\prime}):= d_\nu - d_\nu^{\prime} = \sum_{j\neq \nu} (X_{j\nu} - X_{j\nu}^\prime). 
\end{equation}
Using \eqref{eqn:cond-dist-one} it is easy to check that,
\begin{equation}
\label{eqn:f-deg-def}
	f(\vX)= \E(F(\vX, \vX^{\prime})|\vX) = \frac{d_{\nu} - \sum_{j\neq \nu} \varphi_{\mvbeta}(L_{\nu,j})}{n-1}.
\end{equation}
To apply Theorem \ref{thm:cha-stein-conc}, we next need to find an upper bound on $|(f(\vX)-f(\vX'))F(\vX,\vX')|$. To this end, it is easy to note that $|F(\vX, \vX^\prime)|\leq 1$. Next, note that $|d_v-d_v'| \le 1$. Upon observing that $\|\varphi_{\mvbeta}'\|_\infty \le \gamma$, we further deduce
\[
\left|\varphi_{\mvbeta}(L_{v,j}) -\varphi_{\mvbeta}(L_{v,j}')\right| \le \gamma \left|L_{v,j} - L_{v,j}'\right|= \f{\gamma}{n}\left|X_{vk} -X'_{vk}\right| \le \f{\gamma}{n}, \qquad \text{ on the event } \{K=k\}.
\]
Therefore recalling the definition of $\Delta(\vX)$ defined in Theorem \ref{thm:cha-stein-conc}, we have that
\[
\Delta(\vX)\leq \f{1+\gamma}{2(n-1)},
\]
which upon applying  Theorem \ref{thm:cha-stein-conc} with $B_1=0$ and $B_2 = \f{1+\gamma}{2(n-1)}$ completes the proof.
\end{proof}
We now prove Lemma \ref{lem:proof-ergm-comm-deg-con}. Since, most of the work is already done in \cite{chatterjee2010applications} we provide an outline here for completeness.
\begin{proof}[Proof of Lemma \ref{lem:proof-ergm-comm-deg-con}]
Once again the key idea is to use Theorem \ref{thm:cha-stein-conc}. In this case, fixing $i,j\neq [n]$, we construct  the exchangeable pair as follows: Given a graph $\vX$ we choose a vertex $K\in [n]\setminus \set{i,j}$ uniformly at random and replace the edges $(X_{ik}, X_{jk})$ with $(X_{ik}^\prime, X_{jk}^\prime)$ using the conditional distribution of these edges conditional on the rest of the edges. This gives us the new graph $\vX'$. Let us write $L_{ij}^\prime$ for the normalized co-degree of $i,j$ in $\vX^\prime$.  Similar to \eqref{eqn:cond-dist-one},  it is easy to verify using the form of the Hamiltonian that for $x,y\in \set{0,1}$,
\begin{align*}
	\P(X_{ik}^\prime & = x, X_{jk}^\prime =y|\vX) \propto\\
	&\exp\left(\gamma x L_{ik} + \gamma yL_{jk} + \beta x + \beta y -\frac{\gamma}{n}x X_{ij}X_{jk} - \frac{\gamma}{n}y X_{ij}X_{ik} + \frac{\gamma}{n}xy X_{ij} \right)
\end{align*} 
Now defining the anti-symmetric function $F(\vX, \vX^{\prime}) = L_{ij} - L_{ij}^\prime$, a careful analysis similar to the proof of Lemma \ref{lem:conc-deg-ph} completes the proof of \eqref{eqn:conc-lij-1}. Note that \eqref{eqn:conc-lij-1} holds only when $t \ge 8 \gamma/\sqrt{n}$. This condition on $t$ is needed because the quantity for which concentration is desired, is not exactly $f(\vX)$  in this case. So one needs to apply triangle inequality, and hence we require the aforementioned lower bound on $t$ for \eqref{eqn:conc-lij-1} to hold. 

Next building on \eqref{eqn:conc-lij-1}, we now proceed to prove \eqref{eqn:lij-p2-diff-1}. To this end, let 
\[\Xi:= \max_{i\neq j\in [n]}\left|L_{ij} - \frac{1}{n} \sum_{k\neq \set{i,j}} \varphi_{\mvbeta}(L_{ik})\varphi_{\mvbeta}(L_{jk})\right|\]
Then using union bound, from  \eqref{eqn:conc-lij}, we deduce that given any $\alpha>0$, there exists  $\ol{C}_{\mvbeta}:=\ol{C}_{\mvbeta}(\alpha)$ such that,
\begin{equation}
\label{eqn:xi-bound}
	\P\left(\Xi \leq \ol{C}_{\mvbeta} \sqrt{\frac{\log{n}}{n}} \right) \le n^{-\alpha}. 
\end{equation}
 Let $L_{\min} = \min_{i\neq j} L_{ij}$ and abusing notation let $L_{\min} =  L_{I_{\sss \min}J_{\sss \min}}$. Similarly let $L_{\max} = \max_{i\neq j} L_{ij} $.   Since we are in the ferromagnetic regime, i.e.~$\gamma >0$, note that $\varphi_{\mvbeta}(\cdot)$ is an increasing non-negative function.  Thus,
\begin{align*}
	-\Xi \leq L_{I_{\sss \min}J_{\sss \min}} - \frac{1}{n}\sum_{k\neq \set{I_{\sss \min} , J_{\sss \min}}} \varphi_{\mvbeta}(L_{I_{\sss \min} k}) \varphi_{\mvbeta}(L_{J_{\sss \min} k}) \leq L_{I_{\sss \min}J_{\sss \min}} - \left(\varphi_{\mvbeta}(L_{J_{\sss \min}I_{\sss \min}})\right)^2 +\f{2}{n}.
\end{align*} 
Rearranging and carrying out a similar analysis for $L_{\max}$ we get, 
\begin{equation}
\label{eqn:lmax-lmin}
	\left(\varphi_{\mvbeta}(L_{\min})\right)^2 - \Xi-\f{2}{n} \;  \leq \; L_{\min} \; \leq \; L_{\max} \;\leq \; \left(\varphi_{\mvbeta}(L_{\max})\right)^2 + \Xi +\f{2}{n}. 
\end{equation} 
This motivates defining the function 
\begin{equation}
\label{eqn:varpsi-def}
	\psi_{\mvbeta}(u) := \left(\varphi_{\mvbeta}(u)\right)^2 - u, \qquad u\in [0,1]. 
\end{equation}
Using Lemma \ref{lem:equiv-ass-ergm} it is very easy to see that 
	  $u^*$ is the unique solution of $\psi_{\mvbeta}(u) = 0$ and further $\psi_{\mvbeta}(u) >  0$ for $u < u^*$, whereas $\psi_{\mvbeta}(u) < 0$ for $u > u^*$.  One also has that the derivative $\psi_{\mvbeta}^\prime(u^*) < 0$. These observations, upon applying Inverse Function Theorem,  imply that there exist $\vep, \delta >0$ such that if $|u-u^*|> \delta$ then $|\psi_{\mvbeta}(u)| > \vep$ and further 
	\[\sup_{0 < |u-u^*| \leq \delta} \left[\frac{u-u^*}{-\psi_{\mvbeta}(u)}\right]:= c > 0. \]
Now by \eqref{eqn:lmax-lmin}, $\psi_{\mvbeta}(L_{\max}) \geq -\left(\Xi+\f{2}{n}\right)$ and $\psi_{\mvbeta}(L_{\min}) \leq \Xi+\f{2}{n}$. Thus on the event $\{\Xi\leq \f{\vep}{2}\}$, considering all the cases, we have
\[u^* - \delta < L_{\min} \leq L_{\max} \leq u^* +\delta.\]
However the latter implies that 
\[|L_{\min} -u^*| \leq c \left(\Xi+\f{2}{n}\right), \qquad |L_{\max} - u^*| \leq c\left(\Xi+\f{2}{n}\right),\]
which upon applying yields. This completes the proof. 
\end{proof}
Equipped with Lemma  \ref{lem:proof-ergm-comm-deg-con} and Lemma \ref{lem:conc-deg-ph} we are now ready to prove the concentration result Theorem \ref{thm:ergm-cond}.

We start with the proof for the degree of vertices namely \eqref{eqn:ergm-deg-con}. We will see that this result follows directly if we are able to prove \eqref{eqn:ergm-comm-deg-con}. We start with the following concentration inequality. 

\begin{proof}[Proof of Theorem \ref{thm:ergm-cond}]
We begin by noting that \eqref{eqn:ergm-comm-deg-con} is immediate from Lemma \ref{lem:proof-ergm-comm-deg-con}. Thus it only remains to establish \eqref{eqn:ergm-deg-con}. Noting that $\varphi_{\mvbeta}(\cdot)$ is $\gamma$-Lipschitz on $\R$ and using that by definition, $p^* = \varphi_{\mvbeta}((p^*)^2)$, from Lemma \ref{lem:proof-ergm-comm-deg-con}, we get that,
\begin{equation}
\label{eqn:sup-phi-p-st}
\P\left(\max_{i\neq j\in [n]}\left|\varphi_{\mvbeta}(L_{ij}) - p^*\right| \ge \gamma C_{\mvbeta}\sqrt{\frac{\log{n}}{n}}\right) \le n^{-\alpha}. 
\end{equation} 
Further using union bound, by Lemma \ref{lem:conc-deg-ph}, and enlarging  $C_{\mvbeta}$ if needed, we also have that, 
\begin{equation}
\label{eqn:sup-deg-phi}
	\P\left(\max_{i\in [n]} \left| d_i - \sum_{j\neq i} \varphi_{\mvbeta}(L_{ij}) \right| \geq C_{\mvbeta} \sqrt{n\log{n}}\right) \le n^{-\alpha}. 
\end{equation}
Combining the above with \eqref{eqn:sup-phi-p-st} completes the proof.
\end{proof}

\subsection{Proof of Theorem \ref{thm:rgeom-impli}} In this section our goal is to provide the proof  of Theorem \ref{thm:rgeom-impli}. Hence, we first need to show that assumptions {\bf (A1)}-{\bf (A2)}  hold for random geometric graphs. This is derived in the following concentration result.
\begin{thm}[Concentration result]
	\label{thm:rgeom-cond}
	Consider random geometric graphs $\Graph(d,n,p)$ satisfying Assumption \ref{ass:rgeom}. Then for any $\delta > 1/2$, the following holds almost surely, for all large $n$:
\begin{enumerate}[(a)]
\item
	\begin{equation}
	\label{eqn:rgeom-deg-con}
		\max_{\nu\in [n]} \left||\sN_{\nu}| - np\right| < n^{\delta}.
	\end{equation}
\item For every pair $i \ne j \in [n]$, there exists $p_n^{i,j}$, possibly random, such that 	
	\begin{equation}
	\label{eqn:rgeom-comm-deg-con}
		\max_{\nu\neq \nu^\prime\in [n]} \left||\sN_{\nu}\cap \sN_{\nu^{\prime}}| - n(p_n^{v,v'})^2\right| \leq n^{\delta}.
	\end{equation}
\item Moreover
	\begin{equation}
	\label{eqn:epsn-def}
\max_{v \ne v' \in [n]} \left|(p_n^{v,v'})^2 - p^2\right| \le \tau_n, \text{ where } \tau_n :=  \kappa_p\left[\sqrt{\frac{\max\{\log{n},\log{d}\}}{d}} + \frac{1}{n^2}\right]
	\end{equation}
and $\kappa_p$ is some constant depending only on $p$.	
\end{enumerate}	
\end{thm}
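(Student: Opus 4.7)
The plan is to use rotation-invariance of the uniform law on $\sS_{d-1}$ so that both $|\sN_v|$ and $|\sN_v\cap \sN_{v^\prime}|$ become, after conditioning on the relevant $\bm{X}$'s, sums of i.i.d.\ Bernoulli variables. Hoeffding-type tail bounds and a union bound then yield the concentration statements (a) and (b), while the final bound \eqref{eqn:epsn-def} reduces to a deterministic estimate on the conditional connection probability as a function of $\inp{\bm{X}_v}{\bm{X}_{v^\prime}}$, combined with concentration of inner products on the sphere.

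For Part (a), I would fix $v$ and condition on $\bm{X}_v$. By rotation-invariance of the uniform distribution and the defining equation \eqref{eqn:tpd-def}, the random variables $\{\bI(\inp{\bm{X}_j}{\bm{X}_v}\geq t_{p,d}) : j\neq v\}$ are, conditionally, i.i.d.\ Bernoulli$(p)$, hence $|\sN_v|$ is conditionally Binomial$(n-1,p)$. Hoeffding's inequality yields
\[
\P\bigl(||\sN_v| - (n-1)p| \geq t \,\big|\, \bm{X}_v\bigr) \leq 2\exp\bigl(-2t^2/(n-1)\bigr),
\]
and choosing $t = n^\delta/3$, a union bound over the $n$ vertices combined with the first Borel--Cantelli lemma gives the almost-sure bound \eqref{eqn:rgeom-deg-con}, since for $\delta>1/2$ the tail probabilities decay faster than any polynomial.

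For Parts (b) and (c), fix $v\neq v^\prime$ and condition on $(\bm{X}_v,\bm{X}_{v^\prime})$; define $p_n^{v,v^\prime}$ to be the conditional probability that a fixed other vertex $k$ lies in $\sN_v\cap \sN_{v^\prime}$. The indicators $\{\bI(k\in \sN_v\cap \sN_{v^\prime}):k\neq v,v^\prime\}$ are conditionally i.i.d.\ Bernoulli$(p_n^{v,v^\prime})$, so $|\sN_v\cap \sN_{v^\prime}|$ is conditionally Binomial$(n-2, p_n^{v,v^\prime})$. Hoeffding's inequality, a union bound over the $\binom{n}{2}$ pairs, and Borel--Cantelli then yield \eqref{eqn:rgeom-comm-deg-con}. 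By rotation-invariance, $p_n^{v,v^\prime}$ is a deterministic function of $\rho := \inp{\bm{X}_v}{\bm{X}_{v^\prime}}$ alone. To obtain \eqref{eqn:epsn-def} I would combine (i) an almost-sure envelope $\max_{v\neq v^\prime}|\rho| \leq c_p\sqrt{\max\{\log n,\log d\}/d}$, which follows from the one-dimensional marginal of the uniform law on $\sS_{d-1}$ having density proportional to $(1-x^2)^{(d-3)/2}$, a union bound over pairs, and Borel--Cantelli; with (ii) a deterministic Lipschitz-type estimate $|p_n^{v,v^\prime}(\rho) - p^2|\leq C_p |\rho| + O(1/n^2)$; and the elementary inequality $|x^2 - y^2|\leq 2|x-y|$ for $x,y\in [0,1]$.

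The main obstacle is step (ii): showing that the conditional connection probability $p_n^{v,v^\prime}(\rho)$ is close to $p^2$ when $\rho$ is small. This is a high-dimensional spherical-geometry statement; a clean route is to couple $\bm{X}_k$ with a scaled Gaussian $\bm{Z}_k \sim N(0, I_d/d)$, so that $(\inp{\bm{Z}_k}{\bm{X}_v},\inp{\bm{Z}_k}{\bm{X}_{v^\prime}})$ becomes a bivariate Gaussian with correlation exactly $\rho$, at which point standard Gaussian orthant estimates yield a Lipschitz dependence on $\rho$, and the coupling error accounts for the $O(1/n^2)$ correction in $\tau_n$. An alternative, more direct approach uses the joint density of $(\inp{\bm{X}_k}{\bm{X}_v},\inp{\bm{X}_k}{\bm{X}_{v^\prime}})$, supported on an ellipse determined by $\rho$ with density proportional to a $(d-4)/2$-power of an explicit quadratic form; differentiating this in $\rho$ gives the required Lipschitz constant. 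Once this analytic estimate is established, the remainder of the proof is routine.
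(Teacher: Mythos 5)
Your proposal follows essentially the same route as the paper: the Gaussian representation $\vX_i = \bm{Z}_i/\|\bm{Z}_i\|_2$ makes the edge indicators conditionally i.i.d.~Bernoulli (so $|\sN_v|$ is Bin$(n-1,p)$ after conditioning on $\vX_v$, and $|\sN_v\cap\sN_{v'}|$ is Bin$(n-2,\tilde\gamma_n)$ after conditioning on $(\vX_v,\vX_{v'})$), with Hoeffding/Chernoff plus a union bound and Borel--Cantelli giving (a) and (b). For (c), the paper's argument is your first ``Gaussian coupling'' variant, except there is no coupling error: since $\vX_k=\bm{Z}_k/\|\bm{Z}_k\|_2$ exactly, the pair $(\inp{\bm{Z}_k}{\vX_v},\inp{\bm{Z}_k}{\vX_{v'}})$ is already a standard bivariate Normal with correlation $\rho=\inp{\vX_v}{\vX_{v'}}$, and the $O(1/n^2)$ term in $\tau_n$ comes instead from the probability that $\|\bm{Z}_k\|_2$ deviates from $\sqrt{d}$ by more than $O(\sqrt{\log n})$ (Lemma \ref{lem:norm-conc}). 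Two small points worth tightening. First, the quantity you define as $p_n^{v,v'}$ (the conditional success probability, i.e.~$\tilde\gamma_n$) should be $(p_n^{v,v'})^2$ in the notation of the theorem statement, i.e.~$p_n^{v,v'}:=\sqrt{\tilde\gamma_n}$; with your convention the ensuing Lipschitz estimate bounds $|p_n^{v,v'}-p^2|$, which is the correct target, but then the appeal to $|x^2-y^2|\le 2|x-y|$ is spurious. Second, your ``standard Gaussian orthant estimates yield a Lipschitz dependence on $\rho$'' step also silently requires control of the threshold $t_{p,d}\sqrt{d}$ itself: the paper makes this explicit via Lemma \ref{lem:DGLU} (from \cite{devroye2011high}), which shows $t_{p,d}\sqrt{d}=\Phi^{-1}(1-p)+O(\sqrt{\log d/d})$ --- this is precisely where the $\log d$ inside $\tau_n$ enters, and without it your Lipschitz constant $C_p$ is not visibly uniform in $d$. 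The paper then sandwiches the orthant probability using Willink's bound $\Phi(-h)\Phi(-\theta h)\le\bar\Phi(h,h,\rho)\le(1+\rho)\Phi(-h)\Phi(-\theta h)$ with $\theta=\sqrt{(1-\rho)/(1+\rho)}$ (Lemma \ref{lem:tail-bd-biv-norm}), treating $\rho<0$ separately via the reflection identity. Your alternative ``direct density'' route (differentiating the joint density of $(\inp{\vX_k}{\vX_v},\inp{\vX_k}{\vX_{v'}})$ in $\rho$) is not what the paper does, but it should work and avoids the case split on the sign of $\rho$; on the other hand the Gaussian orthant route keeps all estimates at the level of one-dimensional Normal tails, which is cleaner to carry out.
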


\bigskip
Note that \eqref{eqn:rgeom-deg-con}-\eqref{eqn:rgeom-comm-deg-con} does not establish that the random geometric graphs satisfy {\bf (A1)}-{\bf (A2)}. In fact, when the dimension of the space is poly-logarithmic of the number of points one cannot expect to establish {\bf (A1)}-{\bf (A2)} for any $\delta \le 1-\vep$ for an arbitrarily small $\vep >0$. However, using \eqref{eqn:epsn-def} we see that one can establish {\bf (A1)}-{\bf (A2)} for random geometric graphs with 
\[
\delta_n=1 - \f{\log (1/\tau_n)}{\log n}.
\]
We use this key observation to carefully adapt the proof of Theorem \ref{thm:main} to establish Theorem \ref{thm:rgeom-impli}.

\begin{proof}[Proof of Theorem \ref{thm:rgeom-impli}]
We start by redefining the notion of ``good'' vertices. Since in this current set-up we work with $\delta_n$ instead of a fixed $\delta$ it is natural to have the following definition: For any given set $B \subset [n]$, and $\xi \in \{0,1\}$, we here define
\[
\mathrm{Good}^{\xi,p}(B):= \left\{ \nu \in [n]: \left|\sN_\nu^{B,\xi} - {|B|}p^\xi q^{1-\xi}\right| \le \wt{C}  |B| p^\xi q^{1-\xi}n^{\vep_n(\delta_n-1)/2} \right\},
\]
where $\vep_n:= \f{\bar{C}_0 \log \log (1/\tau_n)}{(1-\delta_n)\log (1/\tau_n)}$ and $\wt{C}$ is some large constant. Equipped with this definition we can easily make necessary changes in the proof of Theorem \ref{thm:main} to complete the proof in the current set-up. 

In summary, the roles of $\delta$ and $n$ from the proof of Theorem \ref{thm:main} will be replaced by $\delta_n$ and $1/\tau_n$ in this proof. Keeping this in mind, we proceed below.

Proceeding exactly same as in Lemma \ref{common-nbr-allgood_p} we deduce that 
if $r < \log (1/\tau_n)$, $\ul{v} \in [n]^r$, $\ul{\xi} \in \{0,1\}^r$, and $\nu_j \in \Good^{\ul{\xi}^{j},p}(\nu_1, \nu_2,...,\nu_{j-1})$,  for all $j =3,4,\ldots,r$, then 
\beq\label{eq:good_set_estimates_p}
\left|\left|\sN_{\nu_1}^{\xi_1} \cap \sN_{\nu_2}^{\xi_2} \cap \cdots \cap \sN_{\nu_r}^{\xi_r}\right|- {n}p^{n(\ul{\xi})}q^{r-n(\ul{\xi})}\right|  \le 3\wt{C}  (\log (1/\tau_n))^{-(\bar{C}_0/2-1)}  {n}p^{n(\ul{\xi})}q^{r-n(\ul{\xi})},
\eeq
for all large $n$, where $n(\ul{\xi}):=|\{i \in [r]: \xi_i =1\}|$. Next we need to extend Lemma \ref{lem:Bad_bound_p} in the current set-up. Recall a key to the proof of Lemma \ref{lem:Bad_bound_p} is the variance bound of Lemma \ref{lem:var_N_nu_p}. Lemma \ref{lem:var_N_nu_p} can be extended in the context of random geometric graph to yield
\beq\label{eq:var_bd_rgg}
\mathrm{Var} \left(\sN_\nu^{B,\xi}\right) \le |B| pq + \bar{C} {|B|^2 n^{\delta_n-1}}.
\eeq
From \eqref{eq:var_bd_rgg} it also follows that 
\[
\left|\left\{ \nu \in [n]: \left|\sN_\nu^{B,\xi} - {|B|}p^\xi q^{1-\xi}\right| > \wt{C} |B| p^\xi q^{1-\xi}n^{\vep_n(\delta_n-1)/2}\right\}\right| \le n (\log (1/\tau_n))^{\bar{C}_0}\Upsilon_n^p(|B|,\delta_n),
\]
where we recall $\Upsilon_n^p(x,\delta)=\frac{2 x^{-1}+q^{-2}n^{\delta-1}}{2}$. We note that if $\log(1/q) r \le ((1-\delta_n) \wedge \f{1}{2})\log n - (\bar{C}_0+13)\log \log (1/\tau_n)$, then $\log \Upsilon_n^p((n/2)q^r,\delta_n) \le - \log(1/q) r -(\bar{C}_0+12)\log \log \tau_n$. Now repeating the remaining steps of the proof of Lemma \ref{lem:Bad_bound_p} we obtain the following result:
For any two positive integers $j< r$ let ${\sf H}_r$ be a graph on $r$ vertices. Fix ${\sf H}_j$ any one of the sub-graphs of ${\sf H}_r$ induced by $j$ vertices. Assume that 
\[
n_\Graph({\sf H_r}) \ge \f{1}{2} \times \f{(n)_r}{|\mathrm{Aut}({\sf H}_r)|}\left(\f{p}{q}\right)^{|E({\sf H}_r)|}q^{{r \choose 2}} \quad \text{ and } \quad n_\Graph({\sf H_j}) \le 2 \times \f{(n)_j}{|\mathrm{Aut}({\sf H}_j)|} \left(\f{p}{q}\right)^{|E({\sf H}_j)|}q^{{j \choose 2}}. 
\]
There exists a large positive constant $C_0$, depending only on $p$ such that, for any given $\ul{\xi}=\{\xi_1,\xi_2,\ldots,\xi_j\} \in \{0,1\}^j$, and $r \le (1-\delta_n)(\log n/\log (1/q))- C_0 \log \log (1/\tau_n)$, we have 
\beq\label{eq:bad_bound_p_rgg}
\left|\Bad^{\ul{\xi}}({\sf H}_{j}, {\sf H}_r)\right| \le \f{n_\Graph({\sf H_r})}{\left(\log (1/\tau_n) \right)^{9(r-j)}},
\eeq
for all large $n$. 

To complete the proof we now need to extend Lemma \ref{goodbad_p} and Lemma \ref{lem:f_r_bar_bound_p}. Since Lemma \ref{goodbad_p} is built upon Lemma  \ref{lem:Bad_bound_p} we obtain necessary modifications of Lemma \ref{goodbad_p} using \eqref{eq:bad_bound_p_rgg}. The main difference here is the rate of convergence. More precisely,  $\log n$ appearing in the  rate of convergence in  Lemma \ref{goodbad_p} should be replaced by $\log(1/\tau_n)$ in the current set-up. Next recalling the proof of Lemma \ref{lem:f_r_bar_bound_p} we see that the conclusion of Lemma \ref{lem:f_r_bar_bound_p} continues to hold for all $r \le \log (1/\tau_n)$. Combining these ingredients one can now easily finish the proof. We omit the tedious details.
\end{proof}

\bigskip

Thus it only remains to establish Theorem \ref{thm:rgeom-cond}. We break the proof of Theorem \ref{thm:rgeom-cond} into two parts. In Section \ref{sec:proof_rgg_part_1} we prove \eqref{eqn:rgeom-deg-con}-\eqref{eqn:rgeom-comm-deg-con} and in Section \ref{sec:proof_rgg_part_2}  we establish \eqref{eqn:epsn-def}. 

Before going to the proof, let us observe that  $n$ uniform points on $\sS_{d-1}$ can be generated as follows: Let $\bm{Z}_1, \bm{Z}_2, \ldots, \bm{Z}_n$ be i.i.d.~standard Normal random vectors namely $\bm{Z}_i\sim N_d(\mvzero, \vI_d)$ where $\mvzero := (0,0,\ldots, 0)$ is the origin in $\R^d$ and $\vI_d$ is the $d\times d$ identity matrix. Then, setting 
\begin{equation}
\label{eqn:vx-vz-const}
	\vX_1 := \frac{\bm{Z}_1}{\|\bm{Z}_1\|_2}, \vX_2 := \frac{\bm{Z}_2}{\|\bm{Z}_2\|_2}, \ldots ,\vX_n := \frac{\bm{Z}_n}{\|\bm{Z}_n\|_2}, 
\end{equation}
where $\|\cdot \|_2$ denotes the Euclidean norm in $\R^d$, we see that $\vX_1,\vX_2,\ldots, \vX_n$ are $n$ i.i.d. uniform points on $\sS_{d-1}$. Here, we will this representation of $\{\vX_i\}_{i \in [n]}$, because it allows us to use properties of Gaussian random vectors. For future reference let us denote $\bm{Z}_i := (Z_{i1}, Z_{i2}, \ldots Z_{id})$.

\subsubsection{{\bf Proof of \eqref{eqn:rgeom-deg-con}-\eqref{eqn:rgeom-comm-deg-con}.}}\label{sec:proof_rgg_part_1}
In this section we prove \eqref{eqn:rgeom-deg-con}-\eqref{eqn:rgeom-comm-deg-con}. We start with the proof of \eqref{eqn:rgeom-deg-con}.

\begin{proof}[Proof of \eqref{eqn:rgeom-deg-con}]
For ease of writing, without loss of generality, let us consider the vertex $1$. Note that conditional on $\bm{Z}_1$, one can construct an orthogonal transformation $\vP$ such that in the new coordinates, $\bm{X}_1 = (1,0,\ldots, 0)$ whilst $\bm{Z}_k^{\prime} = \vP \bm{Z}_k$ for $2\leq k\leq n$ are i.i.d.~$N_d(\mvzero, \vI_d)$. To ease notation we will continue to refer to these as $(\bm{Z}_2, \ldots, \bm{Z}_n)$. Thus note that $\langle \bm{X}_1, \bm{X}_i \rangle = Z_{i1}/\|\bm{Z}_i\|_2$.  In particular the constant $t_{p,d}$ is such that, 
\begin{equation}
\label{eqn:tpd-alternate}
\P\left(\frac{Z_{i1}}{\|\bm{Z}_i\|_2} \geq t_{p,d}\right) = p	.
\end{equation}
Further for each $i\neq j\in [n]$ we write $\set{i\sim j}$ for the event that vertex $i$ and $j$ are connected by an edge in $\Graph(n,d,p)$. Then we have,
\[\left\{1\sim 2\right\} = \left\{\frac{Z_{21}}{\|\bm{Z}_2\|_2} \geq t_{p,d}\right\},\quad \left\{1\sim 3\right\} = \left\{\frac{Z_{31}}{\|\bm{Z}_3\|_2} \geq t_{p,d}\right\},\,  \ldots, \quad \left\{1\sim n\right\} = \left\{\frac{Z_{n1}}{\|\bm{Z}_n\|_2} \geq t_{p,d}\right\}.  \]
This implies that the distribution of the degree of vertex $1$ is $\mbox{Bin}(n-1, p)$. Now, applying  Chernoff's inequality we obtain concentration inequality for $\sN_1$. For any other vertex one can derive the same bound proceeding same as above. Thus the union bound and Borel Cantelli lemma completes the proof of \eqref{eqn:rgeom-deg-con}.  
\end{proof}

\begin{proof}[Proof of \eqref{eqn:rgeom-comm-deg-con}]
We only establish \eqref{eqn:rgeom-comm-deg-con} for $i=1, j=2$. Proof for any other pair of vertices is exactly same. One can then complete the proof of \eqref{eqn:rgeom-comm-deg-con} by taking a  union bound over every pair $i \ne j \in [n]$. 

As in the proof of \eqref{eqn:rgeom-deg-con}, without loss of generality, let $\bm{X}_1 = \ve_1$ whist all other points are constructed as in \eqref{eqn:vx-vz-const}. We start by conditioning on $\bm{X}_2 = \bm{Z}_2/\|\bm{Z}_2\|_2 = (a_1,a_2, \ldots a_d)$. For the rest of this section, write $\tilde\P$ and $\tilde\E$ for the corresponding conditional probability and conditional expectation. Then note that, 
\[|\sN_1\cap \sN_2| = \sum_{v \neq \set{1,2}} \mathbf{1} \left\{\frac{Z_{v1}}{\|\bm{Z}_v\|_2} > t_{p,d}, \frac{\sum_{i=1}^d a_i Z_{vi}}{\|\bm{Z}_v\|_2}> t_{p,d}\right\}. \]
In particular, under $\tilde\P$ we have that $|\sN_1\cap \sN_2|$ is a $\mbox{Bin}(n-2, \tilde{\gamma}_n)$ where, 
\beq\label{eq:gamma_n_define}
\tilde{\gamma}_n :=  \tilde\P\left({\frac{Z_{v1}}{\|\bm{Z}_v\|_2} > t_{p,d}, \frac{\sum_{i=1}^d a_i Z_{vi}}{\|\bm{Z}_v\|_2}> t_{p,d}}\right). 
\eeq
Now using Chernoff's inequality and taking expectation over $\bm{Z}_2$, we obtain
\[
\P\left(|\sN_1 \cap \sN_2| - n(p_n^{1,2})^2| \ge t\right) \le 2 \exp\left(-\f{t^2}{2n}\right),
\]
where $p_n^{1,2}:=\sqrt{\tilde{\gamma}_n}$, which upon taking a union over $i \ne j \in [n]$ completes the proof.
\end{proof}

\subsubsection{{\bf Proof of \eqref{eqn:epsn-def}.}}\label{sec:proof_rgg_part_2} 
Here again we establish \eqref{eqn:epsn-def} for $i=1, \, j=2$. The proof for arbitrary $i \ne j \in [n]$ is same, and therefore one can complete the proof by taking a union bound. Thus it is enough to prove \eqref{eqn:epsn-def} only for $i=1, \, j=2$. 
To simplify notation, we let 
\begin{equation}
\label{eqn:za-zb-def}
	Z_a := Z_{v1}, \qquad Z_b: =  \sum_{i=1}^d a_i Z_{vi}, \qquad \rho := a_1 = \frac{Z_{21}}{\|\bm{Z}_2\|_2}. 
\end{equation}
Note that $(Z_a,Z_b)$ has a standard Bivariate Normal distribution with correlation $\rho$. If $\rho$ were zero, then we would have immediately deduced that \eqref{eqn:epsn-def} holds. Here, we show that $\rho \approx 0$ with large probability, and therefore $\wt{\gamma}_n$ should not differ much from $p^2$. Below, we make this idea precise. We begin with two concentration results on Gaussian random vectors.

\begin{lem}\label{lem:norm-conc}
	Let $Z_1, Z_2, \ldots Z_d$ be i.i.d.~standard Normal random variables and let $\bm{Z} := (Z_1, \ldots, Z_d)$ be the vector representation of this in $\R^d$. Then there exists $d_0$ such that for all $d> d_0$ and all $\vep> 0$,
	\[\P\left(\left|\|\bm{Z}\|_2 - \sqrt{d}\right|> \vep +\frac{1}{2\sqrt{d}}\right)\leq 2e^{-\frac{\vep^2}{2}}.\]
\end{lem}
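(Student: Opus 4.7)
The plan is to combine standard Gaussian Lipschitz concentration with an explicit comparison of $\E\|\bm Z\|_2$ to $\sqrt d$.

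First, observe that the map $f:\R^d\to\R$ defined by $f(\bm z)=\|\bm z\|_2$ is $1$-Lipschitz with respect to the Euclidean norm (by the triangle inequality). Applying the Borell--Tsirelson--Ibragimov--Sudakov Gaussian concentration inequality for $1$-Lipschitz functions of a standard Gaussian vector yields
\[
\P\bigl(\bigl|\|\bm Z\|_2-\E\|\bm Z\|_2\bigr|>t\bigr)\le 2 e^{-t^{2}/2}, \qquad t>0.
\]

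Second, I would control the deviation $|\E\|\bm Z\|_2-\sqrt d|$. The exact value is
\[
\E\|\bm Z\|_2=\sqrt{2}\,\frac{\Gamma\bigl((d+1)/2\bigr)}{\Gamma(d/2)}.
\]
By Jensen's inequality, $\E\|\bm Z\|_2\le\sqrt{\E\|\bm Z\|_2^{2}}=\sqrt d$, so the upper direction is immediate. For the lower direction, I would use the asymptotic expansion for the ratio of Gamma functions (e.g.\ Stirling) to conclude that $\E\|\bm Z\|_2=\sqrt d-\tfrac{1}{4\sqrt d}+O(d^{-3/2})$ as $d\to\infty$; alternatively, one can use the elementary bound $\mathrm{Var}(\|\bm Z\|_2)\le 1$ combined with $\E\|\bm Z\|_2^{2}=d$ to write
\[
(\E\|\bm Z\|_2)^{2}=d-\mathrm{Var}(\|\bm Z\|_2)\ge d-1,
\]
so that $\E\|\bm Z\|_2\ge\sqrt{d-1}\ge\sqrt d-\tfrac{1}{2\sqrt d}$ for all sufficiently large $d$. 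In either case, for $d\ge d_{0}$ we have $|\E\|\bm Z\|_2-\sqrt d|\le \tfrac{1}{2\sqrt d}$.

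Finally, I would combine these ingredients via the triangle inequality: on the event $\{|\|\bm Z\|_2-\sqrt d|>\vep+\tfrac{1}{2\sqrt d}\}$ we must have $|\|\bm Z\|_2-\E\|\bm Z\|_2|>\vep$, and hence the desired bound $2e^{-\vep^{2}/2}$ follows from the concentration inequality above. The main (minor) obstacle is just verifying the $\tfrac{1}{2\sqrt d}$ gap between $\E\|\bm Z\|_2$ and $\sqrt d$; everything else is a direct invocation of Gaussian Lipschitz concentration.
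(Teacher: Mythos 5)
Your primary argument — Gaussian Lipschitz concentration plus the Gamma-ratio asymptotics $\Gamma((d+1)/2)/\Gamma(d/2)=\sqrt{d/2}\,(1-1/(4d)+O(d^{-2}))$ — is exactly the paper's proof, and it is correct. However, your ``alternative'' elementary estimate has a sign error: the inequality $\sqrt{d-1}\ge\sqrt d-\tfrac{1}{2\sqrt d}$ is false for every $d>1$, since $(\sqrt d-\tfrac{1}{2\sqrt d})^2=d-1+\tfrac{1}{4d}>d-1$, so in fact $\sqrt{d-1}<\sqrt d-\tfrac{1}{2\sqrt d}$. The Poincaré route only gives $\sqrt d-\E\|\bm Z\|_2\le\sqrt d-\sqrt{d-1}\le\tfrac{1}{2\sqrt{d-1}}$, which narrowly misses the stated constant $\tfrac{1}{2\sqrt d}$; so if you want the lemma exactly as stated you need the Gamma expansion (or a two-term Poincaré/variance refinement), not the crude $\mathrm{Var}\le 1$ bound.
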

\begin{proof} First note that since $\|\cdot\|_2$ is a one-Lipschitz function, standard Gaussian concentration (for example, see \cite{boucheron2013concentration}) implies that
\begin{equation}
\label{eqn:lip-gauss-conc}
	\P\left(\left|\|\bm{Z}\|_2 - \E(\|\bm{Z}\|_2)\right| > \vep\right) \leq 2\exp(-\frac{\vep^2}{2}).
\end{equation}
Since $\|\bm{Z}\|_2$ has a chi-distribution, this implies that 
\begin{equation}
\label{eqn:expec-norm}
	\E(\|\bm{Z}\|_2) = \sqrt{2} \frac{\Gamma(\frac{d+1}{2})}{\Gamma(\frac{d}{2})},
\end{equation}
where $\Gamma(\cdot)$ is the Gamma function. Standard results on asymptotics  for the Gamma function (see, for example  \cite{tricomi1951asymptotic}) imply 
\[\frac{\Gamma(\frac{d+1}{2})}{\Gamma(\frac{d}{2})} = \sqrt{\frac{d}{2}}\left[ 1 - \frac{1}{4d} + O\left(\frac{1}{d^2}\right)\right]\]
Using this in \eqref{eqn:expec-norm}, upon combining with \eqref{eqn:lip-gauss-conc} completes the proof. 
\end{proof}

\bigskip
Note that the first co-ordinate of $\vX_2$ is given by $a_1 = Z_{21}/\|\bm{Z}_2\|$. Lemma \ref{lem:norm-conc} implies that $\sqrt{d} a_1 = O(1)$ with large probability. The next simple Lemma establishes concentration rates about the order of magnitude. 
\begin{lem}\label{lem:rho-conc}
	Under Assumption \ref{ass:rgeom}, $\exists n_0$ such that for $n\geq n_0$, 
	\[\P\left(\frac{|Z_{21}|}{\|\bm{Z}_2\|_2} > \frac{\sqrt{64\log{n}}}{\sqrt{d}}\right)\leq \frac{4}{n^8}. \]
\end{lem}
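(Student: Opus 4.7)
The plan is a straightforward two-step argument that decomposes the event by the size of the denominator and then uses a standard Gaussian tail bound on the numerator.

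First I would introduce the auxiliary event $\sA := \{\|\bm{Z}_2\|_2 \geq \sqrt{d}/2\}$ and write
\[
\P\left(\frac{|Z_{21}|}{\|\bm{Z}_2\|_2} > \sqrt{\frac{64\log{n}}{d}}\right) \leq \P\left(\frac{|Z_{21}|}{\|\bm{Z}_2\|_2} > \sqrt{\frac{64\log{n}}{d}},\ \sA \right) + \P(\sA^c).
\]
On $\sA$ we have $|Z_{21}|/\|\bm{Z}_2\|_2 \leq 2|Z_{21}|/\sqrt{d}$, so the first term is bounded by $\P(|Z_{21}| > \sqrt{16\log n})$, which by the standard sub-Gaussian tail bound (for $Z_{21}\sim N(0,1)$) is at most $2e^{-8\log n} = 2/n^8$.

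Next I would control $\P(\sA^c) = \P(\|\bm{Z}_2\|_2 < \sqrt{d}/2)$ by invoking Lemma \ref{lem:norm-conc}. Choosing $\vep = \sqrt{d}/2 - 1/(2\sqrt{d})$, which is positive for $d \geq 2$, Lemma \ref{lem:norm-conc} yields
\[
\P(\sA^c) \leq \P\left(\big|\|\bm{Z}_2\|_2 - \sqrt{d}\big| > \vep + \tfrac{1}{2\sqrt{d}}\right) \leq 2\exp(-\vep^2/2).
\]
Since $\vep^2/2 \geq d/16$ for all sufficiently large $d$, and Assumption \ref{ass:rgeom} gives $d/\log n \to \infty$, we have $2\exp(-\vep^2/2) \leq 2/n^8$ for all $n$ larger than some $n_0$. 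Combining the two pieces gives the stated bound $4/n^8$.

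No step presents a genuine obstacle: the only point that requires a touch of care is picking a clean threshold (here $\sqrt{d}/2$) that is both large enough for the Gaussian tail to produce the prescribed $n^{-8}$ rate on the numerator, and small enough that the concentration estimate from Lemma \ref{lem:norm-conc} absorbs the complementary event under the (mild) growth hypothesis $d/\log n \to \infty$.
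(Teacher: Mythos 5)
Your proof is correct and follows essentially the same route as the paper: split on whether $\|\bm{Z}_2\|_2 \geq \sqrt{d}/2$, apply the standard Gaussian tail bound to $|Z_{21}|$ on the favorable event, and use Lemma \ref{lem:norm-conc} together with $d/\log n \to \infty$ to control the complementary event. Your write-up merely makes the choice of $\vep$ and the final inequality-checking more explicit than the paper's terse version, but the decomposition and the estimates invoked are identical.
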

\begin{proof} To ease notation, let $t := \frac{\sqrt{64\log{n}}}{\sqrt{d}}$. Then note that, 
\[\P\left(\frac{|Z_{21}|}{\|\bm{Z}_2\|_2} > t \right) \leq \P\left( \left|  \|\bm{Z}_2\|_2 - \sqrt{d}\right| > \frac{\sqrt{d}}{2}\right) + \P\left(|Z_{21}| > \frac{t}{2}\sqrt{d}\right).\]
The upper bound on the first term in the \abbr{RHS} above follows from Lemma \ref{lem:norm-conc}, whereas standard tail bounds for the Normal distribution gives upper bound on the second term. Combining them together completes the proof. 
\end{proof}

\bigskip
As already noted $(Z_a,Z_b)$ has a bivariate Normal distribution. Therefore we will need some estimates on the distribution function of a bivariate Normal distribution. To this end, throughout the sequel, let $\Phi$ denote the standard Normal distribution function and let $\bar{\Phi}(h,k,\rho):=\P(Z \ge h, Z_\rho \ge k)$, where $(Z,Z_\rho)$ has a standard bivariate Normal distribution with correlation $\rho$. Then we have the following result. 
\begin{lem}[{\cite{willink2005bounds}}]
\label{lem:tail-bd-biv-norm}
	Fix $0\leq \rho\leq 1$ and $h > 0$.  Let $(Z, Z_{\rho})$ has a standard bivariate Normal distribution with correlation $\rho$. Then denoting~$\theta:= \sqrt{\frac{1-\rho}{1+\rho}}$, one has 
	\[\Phi(-h)\Phi(-\theta h) \leq \bar{\Phi}(h,h,\rho) \leq (1+\rho) \Phi(-h)\Phi(-\theta h). \]
\end{lem}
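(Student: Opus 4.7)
The plan is to exploit the conditional decomposition $Z_\rho = \rho Z + \sqrt{1-\rho^2}\,W$ with $W$ a standard normal independent of $Z$, which yields
\[
\bar{\Phi}(h,h,\rho) = \int_h^\infty \phi(z)\,\Phi\!\left(-\frac{h-\rho z}{\sqrt{1-\rho^2}}\right) dz,
\]
together with the key algebraic identity $(1-\rho)/\sqrt{1-\rho^2} = \theta$.

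The lower bound then follows from a straightforward monotonicity argument. On the integration domain $\{z \ge h\}$ and for $0 \le \rho \le 1$, the quantity $(h-\rho z)/\sqrt{1-\rho^2}$ is non-increasing in $z$ and equals $h\theta$ at $z=h$. Hence $\Phi(-(h-\rho z)/\sqrt{1-\rho^2}) \ge \Phi(-h\theta)$ throughout the range, and pulling this constant out of the integral leaves $\int_h^\infty \phi(z)\,dz = \Phi(-h)$, giving the claimed bound $\Phi(-h)\Phi(-h\theta)$.

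For the upper bound I would rotate to the independent Gaussian pair $U' = (Z+Z_\rho)/\sqrt{2(1+\rho)}$, $V' = (Z-Z_\rho)/\sqrt{2(1-\rho)}$. Writing $\alpha = \sqrt{(1+\rho)/2}$ and $\beta = \sqrt{(1-\rho)/2}$, the event $\{Z \ge h,\,Z_\rho \ge h\}$ becomes $\{\alpha U' \ge h + \beta|V'|\}$, so
\[
\bar{\Phi}(h,h,\rho) = 2\int_0^\infty \phi(v)\,\Phi\!\left(-\frac{h}{\alpha}-\frac{\beta v}{\alpha}\right) dv.
\]
Observing that $\beta/\alpha = \theta$, I would apply the Gaussian shift inequality $\Phi(-a-b) \le \Phi(-a)\,e^{-ab - b^2/2}$ valid for $a,b \ge 0$ (a consequence of $\phi(a+s)/\phi(a) = e^{-as-s^2/2}$ together with Mill's-ratio monotonicity), then collapse the resulting Gaussian integral after completing the square. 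The boundary values already match ($\rho=0$ gives $\Phi(-h)^2$ on both sides, $\rho=1$ gives $\Phi(-h)$ on both sides), and a careful accounting of the scaling $\alpha^{-2} = 2/(1+\rho)$, which mirrors the factor of $2$ from folding $V'$ to $|V'|$, produces exactly the constant $(1+\rho)$ in front of $\Phi(-h)\Phi(-h\theta)$.

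The principal obstacle is extracting the sharp constant $(1+\rho)$ rather than a looser $O(1)$ prefactor. A naive bound using monotonicity of the integrand in the original representation is useless on the upper side, because $\Phi(-(h-\rho z)/\sqrt{1-\rho^2})$ is \emph{increasing} in $z$ on $\{z \ge h\}$; the sharpness comes only from the interplay between the Jacobian in the rotated frame and the Mill's-ratio tail estimates. If the rotation-plus-shift scheme proves too delicate to close out the constant cleanly, a fallback is integration by parts on the original integral against $e^{-hu-u^2/2}\,du = -d(M(u))$ with $M(u)=\int_u^\infty e^{-hs-s^2/2}\,ds$, in which the boundary term reproduces $\Phi(-h)\Phi(-h\theta)$ and the remainder collapses to $\rho\,\Phi(-h)\Phi(-h\theta)$ after a Mill's-ratio identification of the residual Gaussian factor.
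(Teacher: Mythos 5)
Your lower bound is correct: from the conditional decomposition
$\bar\Phi(h,h,\rho)=\int_h^\infty\phi(z)\Phi\bigl(-(h-\rho z)/\sqrt{1-\rho^2}\bigr)\,dz$,
the inner $\Phi$-factor is nondecreasing in $z$ on $[h,\infty)$ and equals $\Phi(-\theta h)$ at $z=h$, so pulling it out gives $\Phi(-h)\Phi(-\theta h)$. (For context, the paper only cites Willink (2005) for this lemma and contains no proof of its own.)

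The upper bound does not go through. Your rotation to $(U',V')$ and the folded representation $\bar\Phi(h,h,\rho)=2\int_0^\infty\phi(v)\Phi(-h/\alpha-\theta v)\,dv$ are both fine, and so is the shift inequality $\Phi(-a-b)\le\Phi(-a)e^{-ab-b^2/2}$. But if you actually carry the Gaussian integral through after applying the shift with $a=h/\alpha$, $b=\theta v$, you obtain
\[
\bar\Phi(h,h,\rho)\;\le\;2\alpha\,e^{h^2\theta^2/2}\,\Phi(-h/\alpha)\,\Phi(-\theta h),
\]
not $(1+\rho)\Phi(-h)\Phi(-\theta h)$: here $\phi(v)e^{-\theta^2v^2/2}$ is a Gaussian of variance $\alpha^2$ and the linear drift $(h\theta/\alpha)v$ recenters the truncation at $h\theta$, producing the prefactor $\alpha e^{h^2\theta^2/2}$. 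These two bounds are not the same, and yours is strictly weaker. At $h=0$ your bound is $2\alpha\,\Phi(0)^2=\tfrac12\sqrt{(1+\rho)/2}$, which exceeds the target $(1+\rho)/4$ for every $\rho\in[0,1)$; more generally, since $(1+\rho)=2\alpha^2$, the ratio of your bound to the target is $\alpha^{-1}\,e^{h^2\theta^2/2}\,\Phi(-h/\alpha)/\Phi(-h)$, which equals $1/\alpha>1$ at $h=0$ and approaches $1$ only as $h\to\infty$. So the shift inequality is too lossy uniformly in $h$, not just at a corner case, and the claim that ``a careful accounting $\ldots$ produces exactly the constant $(1+\rho)$'' is false; your remark that ``the boundary values already match'' describes the target inequality, not the bound your argument produces. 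The fallback via integration by parts is also not yet a proof: after extracting the boundary term $\Phi(-h)\Phi(-\theta h)$, the remainder is a genuinely positive double integral that must be shown to be at most $\rho\,\Phi(-h)\Phi(-\theta h)$ -- and it is strictly less for $0<\rho<1$ (e.g.\ $\bar\Phi(0,0,\tfrac12)=\tfrac14+\tfrac{\arcsin(1/2)}{2\pi}=\tfrac13<\tfrac38$), so ``collapses to $\rho\Phi(-h)\Phi(-\theta h)$'' cannot be an identity, and as an inequality it is precisely the step left unproven.
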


\bigskip

Finally to evaluate \eqref{eq:gamma_n_define}, we need the following asymptotic behavior of $t_{p,d}$.
\begin{lem}[{\cite[Lemma 1]{devroye2011high}}]
\label{lem:DGLU}
	Fix $0 < p \leq 1/2$ and assume $d\geq \max\set{4/p^2,27}$. Then there exists a constant $\kappa_p^*<\infty$, depending only on $p$, such that one has 
	\[\bigg|t_{p,d}\sqrt{d}\; -\; \Phi^{-1}(1-p)\bigg| \leq \kappa_p^*\sqrt{\frac{\log{d}}{d}}.\]
\end{lem}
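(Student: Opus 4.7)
\textbf{Proof plan for Lemma \ref{lem:DGLU}.}
The plan is to reduce the computation of $t_{p,d}$ to an explicit integral against the one-dimensional marginal of the uniform distribution on $\sS_{d-1}$, and then do a careful Laplace-style expansion. First, by rotational invariance, $\langle \bm{X}_1,\bm{X}_2\rangle$ has the same distribution as $W := Z_1/R$, where $\bm{Z} = (Z_1,\ldots,Z_d) \sim N_d(\mvzero,\vI_d)$ and $R = \|\bm{Z}\|_2$. A standard computation (or equivalently, the fact that $W$ is the first coordinate of a uniform point on $\sS_{d-1}$) gives that $W$ has density
\[
f_W(w) \;=\; c_d\,(1-w^2)^{(d-3)/2}\,\mathbf{1}_{(-1,1)}(w), \qquad c_d := \frac{\Gamma(d/2)}{\sqrt{\pi}\,\Gamma((d-1)/2)}.
\]
So $t_{p,d}$ is the unique $t \in (0,1)$ with $\int_t^1 f_W(w)\,dw = p$, and we want to show its rescaling $\sqrt{d}\,t_{p,d}$ sits within $O(\sqrt{\log d / d})$ of $h := \Phi^{-1}(1-p)$.

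Second, I would change variables $w = u/\sqrt{d}$ to write
\[
\bar F_W(h/\sqrt d) \;=\; \int_h^{\sqrt d} \frac{c_d}{\sqrt d}\,\Bigl(1-\tfrac{u^2}{d}\Bigr)^{(d-3)/2} du,
\]
then compare this with $1-\Phi(h) = p$. Two estimates do the work: (i) Stirling's formula yields $c_d/\sqrt{d} = (2\pi)^{-1/2}(1 + O(1/d))$, and (ii) for $|u| \leq K\sqrt{\log d}$ the pointwise Taylor bound
\[
\Bigl(1-\tfrac{u^2}{d}\Bigr)^{(d-3)/2} \;=\; e^{-u^2/2}\,\bigl(1 + O((u^4+u^2)/d)\bigr)
\]
holds, while for $|u|>K\sqrt{\log d}$ both the true integrand and the Gaussian integrand have tails bounded by $d^{-c}$ for $K$ large enough. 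Combining these yields
\[
\bigl|\bar F_W(h/\sqrt d) - (1-\Phi(h))\bigr| \;\le\; C_p\,\frac{\log d}{d},
\]
for $d \geq d_0(p)$, comfortably dominated by $O(\sqrt{\log d / d})$.

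Third, I would invert this distributional estimate to a quantile estimate. The density of $W$ at points of the form $u/\sqrt{d}$ with $|u|=O(1)$ satisfies $f_W(u/\sqrt d) = \sqrt{d}\,\phi(u)(1+o(1))$ by the same expansion, so in particular $f_W \geq \tfrac{1}{2}\sqrt{d}\,\phi(h)$ on an interval of length $\Theta(1/\sqrt{d})$ around $h/\sqrt{d}$. Applying the mean value theorem to $\bar F_W$ on that interval,
\[
|t_{p,d} - h/\sqrt d| \;\leq\; \frac{|\bar F_W(t_{p,d}) - \bar F_W(h/\sqrt d)|}{\tfrac{1}{2}\sqrt d\,\phi(h)} \;\leq\; \frac{2C_p}{\phi(h)}\cdot \frac{\sqrt{\log d}}{d^{3/2}},
\]
and multiplying by $\sqrt{d}$ gives the required bound (in fact with an extra $\sqrt{\log d/d}$ factor of slack). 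The only restrictions $d \geq 4/p^2$ and $d \geq 27$ are used to ensure $h/\sqrt{d} \in (-1,1)$ is well inside the support with the monotonicity/density lower bound above.

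\emph{Main obstacle.} The one serious technical point is making the Taylor expansion $(1-u^2/d)^{(d-3)/2} = e^{-u^2/2}(1+\text{small})$ quantitative and uniform in $u$ over a range that both covers $u=h$ and extends far enough out that the remaining tail is negligible. This must be done with explicit constants depending only on $p$ (which determines $h$), which is why one cuts at $u \sim K\sqrt{\log d}$: shorter cutoffs let too much mass escape in the tail, and longer cutoffs degrade the quadratic approximation through the $u^4/d$ term. Everything else is bookkeeping with Stirling and the mean value theorem.
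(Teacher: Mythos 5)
The paper does not prove Lemma \ref{lem:DGLU}; it is imported verbatim from \cite[Lemma 1]{devroye2011high} and used as a black box, so there is no in-paper argument to compare against. Judged on its own, your proposal is the natural route and is essentially correct: identify the law of $\langle\bm{X}_1,\bm{X}_2\rangle$ as having the Beta-type density $c_d(1-w^2)^{(d-3)/2}$ with $c_d=\Gamma(d/2)/(\sqrt{\pi}\,\Gamma((d-1)/2))$; rescale $w=u/\sqrt d$; combine Stirling, $c_d/\sqrt d=(2\pi)^{-1/2}(1+O(1/d))$, with the uniform Taylor estimate $(1-u^2/d)^{(d-3)/2}=e^{-u^2/2}(1+O((u^2+u^4)/d))$ on $|u|\le K\sqrt{\log d}$, plus crude tail bounds beyond that range, to get $|\bar F_W(h/\sqrt d)-(1-\Phi(h))|=O(\log d/d)$ with $h=\Phi^{-1}(1-p)$; then invert through a local density lower bound. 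As you note, this even delivers the sharper rate $O(\log d/d)$, so the stated $\sqrt{\log d/d}$ bound follows with slack.

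One genuine hole in the inversion step: the mean-value argument as written quietly assumes $t_{p,d}$ already lies inside the $O(1/\sqrt d)$ window around $h/\sqrt d$ on which you established $f_W\ge\tfrac{1}{2}\sqrt d\,\phi(h+1)$, but that localization is exactly what needs proving. The fix is a short contradiction. If, say, $t_{p,d}<(h-1)/\sqrt d$, then
\[
C_p\,\frac{\log d}{d}\ \ge\ \bar F_W(t_{p,d})-\bar F_W(h/\sqrt d)\ \ge\ \int_{(h-1)/\sqrt d}^{h/\sqrt d} f_W(w)\,dw\ \ge\ \frac{1}{2}\,\phi(h+1),
\]
which is impossible for $d\ge d_0(p)$; the other side is symmetric. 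Only after $t_{p,d}$ is trapped in the window does the mean-value theorem give $|t_{p,d}-h/\sqrt d|\le C_p'\log d/d^{3/2}$. Finally, your expansions are asymptotic and hence valid only for $d\ge d_0(p)$, whereas the lemma is stated for every $d\ge\max\{4/p^2,27\}$; for the finitely many remaining $d$ the crude bound $|t_{p,d}\sqrt d-h|\le\sqrt d+|h|$ must be absorbed into $\kappa_p^*$. With those two remarks recorded, the argument is complete.
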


\begin{proof}[Proof of \eqref{eqn:epsn-def}]
First let us define the event
\begin{equation}
\label{eqn:good-event}
	\sG_n^{1,2}:= \left\{\frac{|Z_{21}|}{\|\bm{Z}_2\|_2} \leq  \frac{\sqrt{64\log{n}}}{\sqrt{d}} \right\}.
\end{equation}
We will show that on $\sG_n^{1,2}$, we have $|(p_n^{1,2})^2 - p^2| \le \tau_n$. By the same argument one can extend the result for all pairs $i \ne j \in [n]$. Therefore, the proof completes by setting $\sG_n:=\cap_{i \ne j \in [n]} \sG_n^{i,j}$, applying Lemma \ref{lem:rho-conc},  taking a  union bound, and applying Borel Cantelli lemma. 

We break the proofs into two parts.  First we show that on $\sG_n^{1,2}$, we have $\tilde{\gamma}_n \le p^2 +\tau_n$. Denoting $\Delta_n  :=  \sqrt{8\log{n}}$, we note
\begin{align}
\label{eqn:1416}
	\tilde{\gamma}_n  & \leq \tilde\P\left(Z_a > t_{p,d}(\sqrt{d} - \Delta_n), Z_b > t_{p,d}(\sqrt{d} - \Delta_n)\right) + \tilde\P \left(| \norm{\bm{Z}_v}_2  - \sqrt{d} | > \Delta_n \right)\notag\\
	 & =: \text{Term A} + \text{Term B}.
\end{align} 
	By Lemma \ref{lem:norm-conc}, we have $\text{Term B} \le 2/n^{2}$. To bound $\text{Term A}$ we use the fact that under $\tilde\P$, $(Z_a, Z_b)$ has a standard bivariate Normal distribution with correlation $\rho$ as defined in \eqref{eqn:za-zb-def}. First let us consider the case $\rho >0$. Using Lemma \ref{lem:tail-bd-biv-norm} with $h = t_{p,d}(\sqrt{d} - \Delta_n)$, we obtain
\beq\label{eq:TermA}
\text{Term A} \le (1+\rho) \Phi(-t_{p,d}(\sqrt{d} - \Delta_n)) \Phi(-\theta t_{p,d}(\sqrt{d} - \Delta_n)).\notag
\eeq
Next, applying 	Lemma \ref{lem:DGLU} we note
\begin{align}\label{eq:t_pd}
\left|t_{p,d}(\sqrt{d}-\Delta_n) -  \Phi^{-1}(1-p)\right| \leq \kappa'_p\sqrt{\frac{\max\{\log{n},\log{d}\}}{d}},
\end{align}
for another constant $\kappa_p'$, depending only on $p$. Therefore using the Mean-Value Theorem we further obtain that
\beq\label{eq:termA_1}
\left| \Phi(-t_{p,d}(\sqrt{d} - \Delta_n)) - p \right| \le \bar{\kappa}_p \sqrt{\frac{\max\{\log{n},\log{d}\}}{d}}.
\eeq
Moreover, note that on the event $\sG_n^{1,2}$ we have $\rho \leq \sqrt{64\log{n}/d}$ and by Assumption \ref{ass:rgeom} we have $d/\log n \ra \infty$. Hence, using the Mean-Value Theorem again
\beq\label{eq:termA_2}
\left|\Phi(-t_{p,d}(\sqrt{d} - \Delta_n)) - \Phi(-\theta t_{p,d}(\sqrt{d} - \Delta_n))\right|  \le \f{1}{\sqrt{2 \pi}}\left|\theta -1\right| |t_{p,d}(\sqrt{d} - \Delta_n)| \le \kappa_p'' \sqrt{\frac{\max\{\log{n},\log{d}\}}{d}},
\eeq
for some other constant $\kappa_p''$. Combining \eqref{eq:termA_1}-\eqref{eq:termA_2} we obtain the desired bound for \text{Term A} when $\rho >0$. 

When $\rho <0$, we cannot directly use Lemma \ref{lem:tail-bd-biv-norm}. Instead we use the following result: 
\[
\bar{\Phi}(h,h,\rho) = 2\Phi(-h)\Phi(-\theta h) - \bar{\Phi}(\theta h, \theta h, -\rho),
\]
for any $h \in \R$ and $ \rho \in (-1,1)$ (see \cite[Eqn.~(C)]{SO}, \cite[pp.~2294]{willink2005bounds}). Now using the lower bound from Lemma \ref{lem:tail-bd-biv-norm}, we obtain $\bar{\Phi}(\theta h, \theta h, -\rho) \ge \Phi(-\theta h) \Phi(-\theta^2 h)$. We have already seen above that $\theta \approx 1$. Therefore proceeding as above we can argue that $ \Phi(-\theta^2  h) \approx \Phi(-\theta h)$. Then proceeding similarly as above we obtain the desired upper bound for $\text{Term A}$ when $\rho <0$. The details are omitted.

Now it remains to find a lower bound on $\t{\gamma}_n$. To this end, it is easy to note that
\begin{align}
	\tilde{\gamma}_n &\geq \tilde\P\left(\frac{Z_a}{\norm{\bm{Z}_v}_2} > t_{p,d}, \frac{Z_b}{\norm{\bm{Z}_v}_2} > t_{p,d}, \bigg|\norm{\bm{Z}_v}_2 -\sqrt{d} \bigg| \leq \Delta_n \right) \label{eqn:2018}\notag\\ 
	&\geq \text{Term A}' - \text{Term B}, \notag
\end{align} 
where 
\[
\text{Term A}':=\tilde\P\left(Z_a > t_{p,d}(\sqrt{d} + \Delta_n), Z_b > t_{p,d}(\sqrt{d} + \Delta_n)\right).
\]
Proceeding as in the case of $\rho >0$, one can complete the proof. We omit the details. 	
\end{proof}








\subsection{Proof of Theorem \ref{lem:large_clique}.}

To prove Theorem \ref{lem:large_clique} we need a good estimate on $\P(\sC_r)$. It is well known that the number of copies of any subgraph ${\sf H}$ in an Erd\H{o}s-R\'{e}nyi random graph is well approximated in total variation distance, by a Poisson distribution with appropriate mean (see \cite[Section 5.1]{BHJ}). Combining \cite[Theorem 5.A]{BHJ} and \cite[Lemma 5.1.1(a)]{BHJ} results in the following proposition. Here for any two probability measures $\cP_1$ and $\cP_2$ defined on $\mathbb{N}$, write $d_{\mathrm{TV}}(\cP_1,\cP_2)$ for  the total variation distance between these measures.  
Abusing notation write $d_{\mathrm{TV}}(X_1,X_2):= d_{\mathrm{TV}}(\cP_1,\cP_2)$, when $X_1 \sim \cP_1$, and $X_2 \sim \cP_2$.

\begin{prop}\label{prop:bhj}
Let $\Graph(n,1/2)$ be the Ed\H{o}s-R\'{e}nyi random graph with connectivity probability $\f{1}{2}$. For any graph ${\sf H}$ let $v({\sf H}) \le n$ denote the cardinality of the vertex set of ${\sf H}$, and $e({\sf H})$ be the same for the edge set. Further write
\[
\mu={n \choose v({\sf H})} \f{v({\sf H})!}{a({\sf H})} \left(\f{1}{2}\right)^{e({\sf H})},
\]
where $a({\sf H})$ denote the number elements in the automorphism groups of ${\sf H}$. Then
\beq\label{eq:d_tv_bd}
d_{\mathrm{TV}}\left(n_{\Graph(n,1/2)}({\sf H}), \dPois(\mu)\right) \le \left(1- e^{-\mu}\right)\left(\f{\mathrm{Var}(n_{\Graph(n,1/2)}({\sf H}))}{\mu} -1 + 2\left(\f{1}{2}\right)^{e({\sf H})}\right).
\eeq
For $\wt{\sf H}$ any isomorphic copy of ${\sf H}$, let $\Gamma_{\wt{\sf H}}^t$ be the collections of all subgraphs of the complete graph on $n$ vertices that are isomorphic to ${\sf H}$ with exactly $t$ edges not in $\wt{\sf H}$. Then
\beq\label{eq:var_W}
\f{\mathrm{Var}(n_{\Graph(n,1/2)}({\sf H}))}{\mu}= 1- \left(\f{1}{2}\right)^{e({\sf H})} + \sum_{t=1}^{e({\sf H})-1}|\Gamma_{\wt{\sf H}}^t| \left(\left(\f{1}{2}\right)^t- \left(\f{1}{2}\right)^{e({\sf H})}\right).
\eeq
\end{prop}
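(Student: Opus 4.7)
The plan is to derive Proposition \ref{prop:bhj} as a direct combination of two standard tools from Barbour--Holst--Janson, applied to the indicator representation of the subgraph count in the edge-independent $\Graph(n,1/2)$ model.

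First, I set up the indicator representation. Let $\mathcal{I}$ denote the collection of the $|\mathcal{I}| = \binom{n}{v({\sf H})}v({\sf H})!/a({\sf H})$ subgraphs of $K_n$ that are isomorphic to ${\sf H}$, and write
\[
W := n_{\Graph(n,1/2)}({\sf H}) = \sum_{\alpha \in \mathcal{I}} X_\alpha, \qquad X_\alpha := \bI(E(\alpha) \subseteq E(\Graph(n,1/2))).
\]
Since $X_\alpha$ is determined by the $e({\sf H})$ independent edge variables in $E(\alpha)$, one has $p_\alpha := \E X_\alpha = (1/2)^{e({\sf H})}$, so $\mu = \E W = |\mathcal{I}|\, p_\alpha$ agrees with the formula in the statement.

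Second, I produce the dependency structure needed for Chen--Stein. For each $\alpha \in \mathcal{I}$ set the neighborhood
\[
B_\alpha := \{\beta \in \mathcal{I} : E(\alpha) \cap E(\beta) \ne \eset\}.
\]
Because the edges of $\Graph(n,1/2)$ are mutually independent and each $X_\beta$ depends only on the edges in $E(\beta)$, the indicator $X_\alpha$ is independent of $\sigma(X_\beta : \beta \notin B_\alpha)$. This is precisely the local-dependence hypothesis of Theorem~5.A of \cite{BHJ}, which supplies a total-variation bound of the form
\[
d_{\mathrm{TV}}(W, \dPois(\mu)) \le \frac{1 - e^{-\mu}}{\mu}\bigl(b_1 + b_2\bigr),
\]
where $b_1 := \sum_\alpha \sum_{\beta \in B_\alpha} p_\alpha p_\beta$ and $b_2 := \sum_\alpha \sum_{\beta \in B_\alpha,\, \beta \ne \alpha} \E(X_\alpha X_\beta)$.

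Third, I rewrite $b_1 + b_2$ in terms of $\mathrm{Var}(W)$. Lemma~5.1.1(a) of \cite{BHJ} expresses the variance of a sum of locally-dependent indicators as $\mathrm{Var}(W) = \mu - \sum_\alpha p_\alpha^2 + \sum_\alpha \sum_{\beta \in B_\alpha \setminus\{\alpha\}} \mathrm{Cov}(X_\alpha, X_\beta)$ whenever covariances outside the neighborhoods vanish, which holds here by edge independence. Combining this identity with the definitions of $b_1, b_2$ yields an explicit relation between $(b_1 + b_2)/\mu$ and $\mathrm{Var}(W)/\mu$, which upon substitution into the Chen--Stein bound above produces exactly \eqref{eq:d_tv_bd}.

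For the variance identity \eqref{eq:var_W}, I expand $\mathrm{Var}(W) = \sum_\alpha \mathrm{Var}(X_\alpha) + \sum_{\alpha \ne \beta} \mathrm{Cov}(X_\alpha, X_\beta)$. The diagonal term contributes $|\mathcal{I}|\, p_\alpha(1 - p_\alpha) = \mu(1 - (1/2)^{e({\sf H})})$. For off-diagonal pairs, if $\beta$ shares exactly $e({\sf H}) - t$ edges with $\alpha$ (i.e.~$|E(\beta) \setminus E(\alpha)| = t$), then $X_\alpha X_\beta$ requires all $e({\sf H}) + t$ edges in $E(\alpha) \cup E(\beta)$ to be present, so $\E(X_\alpha X_\beta) = (1/2)^{e({\sf H}) + t}$ and $\mathrm{Cov}(X_\alpha, X_\beta) = (1/2)^{e({\sf H}) + t} - (1/2)^{2e({\sf H})}$; pairs with $t = e({\sf H})$ are edge-disjoint and contribute zero. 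Summing over $\alpha$ and using vertex-transitivity of $K_n$ (so that for each fixed $\alpha$ the count $|\Gamma^t_{\wt{\sf H}}|$ of copies at ``edge distance'' $t$ is the same, independent of the reference copy $\wt{\sf H}$), then dividing by $\mu$, gives precisely the displayed formula. The main obstacle is purely bookkeeping: one must match the general statement of Theorem~5.A to the present edge-independent setting and carefully track the algebraic reduction of $(b_1+b_2)/\mu$ to the expression in \eqref{eq:d_tv_bd}; no probabilistic difficulty arises beyond the two cited results.
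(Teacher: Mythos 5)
Your variance computation for \eqref{eq:var_W} is correct, and your setup (indicators $X_\alpha$ over the $\binom{n}{v({\sf H})}v({\sf H})!/a({\sf H})$ copies, $p_\alpha=(1/2)^{e({\sf H})}$, neighborhoods $B_\alpha$ given by edge-overlap) is the right framework. The gap is in the passage from the local Chen--Stein bound to \eqref{eq:d_tv_bd}. With $b_1=\sum_\alpha\sum_{\beta\in B_\alpha}p_\alpha p_\beta$ and $b_2=\sum_\alpha\sum_{\beta\in B_\alpha\setminus\{\alpha\}}\E(X_\alpha X_\beta)$, the variance decomposition (covariances vanish off $B_\alpha$) gives exactly
\[
\mathrm{Var}(W)=\mu+b_2-b_1, \qquad\text{hence}\qquad b_1+b_2=\mathrm{Var}(W)-\mu+2b_1 .
\]
Since $b_1=\mu\,(1/2)^{e({\sf H})}\,N$, where $N=|B_\alpha|$ is the number of copies of ${\sf H}$ sharing at least one edge with a fixed copy (itself included), your route yields
\[
\frac{b_1+b_2}{\mu}=\frac{\mathrm{Var}(W)}{\mu}-1+2N\left(\frac{1}{2}\right)^{e({\sf H})},
\]
and $N>1$ as soon as two distinct copies can overlap in an edge. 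So the ``algebraic reduction'' you defer to bookkeeping does not produce \eqref{eq:d_tv_bd}: it produces a strictly weaker bound with $2N(1/2)^{e({\sf H})}$ in place of $2(1/2)^{e({\sf H})}$, and here $N$ is of order $n^{v({\sf H})-2}$, so the discrepancy is not cosmetic.

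The inequality \eqref{eq:d_tv_bd} is not the neighborhood ($b_1+b_2$) bound but the coupling bound for \emph{positively related} indicators: each $X_\alpha$ is an increasing function of the independent edge variables, so the family admits a monotone coupling, and the corresponding Stein--Chen estimate (this is what the cited \cite[Theorem 5.A]{BHJ} supplies for dissociated families; cf.\ Theorem 2.C there) reads
\[
d_{\mathrm{TV}}\left(W,\dPois(\mu)\right)\le\frac{1-e^{-\mu}}{\mu}\Bigl(\mathrm{Var}(W)-\mu+2\sum_\alpha p_\alpha^2\Bigr),
\]
which with $\sum_\alpha p_\alpha^2=\mu(1/2)^{e({\sf H})}$ is exactly \eqref{eq:d_tv_bd}. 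To repair your argument, replace the local-dependence input by the positive-relatedness input (justified by monotonicity in the i.i.d.\ edge indicators); the remainder of your write-up, including the derivation of \eqref{eq:var_W}, then stands.
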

Equipped with Proposition \ref{prop:bhj} we are now ready to prove Lemma \ref{lem:large_clique}.

\begin{proof}[Proof of Theorem \ref{lem:large_clique}]
First note that $\sC_r= \{ n_{\Graph(n,1/2)}({\sf C}_r) >0\}$. Thus it is enough to prove that
\beq\label{eq:large_clique_up_bd}
\P\left(\sA_{\mathrm{d}}^\delta \cup \sA_{\mathrm{cod}}^\delta \Big| n_{\Graph(n,1/2)}({\sf C}_r) >0 \right) \le \f{\P\left(\sA_{\mathrm{d}}^\delta \cup \sA_{\mathrm{cod}}^\delta\right)}{\P\left( n_{\Graph(n,1/2)}({\sf C}_r) >0 \right)} \ra 0.
\eeq
Using Hoeffding's inequality we obtain
\[
\sup_{\nu \in [n]}\P\left(\left|\left|\sN_\nu\right| - \frac{n}{2}\right|  \ge   Cn^{\delta}\right), \sup_{\nu \ne \nu' \in [n]} \P\left(\left|\left|\sN_\nu\cap \sN_{\nu'}\right| - \frac{n}{4}\right|  \ge   Cn^{\delta}\right) \le 2 \exp\left(-2 C n^{2\delta -1}\right).
\]
Thus taking a union bound 
\beq\label{eq:num_bd}
\P\left(\sA_{\mathrm{d}}^\delta \cup \sA_{\mathrm{cod}}^\delta\right) \le  2 \exp\left(- C n^{2\delta -1}\right).
\eeq
Now to control the denominator of \eqref{eq:large_clique_up_bd} we use Proposition \ref{prop:bhj}. Note that, if we are able to show that the \abbr{RHS} of \eqref{eq:d_tv_bd}, excluding the term $(1- e^{-\mu})$, is $o(1)$, then from \eqref{eq:d_tv_bd} we deduce that
\[
\left|\P(n_{\Graph(n,1/2)}({\sf C}_r) >0) - (1-e^{-\mu})\right| \le  (1-e^{-\mu}) o(1).
\]
This therefore implies that
\beq\label{eq:nG_bd}
\P(n_{\Graph(n,1/2)} ({\sf C}_r)>0) \ge \f{1-e^{-\mu}}{2}.
\eeq
Next note
\[
\mu= \E\left[n_{\Graph(n,1/2)}({\sf C}_r)\right]= {n \choose r} \left(\f{1}{2}\right)^{r \choose 2}
= \f{n(n-1)\cdots (n-r+1)}{r!} \left(\f{1}{2}\right)^{r \choose 2}.
\]
Since $r= c n^{1/2-\vep}$, using Stirling's approximation, we have
\begin{align*}
\mu \le \f{1}{\sqrt{2\pi r}}\left(\f{en}{r}\right)^r  \left(\f{1}{2}\right)^{r \choose 2}= \f{1}{\sqrt{2 \pi c} n^{1/4-\vep/2}}\left(\f{e}{c} n^{1/2+\vep}\right)^{c n^{1/2-\vep}} \left(\f{1}{2}\right)^{r \choose 2}.
\end{align*}
As $r=cn^{1/2-\vep}$ it is easy to check that
\[
 \left(\f{e}{c} n^{1/2+\vep}\right)^{c n^{1/2-\vep}} \ll 2^{r \choose 2},
\]
and hence $\mu \ra 0$, as $n \ra \infty$. Therefore $1- e^{-\mu} \approx \mu$, for large $n$,  and so from \eqref{eq:nG_bd} we further have,
\beq\label{eq:denom_bd}
\P(n_{\Graph(n,1/2)} ({\sf C}_r)>0) \ge \f{\mu}{4}, \quad \text{ for all large } n.
\eeq
Next we observe
\begin{align*}
\limsup_n \log_e\left(\f{8 \exp\left(- C n^{2\delta -1}\right)}{{n \choose r} \left(\f{1}{2}\right)^{r \choose 2}} \right) \le \log_e\hspace{-2pt}8 + \limsup_n \left[ {r \choose 2} \log_e\hspace{-2pt}2- C n^{2\delta-1}\right].
\end{align*}
Since $r=cn^{1/2-\vep}$ with $\vep+\delta >1$, we have
\[
r^2= c^2 n^{1-2\vep} \ll n^{2\delta-1},
\]
proving
\begin{align*}
\limsup_n \log_e\left(\f{8 \exp\left(- C n^{2\delta -1}\right)}{{n \choose r} \left(\f{1}{2}\right)^{r \choose 2}} \right)=-\infty,
\end{align*}
which upon combining with \eqref{eq:num_bd} and \eqref{eq:denom_bd}, yield \eqref{eq:large_clique_up_bd}.

Thus to complete the proof we only need to show that for $r=cn^{1/2-\vep}$, where $c$ and $\vep$ are some positive constants,
\beq\label{eq:Var_small}
\f{\mathrm{Var}(n_{\Graph(n,1/2)}({\sf C}_r))}{\mu} -1 + 2\left(\f{1}{2}\right)^{r \choose 2}\ra 0, \quad \text{ as } n\ra \infty.
\eeq
Using \eqref{eq:var_W} we note that
\beq
\f{\mathrm{Var}(n_{\Graph(n,1/2)}({\sf C}_r))}{\mu} -1 + 2\left(\f{1}{2}\right)^{r \choose 2}= \left(\f{1}{2}\right)^{r \choose 2} + \sum_{t=1}^{{r \choose 2}-1}|\Gamma_{\wt{\sf H}}^t| \left(\left(\f{1}{2}\right)^t- \left(\f{1}{2}\right)^{r \choose 2}\right), \notag
\eeq
for any isomorphic copy $\wt{\sf H}$ of the complete graph on $r$ vertices. If $\wt{\sf H}_1$ and $\wt{\sf H}_2$ are  two different isomorphic copies of the complete graph on $r$ vertices, with  $s$ common vertices between them, then there are ${r \choose 2} - {s \choose 2}$ edges of $e(\wt{\sf H}_2)$ that are not part of the edge set of $\wt{\sf H}_1$. Thus the above expression simplifies to the following:
\beq\label{eq:var_pois_bd}
\f{\mathrm{Var}(n_{\Graph(n,1/2)}({\sf C}_r))}{\mu} -1 + 2\left(\f{1}{2}\right)^{r \choose 2} \le  \left(\f{1}{2}\right)^{r \choose 2} + \sum_{s=2}^{r-1}{r \choose s} {n-r \choose r-s}\left(\f{1}{2}\right)^{{r \choose 2}- {s \choose 2}}. 
\eeq
Now we need to control the summation appearing in the \abbr{RHS} of \eqref{eq:var_pois_bd}. Denoting
\[
a_s := {r \choose s} {n-r \choose r-s}\left(\f{1}{2}\right)^{{r \choose 2}- {s \choose 2}},
\]
we note
\begin{align}
\varrho_s:=\f{a_{s+1}}{a_s} = \f{{r \choose s+1} {n-r \choose r-s-1}}{{r \choose s} {n-r \choose r-s}} \left(\f{1}{2}\right)^{{s \choose 2}- {s+1 \choose 2}}
 &= \f{s!\left((r-s)!\right)^2(n-2r+s)!}{(s+1)!\left((r-s-1)!\right)^2(n-2r+s+1)!}\left(\f{1}{2}\right)^{- s}\notag\\
& = \f{(r-s)^2}{(s+1)(n-2r+s+1)}2^s.\label{eq:cons_ratio}
\end{align}
Thus
\beq\label{eq:ratio_condition}
\varrho_s \ge 1 \Leftrightarrow  2^s \ge \f{(s+1)(n-2r+s+1)}{(r-s)^2}.
\eeq
Using the above equivalent condition of $\varrho_s\ge 1$ we show below that the sequence $\{a_s\}$ monotonically decreases initially, achieves a minimum for $s=O(\log n)$, and then monotonically increases afterwards for all $s$ except possibly for $s=r-3,r-2$. This observation helps us to compute $\sup_{2 \le s \le r-1} a_s$, and therefore we can easily obtain an upper bound on summation appearing in the \abbr{RHS} of \eqref{eq:var_pois_bd}. To this end,  note that for any fixed $K$ if $s \le K$, then
\[
r^2 2^s \le  (cn^{1/2-\vep})^2 2^K < \eta n,
\]
for any $\eta >0$, proving that $\{a_s\}$ is strictly decreasing for $s=2,3,\ldots,K$. We next try to find a necessary condition on $\varrho_s \ge 1$. If $\varrho_s \ge1$, then using \eqref{eq:ratio_condition} we note that we must have 
\[
2^s \ge \f{(s+1)(n-2r+s+1)}{(r-s)^2} \ge \f{3(n-2r+3)}{r^2}\ge \f{2n}{r^2}= \f{2}{c^2} n^{2\vep},
\]
where in the last step we use the fact that $r=c n^{1/2-\vep}$. The above expression therefore implies that $s \ge \vep \log_2\hspace{-2pt}n$. Hence, if a minimum of $\{a_s\}$ occurs at $s^*$, then we should have that $s^* \ge \vep \log_2\hspace{-2pt}n$. Next we show that for any $s >s^*$, $\varrho_s >1$ for all $s$, except possibly for $s=r-3,r-2$. To see this, consider
\beq\label{eq:ratio_rho}
\f{\varrho_{s+1}}{\varrho_s}= \f{a_{s+2}/a_{s+1}}{a_{s+1}/a_{s}}=2 \f{(r-s-1)^2}{(r-s)^2}\f{(s+1)(n-2r+s+1)}{(s+2)(n-2r+s+2)}.
\eeq
Since $s \ge \vep \log_2\hspace{-2pt}n$, and $r=cn^{1/2-\vep}$, the product of the ratio
\[
\f{(s+1)(n-2r+s+1)}{(s+2)(n-2r+s+2)} 
\]
can be made as close to $1$ as needed for all large $n$. Also note that if $s \le r-4$, then
\[
\f{(r-s-1)^2}{(r-s)^2} = \left(1-\f{1}{r-s}\right)^2\ge \f{9}{16}.
\]
Hence, from \eqref{eq:ratio_rho} we conclude that $\{a_s\}$ is monotonically increasing for all integer $s$ between $s^*$, and $r-4$. Hence, from \eqref{eq:var_pois_bd} we have
\begin{align*}
\f{\mathrm{Var}(n_{\Graph(n,1/2)}({\sf C}_r))}{\mu} -1 + 2\left(\f{1}{2}\right)^{r \choose 2} \le  \left(\f{1}{2}\right)^{r \choose 2} + r \max_{s=2,r-3,r-2,r-1}\left\{{r \choose s} {n-r \choose r-s}\left(\f{1}{2}\right)^{{r \choose 2}- {s \choose 2}}\right\}. 
\end{align*}
It is easy to note that
\[
r \max_{s=2,r-3,r-2,r-1}\left\{{r \choose s} {n-r \choose r-s}\left(\f{1}{2}\right)^{{r \choose 2}- {s \choose 2}}\right\}= O\left(r^2 n \left(\f{1}{2}\right)^r\right)= o(1),
\]
and thus we obtain \eqref{eq:Var_small} completing the proof.
\end{proof}
\begin{rmk} In \cite[Example 5.1.4]{BHJ} the number of copies of large complete subgraphs of an Erd\H{o}s-R\'{e}yni graph on $n$ vertices is also shown to converge to a Poisson distribution in total variation distance. However, their set-up significantly differs from ours. In particular, they assume that the connectivity probability $p$ of the Erd\H{o}s-R\'{e}yni graph satisfies the inequality $p \ge n^{-2/(r-1)}$, and $r \ll n^{1/3}$, neither of which holds here. Both in \cite[Example 5.1.4]{BHJ}, and Lemma \ref{lem:large_clique} the starting point is \eqref{eq:d_tv_bd}-\eqref{eq:var_W}. However, the analysis of the terms appearing in \eqref{eq:var_W} are done differently.
\end{rmk}


\subsection{Proof of Theorem \ref{thm:binary_graph}}
Recall that $\Graph_n^{b}$ is a graph on $n$ vertices where its vertices are binary tuples of length $k$ containing an odd number of ones, excluding the vector of all ones, where $n=2^{k-1}-1$. A pair of vertices $u$ and $v$ are connected if $\langle u, v \rangle =1$, where additions and multiplications are the corresponding operations in the binary field. It is easy to see that $\Graph_n^b$ is a $d$-regular graph with $d=2^{k-2}-2$. Also it can be shown that the number of common neighbors for any pair of adjacent vertices is $2^{k-3}-3$ and the same for a pair of non-adjacent vertices is $2^{k-3}-1$ (See \cite[Section 3, Example 4]{prg}). Therefore, it is immediate that $\Graph_n^b$ satisfies {\bf (A1)}-{\bf (A2)} with $p=1/2, \, \delta =0$, and $C=3$. Thus the proof of Theorem \ref{thm:binary_graph}(i) is a direct application of Theorem \ref{thm:main}. In the rest of this section we prove Theorem \ref{thm:binary_graph}(ii)-(iii). To complete the proof we need the following elementary result.

\begin{lem}\label{lem:lin_algeb}
Let $B$ be a binary $m \times k$ matrix with $\mathrm{rank}(B)=\ell$ such that $\ell \le m \le k$. Then given any binary vector $b$ of length $m$ such that it belongs to the column space of $B$, the number of solutions of the linear system of equation $Bx=b$ is $2^{k-\ell}$. 
\end{lem}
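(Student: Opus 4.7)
The plan is to invoke the standard rank-nullity/coset structure from linear algebra, carried out over the binary field $\mathbb{F}_2$. The map $T: \mathbb{F}_2^k \to \mathbb{F}_2^m$ defined by $T(x) = Bx$ is $\mathbb{F}_2$-linear, and the hypothesis that $b$ lies in the column space of $B$ is precisely the statement that $b \in \mathrm{Image}(T)$, so the solution set $S_b := \{x \in \mathbb{F}_2^k : Bx = b\}$ is non-empty.

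First I would fix any particular solution $x_0 \in S_b$ and observe the affine decomposition $S_b = x_0 + \ker(T)$: indeed, for any $x \in \mathbb{F}_2^k$, we have $Bx = b$ if and only if $B(x - x_0) = 0$, i.e.\ $x - x_0 \in \ker(T)$. Since translation by $x_0$ is a bijection on $\mathbb{F}_2^k$, this gives $|S_b| = |\ker(T)|$.

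Next I would apply the rank-nullity theorem to $T$, which yields $\dim \ker(T) = k - \mathrm{rank}(B) = k - \ell$. Since $\ker(T)$ is an $\mathbb{F}_2$-vector subspace of $\mathbb{F}_2^k$ of dimension $k-\ell$, its cardinality equals $2^{k-\ell}$. Combining with the previous step gives $|S_b| = 2^{k-\ell}$, as claimed.

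There is no genuine obstacle here; the only point worth stating carefully is that the formulae $\dim \ker(T) = k - \mathrm{rank}(B)$ and $|V| = 2^{\dim V}$ for a finite-dimensional $\mathbb{F}_2$-vector space hold verbatim over the binary field, so the argument is identical to the one over $\mathbb{R}$. The conditions $\ell \le m \le k$ play no role in the counting itself; they merely ensure that $B$ has enough columns (so $k - \ell \ge 0$) and that the rank $\ell$ is consistent with $B$ being $m \times k$.
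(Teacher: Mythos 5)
Your proof is correct. The paper itself omits the proof of this lemma, remarking only that it ``follows from standard linear algebra argument and an induction argument,'' and your affine-coset plus rank-nullity argument over $\mathbb{F}_2$ is exactly the standard argument being gestured at (the ``induction'' the paper alludes to is just what one would use to prove rank-nullity or row-reduce, which you instead invoke as a known theorem).
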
 

\bigskip
The proof of Lemma \ref{lem:lin_algeb} is easy and follows from standard linear algebra argument and an induction argument. We omit the details. Now we are ready to prove Theorem \ref{thm:binary_graph}(ii)-(iii). First we begin with the proof of Theorem \ref{thm:binary_graph}(ii).

\begin{proof}[Proof of Theorem \ref{thm:binary_graph}(ii)]
First let us find the size of the largest independent set in $\Graph_n^b$. To this end, let us assume that $\{v_1,v_2,\ldots,v_r\}$ forms an independent set of size $r$. Then, we have that $\langle v_i, v_j \rangle =0$ for all $i \ne j \in [r]$. We further claim that $\{v_1,v_2,\ldots,v_r\}$ is a set of mutually independent vectors when viewed as vectors of length $k$ over the binary field. To see this, if possible let us assume that there exists coefficients $c_j \in \{0,1\}$, $j \in [r]$, not all of them zero, such that 
\beq\label{eq:max_indep_set}
\sum_{j=1}^r c_j v_j=0. 
\eeq
Note that $\langle v_j, v_j \rangle=\langle v_j , {\bf 1} \rangle =1$ for all $j \in [r]$, where ${\bf 1}$ is the vector of all ones. Therefore, taking an inner product with $v_i$ on both sides of \eqref{eq:max_indep_set} we have that $c_i=0$ for all $i \in [r]$ which  is a contradiction. Therefore $\{v_1,v_2,\ldots,v_r\}$ forms an independent set of size $r$. Since the number of independent vectors of length $k$ over any field is at most $k$, recalling that $k=\log_2(n+1) +1$, the assertion about that size of the maximal independent set in $\Graph_n^b$ is established.

Now let us show that the number of independent sets of size $(1-\eta)\log_2\hspace{-2pt}n$ is roughly same as that of a $\Graph(n,\f{1}{2})$. We begin by noting that if $\{v_1,v_2,\ldots,v_r\}$ is an independent set then so is $\{v_1,v_2,\ldots,v_{r-1}\}$ and $v_r$ must be a common non-neighbor of $v_1,v_2,\ldots,v_{r-1}$. To find the number of common non-neighbors we define $B_r$ to be the $r\times k$ matrix whose rows are $v_1,v_2,\ldots,v_{r-1}, {\bf 1}$. Then we note that $v_r$ must be a solution of 
\beq\label{eq:lin_eq}
B_r x = \begin{pmatrix} 0  \\ \vdots \\ 0 \\ 1 \end{pmatrix}.
\eeq	
The first $(r-1)$ rows of \eqref{eq:lin_eq} ensures that $v_r$ is a common non-neighbor of $\{v_1,v_2,\ldots,v_{r-1}\}$ and the last row ensures that the number of ones in $v_r$ is odd, which is necessary for $v_r$ to be a vertex of $\Graph_n^b$. From Lemma \ref{lem:lin_algeb} it follows that, if $\mathrm{rank}(B_r)=r$, then the number of such solutions is $2^{k-r}$. We also observe that any of $\{v_1,v_2,\ldots,v_{r-1},{\bf 1}\}$ cannot be a solution of \eqref{eq:lin_eq}. Because
\[
\langle v_i,v_i \rangle = \langle v_i, 1 \rangle =1, \quad i=1,2,\ldots,r-1.
\]
Thus we deduce that given any collection of $(r-1)$ vertices $\{v_1,v_2,\ldots,v_{r-1}\}$, if $\mathrm{rank}(B_r)=r$ then the number of common non-neighbors is $2^{k-r}$. Now let us try to understand when one can have $\mathrm{rank}(B_r) \ne r$. If $\mathrm{rank}(B_r) < r$ then we must have coefficients $c_1,c_2,\ldots, c_r \in \{0,1\}$, all of them not zero, such that
\beq\label{eq:lin_combin}
\sum_{j=1}^{r-1} c_j v_j + c_r {\bf 1}=0. 
\eeq
Since $\langle v_j, {\bf 1} \rangle =1$ for $j\in [r-1]$, taking an inner product with ${\bf 1}$ from \eqref{eq:lin_combin} we deduce that $\sum_{j=1}^r c_j=0$. Since $\{v_1,v_2,\ldots,v_{r-1}\}$ forms an independent set of size $(r-1)$, we have that $\langle v_i, v_j \rangle =0$ for $i \ne j \in [r-1]$. Thus taking an inner product with $v_j$ for $j \in [r-1]$, from \eqref{eq:lin_combin} we further deduce that $c_j +c_r=0$ for all $j \in [r-1]$. Combining the last two observations we deduce that $r$ must be even and $c_j=1$ for all $j$. Therefore we can only have $\mathrm{rank}(B_r) \in \{r-1,r\}$ when $\{v_1,v_2,\ldots,v_{r-1}\}$ is an independent set.  This further implies that
\beq\label{eq:lin_combin_1}
v_{r-1}=v_1+v_2+ \cdots+v_{r-2}+{\bf 1},
\eeq 
whenever $\mathrm{rank}(B_r)=r-1$. It is also easy to check that in this case there are no solution to the linear system of equations \eqref{eq:lin_eq}. In summary we have the following situation. Given any independent set $\{v_1,v_2,\ldots,v_{r-1}\}$, one of the following two conditions hold:
\begin{itemize}
\item $\mathrm{rank}(B_r)=r$ and there are $2^{k-r}$ common non-neighbors of $\{v_1,v_2,\ldots,v_{r-1}\}$.
\item $\mathrm{rank}(B_r)=r-1$ (only when $r$ is even).  Then \eqref{eq:lin_combin_1} holds, and there are no common non-neighbors of $\{v_1,v_2,\ldots,v_{r-1}\}$.
\end{itemize}
Using these observations we are now ready to prove our claim for the independent sets.

We start by choosing $v_1$, which we can obviously do in $(2^{k-1}-1)$ many ways. For any such choice of $v_1$ there are $2^{k-2}$ many non-neighbors of $v_1$. This is the number of choices of $v_2$. For any such choice of $\{v_1,v_2\}$, by our above argument there are $2^{k-3}$ choices of $v_3$ such that $v_3$ is a common non-neighbor of $v_1$ and $v_2$. Out of these $2^{k-3}$ many choices if $v_3=v_1+v_2+{\bf 1}$ then we cannot extend $\{v_1,v_2,v_3\}$ to a larger independent set as we have already seen that in this case there no common non-neighbor of these three vertices. Thus the number of choices of $v_3$ such that it can be extended to form a larger independent set is $(2^{k-3}-1)$. For each of these choices we again have already established that $\mathrm{rank}(B_4)=4$. Therefore, we obtain that the number of choices of $v_4$ is $2^{k-4}$. Continuing the above we see that the number of independent sets of size $r=(1-\eta) k$ is
\begin{align}
\f{1}{r!} (2^{k-1}-1)2^{k-2} (2^{k-3}-1)2^{k-4} \cdots  & = \f{1}{r!}\prod_{\ell=1}^r 2^{k-\ell} \prod_{\substack{1 \le \ell \le r \\ \ell=\text{odd}}} \left(1 - 2^{-(k-\ell)}\right)\notag\\
& = \f{1}{r!}2^{{k \choose 2}-{k-r \choose 2}} \prod_{\ell=\lceil \f{k-r}{2}\rceil}^{\f{k-1}{2}} \left(1 - 2^{-2\ell}\right)\notag\\
& \sim \f{1}{r!}2^{{k \choose 2}-{k-r \choose 2}}  \exp\left(-\sum_{\ell=\lceil \f{k-r}{2}\rceil}^{\f{k-1}{2}} 2^{-2\ell}\right)\sim \f{1}{r!}2^{{k \choose 2}-{k-r \choose 2}}, \notag
\end{align}
as $k \ra \infty$. In the last two steps we use the fact that $\eta >0$. An easy calculation yields that 
\begin{align}
\E\left[n_{\Graph(n,\f{1}{2})}({\sf I}_r)\right] \sim \f{1}{r!}n^r \left(\f{1}{2}\right)^{r \choose 2} \sim \f{1}{r!}2^{(k-1)r} \left(\f{1}{2}\right)^{r \choose 2}. \notag
\end{align}
Since ${k \choose 2} + {r \choose 2} - {k-r \choose 2} = r(k-1)$ the proof completes.
\end{proof}


\bigskip

To prove Theorem \ref{thm:binary_graph}(iii) it will be easier to consider the following alternate representation of the vertices of $\Graph_n^b$. Note that for any vertex $v$ in $\Graph_n^{b}$ we can define $\bar{v}$ such that $v+\bar{v}={\bf 1}$. Instead of considering $v$ as its vertices, equivalently we can view $\Graph_n^{b}$ as a graph with vertices $\bar{v}={\bf 1}+v$. Note that in this representation $\bar{u}$ and $\bar{v}$ are connected if and only if $\langle \bar{u}, \bar{v}\rangle =0$. We work with this representation to prove Theorem \ref{thm:binary_graph}(iii). Theorem \ref{thm:binary_graph}(iii) actually follows from \cite{thom2}. We include the proof for completeness. It can be noted that the same proof shows the existence of a clique of size $\sqrt{n+1}-1$ in $\Graph_n^b$.

\begin{proof}[Proof of Theorem \ref{thm:binary_graph}(iii)]
Denote $t:= \f{k-1}{2}$ and fix any positive integer $t'$. We create a clique of size $(t+t')$ as follows: First we consider $t$ mutually independent vectors $\{\bar{v}_1,\bar{v}_2,\ldots,\bar{v}_t\}$ which form a clique of size $t$ and then we pick $t'$ more vectors from $\text{Span}(\bar{v}_1,\bar{v}_2,\ldots, \bar{v}_t)$. Since $\langle \bar{v}_j, {\bm 1}\rangle =0$, it is easy to check that any vector belonging to $\text{Span}(\bar{v}_1,\bar{v}_2,\ldots, \bar{v}_t)$ is a common neighbor of $\{\bar{v}_1,\bar{v}_2,\ldots,\bar{v}_t\}$. Therefore we obtain a clique of size $(t+t')$. Thus we only need to count in how many ways one can construct $t$ mutually independent vectors which themselves form a clique and in how many ways one can choose $t'$ more vectors from the span of those $t$ mutually independent vectors.

Fix $\ell <t$ and let $\{\bar{v}_1,\bar{v}_2,\ldots,\bar{v}_\ell\}$ be a collection $\ell$ mutually independent vectors. Then $\bar{v}_{\ell+1}$ is a common neighbor of $\{\bar{v}_1,\bar{v}_2,\ldots,\bar{v}_\ell\}$ if $\wt{B}_{\ell+1} \bar{v}_{\ell+1}={\bm 0}$, where $\wt{B}_{\ell+1}$ is a $(\ell+1)\times k$ matrix with rows $\bar{v}_1,\bar{v}_2,\ldots,\bar{v}_\ell, {\bm 1}$, and ${\bm 0}$ is the zero vector. Since $\langle \bar{v}_j , {\bf 1} \rangle =0$ for $j \in [\ell]$ and $\{\bar{v}_j\}_{j \in [\ell]}$ is a collection of mutually independent vectors, we have $\mathrm{rank}(\wt{B}_{\ell+1})=\ell+1$. Therefore, number of solutions of $\wt{B}_{\ell+1} x = {\bf 0}$ is $2^{k-\ell-1}$. Removing $2^{\ell}$ many vectors  that belong to $\text{Span}(\bar{v}_1,\bar{v}_2,\ldots, \bar{v}_\ell)$ we obtain that the number of choices of common neighbors of $\{\bar{v}_1,\bar{v}_2,\ldots,\bar{v}_\ell\}$ that are independent from the latter collection of vectors is $2^{k-\ell-1} - 2^\ell$. Continuing by induction we then deduce that the number of mutually independent vectors $\{\bar{v}_1,\bar{v}_2,\ldots,\bar{v}_t\}$ such that they themselves form a clique of size $t$ is 
\[
(2^{k-1}-1)(2^{k-2} - 2)\cdots(2^{t+1}-2^{t-1}).
\]
Recalling that $k=2t+1$, we now have that the number of cliques of size $(t+t')$ in $\Graph_n^{b}$ is at least
\begin{align*}
&\f{1}{(t+t')!}(2^{k-1}-1)(2^{k-2} - 2)\cdots(2^{t+1}-2^{t-1})(2^t-(t+1))(2^t-(t+2))\cdots (2^t- (t+t'))\\
\sim & \f{1}{(t+t')!}\prod_{\ell=1}^{t} 2^{k-\ell} \prod_{\ell \ge 1} \left(1-\f{1}{4^\ell}\right) 2^{t t'}\\
\sim  & \frac{1}{(t+t')!}n^t \prod_{\ell=0}^{t-1}2^{-\ell}  \prod_{\ell \ge 1} \left(1-\f{1}{4^\ell}\right)  n^{t'} 2^{-tt'} \sim\f{1}{(t+t')!} n^{t+t'}  \left(\f{1}{2}\right)^{t+t' \choose 2} 2^{t' \choose 2} \prod_{\ell \ge 1} \left(1-\f{1}{4^\ell}\right).
\end{align*}
Since for $x \in \f{1}{2}\log 2$, one has $1-x \ge e^{-2x}$, we obtain that $\prod_{\ell \ge 1} \left(1-\f{1}{4^\ell}\right) \ge \exp(-2/3)$. Finally noting that in $\Graph(n,\f{1}{2})$ the number of cliques of size $(t+t')$ is approximately $\f{1}{(t+t')!} n^{t+t'}  \left(\f{1}{2}\right)^{t+t' \choose 2} $ the proof completes.
\end{proof}

\section*{Acknowledgements} 
Much of the work was done while AB was a visiting assistant professor at Department of Mathematics, Duke University. SB has been partially supported by NSF-DMS grants  1310002, 160683, 161307 and SES grant 1357622. SC has been partially supported by SES grant 1357622 and by a research assistantship from SAMSI. AN has been partially supported by NSF-DMS grants 1310002 and 1613072.

\bibliographystyle{plain}
\bibliography{pseudo}



\end{document}